\theoremstyle{plain}\definecolor{Red}{cmyk}{0,1,1,0} \definecolor{Green}{cmyk}{1,0,1,0}
\theoremstyle{plain}
\newtheorem*{lemma*}{Lemma}
\newtheorem{lemma}{Lemma}
\newtheorem*{theorem*}{Theorem}
\newtheorem{theorem}{Theorem}
\newtheorem*{proposition*}{Proposition}
\newtheorem{proposition}{Proposition}
\newtheorem*{corollary*}{Corollary}
\newtheorem{corollary}{Corollary}
\theoremstyle{definition}
\newtheorem*{definition*}{Definition}
\newtheorem{constraint}{Constraint}
\newtheorem{definition}{Definition}
\newtheorem*{example*}{Example}
\newtheorem{example}{Example}
\theoremstyle{remark}
\newtheorem*{remark*}{Remark}\newtheorem*{proof*}{Proof}
\newtheorem{remark}{Remark}
\def\Gama{{\Gamma_S}}
\def\Gamat{  \R^m_{X,S}}
\def\Z{{\mathbb Z}}
\def\R{{\mathbb R}}
\def\C{{\mathbb C}}
\def\GA{{\mathcal A}}
 \title[ The rectangle graphs ]{ The rectangle graphs}
\author{   C. Procesi{**}. }
\begin{document}
\begin{abstract}
We discuss a  combinatorial graph used in the study of the NLS.
 \end{abstract}\maketitle
 
\tableofcontents
\section{Introduction} In this paper  we   want to present in a unified form the results on a graph used in the papers  \cite{PP},\cite{PP1},\cite{PP3}, for the study of the cubic NLS.   We will not recall the origin of this graph  which can be found in the mentioned papers, nor its applications, but only the theory which appears scattered in the previous papers (with some unfortunate mistakes or obscure proofs), trying to give a more readable and unified treatment of the main Theorems. \bigskip

The rectangle graphs are infinite graphs which appear for any given integer $n$, in two versions an {\em arithmetic} and a {\em geometric } form.  In the first  the vertices are the points in $\Z^n$  while in the second  the points in $\R^n$ for some given dimension $n$.  

The construction of one of these graphs, that is the description of the edges,  depends on the choice of a set of  vectors $S:=\{\mathtt v_1,\cdots,\mathtt v_m\}$ (called {\em tangential sites} for dynamical reasons) in $\Z^n$ in the arithmetic case and in $\R^n$ in the geometric case. 

 We thus will have a family of graphs  depending on $S$, the corresponding graph  will be denoted $\Gamma_S$.  A more general set of graphs appears for the NLS  with non linear part of degree $2q+1$  for $q>1$. For these the results of Part 1 of this paper still hold, as shown in the Appendix and are sufficient for most applications, see \cite{PP3}.\medskip
\part{The graphs $\Gamma_S$}
\section{Edges and rectangles}
Given $S=\{\mathtt v_1,\cdots,\mathtt v_m\}\subset\R^n$, the graph  $\Gamma_S$ can be first defined as a geometric graph with vertices in $\R^n$ and, in case the $\mathtt v_i\in\Z^n$,  its restriction to $\Z^n$ is the arithmetic graph.  It is defined   taking the following edges.
\begin{definition}\label{pr}
Two  points  $p,q\in R^n$ are  connected  with an edge in $\Gama$, if there exist  two vectors $\mathtt v_i,\mathtt v_j\in S$ so that the vectors $p,q,\mathtt v_i,\mathtt v_j$ are the vertices of a rectangle.
\end{definition} 
Notice that  
 the  vectors $a, b, c,d$ are the vertices of a rectangle  if and only if   $$ a+c= b+d,\quad |a|^2+|c|^2= |b|^2+|d|^2 .$$   
 \begin{center}\includegraphics[width=3cm]{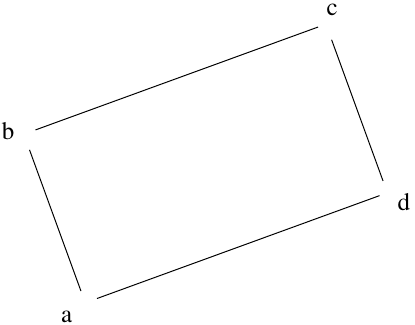}\end{center}
\begin{remark}\label{edg}\strut
In fact we have two different possibilities (two colors)

\begin{itemize}\item An oriented  black edge $\xymatrix{  p\ar@{->}[r]^{\mathtt v_i-\mathtt v_j} &q}$ connects  two points $p , \ q $ which are  {\em adjacent} in the rectangle with vertices     $p, q,\mathtt v_i,\mathtt v_j$  hence $$
q=p+\mathtt v_i-\mathtt v_j,\   |p|^2+|\mathtt v_i|^2=|q|^2+|\mathtt v_j|^2\implies |p|^2+|\mathtt v_i|^2=|p+\mathtt v_i-\mathtt v_j |^2+|\mathtt v_j|^2
$$$$  |p|^2+|\mathtt v_i|^2=|p|^2+2(p,\mathtt v_i-\mathtt v_j )+|\mathtt v_i-\mathtt v_j |^2+|\mathtt v_j|^2$$ 
\begin{equation}\label{lan}\implies  \boxed{(p,\mathtt v_i-\mathtt v_j )= (\mathtt v_i,\mathtt v_j)-|\mathtt v_j |^2}.\end{equation}

\item A red edge $\xymatrix{  p\ar@{=}[r]^{-\mathtt v_i-\mathtt v_j} &q}$  connects  two points $p, q  $ which are {\em opposite} in the rectangle with vertices     $p, \mathtt v_j,q,\mathtt v_i$   hence 
$$
q=-p+\mathtt v_i+\mathtt v_j,\   |p|^2+|q|^2=|\mathtt v_i|^2 +|\mathtt v_j|^2\implies |p|^2+|-p+\mathtt v_i+\mathtt v_j|^2=|\mathtt v_i|^2 +|\mathtt v_j|^2
$$
 $$  2|p|^2+| \mathtt v_i+\mathtt v_j|^2-2(p,\mathtt v_i+\mathtt v_j)=|\mathtt v_i|^2 +|\mathtt v_j|^2$$
 \begin{equation}\label{lar}\implies  \boxed{ |p|^2- (p,\mathtt v_i+\mathtt v_j)=- (\mathtt v_i,\mathtt v_j)}.\end{equation}

\end{itemize}

\end{remark}  
 \begin{definition}\label{lasfe}  1)\quad An edge $\ell= -\mathtt v_i-\mathtt v_j $ defines a sphere $S_\ell$ through the relation:

\begin{equation}\label{sfera}S_\ell=\{x\mid |x|^2+(x,-\mathtt v_i-\mathtt v_j)=- (  \mathtt v_i,\mathtt v_j) \iff |x-\frac{\mathtt v_i+\mathtt v_j}2|^2=\frac{|\mathtt v_i-\mathtt v_j|^2}4\}.
\end{equation} The sphere $S_\ell$ is the one in which  two vectors $\mathtt v_i,\mathtt v_j$ are the endpoints of a diameter, that is of center $\frac{\mathtt v_i+\mathtt v_j}2$ and diameter $|\mathtt v_i-\mathtt v_j|$. \smallskip

 Two points $p,q$ are joined by the red edge  $\ell= -\mathtt v_i-\mathtt v_j $ if and only if they are endpoints of a diameter of $S_\ell$.\medskip

2)\quad An edge $\ell=\mathtt v_i-\mathtt v_j$ defines a hyperplane $H_{\ell}$ through the relation \begin{equation}
\label{iperp} H_\ell=\{x\mid (x,\mathtt v_i-\mathtt v_j)=   |\mathtt v_i|^2- (  \mathtt v_i,\mathtt v_j)=(\mathtt v_i,\mathtt v_i -\mathtt v_j)
\}.
\end{equation}
 \end{definition}
 The hyperplane  $H_{\ell}$ is the one passing through $\mathtt v_i$ and perpendicular to $\mathtt v_i-\mathtt v_j$,  $H_{-\ell}$ is the one passing through $\mathtt v_j$ and perpendicular to $\mathtt v_i-\mathtt v_j$ that is parallel to $H_{\ell}$.

 Two points $p,q$ are joined by the black  edge  $\ell=  \mathtt v_i-  \mathtt v_j $ if and only if $p\in H_\ell$ and $q$ is the orthogonal projection of $p$ to $H_{-\ell}$.\medskip
 
 \includegraphics[width=11cm]{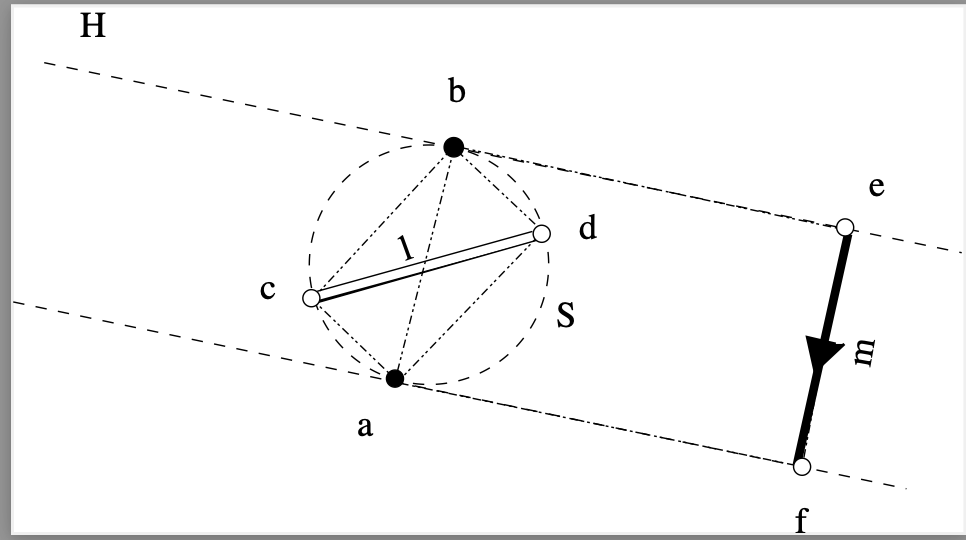}
\medskip

 {\footnotesize{The plane $H_{\ell}$ with $\ell=  \mathtt v_j-\mathtt v_i$ and the sphere $S_{\ell}$  with $\ell= -\mathtt v_i-\mathtt v_j$. The points $\mathtt v_i=a,\mathtt v_j=b,e,f$ form  the vertices of a rectangle. Same for  the points $a,c,b,d$}}\label{fig1}

  $$\xymatrix{ &\mathtt v_i\ar@{->}[dr]&&\\
 p\ar@{-}[ur] \ar@{->}[dr]^{\mathtt v_i-\mathtt v_j}&&\mathtt v_j \\
 & q\ar@{-}[ur]&&  } \qquad\xymatrix{ &\mathtt v_i\ar@{-}[dr]&&\\
 p\ar@{-}[ur] \ar@{-}[dr] \ar@{=}[rr]^{\mathtt v_i-\mathtt v_j}&&q \\
 & \mathtt v_j\ar@{-}[ur]&&  } $$
 Thus the set $S$  determines finitely many spheres and finitely many pairs of parallel hyperplanes, which have a complicated geometric pattern of intersections. 
 
 Points which are not in any of these finitely many     spheres or hyperplanes       will not be connected to any other point in the graph, that is they are isolated.  The possibility for a point to be connected with many other points depends roughly   in how many of these hypersurfaces the point lies.  It should be intuitively clear that the complicated geometry of this configuration of      spheres and  hyperplanes depends strongly on the choice of $S$.\medskip

 EXAMPLE:  $S$  is given by 4  points in the plane marked \quad $\textcolor{Red}{\bullet}$  $$\xymatrix{&.&.&.&.&.&.&.&.&.\\
 &.&.&\mathtt v_1\textcolor{Red}{\bullet}&.&.&.&.&.&.\\
 &.&.&.&.&.&.&.&\mathtt v_2\textcolor{Red}{\bullet}&.\\
 &.&.&.&.&.&.&.&.&.\\
 &.&.&.&.&.&.&\mathtt v_3\textcolor{Red}{\bullet}&.&.\\
 & .&.&\mathtt v_4\textcolor{Red}{\bullet}&.&.&.&.&.&.
 } $$ EXAMPLE:  points connected by edges  $$\xymatrix{ &.&.&.&\textcolor{Green}{\bullet}\ar@{-}[ddl]\ar@{.}[rrd]&.&.&.&.&.\\
 &.&.&\textcolor{Red}{\bullet}\ar@{.}[lldd]&.&.&\textcolor{Green}{\bullet}\ar@{-}[ddl]\ar@{-}[dddd]\ar@{.}[rrd]&.&.&.\\
 &.&.&\textcolor{Green}{\bullet}\ar@{.}[rrd]&.&.&.&.&\textcolor{Red}{\bullet}&.\\
 &\textcolor{Green}{\bullet} {\ar@{=}[rrrr]}\ar@{.}[rrdd]&.&.&.&\textcolor{Green}{\bullet}\ar@{.}[lldd]\ar@{.}[rrd]\ar@{.}[lluu]&.&.&.&.\\
 &.&.&.&.&.&.&\textcolor{Red}{\bullet}\ar@{.}[ruu]&.&.\\
 & .&.&\textcolor{Red}{\bullet}&.&.&\textcolor{Green}{\bullet}&.&.&.
} $$
The graph depends strongly  on the choice of $S$  and we want to see its form under a {\em generic choice} of $S$.  Recall some terminology
\begin{definition}\label{term}
A path in a 
 graph $\Gamma$  is a sequence of vertices $v_1,v_2,\cdots, v_k$  such  that $v_i$ and $v_{i+1}$ are connected by an edge.
 
 A path is {\em simple}  if the $v_i$ are all distinct.
 
 A path is a {\em circuit} if $v_1=v_k$.  It is a {\em simple circuit}  if $v_1,\cdots, v_{k-1}$ is a simple path.
 
 A graph is connected if any two vertices of $\Gamma$ are connected by a path.
 
 A connected graph is a {\em tree}  if it does not have circuits or equivalently two vertices are connected by a unique simple path.
 
 Given any set of vertices $U$ of $\Gamma$  the graph $\Gamma|U$with these vertices and all the edges in $\Gamma$ joining two of them is the {\em full subgraph} generated by $U$.
\end{definition}
Each graph decomposes into its connected components and our goal is to study the connected components of $\Gama$ and prove Theorem \ref{main}.
\begin{theorem} \label{main}
For generic choices of $S$  the set  $S$ is a connected component of the graph $\Gamma_S$, called {\em the special component}.

 The other connected components of the graph $\Gamma_S$,    are formed by  {\em affinely independent points}.
\smallskip

In particular each non special component has at most $n+1$ points.
\end{theorem}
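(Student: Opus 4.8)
The strategy is to set up the genericity conditions carefully and then argue that a non-special connected component can never contain an affine dependence, from which the bound $n+1$ is immediate (an affinely independent set in $\R^n$ has at most $n+1$ elements). First I would fix the (finite) list of algebraic conditions on $S$ that ``generic'' must encode: the $\mathtt v_i$ are affinely independent and in general position with respect to all the spheres $S_\ell$ and hyperplanes $H_\ell$ attached to pairs from $S$; no vertex of $S$ lies on a sphere or hyperplane determined by two \emph{other} vectors of $S$ except in the forced ways; and, crucially, that no ``accidental'' rectangle closes up, i.e. whenever a short circuit of black/red edges forces a polynomial identity among the coordinates of the $\mathtt v_i$ that is not identically true, that identity fails for our chosen $S$. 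The point is that each such bad coincidence is a proper Zariski-closed condition on $S\in(\R^n)^m$, there are finitely many relevant ones for a bound depending only on $n$, so their union is still a proper closed set and the complement is the generic locus.

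Next I would show $S$ itself is a connected component. Connectedness of $S$: any two $\mathtt v_i,\mathtt v_j$ are opposite vertices of the degenerate rectangle with the other pair being, say, $\mathtt v_i,\mathtt v_j$ themselves — more robustly, I would exhibit for each pair an explicit rectangle using a third site, or simply note the red edge $\ell=-\mathtt v_i-\mathtt v_j$ joins $\mathtt v_i$ to $\mathtt v_j$ since they are endpoints of a diameter of $S_\ell$ by Definition~\ref{lasfe}. That $S$ is \emph{closed} under edges (no edge leaves $S$) is where genericity enters: if some $\mathtt v_i$ were joined to a point $p\notin S$, then $p,\mathtt v_i$ together with two sites $\mathtt v_k,\mathtt v_l$ form a rectangle, and writing out the rectangle equations $a+c=b+d$, $|a|^2+|c|^2=|b|^2+|d|^2$ expresses $p$ as an algebraic function of three vectors of $S$; demanding that this $p$ nonetheless land in or out of $S$, or that it coincide with another forced point, is a proper closed condition, so generically the only solutions have $p\in S$ and the component of $S$ is exactly $S$.

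Now the heart: let $C$ be a non-special connected component and suppose, for contradiction, that the vertices of $C$ are affinely dependent. I would take a \emph{minimal} affinely dependent subset $\{p_0,\dots,p_k\}\subseteq C$, so $\sum \lambda_i p_i=0$, $\sum\lambda_i=0$, all $\lambda_i\neq 0$. Because $C$ is connected, these points are linked by a path (in fact a tree spanning them), and each edge imposes one of the linear/quadratic relations \eqref{lan}, \eqref{lar}, relating the coordinates of the $p_i$ to those of the sites appearing as edge labels. Combining the affine relation with the edge relations along the spanning tree, I would eliminate the $p_i$ and derive a nontrivial polynomial identity purely among the coordinates of the finitely many sites $\mathtt v_j$ occurring — the number of sites involved is bounded in terms of $k\le n$, hence in terms of $n$ only. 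That identity is not an identity in the polynomial ring (one checks it fails for some explicit configuration, e.g. sites in ``very general position''), so it cuts out a proper Zariski-closed subset of $(\R^n)^m$; excluding it is part of the genericity hypothesis. Contradiction. Therefore $C$ is affinely independent, and \[|C|\le n+1.\]

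The main obstacle is the bookkeeping in that last elimination step: one must verify that the polynomial relation obtained is genuinely non-trivial (not automatically satisfied by the structure of the edge equations), and that the collection of such relations, as $C$ ranges over all abstract connected edge-labelled configurations with $\le n+1$ vertices, is finite — so that intersecting all the good loci still leaves a dense open ``generic'' set. I expect the cleanest route is not to treat arbitrary trees at once but to reduce to short configurations: show first that a component containing a circuit already forces a bad coincidence (using that a circuit of black/red edges returns a point to itself only along a subvariety), so generic components are trees; then show a tree with an affine dependence among its leaves likewise forces such a coincidence. Handling black versus red edges and mixed paths — and in particular the sign bookkeeping from the $-p$ in the red edge relation $q=-p+\mathtt v_i+\mathtt v_j$ — is the fiddly part, but each case is a finite linear-algebra computation over the polynomial ring, so no genuinely new idea is needed beyond organizing the genericity conditions.
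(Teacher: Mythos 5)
Your plan founders on the step you yourself flag as ``the main obstacle'' and then dismiss as bookkeeping: the claim that every offending configuration, after eliminating the intermediate vertices, yields a polynomial identity in the coordinates of the $\mathtt v_i$ that is \emph{not} identically zero, so that it can be added to the genericity list. This is false, and it is precisely where the real difficulty of the theorem lies. There exist combinatorial configurations (the paper's \emph{degenerate resonant} graphs, e.g.\ the graph in \eqref{mig} and the examples of Remark \ref{bala}) for which the affine dependence $\sum_i n_i a_i=0$ among the vertices forces $\sum_i n_i C(a_i)=0$ identically, so the elimination produces the zero polynomial and no Zariski-closed condition on $S$ can exclude them. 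For these one must prove directly, with no genericity available, that the associated quadratic system has no real solution outside $S$; the paper does this by showing every such graph is ``not allowable'' (it contains a vertex $-2e_i$ or $-3e_i+e_j$, Theorem \ref{MM}) and then using positive-definiteness, resp.\ $|x-\mathtt v_i|^2=0$, in Proposition \ref{ilpunto0}. That reduction is the content of the entire second part of the paper (the encoding graphs, even and doubly odd circuits, the 18-case computation of $\bar L_u$), and it is a genuinely new idea, not a finite linear-algebra check; your suggested shortcut ``generic components are trees'' cannot be right as stated, because circuits whose resonance vanishes identically are exactly the ones genericity does not see.

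A second gap is the finiteness of your list of conditions. A minimal affinely dependent subset has at most $n+2$ points, but the path inside the component $C$ joining them may be arbitrarily long, so the eliminated identity involves unbounded integer coefficients and you get infinitely many candidate constraints; moreover a priori the lift of a geometric circuit to the Cayley graph may even unravel into an infinite string (Example \ref{ecv}), so $C$ itself is not obviously finite. The paper handles this by a two-step scheme you would also need: first impose constraints (\ref{c3}, \ref{c4}) so that components with at most $2n+2$ vertices lift isomorphically (Proposition \ref{main1}), and then prove that no compatible graph with rank $n+1$, hence with $\geq 2n+1$ vertices, has a generic realization outside $S$ (Theorems \ref{aMT}, \ref{codim}, \ref{bou}); that last step is not a dimension count but an argument through determinantal varieties, irreducibility of $\det\circ\rho$, Zariski density of the real points, and an $O(n)$-equivariance/invariant-theory argument showing the generic solution is a polynomial and in fact equal to some $\mathtt v_i$. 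Your proposal contains the correct overall shape (finitely many avoidable resonances plus the affine-independence conclusion, essentially Proposition \ref{avri}), but without these two ingredients the proof does not close.
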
 The proof of Theorem \ref{main} is quite complex,
it requires some non trivial algebraic geometry, invariant theory  and a  very   long  and hard combinatorial analysis  which will be presented in Part 2 starting from \S \ref{drg}.
\medskip

In this paper  {\em generic}  is in the sense of algebraic geometry.  We think of $S$ as a point in $\R^{nm}$ and then we   want to find    {\em optimal}  constraints on the tangential sites $S$, given by a finite list of polynomial inequalities on the coordinates of $S$. 

If $S$ satisfies these inequalities we say that it is {\em generic} and then, hopefully these      constraints    make the graph  {\em as simple as possible}.  

These constraints will be discovered and constructed stepwise as we go along the proof.
\begin{remark}\label{suic}
\begin{itemize}
\item Several polynomial inequalities are equivalent to a unique polynomial inequality.
\item  We will have linear, quadratic,  and determinantal inequalities of degree $n,\ n+1.$
\item The  number of inequalities is finite but depends on $n,m$.
\item Most choices of $S$, even if restricted to be integral,  satisfy these inequalities.
\end{itemize}
\end{remark}

Notice that two  vectors $\mathtt v_i,\mathtt v_j\in S$  are connected by both a black and a red edge since they are vertices of a degenerate rectangle and satisfy the two equations \eqref{lan},   \eqref{lar}.
 
\begin{remark}\label{othg}
When we restrict to $S\in (\Z^n)^m$  and the arithmetic graph  one can use a stronger notion of being generic  by imposing arithmetic constraints. 

In this way one may get stronger results, as in the first paper on this subject by Geng, You, Xu \cite{GYX}, who give arithmetic conditions for $n=2$ which insure that we have components with at most 2 vertices, rather than 3 as given by our geometric Theorem. \smallskip

There is also, in this case, a weaker notion of  being generic that is that the density of the possible $S\in (\Z^n)^m= \Z^{n m}$   in the sets $B_k:=\{(a_1,\cdots, a_{nm})\mid  a_i\in\Z,\ |a_i|\leq k\}$ tends to 1 as $k\to\infty$. This is automatically true if $S$  is generic in the sense of algebraic geometry.
\end{remark} The first simple constraint is
\begin{constraint}\label{c0}
We assume that the vectors in $S$  linearly span $\R^n$.
\end{constraint}
\subsection{The special component}

The next constraint  we want  serves  to ensure that no vector $p\notin S,\ p\in\R^n$ is connected by an edge to $S$, that is $S$ is a component of the graph. 

 For this it is sufficient to assume that  any 3  vectors $\mathtt v_i,\mathtt v_j,\mathtt v_h\in S$   are not vertices of a rectangle. 
 
 This means that the triangle of vertices $\mathtt v_i,\mathtt v_j,\mathtt v_h$   has no right angle i.e. of $\pi/2$. 

\begin{constraint}\label{c1}
This is insured by 3 inequalities  $(\mathtt v_a-\mathtt v_b,\mathtt v_a-\mathtt v_c)\neq 0$ on the scalar products  of the 3 vectors sides of the triangle, we also impose $(\mathtt v_i,\mathtt v_j)\neq 0,\ \forall i,j$.
\end{constraint}    \smallskip

\begin{remark}\label{laspe}
Under the previous constraint $S$ is a component. We say that $S$ is {\em complete} and call $S$ the {\em special component}.
\end{remark}

  \begin{example}
$q=1,\ n=2,m=4$.  \quad Four vectors $\mathtt v_1,\mathtt v_2,\mathtt v_3,\mathtt v_4$  in the plane   do not satisfy Constraint 1)  if they form a picture of type
$$\xymatrix{{\circ}\, \mathtt v_1&&   &{\circ}\, \mathtt v_4\\&{\circ}\,\mathtt v_2&   &{\circ}\,\mathtt v_3 }$$  
 The point $x$ is connected to $S$ by 3 edges.$$\xymatrix{{\circ}\, \mathtt v_1&x&   &{\circ}\, \mathtt v_4\\&{\circ}\,\mathtt v_2&   &{\circ}\,\mathtt v_3 }$$ 
\end{example}

 \subsection{Combinatorial graphs}
  By fixing an element $x$  in a component,  called {\em the root},  the component is described by a marked graph  of this type
\begin{example}   $$
    \xymatrix{ &x-\mathtt v_1+\mathtt v_3\ar@{<- }[d] _{2,  1}& &\\  &x-\mathtt v_2+\mathtt v_3& &\\  x\ar@{ ->}[ru] _{3,  2}\ar@{<-}[ruu] ^{  1,3}\ar@{=}[rr]_{1,  2} && -x+\mathtt v_1+\mathtt v_2 \ar@{=}[luu]_{2,  3} \ar@{=}[lu] ^{1,  3} &   } $$\end{example}
 which encodes the linear relations  explained in Remark \ref{edg}.\smallskip
 
  This graph is completely recovered from the following combinatorial graph with two colors on vertices and the $\mathtt v_i$. A formal definition is \ref{comg}.
  \begin{equation}\label{prie}
 \xymatrix{ &  \bullet\ar@{<- }[d] _{2,  1}& &\\  &  \bullet & &\\  x\ar@{ ->}[ru] _{3,  2}\ar@{<-}[ruu] ^{  1,3}\ar@{=}[rr]_{1,  2} &&    \textcolor{Red}{\bullet}   \ar@{=}[luu]_{2,  3} \ar@{=}[lu] ^{1,  3} &   }  
\end{equation}
The color of a vertex is black   if the vertex is reached from $x$ by a path containing an even number of red edges   and red otherwise. At this point it is not clear that the color is well defined, since the vertex can be reached by different paths.

 We will see in \S \ref{colo} that under Constraints \ref{c3},\ \ref{c4}  the color is well defined.

The equations that $x$ has to satisfy  for this to be part of the rectangle graph  are obtained from those defining the various rectangles eliminating the variable vertices different from the root by the linear equations. In this example they can be organised as follows, where $(u,v)$ denotes the usual scalar product in $\R^n$:  $$  \begin{matrix}&\\&\\&\\
  (x,  \mathtt v_2-\mathtt v_3)=|\mathtt v_2|^2-  (\mathtt v_2,  \mathtt v_3) \\&\\
|x|^2-  (x,  \mathtt v_1+\mathtt v_2)=-  (\mathtt v_1,  \mathtt v_2) \\&\\
  (x,  \mathtt v_1-\mathtt v_3)=|\mathtt v_1|^2-  (\mathtt v_2,  \mathtt v_3) 
\end{matrix}
 $$  In general one has a similar list of linear and quadratic constraints on $x$, given by  Formulas \eqref{bacos}, each for a vertex of the graph different from $x$. 
 
 The equation is  linear  if the vertex is reached from $x$ by a path containing an even number of red edges (a black vertex) and quadratic otherwise (a red vertex).
  \begin{proposition}\label{eli}
By eliminating  the intermediate steps  the equations defining the various rectangles give rise for each coloured vertex (different from the root) to\begin{enumerate} \item  Each vertex $p$  is of the form $p=a+x$ if black, or $p=a-x$ if red,  with $a$ a linear combination with integer coefficients of the $\mathtt v_i$.
\item For a black vertex we have a linear equation for $x$ of the form $(x,a)=b$  with $a$ a linear combination with integer coefficients of the $\mathtt v_i$ and  $b$ a linear combination with integer coefficients of the $|\mathtt v_i|^2,\ (\mathtt v_i,\mathtt v_j)$. 
\item For a red vertex  we have  a quadratic  equation for $x$ of the form $|x|^2+(x,a)=b$  with $a$ a linear combination with integer coefficients of the $\mathtt v_i$ and  $b$ a linear combination with integer coefficients of the $|\mathtt v_i|^2,\ (\mathtt v_i,\mathtt v_j)$. \end{enumerate}  
\end{proposition}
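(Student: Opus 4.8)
My plan is to argue by a single induction on the distance from the root $x$ in a fixed spanning tree $T$ of the (combinatorial) component: statement (1) is the inductive payload, and (2)--(3) are then read off vertex by vertex by substituting (1) into the relation carried by the tree edge reaching that vertex. So fix $T$ rooted at $x$; for $p\neq x$ let $q$ be its parent and $e$ the edge joining $q$ and $p$, and provisionally call $p$ \emph{black} or \emph{red} according to the parity of the number of red edges on the $T$-path $x\to p$. That this parity, hence the colour, is independent of $T$ is exactly what \S\ref{colo} establishes under Constraints \ref{c3}, \ref{c4}; for the present proposition one works with this $T$-colour.

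For (1): the base case is $x=0+x$, reached by the empty path. Assume inductively $q=a'+\varepsilon' x$ with $a'$ an integer combination of the $\mathtt v_k$ and $\varepsilon'=\pm1$ the sign forced by the parity at $q$. By the displacement relations of Remark \ref{edg}: if $e$ is a black edge $\pm(\mathtt v_i-\mathtt v_j)$ then $p=q\pm(\mathtt v_i-\mathtt v_j)=\bigl(a'\pm(\mathtt v_i-\mathtt v_j)\bigr)+\varepsilon' x$ and the number of red edges on the path to $p$ equals that to $q$; if $e$ is the red edge $-\mathtt v_i-\mathtt v_j$ then $p=-q+\mathtt v_i+\mathtt v_j=(-a'+\mathtt v_i+\mathtt v_j)-\varepsilon' x$ and that number has increased by one. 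In either case $p=a+\varepsilon x$ with $a$ an integer combination of the $\mathtt v_k$, and $\varepsilon=+1$ exactly when $p$ is black; this is (1).

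For (2)--(3): beyond the displacement relation, the edge $e$ imposes a condition at each of its endpoints. If $e$ is black, $p$ and $q$ lie on the parallel hyperplanes $H_\ell$, $H_{-\ell}$, giving relations of type \eqref{lan} (linear, with right-hand side an integer combination of the $(\mathtt v_k,\mathtt v_l)$ and $|\mathtt v_k|^2$); if $e$ is red, $p$ and $q$ both lie on the sphere $S_\ell$, giving the quadratic relation \eqref{lar}. By (1) each endpoint equals (an integer combination of the $\mathtt v_k$) $\pm x$. Substituting such an expression into a linear endpoint-relation yields a linear equation $(x,a_p)=b_p$ in $x$, and substituting into \eqref{lar} yields $|x|^2+(x,a_p)=b_p$, the coefficient of $|x|^2$ being exactly $1$ because $|a'\pm x|^2=|x|^2\pm2(a',x)+|a'|^2$. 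In both cases $a_p$ is an integer combination of $\mathtt v_i,\mathtt v_j,a'$, hence of the $\mathtt v_k$, while $b_p$ is assembled from $(\mathtt v_i,\mathtt v_j)$, $|\mathtt v_j|^2$ and the products $(a',\mathtt v_i\pm\mathtt v_j)$, $|a'|^2$; writing $a'=\sum_k c_k\mathtt v_k$ with $c_k\in\Z$ exhibits these products, hence $b_p$, as integer combinations of the $(\mathtt v_k,\mathtt v_l)$ and $|\mathtt v_k|^2$. This is (2) when $e$ is black and (3) when $e$ is red.

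The remaining, and only non-formal, point is to match this dichotomy with the colour of $p$ itself, i.e. to choose $T$ so that every non-root vertex is joined to its parent by an edge of its own colour. Since a black edge joins two vertices of equal colour and a red edge two of opposite colour, this amounts to a $T$ in which the black vertices span a subtree linked by black edges while each red vertex hangs off a black vertex by a red edge. Proving that such a $T$ exists, together with the well-definedness of the colouring used throughout, is the delicate combinatorial content deferred to \S\ref{colo} (and, for the finer structure of the components, to Part 2); granted it, (1)--(3) follow as above. I expect this coherence of the two-colouring, rather than the substitutions, to be the genuine obstacle, which is precisely why Constraints \ref{c3} and \ref{c4} are introduced there.
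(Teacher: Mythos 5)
Your induction for part (1) and the substitution mechanism are fine and essentially the paper's ``simple induction'' (the paper, too, defers path--independence of the colour to \S\ref{colo} under Constraints \ref{c3}, \ref{c4}). The problem is in how you assign the resulting equation to a vertex. Substituting the parent's expression into the relation carried by the last edge produces an equation whose type is governed by the \emph{colour of that edge}: a black edge always gives a linear relation of type \eqref{lan}, a red edge always a quadratic one of type \eqref{lar}. The proposition, however, attaches to each vertex an equation whose type is governed by the \emph{colour of the vertex} (as in Formula \eqref{bacos}), and the two disagree exactly when the last edge's colour differs from the vertex's colour, e.g.\ a black vertex reached from a red parent by a red edge, or a red vertex reached from a red parent by a black edge (the latter occurs already in Example \eqref{es12}). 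Your proposed repair --- choosing a spanning tree in which every non-root vertex hangs on an edge of its own colour --- is not available in general: a red vertex may be incident only to black edges (reached from another red vertex), e.g.\ the component $\{0,\ (-e_1-e_2)\tau,\ (-e_1-e_2+e_3-e_4)\tau\}$ with a red edge followed by the black edge $e_3-e_4$; no tree of your required shape exists there, so this step of your argument fails.

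The fix is not a clever choice of tree but a slightly stronger induction: carry along the path, for each vertex $p=a+\varepsilon x$, \emph{its own} equation, linear if $\varepsilon=+1$ and quadratic if $\varepsilon=-1$, and produce the equation for $p$ by combining the parent's equation with the edge relation. Concretely, if $q=a'-x$ is red with equation $|x|^2+(x,c')=d'$ and the edge to $p$ is black, the edge relation \eqref{lan} is linear in $x$ and \emph{adding} it to $q$'s equation gives the required quadratic equation for the red vertex $p$; if instead the edge is red, the substituted \eqref{lar} is quadratic and \emph{subtracting} $q$'s equation cancels $|x|^2$ and yields the linear equation for the black vertex $p$. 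This is exactly what the paper packages via the energy function: the equation attributed to $p$ is $K(g_p\cdot a)=K(a)$ written out as in \eqref{bacos}, a cumulative relation along the whole path, not the raw relation of the final edge. With that amendment your first two paragraphs give the proposition; the last paragraph should be discarded, keeping only the (correct) remark that well-definedness of the colour and of the expression $p=a\pm x$ is the content of \S\ref{colo}.
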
\begin{proof}
This is a simple induction by choosing a path from the root to the vertex, the explicit Formulas are \eqref{bacos}.

A priori a different path  could give a different expression for the vertex, this as we will see in \S \ref{colo}  is excluded by the constraints  \ref{c3},\ \ref{c4}.\end{proof}
Thus the first problem is to understand the exact form of these equations. This will be explained in Formula \eqref{bacos}, for this we need some algebra.

\subsection{ The Cayley graph} The conditions for 2 points to be vertices of a rectangle are linear and quadratic. We first describe an efficient way to keep track of the linear equations, which are expressed in Remark \ref{edg} and afterwards   we will show how  to define a function {\em quadratic energy} with which to express the linear and quadratic equations (see  \eqref{bacos}).\smallskip

How to describe the possible combinatorial graphs appearing in the geometric graph?

This is done through the idea of  Cayley graph. Cayley graphs are a useful tool of group theory  to visualise monomial relations among group elements. 

 The formal definition is the following.

 \smallskip

 Let $G$ be a group and $X=X^{-1}\subset G$ a subset  (by $X^{-1}$ we denote $\{g^{-1}.\mid g\in X\}$).
   Consider an action $G\times A\to A$ of $G$ on a set $A$,   we then define.
 \begin{definition}\label{cg}[Cayley graph] The graph $A_X$ has as  vertices   the elements of $A$ and,   given $a,  b\in A$ we join them by an oriented edge $\xymatrix{a\ar@{->}[r]^{x} &b }      $,   marked $x$,   if $b=xa,  \ x\in X$.
 \end{definition} The condition     $X=X^{-1}$ is used so  that $\xymatrix{a\ar@{->}[r]^{x} &b }  \iff  \xymatrix{a\ar@{<- }[r]^{\quad x^{-1}} &b }           $.
 \smallskip

 Cayley graphs are very useful in group theory.  In particular when $G$ acts on itself by multiplication and its Cayley graph is denoted $G_X$. 
 
 Different paths in the Cayley graph give relations  among the elements $X$. The graph $G_X$  is connected if and only if $X$ generates $G$.
  \medskip

  \includegraphics[height=4cm]{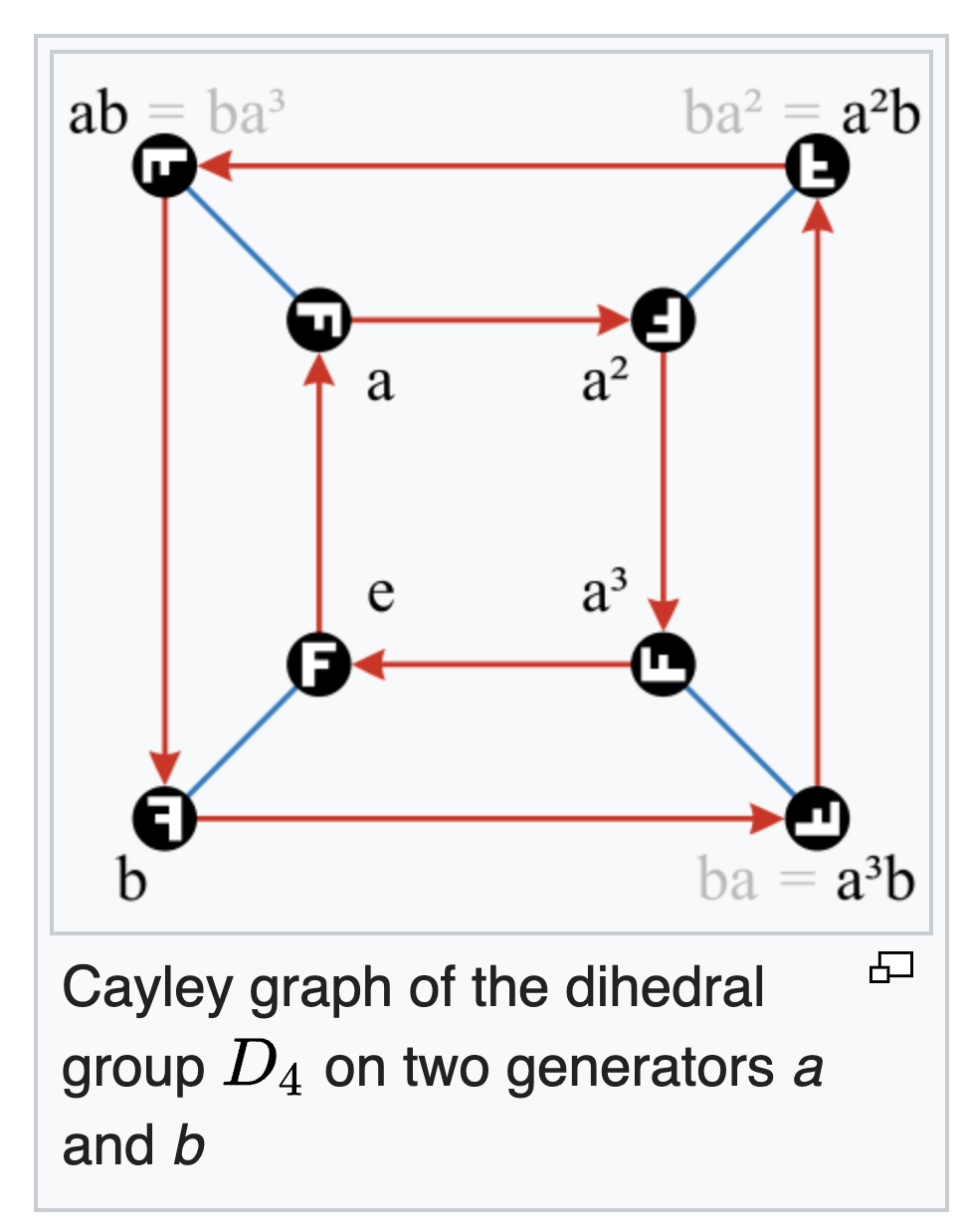}  
\vskip-3cm\hskip 4cm The 8  symmetries of a square. $e$ is the identity,  $a$ is the 

\hskip 4cm rotation by  $\pi/2$ and $b$ the reflection. 

\hskip 4cm $a^4=e,\ b^2=e,\ ab=ba^3$.
\vskip3cm
 
  \begin{remark}\label{isocg}
Right multiplication  by an element $g\in G$  gives an isomorphism of the Cayley graph $G_X$.
\end{remark}
In our setting the relevant group $G$  is   the group of transformations of $ \Z^m$ (or  $ \R^m$)   generated by translations $a:x\mapsto x+a,\ a\in \Z^m$ and {\em sign change} $\tau:x\mapsto -x$. 
 $$\text{We have}\quad G:={\Z^m}\rtimes\Z/  (2) =\Z^m \cup \Z^m\tau\qquad\text{is a semidirect product} $$ and the product rule is $a\tau=-\tau a,\ \forall a\in\Z^m$ (notice that this implies $(a\tau)^2=0$). 
 
 Sowe have  the composition Formulas (denote by $\circ$ the group composition)
 \begin{equation}\label{cofu}
a,b\in\Z^m,\quad a\circ b=a+b, \quad a\tau\circ b=(a-b)\tau,\ \tau^2=0.
\end{equation}
  \smallskip

  In order to express in a compact form the equations of compatibility  we need to extend our group to real  linear combinations of the $e_i$  identified to   $\R^m$: \begin{equation}\label{gr}
G ={\Z^m}\rtimes\Z/  (2)\subset G_{\R} ={\R^m}\rtimes\Z/  (2) =\R^m \cup \R^m\tau 
\end{equation} which acts  on itself and on $\R^m$ as $G$ does.\smallskip

Having chosen $S\subset  \R^n$ the groups $G,\ G_\R$ act  also geometrically on $\R^n$  by defining $$\pi:\R^m\to \R^n,\quad \pi(\sum_ia_ie_i ):=\sum_ia_i\mathtt v_i .$$
We then define  an action of $  G_\R$ on $\R^n$ by setting, for    $g\in G_\R,\  x\in\R^n$:
  \begin{equation}\label{lazz}
g=a\in\R^m,\  g\cdot x:=-\pi(a)+  x,\footnote{the choice of the minus sign is due to conservation laws in the NLS}\quad \tau x:= -  x.
\end{equation} In particular 
  \begin{equation}\label{pig}
 g\cdot   0=-\pi(a),\ g=a,\ g=a\tau.
\end{equation}
  
\subsection{The case of $S$} We have  by this definition
  $$(e_i-e_j )x=\mathtt v_j-\mathtt v_i+x,\  (-e_i-e_j )\tau x= \mathtt v_i+\mathtt v_j -x$$ which are a possible black and a red edge, see Remark \ref{edg}. Therefore we can also identify the edges as these elements of $G$.\smallskip

\begin{definition}\label{cg1}  We denote by
\begin{equation}\label{lix}
X=X_0\cup X_2,\  X_0:=\{(e_i-e_j )\} ,\ X_2 :=\{(-e_i-e_j )\tau\},\  \forall i\, \neq j\in  \{1,2,\cdots,m\}.
\end{equation}
We consider the Cayley graphs $G_X  \subset G_{X,\R} $ generated by these elements, in $G$  and $G_\R$ respectively, and $\R^n_{ X}$ generated by the action of $G_\R$ on $\R^n$.
\end{definition}  \begin{proposition}\label{coo}
If we have a sequence of points $p_1,p_2,\cdots ,p_k\in\R^n$ with $p_i,\ p_{i+1}$ connected by some edge $ \ell_i, $  (a path) we have\begin{equation}\label{gcom}
p_k=g\cdot p_1,\quad g=\ell_{k-1}\circ\ell_{k-2}\circ\cdots \circ\ell_{2}\circ\ell_{ 1}.
\end{equation}
\end{proposition}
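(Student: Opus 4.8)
The plan is to reduce Proposition \ref{coo} to the single fact that formula \eqref{lazz} really defines a left action of the group $G_\R$ on $\R^n$; once that is known the statement is a one‑line induction on the length $k$ of the path.

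First I would check the action axiom $(g_1\circ g_2)\cdot x=g_1\cdot(g_2\cdot x)$ for all $g_1,g_2\in G_\R$ and all $x\in\R^n$. Since $\pi\colon\R^m\to\R^n$ is linear, this is an elementary verification obtained by distinguishing the four cases according to whether each $g_i$ lies in $\R^m$ or in $\R^m\tau$, using the composition rules \eqref{cofu}. For instance, if $g_1=a\tau$ and $g_2=b$, then $g_1\circ g_2=(a-b)\tau$ and both sides evaluate to $-\pi(a)+\pi(b)-x$; the case $g_1=a,\ g_2=b$ gives $-\pi(a)-\pi(b)+x$ on both sides, the case $g_1=a,\ g_2=b\tau$ gives $-\pi(a)-\pi(b)-x$, and the case $g_1=a\tau,\ g_2=b\tau$ gives $-\pi(a)+\pi(b)+x$, where one uses $\tau^2=0$ (so that $(a\tau)^2$ is the trivial translation) together with $a\tau\circ b=(a-b)\tau$. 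This is precisely the point at which the minus sign in \eqref{lazz} and the rule $a\tau=-\tau a$ are needed for everything to be consistent.

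Next I would make precise the identification of a traversed edge with an element of $X$. If $p_i$ and $p_{i+1}$ are joined by a black edge, which in the rectangle formed with the sites $\mathtt v_a,\mathtt v_b$ reads $p_{i+1}=p_i+\mathtt v_a-\mathtt v_b$, set $\ell_i:=e_b-e_a\in X_0$; then by \eqref{lazz} one has $\ell_i\cdot p_i=-\pi(e_b-e_a)+p_i=p_i+\mathtt v_a-\mathtt v_b=p_{i+1}$. If instead they are joined by a red edge, so that $p_i+p_{i+1}=\mathtt v_a+\mathtt v_b$, set $\ell_i:=(-e_a-e_b)\tau\in X_2$; then $\ell_i\cdot p_i=\mathtt v_a+\mathtt v_b-p_i=p_{i+1}$, using the computation $(-e_i-e_j)\tau\,x=\mathtt v_i+\mathtt v_j-x$ recorded above. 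Because $X=X^{-1}$ — a black edge traversed in the opposite direction corresponds to $e_a-e_b=(e_b-e_a)^{-1}$, and a red edge is its own inverse since $(a\tau)^2$ is trivial — this assignment does not depend on the direction in which the edge is traversed, so every step $p_i\mapsto p_{i+1}$ carries a well‑defined $\ell_i\in X$ with $p_{i+1}=\ell_i\cdot p_i$.

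Finally the proof proceeds by induction on $k$, the case $k=2$ being exactly the previous paragraph. Assuming $p_{k-1}=g'\cdot p_1$ with $g'=\ell_{k-2}\circ\cdots\circ\ell_1$, the action property yields
\[
p_k=\ell_{k-1}\cdot p_{k-1}=\ell_{k-1}\cdot(g'\cdot p_1)=(\ell_{k-1}\circ g')\cdot p_1=(\ell_{k-1}\circ\ell_{k-2}\circ\cdots\circ\ell_1)\cdot p_1,
\]
which is \eqref{gcom}. There is no genuine obstacle in this argument: the only thing requiring care is the bookkeeping in the first two steps — that \eqref{lazz} is a \emph{left} (not right) action, and that the dictionary between geometric edges and elements of $X$ is set up with the correct signs — since this is exactly what forces the composition in \eqref{gcom} to appear in the order $\ell_{k-1}\circ\cdots\circ\ell_1$ rather than reversed.
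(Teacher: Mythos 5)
Your proposal is correct and follows essentially the same route as the paper: the paper's proof is the one-line observation that, by definition, $p_{i+1}=\ell_i\cdot p_i$ for each edge, and then induction on $k$. You merely make explicit two points the paper treats as part of the preceding setup (that \eqref{lazz} is a genuine left action of $G_\R$, verified through \eqref{cofu}, and the sign-correct dictionary between geometric edges and elements of $X$), and your case-by-case checks of these are accurate.
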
\begin{proof} By definition $p$ is connected to $q$ by an edge $\ell$ if  $q=\ell p$, then the proof is by induction.\end{proof}

\begin{remark}\label{cogg} The geometric graph $\Gama$  is thus a subgraph of    the Cayley graph $\R^n_{ X}$  defined by imposing the quadratic equations to the edges.\smallskip

Under the orbit maps  $\rho_x:  G_{X,\R}\to \R^n,\ \rho_x(g)=g\cdot x$, the graph $G_{X,\R} $ maps surjectively  to the Cayley graph $\R^n_{ X}$.\smallskip

We will see in Example   \ref{ecv},    that this map is not injective but a {\em covering} of the graphs.

\begin{itemize}
\item In fact for all $g=\sum_im_ie_i$ with $ \sum_im_i\mathtt v_i=0$ we have $g\cdot x=x$ for all $x$. 

\item For all $g=(\sum_im_ie_i)\tau$ we have $g\cdot x=x$ if and only if $2x=- \sum_im_i\mathtt v_i$. 

\end{itemize}

So the stabilizer $H_x$ of $x$  in $G$  is non 0, as soon as $m>n$. 

The group  $H_x$  is either the kernel of the map $\pi$  or, in case   $2x= - \sum_im_i\mathtt v_i.\ m_i\in\Z$ it also contains another coset  of this Kernel inside $\Z^m\tau$. \smallskip

 We may identify the orbit $G_{X,\R}x=  G_{X,\R}/H_x $. This is a quotient also as graphs and also as the topological spaces associated to the graphs.\smallskip

For $a\in \R^m$  the stabilizer in $G$  is trivial unless $a\in\frac12\Z^m$  when it has 2 elements  $1,\ 2a\tau$.\end{remark}
\subsubsection{Orbit maps}
Let us make a brief general digression, it will be used in \S \ref{colo}. 

 If $G$  acts on a set $A$  then $A$ decomposes into its $G$ orbits. For a given $a\in A$ let  $G_a:=\{g\in G\mid g\cdot a=a\}.$  This is a subgroup of $G$ the {\em stabilizer of $a$}. \smallskip

The orbit $G\cdot a$   is identified with the set $G/G_a$  of its {\em left cosets  $gG_a$.}\smallskip

Now a set $X=X^{-1}$  defines two Cayley graphs     $G_X, A_X$  in $G$ and $A$ respectively, and the orbit map  $\rho:g\mapsto g\cdot a$  is a map of graphs.

Take a  subset $U\subset A$  containing $a$ and such that the full subgraph $\Lambda\subset A_X$  (Definition \ref{term})   on the vertices $U$ is connected.  Consider the  full subgraph   $\rho^{-1}(\Lambda)\subset G_X$ formed by the elements $\rho^{-1}(U)$. 

  \begin{lemma}\label{mcc}
Under the orbit map  $\rho^{-1}(U)$  maps to $\Lambda$ and  
  each connected component of  the graph $\rho^{-1}(\Lambda)$  maps onto  $U$.
\end{lemma}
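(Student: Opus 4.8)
The plan is to unwind the definitions and argue componentwise. Fix a connected component $\mathcal C$ of $\rho^{-1}(\Lambda)$; I must show $\rho(\mathcal C)=U$. Since $\Lambda$ is the full subgraph of $A_X$ on $U$, it is immediate that $\rho$ sends $\rho^{-1}(U)$ into $U$, and it sends edges of $\rho^{-1}(\Lambda)\subset G_X$ to edges of $A_X$ between elements of $U$, hence to edges of $\Lambda$; so $\rho(\mathcal C)\subseteq U$ and $\rho|_{\mathcal C}$ is a graph morphism into $\Lambda$. The content is the reverse inclusion $U\subseteq\rho(\mathcal C)$.

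For that I would use the path-lifting property of the orbit map. Pick $g_0\in\mathcal C$ and set $u_0:=\rho(g_0)=g_0\cdot a\in U$. Given any $u\in U$, since $\Lambda$ is connected there is a path $u_0=w_0,w_1,\dots,w_k=u$ in $\Lambda$, so each consecutive pair satisfies $w_{j+1}=x_j w_j$ for some $x_j\in X$. Define $g_j:=x_{j-1}\circ\cdots\circ x_0\circ g_0\in G$ inductively; then $g_{j+1}=x_j g_j$, so $\rho(g_{j+1})=x_j\cdot(g_j\cdot a)=x_j\cdot\rho(g_j)$, and by induction $\rho(g_j)=w_j$ for all $j$. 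In particular every $g_j$ lies in $\rho^{-1}(U)$, so all the $g_j$ are vertices of $\rho^{-1}(\Lambda)$, and consecutive ones are joined by the edge labelled $x_j$; hence $g_0,g_1,\dots,g_k$ is a path in $\rho^{-1}(\Lambda)$ starting at $g_0\in\mathcal C$. Therefore $g_k\in\mathcal C$ and $\rho(g_k)=w_k=u$, which shows $u\in\rho(\mathcal C)$. Since $u\in U$ was arbitrary, $\rho(\mathcal C)=U$.

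The only subtlety, and the step I would be most careful about, is checking that the lifted sequence $g_0,\dots,g_k$ genuinely consists of vertices of the full subgraph $\rho^{-1}(\Lambda)$ and that consecutive lifts really are \emph{edges} of $G_X$ (not merely that their images are edges): this is exactly where one needs $X=X^{-1}$, so that $w_{j+1}=x_jw_j$ in $A_X$ is witnessed by an edge of $A_X$ in the sense of Definition \ref{cg}, and the corresponding relation $g_{j+1}=x_jg_j$ is an edge of $G_X$ by the same definition. Once that bookkeeping is in place the argument is purely formal and uses nothing about the particular group $G$ or action beyond what Definition \ref{cg} supplies; in particular it applies verbatim to the situation $A=\R^n$, $G=G_\R$, $X$ as in \eqref{lix} that will be needed in \S\ref{colo}.
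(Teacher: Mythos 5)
Your proof is correct and follows essentially the same route as the paper: lift the path in $\Lambda$ edge-by-edge through the orbit map, using that $g_{j+1}=x_jg_j$ with $x_j\in X$ both gives an edge of $G_X$ (hence of the full subgraph $\rho^{-1}(\Lambda)$, as all lifted vertices land in $\rho^{-1}(U)$) and satisfies $\rho(g_{j+1})=x_j\cdot\rho(g_j)$. The extra bookkeeping you spell out (edges mapping to edges, the role of $X=X^{-1}$) is implicit in the paper's shorter argument but adds nothing essentially different.
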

\begin{proof}
Take any  $h\in \rho^{-1}(U)$ and let $b=\rho(h)\in U$.  If  $c\in U$ since the graph $\Lambda $    on the vertices $U$ is connected there is a path from $b$  to $c$  with edges $\ell_i\in X$. The same sequence of edges defines a path  from $h$  to some element $k$  in the connected component of $h$  in $\rho^{-1}(\Lambda)$ lifting the given path so $\rho(k)=c$  and the claim follows.
\end{proof}
Denote by $C_1$ the connected component  of the identity $1\in G$ of  $\rho^{-1}(\Lambda)$.

\begin{lemma}\label{leal} Take a connected component $C$  of $\rho^{-1}(\Lambda)$ and an element $g_0\in C$ with $\rho(g_0)=1$ (Lemma \ref{mcc}).  Then $C=C_1g_0$. 
\end{lemma}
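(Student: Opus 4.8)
The plan is to realize right multiplication $R_{g_0}\colon g\mapsto gg_0$ as an automorphism of the full subgraph $\rho^{-1}(\Lambda)$ of $G_X$, and then to compare connected components. The starting point is Remark \ref{isocg}: since the edges of a Cayley graph are produced by \emph{left} multiplication ($b=xa$, $x\in X$), right multiplication by any fixed $g_0\in G$ commutes with them and hence is an automorphism of $G_X$.

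The first thing I would check is that, for our particular $g_0$, the map $R_{g_0}$ also preserves $\rho^{-1}(\Lambda)$, which by construction is the full subgraph of $G_X$ on the vertex set $\rho^{-1}(U)$. By Lemma \ref{mcc} the component $C$ maps onto $U$, and since the base point $a$ lies in $U$ we may take $g_0\in C$ with $\rho(g_0)$ equal to the base point — this is the hypothesis $\rho(g_0)=1$ — so that $g_0$ lies in the stabilizer $G_a$. Then for every $g\in G$,
\[
\rho(gg_0)=(gg_0)\cdot a=g\cdot(g_0\cdot a)=g\cdot a=\rho(g),
\]
so $gg_0\in\rho^{-1}(U)$ if and only if $g\in\rho^{-1}(U)$; thus $R_{g_0}$ carries the vertex set $\rho^{-1}(U)$ bijectively onto itself (its inverse is $R_{g_0^{-1}}$), and, being an automorphism of $G_X$, it restricts to an automorphism of the full subgraph $\rho^{-1}(\Lambda)$.

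The conclusion is then immediate: a graph automorphism permutes connected components, so $C_1g_0=R_{g_0}(C_1)$ is a connected component of $\rho^{-1}(\Lambda)$. It contains $R_{g_0}(1)=g_0$, and $C$ is by hypothesis the connected component of $\rho^{-1}(\Lambda)$ containing $g_0$; two connected components sharing a vertex coincide, so $C=C_1g_0$.

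I do not expect a genuine obstacle in this argument. The one step that needs care is the invariance of $\rho^{-1}(\Lambda)$ — not merely of the ambient Cayley graph $G_X$ — under $R_{g_0}$: this is exactly where the hypothesis $g_0\in G_a$ is used, through the associativity identity $(gg_0)\cdot a=g\cdot(g_0\cdot a)$. It is also worth being explicit that $R_{g_0}$ is a bijective graph map with inverse $R_{g_0^{-1}}$, so that it genuinely sends connected components \emph{onto} connected components, and not just into connected subgraphs.
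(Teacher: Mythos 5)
Your proof is correct and takes essentially the same route as the paper: the paper's terse step ``any path from $g_0$ to $a$ in $C$ corresponds to a path from $1$ to $g$ in $C_1$'' is precisely your observation that right multiplication by $g_0$ commutes with the Cayley edges and, since $g_0$ stabilizes the base point, preserves the fibers of $\rho$ and hence the full subgraph $\rho^{-1}(\Lambda)$. You merely make explicit the automorphism-permutes-components bookkeeping that the paper leaves implicit.
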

\begin{proof}
If $a=gg_0,\  g\in C_1$  we have $\rho(a)=\rho(g)\in U$.  Then any path from $g_0$ to $a$ in $C$ corresponds to a path from $1$ to $g$ in $C_1$.
\end{proof}
\begin{proposition}\label{bigi}
The orbit map  $\rho$  induces for each connected component of  $\rho^{-1}(\Lambda)$ an isomorphism to    $\Lambda$, if and only if the connected component $C_1$ of the identity $1\in G$ of  $\rho^{-1}(\Lambda)$  intersects $G_a$ only in 1.
\end{proposition}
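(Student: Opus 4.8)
\subsection*{Proof proposal}

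The plan is to reduce the assertion about \emph{every} component of $\rho^{-1}(\Lambda)$ to the single component $C_1$ of the identity, and there to identify the failure of $\rho$ to be injective with a holonomy element of $C_1\cap G_a$, in the spirit of covering space theory. First I would use Lemma \ref{leal}: every connected component of $\rho^{-1}(\Lambda)$ is a right translate $C_1 g_0$ with $g_0\in G_a$ (an element satisfying $g_0\cdot a=a$, supplied by Lemma \ref{mcc}). Since right multiplication by $g_0$ is a graph automorphism of $G_X$ (Remark \ref{isocg}) and $\rho(gg_0)=g\cdot(g_0\cdot a)=g\cdot a=\rho(g)$, it maps $C_1$ isomorphically onto $C_1 g_0$ compatibly with $\rho$. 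Hence $\rho$ restricts to an isomorphism onto $\Lambda$ on every component of $\rho^{-1}(\Lambda)$ if and only if it does so on $C_1$, and it suffices to analyse $C_1$.

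Next I would observe that $\rho|_{C_1}\colon C_1\to\Lambda$ is an isomorphism of graphs precisely when it is injective on vertices. It is surjective on vertices by Lemma \ref{mcc}; and if it is moreover injective on vertices, the edges are forced to correspond: given an edge of $\Lambda$ from $b$ to $c=x\cdot b$ labelled $x\in X$, lift $b$ to the unique $h\in C_1$ over it, note that $xh$ is the unique vertex of $C_1$ over $c$ and that the edge $h\to xh$ of $G_X$ lies in $\rho^{-1}(\Lambda)$, hence in $C_1$; this is the only edge of $C_1$ over the given one, and conversely every edge of $C_1$ maps to an edge of $\Lambda$ because $\rho$ preserves labels. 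So the claim reduces to: $\rho|_{C_1}$ is injective on vertices $\iff C_1\cap G_a=\{1\}$.

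The direction ``$\Rightarrow$'' is immediate: if $g\in C_1\cap G_a$ then $\rho(g)=g\cdot a=a=\rho(1)$ with $g,1\in C_1$, forcing $g=1$. For ``$\Leftarrow$'' — the crux — suppose $C_1\cap G_a=\{1\}$ and take $g,g'\in C_1$ with $\rho(g)=\rho(g')$, equivalently $g^{-1}g'\in G_a$; I want $g=g'$. Choose in $C_1$ a path $1\to g$ with successive labels $x_1,\dots,x_r$ (so $g=x_r\cdots x_1$, and the partial products all lie over $U$) and any path $g\to g'$ inside $C_1$; then append the path that starts at $g'$ and reads off the labels $x_r^{-1},\dots,x_1^{-1}$. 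Its vertices are, for $j=1,\dots,r$, the elements $x_{r-j+1}^{-1}\cdots x_r^{-1}g'$ (ending at $x_1^{-1}\cdots x_r^{-1}g'=g^{-1}g'$), and since $\rho(g)=\rho(g')$ their $\rho$-images coincide with those occurring along the first path, hence lie in $U$; so this last stretch also runs inside $\rho^{-1}(\Lambda)$. The concatenation is therefore a path in $\rho^{-1}(\Lambda)$ from $1$ to $g^{-1}g'$, so $g^{-1}g'\in C_1$; combined with $g^{-1}g'\in G_a$ and the hypothesis, $g^{-1}g'=1$, i.e. $g=g'$.

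The main obstacle is precisely this holonomy step: recognizing that two vertices of $C_1$ with equal image must differ by an element of $G_a$ which transport of a returning path forces back into $C_1$. Everything else is bookkeeping with Lemmas \ref{mcc} and \ref{leal} and the rigidity of Cayley-graph edges under label-preserving maps. A minor point to settle along the way: the condition ``$\rho(g_0)=1$'' in Lemma \ref{leal} should be read as ``$g_0\in G_a$'', i.e. $\rho(g_0)=a$, which is exactly what Lemma \ref{mcc} provides.
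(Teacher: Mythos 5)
Your proposal is correct and follows essentially the same route as the paper: reduce to the identity component $C_1$ via Lemma \ref{leal} (and the compatibility $\rho(gg_0)=\rho(g)$ for $g_0\in G_a$), dispose of one direction immediately, and for the converse transport the inverse labels of a path $1\to g$ starting from $g'$ to exhibit a path in $\rho^{-1}(\Lambda)$ ending at $g^{-1}g'$, forcing $g^{-1}g'\in C_1\cap G_a=\{1\}$ — which is exactly the paper's "walk back to $a$" argument with $h,k$ in place of $g,g'$. Your side remarks (edges are rigid once vertices are injective, and the typo $\rho(g_0)=1$ should read $g_0\in G_a$) are accurate but do not change the substance.
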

\begin{proof} By the previous Lemma it is enough to treat the connected component $C_1$.
If there is an element $g\in G_a,\ g\neq 1$  in the connected component $C_1$  of 1  then $g\cdot a=1\cdot a=a$ and so the map is not injective.   

Conversely if given $h,k\in C_1$ we have $h\cdot a=k\cdot a$ then $h^{-1}k\in G_a$.  We  need to show that $h^{-1}k\in C_1$. By definition of  $\rho^{-1}(\Lambda)$ to say that $h\in C_1$  means that there is a sequence of edges $\ell_i\in X,\ i=1,\cdots,p$  so that, setting by induction $h_1=1,\  h_{i+1}=\ell_i h_i$  we have that  $h=h_p$  and  for each $i$ the two elements $h_{i-1}a,\  h_ia=\ell_ih_{i-1}a\in  U$  are connected by the edge $\ell_i$ in $\Lambda$.  We have $$ h^{-1} =\ell_1^{-1}\ell_2^{-1}\cdots \ell_p^{-1}.$$ Thus going back from $h\cdot a=k\cdot a$ to $a$  with the edges $\ell_i^{-1}$  we just walk back to $a$  remaining in   $\Lambda$  this means that  $h^{-1}k\in C_1$. 
\end{proof} If there is an element $g\neq 1,\ g\in C_1\cap  G_a$  we have $C_1g=C_1$. So  $C_1\cap G_a$  is a subgroup $H$ of $G_a$  acting on $C_1$  and naturally $\Lambda= C_1/H$.

\subsubsection{The subgroup $G_2$}

   Let $G_2$  be the subgroup of the group $G$  generated by the elements  $(e_i-e_j ), (-e_i-e_j)\tau$.\medskip
  
  Given $a=\sum_i\nu_ie_i  $ set $
 \eta(a):=\sum_i\nu_i \in \Z$. \footnote{Sometimes one refers to $\eta(a) $ as the {\em mass} of $a$.}  We have
$$\eta((e_i-e_j )a)=\eta(e_i-e_j+a)=\eta(a),$$$$ \eta((-e_i-e_j )\tau a)=\eta(-e_i-e_j-a)=-2-\eta(  a)$$
One easily verifies that:
 $$G_2:=G_{2,+}\cup G_{2,-},\   G_{2,-}= G_{2,+}\tau $$
 $$G_{2,+}:=\{a\in\Z^m\mid\eta(a)=0,\ G_{2,-}:=\{a\tau,\ a\in\Z^m\mid\eta(a)=-2\}.$$
Of course $G_{2,+}$ is a subgroup of index 2 of $G_2$.  In particular $G_2$  can be identified to  the orbit of 0 under $G_2$ in $\Z^m$
\begin{equation}\label{orbG}
\Z^m_2:=G_{2}\cdot 0=\{a\in \Z^m, \eta(a)=0, -2\}.
\end{equation} 
We call {\em black} the points $a\in \Z^m_2$ with $\eta(a)=0$  and {\em red}  the ones   with $\eta(a)=-2.$  \medskip

The composition law  of two such integral vectors as group elements is:
\begin{equation}\label{claw}
a\circ b=a+(\eta(a)+1)b,\ \ a\circ b=a+b\ \text{if}\ \eta(a)=0,\  a\circ b=a-b\ \text{if}\ \eta(a)=-2.
\end{equation} 
It is also convenient to write an element of $G_2$ as the pair $(a,\eta(a)+1),\ a\in\Z^m_2$ and the ones in $\Z^m_2$ as  pairs $(a,\pm 1).$ 
\medskip

\begin{remark}\label{cc}
The group  $G_2$ is a connected component of  $G_X$ and $G_{X,\R} $,  and the other components are its  right cosets  $G_2g,\ g\in G_{X,\R}  $.\smallskip

The connected components of $\R^n_{X}$ are the $G_2$ orbits.
\end{remark}

As for the graph in $\Z^n$ or in $\R^n$,
a path of edges  starting from some $x$  reaches a point $y$ obtained from $x$ by applying the corresponding product of elements, by \eqref{claw}.

\begin{equation}\label{lieq}
y=\pm x+\sum_in_i\mathtt v_i,\ n_i\in\Z.
\end{equation} 
\begin{proposition}\label{lieq1}
Formula  \eqref{lieq} expresses the linear equations for the vertices of $\Gama$ in  Proposition \ref{eli}.
\end{proposition}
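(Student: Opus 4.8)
The plan is to unwind the definitions so that Proposition~\ref{lieq1} becomes essentially a bookkeeping statement identifying two descriptions of the same vertex. Recall that by Proposition~\ref{eli} each coloured vertex $p$ reachable from the root $x$ by a path is of the form $p = \pm x + \sum_i n_i \mathtt v_i$ with $n_i \in \Z$, and that the sign and the coefficients $n_i$ are \emph{claimed} to be obtained by following a path of edges. On the other hand, Proposition~\ref{coo} together with \eqref{gcom} tells us that if $p_1,\dots,p_k$ is a path of edges $\ell_1,\dots,\ell_{k-1}$ then $p_k = g\cdot p_1$ with $g = \ell_{k-1}\circ\cdots\circ\ell_1 \in G_2$ (the composite of elements of $X$ lies in the subgroup $G_2$ by definition of $G_2$ and Remark~\ref{cc}). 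So the whole content is: compute $g\cdot x$ for $g\in G_2$ using the action \eqref{lazz} and the composition law \eqref{claw}, and check that the resulting formula is exactly \eqref{lieq}, and that it reproduces the linear equations of Proposition~\ref{eli}.

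First I would write $g \in G_2$ as a pair $(a, \pm 1)$ with $a \in \Z^m_2$, following the notation introduced just before Remark~\ref{cc}: $g = a$ with $\eta(a) = 0$ in the black case, $g = a\tau$ with $\eta(a) = -2$ in the red case. Applying \eqref{lazz} directly: in the black case $g\cdot x = -\pi(a) + x = x - \sum_i a_i \mathtt v_i$, and in the red case $g\cdot x = a\tau \cdot x = -\pi(a) + \tau\cdot x = -\pi(a) - x = -x - \sum_i a_i \mathtt v_i$. In both cases this has the shape $y = \pm x + \sum_i n_i \mathtt v_i$ with $n_i = -a_i \in \Z$, which is precisely \eqref{lieq}; the sign is $+$ exactly when $\eta(a) = 0$, i.e. when the path contains an even number of red edges (each red generator $(-e_i - e_j)\tau$ contributes a factor $\tau$, and $\tau^2 = 0$ as a transformation, so the parity of the number of $\tau$'s governs the sign), which matches the black/red colouring convention. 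Then, to connect with Proposition~\ref{eli}, I would observe that the equation attached to the vertex $p$ is just the defining equation \eqref{lan} or \eqref{lar} of the last edge $\ell_{k-1}$ into $p$, rewritten in terms of $x$ by substituting $p_{k-1} = \ell_{k-2}\circ\cdots\circ\ell_1 \cdot x$; since that substitution is itself of the form \eqref{lieq} by induction, eliminating $p_{k-1}$ yields a linear (resp. quadratic) equation in $x$ whose linear part is read off from the coefficient vector produced by the group composition — exactly the content of parts (2) and (3) of Proposition~\ref{eli}, with Formula~\eqref{bacos} giving the explicit shape.

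The proof is therefore a short induction on the length of the path, the inductive step being the composition formula \eqref{claw}: if $p_{k-1} = g' \cdot x$ with $g' = (a', \pm 1)$, and $\ell_{k-1} = (b, \pm 1) \in X$, then $p_k = (\ell_{k-1}\circ g')\cdot x$ and $\ell_{k-1}\circ g' = (b + (\eta(b)+1)a', \ldots)$ by \eqref{claw}, which keeps us in $\Z^m_2$ and produces integer coefficients; feeding this into \eqref{lazz} gives the asserted form of $p_k$ and, combined with \eqref{lan}/\eqref{lar} for the edge $\ell_{k-1}$, the asserted form of the equation. The only mild subtlety — and the step I expect to need the most care — is the bookkeeping of signs: one must check that the minus sign in the action \eqref{lazz}, the convention $q = p + \mathtt v_i - \mathtt v_j$ versus $q = -p + \mathtt v_i + \mathtt v_j$ in Remark~\ref{edg}, and the identification of edges with the generators $(e_i - e_j)$ and $(-e_i - e_j)\tau$ in \eqref{lix} are all mutually consistent, so that the coefficient vector coming out of the group composition literally equals the vector $a$ appearing in Proposition~\ref{eli}(2),(3). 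This is routine but must be done once and for all; after that, the statement of Proposition~\ref{lieq1} is immediate.
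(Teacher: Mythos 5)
Your proposal is correct and follows essentially the same route the paper intends: the paper treats Proposition \ref{lieq1} as an immediate consequence of Proposition \ref{coo}, the composition law \eqref{claw} and the action \eqref{lazz}, exactly the unwinding and short induction you carry out (including the sign bookkeeping, where the parity of red edges governs the $\tau$-component and hence the $\pm x$). The only point the paper defers — that different paths to the same vertex give the same expression — is likewise not needed here and is handled later via Constraints \ref{c3}, \ref{c4}, just as you implicitly assume.
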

\begin{remark}\label{edg1}\strut  We can define, using $S$,  a  subgraph $\Gamat$ of the Cayley graph $\R^m_X$  as in  Remark \ref{edg}, formed by edges {\em compatible with $S$}:
 
\begin{itemize}\item An oriented  black edge $\xymatrix{  p\ar@{->}[r]^{e_i-e_j} &q}$   connecting  two points $p , \ q=e_i-e_j+p \in \R^m$ is compatible with $S$ if $-\pi(p),\ -\pi(q)$   are  {\em adjacent} in the rectangle with vertices     $-\pi(p), -\pi(q),\mathtt v_j,\mathtt v_i$  hence $$-\pi(q)=-(e_i-e_j)\pi(p)=-\pi(p)+\mathtt v_j-\mathtt v_i.$$ 

\item A red edge $\xymatrix{  p\ar@{=}[r]^{(-e_i-e_j)\tau} &q}$  which connects  two points $p, q=-e_i-e_j-p\in \R^m  $ is compatible with $S$ if $-\pi(p),\ -\pi(q)$   are {\em opposite} in the rectangle with vertices     $-\pi(p), \mathtt v_j,-\pi(q),\mathtt v_i$   hence $$-\pi(q)=-(-e_i-e_j)\tau \pi(p)=\pi(p)+\mathtt v_j+\mathtt v_i$$

\end{itemize}
\end{remark}   With this notation it is important  to make sure that two different  combinatorial edges which appear in the Cayley graph do not determine the same  the geometric edge. 

This is insured by the next constraint    
\begin{constraint}\label{c2}
If $e_i-e_j\neq e_h-e_k$ we require $\mathtt v_i-\mathtt v_j\neq \mathtt v_h-\mathtt v_k$. Similarly if  $e_i+e_j\neq e_h+e_k$ we require $\mathtt v_i+\mathtt v_j\neq \mathtt v_h+\mathtt v_k$.
\end{constraint}
In fact later we shall  use the further constraint
\begin{constraint}\label{c3}
$\sum_{i=1}^m\nu_i\mathtt v_i\neq 0,\ \forall \nu_i\in\Z,\ \mid \sum_{i=1}^m|\nu_i|\leq 4(n+1)$.
\end{constraint}

\begin{definition}\label{comg}
A {\em combinatorial graph} is a finite full subgraph (Definition \ref{term}) of the graph $G_X$ in  $G_2\equiv G_2\cdot 0$  containing 0.
\end{definition}

\begin{example}[Combinatorial graph]\label{es1}
$$ \xymatrix{  && \textcolor{Red}{\bullet} -e_2-e_1 \ar@{->}[r]_{ e_1-e_3} &  \textcolor{Red}{\bullet} -e_2-e_3    \ar@{=}[dl]^{-e_2-e_3}& &&\\ &e_4-e_5  \bullet   \ar@{<-}[r] _{e_4-e_5}&0 \ar@{=}[u]^{-e_2-e_1}  \ar@{->}[r]_{e_4-e_2} &\bullet  e_4-e_2}$$   
\end{example}      
     
    So the previous  example applied to some $x\in
R^n$ gives:
\begin{example}[Geometric Avatar]\label{es11}
$$ \xymatrix{  && \mathtt v_2+\mathtt v_1 -x \ar@{->}[r]_{ \mathtt v_3-\mathtt v_1} & \mathtt v_2+\mathtt v_3-x \ar@{=}[dl]^{\mathtt v_2+\mathtt v_3}& &&\\ &  \mathtt v_5-\mathtt v_4+x \ar@{<-}[r] _{\mathtt v_5-\mathtt v_4}&x \ar@{=}[u]^{ \mathtt v_2+\mathtt v_1}  \ar@{->}[r]_{\mathtt v_2-\mathtt v_4}
 & \mathtt v_2-\mathtt v_4+x}$$ 
\end{example} If this graph is contained in a component  of $\Gamma_S$ we  say  that  {\em  it is compatible with $S$}.

The condition    is that  the 4 vertices satisfy  4 linear and 4 quadratic equations $$ a=\mathtt v_5-\mathtt v_4+x,\ e = \mathtt v_1+\mathtt v_2-x,\  c=\mathtt v_2+\mathtt v_3-x,\  d=\mathtt v_2-\mathtt v_4+x $$$$ |a|^2-|x|^2=|\mathtt v_5|^2-|\mathtt v_4|^2,\ |e|^2+|x|^2=|\mathtt v_1|^2+|\mathtt v_2|^2,\ldots .$$ One can eliminate, using the linear equations,  all vertices different from the root and finally obtain a system of linear and quadratic equations for $x$. Our next task is to understand these equations in general, see \eqref{bacos}.
      \subsection{The quadratic energy constraints\label{gerea}} In order to discuss, in Proposition \ref{main1}, the quadratic equations of Proposition \ref{eli} we need to use the Cayley graph in $\R^m$ and introduce a quadratic function on $\R^m$.\medskip
      
        Denote $a\in\R^m$  by $(a,1)$  and   $a\tau,\ a\in\R^m$  by $(a,-1)$.\smallskip

We want to formalize the proof of Proposition \ref{main1} as follows.\medskip

We consider $\R^n$   with the standard scalar product $(a,b)$.  \smallskip

\begin{enumerate}\item Given a list $S$ of  $m$ vectors $\mathtt v_i\in\R^n$, we  have defined  the  linear map \begin{equation}\label{ilpi}
\qquad \qquad \pi:\R^m\to\R^n,\quad e_i\mapsto \mathtt v_i . 
\end{equation}
\item 
Let    $S^2[{\R^m}]:=\{\sum_{i,j=1}^ma_{i,j}e_ie_j\},\  a_{i,j}\in\R$ be   the  polynomials of degree $2$  in the variables $e_i$ with real coefficients. 

We extend  the map $\pi$ to a linear map of $S^2[{\R^m}]$ to quadratic  polynomials on $\R^n$, and introduce a linear map $L^{(2)}: a\mapsto a^{(2)}\in S^2({\R^m})$ as:
  \begin{equation}\label{ipi}  \pi(e_i) = \mathtt v_i,\ 
\pi(e_ie_j):=(\mathtt v_i,\mathtt v_j),  \ L^{(2)}:{\R^m}\to S^2({\R^m}),\   a= \sum a_i e_i \mapsto a^{(2)}:= \sum a_i e_i^2 .
\end{equation}

\item We have $   \pi(AB)=(\pi(A),\pi(B)),\forall A,B\in{\R^m}.$
\end{enumerate}

\begin{remark}
Notice that we have   $a^{(2)}=a^2$ if and only if  $a=0$   or $a=e_i$, for  one of the variables $e_i$.
\end{remark} 
 
\begin{definition}
Given an element $u= (a,\sigma)=(\sum_ia_ie_i,\sigma) \in G_\R,\ \sigma=\pm 1 $ set 
 
\begin{equation}\label{ricon} 
 C(u):= \frac{\sigma }{2}(a^2+a^{(2)}),\quad    K(u) : =\pi(C(u) )=\boxed{\frac{\sigma }{2} (|\sum_ia_i\mathtt v_i|^2+\sum_ia_i|\mathtt v_i|^2)}.
\end{equation}
 \smallskip
 
  \end{definition}
 \begin{remark}
Notice that if $a\in\Z^m$  then  $C(u)$ has integer coefficients (for instance we have $C(e_1+e_2)=e_1^2+e_2^2+e_1e_2$) so $K(u)$  is a quadratic polynomial in the coordinates of the vectors $\mathtt v_i$ with integer coefficients.
\end{remark} 
In particular we have
 \begin{equation}\label{ilK}
K(e_i-e_j)=\frac 1{2}( |\mathtt v_i - \mathtt v_j|^2+|\mathtt v_i|^2-|\mathtt v_j|^2)= |\mathtt v_i|^2-(\mathtt v_i,\mathtt v_j)=(\mathtt v_i,\mathtt v_i -\mathtt v_j)
\end{equation}
 \begin{equation}\label{ilK1}
K((-e_i-e_j)\tau)=-\frac 1{2}( |\mathtt v_i + \mathtt v_j|^2-|\mathtt v_i|^2-|\mathtt v_j|^2)=  -(\mathtt v_i,\mathtt v_j)
\end{equation}

These Formulas coincide with the right hand side of formulas  \eqref{iperp} and \eqref{sfera}.\medskip

  \subsubsection{Composition}
For   $u= (a,\sigma) $ and 
   $ g=(b,  \rho)$ consider $g\cdot u=( b +\rho a,\rho \sigma).$ We have 
   $$C(g\cdot u)= \frac{\sigma\rho }{2}\left(( b +\rho a)^2+( b +\rho a)^{(2)}\right)= \frac{\sigma\rho }{2}\left(  b^2 +  b^{(2)} +2\rho ab+a^2 +\rho a^{(2)}\right)$$
$$ =\frac{\sigma\rho }{2}\left(  b^2 +  b^{(2)}\right) + \sigma ab+ \frac{\sigma }{2}\left( \rho a^2 + a^{(2)}\right) =\frac{\sigma\rho }{2}\left(  b^2 +  b^{(2)}\right) + \sigma ab+ \frac{\sigma }{2}\left( (\rho -1)a^2 +a^2+ a^{(2)}\right). $$
Therefore:\begin{proposition}\label{comC} With the previous notations:
\begin{equation}\label{vee}
C(g\cdot u)=\sigma   C(g)+ C(u)+ (\rho-1) \frac{\sigma }{2}  a^2+ \sigma    ab.
\end{equation}
$$\implies  K(g\cdot u)=\sigma   K(g)+ K(u)+ (\rho-1) \frac{\sigma }{2}  |\pi(a)|^2+ \sigma   (\pi(a) ,\pi(b)).$$
\end{proposition}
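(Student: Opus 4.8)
The plan is to substitute the group law into the definition \eqref{ricon} of $C$ and to expand, carefully tracking the signs $\sigma,\rho\in\{\pm 1\}$; this is in fact precisely the computation displayed just before the statement, so I only describe how I would organize it. First I would write $u=(a,\sigma)$, $g=(b,\rho)$ and recall (\S\ref{gerea}, cf.\ \eqref{cofu} in the pair notation) that in the semidirect product $G_\R$ one has $g\cdot u=(b+\rho a,\rho\sigma)$. Hence, by \eqref{ricon},
\[
C(g\cdot u)=\frac{\rho\sigma}{2}\bigl((b+\rho a)^2+(b+\rho a)^{(2)}\bigr).
\]

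Next I would expand the two summands separately. For the honest square, the binomial expansion together with $\rho^2=1$ gives $(b+\rho a)^2=b^2+2\rho\,ab+a^2$. For the map $L^{(2)}$ of \eqref{ipi}, which is \emph{linear} (and not compatible with squaring), one instead has $(b+\rho a)^{(2)}=b^{(2)}+\rho\,a^{(2)}$. Substituting these, distributing $\tfrac{\rho\sigma}{2}$, and using $\rho^2=1$ once more, I obtain
\[
C(g\cdot u)=\frac{\rho\sigma}{2}(b^2+b^{(2)})+\sigma\,ab+\frac{\rho\sigma}{2}a^2+\frac{\sigma}{2}a^{(2)}.
\]
Then I would recognize the three pieces of the claimed formula: the first parenthesis equals $\sigma\,C(g)$ since $C(g)=\tfrac{\rho}{2}(b^2+b^{(2)})$; the middle term is already $\sigma\,ab$; and the last two terms I would rewrite as
\[
\frac{\sigma}{2}(a^2+a^{(2)})+\frac{(\rho-1)\sigma}{2}a^2=C(u)+\frac{(\rho-1)\sigma}{2}a^2,
\]
which assembles to \eqref{vee}.

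Finally, for the boxed consequence I would apply the linear map $\pi$ to \eqref{vee} and invoke the multiplicativity relation $\pi(AB)=(\pi(A),\pi(B))$ of \S\ref{gerea}, so that $\pi(a^2)=|\pi(a)|^2$ and $\pi(ab)=(\pi(a),\pi(b))$, together with $\pi(C(g))=K(g)$ and $\pi(C(u))=K(u)$ from \eqref{ricon}; this turns \eqref{vee} verbatim into the displayed identity for $K$.

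There is no real obstacle here: the proposition is a direct computation. The one error-prone point — and the place where I would slow down — is to keep the two quadratic operations distinct, since the genuine square $(\,\cdot\,)^2$ obeys the binomial formula whereas $a\mapsto a^{(2)}$ is merely linear, so the cross term $2\rho\,ab$ arises solely from $(b+\rho a)^2$. The two uses of $\rho^2=1$ (to erase the coefficient of $a^2$ coming from inside the square, and to turn $\rho^2 a^{(2)}$ into $a^{(2)}$) are the other bookkeeping details worth recording explicitly.
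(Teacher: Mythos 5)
Your proposal is correct and is essentially the paper's own argument: the paper proves \eqref{vee} by the very expansion you describe (the displayed computation immediately preceding the Proposition), expanding $(b+\rho a)^2$ binomially, using linearity of $a\mapsto a^{(2)}$, and regrouping $\frac{\rho\sigma}{2}a^2+\frac{\sigma}{2}a^{(2)}$ as $C(u)+(\rho-1)\frac{\sigma}{2}a^2$, then applying $\pi$ with $\pi(AB)=(\pi(A),\pi(B))$ to get the $K$ identity. Your explicit flagging of the distinction between the true square and the linear map $L^{(2)}$ is exactly the right bookkeeping and matches the paper.
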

 From \eqref{vee} we see that $K(g\cdot u)=  K(u)$ if and only if:
\begin{equation}\label{vee1}
\begin{cases}
 i)\quad K(g)= -   (\pi(a),\pi(b)),\  \rho=1\\  i i)\quad  K(g)=  |\pi(a)|^2-   (\pi(a),\pi(b)),\  \rho=-1
\end{cases}.
\end{equation}$K$ is called the {\em energy function  } on $G_\R$.\footnote{In the theory of the NLS this appears as  a conservation law.}\medskip

With the notations of Remark \ref{edg} we have the fundamental reason to introduce the function $K(u)$:
\begin{theorem}\label{fo}
Two points  $u=(a,\sigma),\ v=\ell\cdot u\in G_\R,\ \ell\in X$  have $K(u)=K(v)$  if and only if  $p:=u\cdot 0=-\pi(a),\ q:=v\cdot 0 $ are connected by the edge marked    $\ell$ compatible with $S$.
\end{theorem}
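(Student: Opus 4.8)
The plan is to reduce everything to the composition formula \eqref{vee1} of Proposition~\ref{comC}, applied with $g=\ell$, and then to match the resulting scalar identity with the quadratic rectangle conditions \eqref{lan} and \eqref{lar} of Remark~\ref{edg}. First I would record the elementary fact that, since $G_\R$ acts on $\R^n$, one has $q=v\cdot 0=(\ell\cdot u)\cdot 0=\ell\cdot(u\cdot 0)=\ell\cdot p$; using the explicit action \eqref{lazz} together with \eqref{pig}, this gives $q=p+\mathtt v_j-\mathtt v_i$ when $\ell=e_i-e_j$, and $q=\mathtt v_i+\mathtt v_j-p$ when $\ell=(-e_i-e_j)\tau$. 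In either case $p,q$ are joined by the combinatorial edge marked $\ell$ of the Cayley graph $\R^n_X$, and, comparing with Remark~\ref{edg1}, the only extra requirement for this edge to be an edge of $\Gamma_S$ (``compatible with $S$'') is that $p,q,\mathtt v_i,\mathtt v_j$ be the vertices of an actual (possibly degenerate) rectangle, i.e.\ that the corresponding quadratic equation \eqref{lan} resp.\ \eqref{lar} holds.

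Next I would write $u=(a,\sigma)$ and $\ell=(b,\rho)$, with $b=e_i-e_j,\ \rho=1$ in the black case and $b=-e_i-e_j,\ \rho=-1$ in the red case, and invoke \eqref{vee1} with $g=\ell$. In the black case it says $K(u)=K(v)$ iff $K(\ell)=-(\pi(a),\pi(b))$; substituting $K(e_i-e_j)=(\mathtt v_i,\mathtt v_i-\mathtt v_j)$ from \eqref{ilK}, $\pi(b)=\mathtt v_i-\mathtt v_j$, and $\pi(a)=-p$ from \eqref{pig}, this becomes $(p,\mathtt v_i-\mathtt v_j)=|\mathtt v_i|^2-(\mathtt v_i,\mathtt v_j)$, which is precisely \eqref{lan} for the rectangle with adjacent vertices $p,q$. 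In the red case \eqref{vee1} says $K(u)=K(v)$ iff $K(\ell)=|\pi(a)|^2-(\pi(a),\pi(b))$; substituting $K((-e_i-e_j)\tau)=-(\mathtt v_i,\mathtt v_j)$ from \eqref{ilK1}, $\pi(b)=-\mathtt v_i-\mathtt v_j$, $|\pi(a)|^2=|p|^2$ and $\pi(a)=-p$, this becomes $|p|^2-(p,\mathtt v_i+\mathtt v_j)=-(\mathtt v_i,\mathtt v_j)$, which is precisely \eqref{lar} for the rectangle with opposite vertices $p,q$. Thus in both colors $K(u)=K(v)$ is equivalent to the quadratic rectangle condition, hence, by the first paragraph, to $p$ and $q$ being joined by the edge marked $\ell$ compatible with $S$, which is the assertion.

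The computations are routine once this dictionary is set up, so I do not expect a genuine obstacle; the one point that needs care is the bookkeeping of signs --- the minus sign built into the action \eqref{lazz}, and the transposition of indices between the combinatorial label $e_i-e_j\in X$ and the geometric label $\mathtt v_i-\mathtt v_j$ of Remark~\ref{edg} --- which is exactly what aligns \eqref{vee1} with \eqref{lan} and \eqref{lar}.
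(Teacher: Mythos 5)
Your argument is correct and is essentially the proof in the paper: both reduce the claim to the composition identity \eqref{vee1} applied with $g=\ell$, substitute the values \eqref{ilK}, \eqref{ilK1} and $\pi(a)=-p$, and recognize the resulting scalar identity as the rectangle condition \eqref{lan} (with the indices transposed, matching the displacement $\mathtt v_j-\mathtt v_i$ coming from the minus sign in the action) resp.\ \eqref{lar}, exactly as in Remark \ref{edg1}. The sign and index bookkeeping you flag is handled the same way in the paper's proof, so there is nothing to add.
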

 \begin{proof} Since   $q=v\cdot 0=\ell\cdot u\cdot 0$ we have $q=\ell\cdot p$.  Now the compatibility with $S$ is given:\smallskip

\begin{enumerate}\item If $\ell=e_i-e_j,\ a\in\R^m$  we have $K(e_i-e_j) = |\mathtt v_i|^2-(\mathtt v_i,\mathtt v_j)$.
The condition  $K(u)=K(v)$ is from Formula \eqref{vee1}  i)  and \eqref{ilK} applied to $g=\ell=e_i-e_j$
$$  |\mathtt v_i|^2-(\mathtt v_i,\mathtt v_j)=-  (\mathtt v_i-\mathtt v_j, \pi(a)) =-  (\mathtt v_j-\mathtt v_i, p).$$
This means that the two points  $u\cdot 0=-\pi(a),\ \ell\cdot u\cdot 0=-\pi(e_i-e_j+  a)=-\mathtt v_i+\mathtt v_j+u\cdot 0$ are the vertices of a black edge marked  by $\mathtt v_j-\mathtt v_i$,  compatible with $S$,  Formula \eqref{lan} and  Remark \ref{edg1}.\smallskip

\item  If $\ell=-e_i-e_j,\ a\in\R^m$  we have
$K((-e_i-e_j)\tau)=  -(\mathtt v_i,\mathtt v_j)$.   The condition is  from Formula \eqref{vee1}  ii)  and \eqref{ilK1}  applied to $g=\ell=-e_i-e_j$
$$  -(\mathtt v_i,\mathtt v_j)=  |\pi(a)|^2+  (\pi(a), \mathtt v_i+\mathtt v_j)= |p|^2- (p, \mathtt v_i+\mathtt v_j).$$
This means that the two points  $u\cdot 0=-\pi(a),\ \ell\cdot u\cdot 0=-\pi(-e_i-e_j-  a)=\mathtt v_j+\mathtt v_i -u\cdot 0$ are the vertices of a red edge marked  by $-\mathtt v_i-\mathtt v_j$, Formula   \eqref{lar} and  Remark \ref{edg1}. \end{enumerate}\end{proof}

Observe that for $g\in G_\R$  we have  $K(g\tau) =-K(g)$.\smallskip

{\bf Warning}\quad The function $K(u)$ is defined only on $G_\R$ and not on $\R^n$ where the geometric graph  $\Gamma_S$ lives.  But we have the following:
\begin{proposition}\label{indp}
For  $q=(q_1,\cdots,q_m) \in \R^m$  with  $\pi(q)=0$  set $\phi(q):= \frac12(\sum_iq_i|\mathtt v_i|^2).$ 

Then $K(q)= \phi(q)$ and, for any $p\in \R^m$ we have  \begin{equation}\label{trasl}
K(p+q)=K(p)+K(q)=K(p)+\phi(q)\end{equation}\end{proposition}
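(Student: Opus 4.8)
The plan is to read off both assertions directly from the composition Formula of Proposition \ref{comC}, specialized to the case at hand. The key observation is that a point $q \in \R^m$ with $\pi(q) = 0$ should be thought of as the group element $(q, 1) \in G_\R$, i.e. with $\sigma = 1$, and that translating $p$ by $q$ in $\R^m$ corresponds on the level of group elements to the composition $q \circ p$ (if $p = (b,\rho)$, then $q \circ p = (q + b, \rho)$, since $q$ acts with trivial sign). So Formula \eqref{vee}, with $g = q$, $u = p$, reads
\begin{equation}\label{planeq}
C(q \circ p) = C(q) + C(p) + (\rho - 1)\tfrac12 q^2 + q\,b,
\end{equation}
where I have used $\sigma = 1$ for the element $q$. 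Applying $\pi$ and recalling $K = \pi \circ C$ and $\pi(q\,b) = (\pi(q), \pi(b)) = 0$ since $\pi(q) = 0$, the last two terms vanish (the $(\rho-1)\tfrac12\pi(q)^2$ term vanishes because $\pi(q) = 0$, the $q b$ term because $\pi(q)=0$), leaving $K(q+p) = K(q) + K(p)$.

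First I would verify the formula $K(q) = \phi(q)$ for $\pi(q) = 0$: by definition \eqref{ricon} with $\sigma = 1$, $K(q) = \tfrac12(|\pi(q)|^2 + \sum_i q_i |\mathtt v_i|^2) = \tfrac12 \sum_i q_i |\mathtt v_i|^2 = \phi(q)$, using $\pi(q) = 0$ once more to kill the first term. This is immediate. Then, for the translation identity, I would take $p = (p_1, \dots, p_m) \in \R^m$ regarded as the element $(p, 1) \in G_\R$ (so $b = p$, $\rho = 1$), note that $p + q$ as an element of $\R^m$ equals $q \circ p$ as an element of $G_\R$, and invoke \eqref{planeq}; with $\rho = 1$ the $(\rho-1)$-term drops regardless, and $\pi(q\,p) = (\pi(q),\pi(p)) = 0$ because $\pi(q) = 0$. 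Hence $K(p+q) = K(p) + K(q)$, and combining with the first part gives $K(p+q) = K(p) + \phi(q)$.

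There is essentially no obstacle here; the only thing to be careful about is the bookkeeping of which element plays the role of $g$ and which of $u$ in Proposition \ref{comC}, and the fact that one must use $\pi(q) = 0$ in \emph{both} of the correction terms (the quadratic one and the cross term) — it is the hypothesis $\pi(q) = 0$, not any constraint on $p$, that makes everything collapse. One should also remark that the statement makes sense for arbitrary $p \in \R^m$ (not merely $p$ with $\pi(p) = 0$), which is exactly why it is useful: it says that $K$, although not well-defined on $\R^n$, is invariant under the subgroup $\ker \pi$ of translations, so it descends to a function on each orbit once a base point is chosen — this is the content needed later in \S\ref{colo}. I would close by noting the formula is the additive analogue, on the kernel, of the general quadratic behaviour of $K$ recorded in \eqref{vee}.
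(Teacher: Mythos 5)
Your proposal is correct and is essentially the paper's own proof: the paper likewise just applies the composition Formula \eqref{vee} to $p,q\in\R^m$ with $\pi(q)=0$ and reads off $K(q)=\phi(q)$ from \eqref{ricon}. The only nit is a harmless label swap (you announce $g=q$, $u=p$ but then write \eqref{vee} with $q$ in the role of $u$); since both elements are taken with trivial sign $\rho=\sigma=1$, this does not affect the argument.
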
  \begin{proof}
Apply Formula \eqref{vee}  to $p,q\in \R^m$  with $\pi(q)=0$.   If $q=(q_1,\cdots,q_m)$  we have 
$$K(q)=\frac12(\sum_iq_i|\mathtt v_i|^2),\quad K(p+q)= K(p)+K(q)=K(p)+ \frac12(\sum_iq_i|\mathtt v_i|^2).$$  
\end{proof}
\begin{definition}\label{ilG} We  define $\Lambda_{S,\R}$  (resp.    $\Lambda_{S,\Z}$) to be   the subgraph of the Cayley graph $G_{X,\R}$   (resp. $G_X$) in which we only keep as edges the ones which preserve the energy function $K$.\smallskip

For each $a\in\R$  we denote by $G_{X,\R}^a$ the subgraph of $G_{X,\R}$ formed by the vertices $p\in G_{X,\R}$ with $K(p)=a$.\smallskip

By  definition  $G_{X,\R}^a$    is a full subgraph of $\Lambda_{S,\R}$ which is the union of the $G_{X,\R}^a,\ a\in \R$. 
\end{definition}
\begin{corollary}\label{mapg}[Of Theorem \ref{fo}]
Under the orbit map  $g\mapsto g\cdot 0,\ \in\R^n$ the graph  $\Lambda_{S,\R}$ maps to the geometric graph $\Gamma_S$ as a surjective graph morphism.
\end{corollary}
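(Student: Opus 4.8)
The plan is to read off both assertions—that $\rho_0$ restricts to a graph morphism on $\Lambda_{S,\R}$ and that this morphism is onto $\Gamma_S$—directly from Theorem \ref{fo}, which already matches the edges of $\Lambda_{S,\R}$ with the geometric edges of $\Gamma_S$ compatible with $S$, together with the spanning Constraint \ref{c0}. Nothing deeper than bookkeeping on top of Theorem \ref{fo} is needed.

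I would first note, using \eqref{pig}, that the orbit map $\rho_0\colon G_{X,\R}\to\R^n$, $g\mapsto g\cdot 0$, sends $g=(a,\sigma)$ with $a\in\R^m$ to $-\pi(a)$, so that $\rho_0$ is surjective because $\pi$ is (Constraint \ref{c0}); since $\Lambda_{S,\R}$ has the same vertices as $G_{X,\R}$ and $\Gamma_S$ has vertex set all of $\R^n$, this already gives surjectivity on vertices. For the morphism property I would take an edge of $\Lambda_{S,\R}$, i.e.\ (Definition \ref{ilG}) a pair $u,\ v=\ell\cdot u$ with $\ell\in X$ and $K(u)=K(v)$, set $p:=u\cdot 0$ and $q:=\ell\cdot p=v\cdot 0$, and apply Theorem \ref{fo} to conclude that $p$ and $q$ are joined in $\Gamma_S$ by the edge marked $\ell$ (an edge of $\Gamma_S$ is exactly a geometric edge compatible with $S$, Remark \ref{cogg}). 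I would also check $p\neq q$, which follows from $i\neq j$ in the definition of $X$: a black edge with $p=q$ would force $\mathtt v_i=\mathtt v_j$, and a red one a sphere $S_\ell$ of radius zero. Hence edges of $\Lambda_{S,\R}$ go to edges of $\Gamma_S$ carrying the same label, so $\rho_0|_{\Lambda_{S,\R}}$ is a graph morphism into $\Gamma_S$.

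For surjectivity on edges I would start from an arbitrary edge of $\Gamma_S$ joining $p$ to $q$; by Remarks \ref{edg} and \ref{edg1} it has $q=\ell\cdot p$ for some $\ell\in X$, and using Constraint \ref{c0} I would choose $a\in\R^m$ with $-\pi(a)=p$ and set $u=a\in G_\R$, $v=\ell\cdot u$, so that $u\cdot 0=p$ and $v\cdot 0=q$. The converse implication in Theorem \ref{fo} then gives $K(u)=K(v)$, so the edge $u\to v$ of $G_{X,\R}$ in fact lies in $\Lambda_{S,\R}$, and it is mapped by $\rho_0$ onto the given edge of $\Gamma_S$; this finishes the argument. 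The only place requiring any care—and it is minor rather than a genuine obstacle—is that both the surjectivity and the non-collapsing of edges rely on Constraint \ref{c0} and on the condition $i\neq j$ in the definition of $X$; the substance is entirely contained in Theorem \ref{fo}, used in both directions.
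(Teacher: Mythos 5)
Your proof is correct and takes essentially the same route as the paper, which offers no separate argument but treats the statement as an immediate consequence of Theorem \ref{fo}: your unpacking (surjectivity on vertices from Constraint \ref{c0}, edges of $\Lambda_{S,\R}$ sent to compatible edges by the forward direction of Theorem \ref{fo}, and every edge of $\Gamma_S$ lifted via a choice of $a$ with $-\pi(a)=p$ and the converse direction) is precisely that implicit argument. The added check that $p\neq q$ is a harmless extra refinement the paper does not bother to record.
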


Moreover the subgraph  $\Gamat\subset \R^m_X$ (see \ref{edg1})     in $\R^m $    is  obtained     keeping only the edges which preserve the function $K$.
 
\subsubsection{Connected components} Take $x\in\R^n\setminus S$  we want to study the connected component $C_x$ of the graph $\Gama$  containing $x$, using the ideas of the previous section.  

The first remark is that $C_x\subset G_2\cdot x$  by  Remark \ref{edg1} and the orbit $G_2\cdot x$ is a connected component $A_X$ of the Cayley graph $\R^n_X$ isomorphic to the coset space $A:=G_2/H_x,\ H_x:=\{g\in G_2\mid gx=x\}$.

Choose $p\in \R^m$  so that $p\cdot 0=-\pi(p)=x$ and let $a:=K(p)$, this depends on the choice of $p$ and it exists by Constraint \ref{c0}.

We have $g\cdot x=-\pi(g\cdot p)$  and thus we can define  the subgraph of  the Cayley graph of $(G_2)_X$ in which we only keep   the elements $g\in G_2$  with $K(g\cdot p)=a=K(p)$. 
\begin{lemma}\label{indp1}
The previous graph depends only on $x$ and not on $p$  with $p\cdot 0=x$.
\end{lemma}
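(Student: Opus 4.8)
The plan is to reduce the statement to the observation that the difference $K(g\cdot p)-K(p)$, viewed as a function of the auxiliary choice of $p$, depends on $p$ only through $\pi(p)=-x$. The ambient Cayley graph $(G_2)_X$ is fixed once and for all, and the subgraph under consideration is determined by its set of vertices $\{g\in G_2\mid K(g\cdot p)=K(p)\}$ (the edges being exactly those of $(G_2)_X$ joining two such vertices, since an edge $g\to\ell g$ of $(G_2)_X$ preserves the value of $K(\,\cdot\,p)$ if and only if both endpoints lie in that set). So it suffices to show that this vertex set coincides for any two $p,p'\in\R^m$ with $p\cdot 0=p'\cdot 0=x$, that is with $\pi(p)=\pi(p')=-x$.

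The computation is an immediate application of Proposition \ref{comC}. Writing $g=(b,\rho)\in G_2$ with $\rho=\pm1$ and taking $u=p=(p,1)$ (so that $\sigma=1$ and $a=p$ in the notation there), one gets
\begin{equation*}
K(g\cdot p)=K(g)+K(p)+(\rho-1)\tfrac12\,|\pi(p)|^2+(\pi(p),\pi(b)),
\end{equation*}
and substituting $\pi(p)=-x$ this becomes $K(g\cdot p)-K(p)=K(g)+(\rho-1)\tfrac12|x|^2-(x,\pi(b))$. The right-hand side only involves $g$ and $x$; the very same computation with $p'$ in place of $p$ (note $\pi(p')=-x$ as well) gives the identical right-hand side, so $K(g\cdot p)=K(p)$ if and only if $K(g\cdot p')=K(p')$. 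Hence the vertex set in question equals
\[
\{(b,\rho)\in G_2\mid K((b,\rho))=(x,\pi(b))-(\rho-1)\tfrac12|x|^2\},
\]
which depends only on $x$; by Theorem \ref{fo} such a vertex $g$ (resp.\ such an edge $\ell$) records precisely the point $g\cdot x$ (resp.\ the geometric edge of $\Gamma_S$ between $g\cdot x$ and $\ell g\cdot x$).

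I do not expect a genuine obstacle: the argument is essentially one substitution into Proposition \ref{comC}. The only point demanding care is orienting that proposition correctly — applying it with the varying element $p$ in the slot of $u$, and keeping track of the sign $\rho$ of $g$, which is what makes the $\tfrac12(\rho-1)|x|^2$ term appear for the red part of $G_2$. One could also argue through Proposition \ref{indp}, writing $p=p'+q$ with $q\in\ker\pi$ and computing how $K$ changes under the translation by $q$; this works but is marginally less clean, since it requires extending Proposition \ref{indp} to group elements of $G_\R$ whose sign component equals $-1$.
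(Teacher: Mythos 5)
Your proof is correct, and it reaches the conclusion by a slightly different route than the paper. The paper compares two lifts directly: it writes a second choice as $r=p+q$ with $\pi(q)=0$ and invokes Proposition \ref{indp} to get $K(g\cdot r)=K(g\cdot p)+\phi(q)$ and $K(r)=K(p)+\phi(q)$, so the defining condition $K(g\cdot p)=K(p)$ is insensitive to the choice. You instead substitute $u=(p,1)$ into Proposition \ref{comC} and read off that $K(g\cdot p)-K(p)=K(g)+(\rho-1)\tfrac12|x|^2-(x,\pi(b))$ depends on $p$ only through $x=-\pi(p)$, which yields an intrinsic description of the vertex set (in effect Formula \eqref{vee1} specialized to $\pi(a)=-x$); since by Definition \ref{g2x} the graph $G_2^x$ is the full subgraph of $(G_2)_X$ on that vertex set, and its edges are exactly the Cayley edges between kept vertices, this closes the argument. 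What your packaging buys is precisely the point you flag at the end: Proposition \ref{indp} is stated for elements of $\R^m$, whereas the paper applies it to $g\cdot p$ and $g\cdot r$, which may carry a $\tau$; the extension is immediate (for $g=(b,\rho)$ one has $g\cdot r=(b+\rho p+\rho q,\rho)$ and the shift of $K$ under adding $\rho q$ with $\pi(q)=0$ is $\tfrac{\rho}{2}\cdot\rho\sum_i q_i|\mathtt v_i|^2=\phi(q)$, independent of $\rho$), but your derivation never needs it. What the paper's route buys is brevity and reuse of the translation formula \eqref{trasl}. Both arguments are one-step consequences of \eqref{vee}, so the difference is one of organization rather than substance; your version is, if anything, the more self-contained of the two.
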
  
\begin{proof} 
Let $r=p+q\in\R^m$ be such that $r\cdot 0=r\cdot 0=x$ we have thus $  \pi(q)=0$  and  let  $h,k\in G_2$ be such that $k=\ell h,\    K(k\cdot p)=K(h\cdot p)$ we have by proposition \ref{indp}  $$K(k\cdot r) =K(k\cdot p)+\phi(q),\  K(h \cdot r) =K(h\cdot p)+\phi(q)\implies  K(k\cdot r) = K(h \cdot r) $$ and so the graphs are the same.
\end{proof}
\begin{definition}\label{g2x}
Let us denote this graph $G_2^x$, by definition this is a full subgraph of the Cayley graph $(G_2)_X$.  \smallskip

By $\widetilde C_x$  denote  the connected component  of  0  in the graph  $G_2^x$\end{definition}
By definition two  elements $h,k\in G_2$  are joined  by an edge $\ell$  in  $G_2^x$ if and only if  $k=\ell h$ and  $K(h\cdot p)= K(k\cdot p)=K(p)$. By  Theorem \ref{fo} if we consider the orbit map  $\rho:G_2\to  G_2\cdot x, \ \rho(g):=g\cdot x=-\pi(g\cdot p)+x$ the previous condition is that the two points $h\cdot x, \ k\cdot x$  are joined by $\ell$  in $\Gama$.  

In particular   the connected component  of  0  in the graph  $G_2^x$, $\widetilde C_x$ under the orbit map $\rho$   maps to $C_x$.
\begin{theorem}\label{fuga} The map $\rho:\widetilde C_x\to C_x$ is surjective  and, if it is also injective  
the graph $C_x$ is a full subgraph  of the Cayley graph $A_X$.
\end{theorem}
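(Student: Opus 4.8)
The statement has two parts — a formal surjectivity statement and a conditional fullness statement — and I would treat them in turn.

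\emph{Surjectivity of $\rho\colon\widetilde C_x\to C_x$.} The plan is a path‑lifting argument, sharpening the observation made just before the theorem. Given $y\in C_x$, choose a path $x=y_0,y_1,\dots,y_k=y$ in $\Gamma_S$ and let $\ell_i\in X$ be the label of the edge from $y_{i-1}$ to $y_i$. Put $g_0=1$ and $g_i=\ell_i\circ g_{i-1}$; Proposition \ref{coo} gives $\rho(g_i)=g_i\cdot x=y_i$. To see that $g_0,\dots,g_k$ is in fact a path in $G_2^x$ — hence, as it starts at $1$, in $\widetilde C_x$ — I would apply Theorem \ref{fo} inductively to the lifts $u_i:=g_i\cdot p\in\R^m$: one has $u_i\cdot 0=y_i$ and $u_i=\ell_i\cdot u_{i-1}$, and since $y_{i-1}\to y_i$ is an edge of $\Gamma_S$ (compatible with $S$), Theorem \ref{fo} forces $K(u_i)=K(u_{i-1})$. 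As $K(u_0)=K(p)=a$, all $u_i$ have energy $a$, so every $g_i$ is a vertex of $G_2^x$ and every $\ell_i$ is an edge of $G_2^x$; thus $g_k\in\widetilde C_x$ and $\rho(g_k)=y$. (This is the special case of Lemma \ref{mcc} one obtains once the energy condition is fed in.)

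\emph{The fullness part.} Now assume $\rho|_{\widetilde C_x}$ is also injective; by the first part it is then a bijection onto the vertex set of $C_x$, and feeding a single edge into the previous argument shows moreover that $\rho\colon\widetilde C_x\to C_x$ is an isomorphism of graphs (in particular $\widetilde C_x\cap H_x=\{1\}$). Since $\Gamma_S\subseteq\R^n_X$, every edge of $C_x$ is already an edge of $A_X$, so what remains is the converse: every edge of $A_X$ joining two vertices $u,v\in C_x$ is an edge of $\Gamma_S$. Here is how I would argue. Let $\ell\in X$ be its label, so $v=\ell\cdot u$, and take the preimages $h,k\in\widetilde C_x$ of $u,v$ (they exist by the first part and are unique by injectivity). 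Then $\rho(\ell\circ h)=\ell\cdot u=v=\rho(k)$, so $g_0:=k^{-1}\circ\ell\circ h\in H_x$ and $\ell\circ h=k\circ g_0$; Propositions \ref{indp} and \ref{comC} then give
\[
K\bigl((\ell\circ h)\cdot p\bigr)=K(k\cdot p)+\phi(g_0)=a+\phi(g_0)
\]
(assuming the generic situation $H_x=\ker\pi$, so that $\phi(g_0)$ is defined; the extra coset that occurs when $2x\in\pi(\Z^m)$ is treated similarly). By Theorem \ref{fo} the $A_X$‑edge $u\to v$ is geometric precisely when $\phi(g_0)=0$. If I can force $g_0=1$, then $\ell\circ h=k$, so $h$ and $k$ are joined by an edge of $G_2^x$, and Theorem \ref{fo} then says exactly that $u$ and $v$ are joined by a geometric edge labelled $\ell$; hence $C_x$ coincides with the full subgraph of $A_X$ on the vertex set of $C_x$, i.e. $C_x$ is a full subgraph of $A_X$.

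\emph{The main obstacle.} Everything above is formal except the last step — forcing $g_0=1$, equivalently killing the defect $\phi(g_0)$ — and this is where genericity must enter. Writing $h$ and $k$ via shortest paths from $1$ in $\widetilde C_x$, whose lengths equal $\operatorname{dist}_{C_x}(x,u)$ and $\operatorname{dist}_{C_x}(x,v)$ because $\rho$ is an isomorphism, the element $g_0=k^{-1}\circ\ell\circ h$ is a word in $X$ of length at most $1+\operatorname{dist}_{C_x}(x,u)+\operatorname{dist}_{C_x}(x,v)$, so $g_0=\sum_i\nu_ie_i\in\ker\pi$ with $\sum_i|\nu_i|$ bounded in terms of (twice) the diameter of $C_x$; Constraint \ref{c3} (no non‑trivial relation $\sum_i\nu_i\mathtt v_i=0$ of small size) then forces $g_0=1$. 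I expect the real work to be exactly this bookkeeping — coupling the word‑length bound with the a priori bound on the size of the non‑special components, and handling the exceptional coset of $H_x$ — rather than the covering‑space manipulations, which are routine given Theorem \ref{fo} and Proposition \ref{coo}.
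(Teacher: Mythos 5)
Your surjectivity argument is correct and is essentially the paper's own (it is the specialization of Lemma \ref{mcc}, repeated as Lemma \ref{lift}): Theorem \ref{fo} lets you lift a geometric path from $x$ edge by edge into $G_2^x$, so the component $\widetilde C_x$ of $0$ surjects onto $C_x$. No issue there.

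The fullness half is where your proposal has a genuine gap. You correctly reduce the question to showing that the defect $g_0=k^{-1}\circ\ell\circ h\in H_x$ is trivial, but you never close that step: you write ``if I can force $g_0=1$'' and then defer the work to a word-length bound combined with Constraint \ref{c3}, explicitly calling it the remaining ``bookkeeping''. That bookkeeping is not a detail: the bound on the diameter of a non-special component that you want to feed into Constraint \ref{c3} is only established much later (Theorem \ref{bou} and Part 2), Constraint \ref{c3} itself only controls relations of length at most $4(n+1)$ so your argument can at best treat components of a priori bounded size, and the coset $\Z^m\tau\cap H_x$ (the case $2x\in\pi(\Z^m)$, which you ``treat similarly'') is exactly what requires the quadratic Constraint \ref{c4}, which has not even been formulated at this point. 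In substance what you are sketching is Proposition \ref{main1} — the injectivity/isomorphism statement proved under Constraints \ref{c3} and \ref{c4} for components with at most $2n+2$ vertices — rather than Theorem \ref{fuga} as stated, and even that is left as a sketch. The paper's route is different: it takes the fullness assertion as a formal consequence of the preceding discussion and Corollary \ref{mapg} — $G_2^x$ is by definition a full subgraph of the Cayley graph, so any Cayley edge between two of its vertices is an edge of $G_2^x$, and Theorem \ref{fo} translates such edges exactly into geometric edges, so an injective $\rho$ identifies $\widetilde C_x$ with $C_x$ inside $A_X$ without any genericity entering. Your instinct that the passage from $\rho(\ell\circ h)=\rho(k)$ to $\ell\circ h=k$ is the delicate point is sound — it is precisely the issue isolated abstractly in Proposition \ref{bigi} and handled under constraints in Proposition \ref{main1} — but as a proof of the statement in question your text leaves exactly that step open, while importing hypotheses the theorem does not have.
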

\begin{proof}
This follows from Corollary \ref{mapg} and the previous discussion.
\end{proof}
Our next goal is to prove that  \begin{theorem}\label{noinf}
Under further constraints on $S$,  for all $x\in\R^n$,  we have that $\widetilde C_x$  is finite and isomorphic to  $C_x$.
\end{theorem}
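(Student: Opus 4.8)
\emph{Proof strategy.} The plan is to deduce Theorem~\ref{noinf} from the orbit–map machinery of Propositions~\ref{mcc}--\ref{bigi} together with one quantitative input: a uniform bound, depending only on $n$, on the $\ell^1$–size of the group elements that can occur in $\widetilde C_x$. The ``further constraints'' of the statement will be Constraint~\ref{c3} (already introduced) together with the determinantal Constraint~\ref{c4}, introduced precisely so that this bound goes through.

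\emph{Step 1: reduction to two claims.} By Corollary~\ref{mapg} and the discussion preceding Theorem~\ref{fuga}, $\rho\colon\widetilde C_x\to C_x$ is a surjective morphism of graphs, and we are exactly in the situation of the orbit–map digression with $G=G_2$ acting on $A=G_2\cdot x\cong G_2/H_x$, with $\Lambda$ the (connected, full) subgraph $C_x$ of $A_X$, and with $\widetilde C_x$ the connected component $C_1$ of the identity in the preimage, built from the energy–preserving edges; by Theorem~\ref{fo} these are precisely the edges that descend to $\Gamma_S$. Hence Proposition~\ref{bigi} applies and says that $\rho$ is injective on $\widetilde C_x$ — equivalently $\widetilde C_x\cong C_x$ — if and only if $\widetilde C_x\cap H_x=\{1\}$. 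So it suffices to prove: (i) $\widetilde C_x$ is finite; (ii) $\widetilde C_x\cap H_x=\{1\}$.

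\emph{Step 2: the size bound, and why it gives (i) and (ii).} The key lemma is that under the constraints every vertex $(a,\sigma)\in\widetilde C_x$, with $a=\sum_i a_ie_i\in\Z^m_2$, satisfies
\[
(a,\sigma)\in\widetilde C_x\ \Longrightarrow\ \textstyle\sum_i|a_i|\le 2(n+1).
\]
Granting this, (i) is immediate since $\{a\in\Z^m_2:\sum_i|a_i|\le 2(n+1)\}$ is finite. For (ii), let $g=(a,\sigma)\in\widetilde C_x\cap H_x$ with $g\neq1$. If $\sigma=1$, then $g\cdot x=x$ reads $\pi(a)=\sum_i a_i\mathtt v_i=0$ with $0<\sum_i|a_i|\le 2(n+1)<4(n+1)$, contradicting Constraint~\ref{c3}. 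If $\sigma=-1$, then $g=a\tau$ and $g\cdot x=x$ reads $2x=-\pi(a)$; a path from $0$ to $a\tau$ in $G_2^x$ would then descend under $\rho$ to a nontrivial circuit in $C_x\subset\Gamma_S$ based at $x$, whose existence is excluded — the short circuits are ruled out by Constraints~\ref{c1},~\ref{c2} (using that $\mathtt v_i+\mathtt v_j=\mathtt v_h+\mathtt v_k$ forces $\{i,j\}=\{h,k\}$, so the only length–$2$ ``red–red'' circuit is backtracking), and a longer one contradicts the size bound applied along the circuit. Thus (ii) holds. To prove the bound itself, one works along a simple path $0=g_0,g_1,\dots,g_k$ in $\widetilde C_x$. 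By the composition rules \eqref{cofu}--\eqref{claw} the vector of $g_{i+1}$ differs from $\pm$ that of $g_i$ by one of the fixed vectors $e_a-e_b$ or $-e_a-e_b$, so the bound cannot come from path length alone; instead one exploits that $K$ is conserved, extracting from Proposition~\ref{comC} the conservation laws $|x|^2-|y|^2=\sum_i a_i|\mathtt v_i|^2$ at a black vertex and $|x|^2+|y|^2=-\sum_i a_i|\mathtt v_i|^2$ at a red one (here $y:=g_i\cdot x$), together with the linear relations \eqref{lieq}, and then eliminates the intermediate vertices exactly as in Proposition~\ref{eli}. A simple path whose endpoint has $\sum_i|a_i|$ above the threshold yields, after this elimination, an over-determined system for $x$ whose compatibility is a nontrivial polynomial identity in the coordinates of $S$ — either a short $\Z$–linear relation among the $\mathtt v_i$, or a quadratic or determinantal one of degree $n$ or $n+1$ — which is forbidden by Constraints~\ref{c3} and~\ref{c4}; one organizes this as a finite case analysis on the sequence of colours occurring along the path. (Since $\widetilde C_x$ is locally finite, one may alternatively phrase finiteness via K\"onig's lemma: an infinite $\widetilde C_x$ contains an infinite simple ray, to which the same argument applies.)

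\emph{Main obstacle.} The entire difficulty is in Step~2: turning ``$K$ is preserved along the walk'' into a genuine bound on the walk. The danger is a long self-avoiding, energy–preserving walk that drifts to infinity without ever forcing one of the forbidden relations; ruling this out requires careful bookkeeping of how the integer coefficients $a_i$ accumulate as the colours of successive vertices alternate, and is precisely where the determinantal Constraint~\ref{c4} enters. This is a down-scaled version of the combinatorial analysis of Part~2: here one needs only \emph{some} linear-in-$n$ bound rather than the sharp bound $n+1$ of Theorem~\ref{main}, which keeps the argument within reach. Once Step~2 is in place, Step~1 finishes the proof: $\rho\colon\widetilde C_x\to C_x$ is an isomorphism and $\widetilde C_x$ is finite.
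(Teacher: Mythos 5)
Your Step 1 is sound and agrees with the paper's framework: the reduction via the orbit map (Lemma \ref{mcc}, Proposition \ref{bigi}, Theorem \ref{fuga}) to injectivity plus finiteness is exactly how Proposition \ref{main1} is set up. Two problems, one local and one fatal. The local one: in your point (ii), for a red stabilizer $g=a\tau$ you argue that a "nontrivial circuit in $C_x$" is excluded; but circuits in $C_x$ are not forbidden by anything (they occur, cf.\ Examples \ref{ecv}, \ref{ecv1}), and the size bound does not exclude them either. What the condition $2x=-\pi(a)$ together with energy conservation along the path actually gives is the quadratic relation \eqref{pririn} for the bounded integer vector $a$, and the correct exclusion is Constraint \ref{c4} (with the identically-vanishing cases disposed of by Constraint \ref{c1}, forcing $x\in S$) — this is precisely the argument of Proposition \ref{main1}, which you should quote rather than the circuit heuristic.

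The fatal gap is your "key lemma" in Step 2, the uniform bound $\sum_i|a_i|\le 2(n+1)$ for all vertices of $\widetilde C_x$: it is asserted, not proved, and the sketch you give cannot be completed with the tools you allow yourself. Constraints \ref{c3} and \ref{c4} only forbid short integral linear relations among the $\mathtt v_i$ and the particular sphere equations \eqref{pririn}; they say nothing about the compatibility conditions obtained by eliminating $x$ from a long energy-preserving walk. Your claim that such a walk "yields an over-determined system whose compatibility is a nontrivial polynomial identity" is exactly the point at issue, and it is false in general: there exist degenerate--resonant configurations for which every compatibility relation vanishes identically in the $\mathtt v_i$ (this is the whole subject of Part 2, Theorem \ref{MM}), so no constraint of the form "a nonzero polynomial must not vanish" can be extracted from them directly. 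The paper's route to finiteness — equivalently, to the absence of realizable components with $\ge 2n+1$ vertices — is Theorem \ref{bou}, which needs the full battery of constraints \ref{c0} through \ref{c6b} (in particular the avoidable-resonance Constraints \ref{co6}, \ref{c6}, \ref{c7}, \ref{c6b}, not just \ref{c3}, \ref{c4}) and two substantive arguments you do not supply: the colored-rank Theorem \ref{ridma} (same-color linear dependence always produces a nonzero resonance), and the determinantal/invariant-theory Theorems \ref{codim}, \ref{aMT} and Lemma \ref{spegra} (with $n+1$ independent vertices the generic solution of \eqref{bacos} is a polynomial, hence $O(n)$-equivariant, hence equal to some $\mathtt v_i$, so the realization lies in the special component). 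So your "finite case analysis on the sequence of colours" is not a down-scaled exercise but the actual core of the proof; as written, Step 2 assumes what Theorem \ref{bou} is needed to establish, and the proposal does not prove Theorem \ref{noinf}.
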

\smallskip

We will use  the conditions of Proposition \ref{bigi}, to prove this in Proposition \ref{main1} by introducing further constraints,  but first

%
%
%
\begin{definition}\label{Lp}
   The graph  $\widetilde C_x\subset G_2^x$ is called {\em the combinatorial graph} associated to $x$.

We say that a combinatorial graph  $\Gamma$ (Definition \ref{comg})  has a geometric realization in $\Gama$  if there is a $x\in \R^n$ so that 
$\Gamma \subset G_2^x.$

\end{definition} By our discussion to say that  $\Gamma$   has a geometric realization in $\Gama$ for some $x\in\R^n$ means that $\Gamma\cdot x\subset C_x$, with $C_x$   the connected component   of the graph $\Gama$  containing $x$.
  \begin{remark}\label{fus}

If a  subgraph  $\Gamma$ of the Cayley graph of $G_2$ has a geometric realization  then so has any of its translates  $\Gamma\cdot g^{-1},\ g\in \Gamma.$

By choosing a $g\in \Gamma$  we have (in different ways)  graphs $  \Gamma g^{-1}$ containing 0 (in the position where first was $g$), that is combinatorial graphs, which have a geometric realization.  What changes is the {\em root}  of the connected graph.  We say that two such graphs are {\em equivalent} see Example \ref{es1} and \eqref{es11}, \eqref{es12}.
\end{remark} In Example $\Gamma$ given by \ref{es1} if we choose for $g=e_4-e_2$ then  $  \Gamma g^{-1}$  becomes:
 \begin{equation}\label{es12}
 \xymatrix{  &&\! \! \! \! \! \! \! \! \! \! \! \! \! \! \! \! \! \! \! \! \! \! -2e_2-e_1+e_4 \textcolor{Red}{\bullet}  \ar@{->}[r]_{ e_1-e_3} &  \textcolor{Red}{\bullet} -2e_2-e_3  +e_4  \ar@{=}[dl]^{-e_2-e_3}& &&\\ &e_2-e_5  \bullet   \ar@{<-}[r] _{e_4-e_5}&e_2-e_4 \ar@{=}[u]^{-e_2-e_1}  \ar@{<-}[r]_{e_2-e_4} &   0}.
\end{equation} 
\begin{remark}\label{fing}
Given any integer $k$  there are only finitely many  combinatorial graphs  with at most $k$ vertices.  Our strategy will be to impose constraints  which will exclude some of these combinatorial graphs  to  appear in the geometric graph.
\end{remark}
\subsection{The geometric avatars\label{colo}}
We now pass to the {\em geometric avatars} of   $G_2^x$ in $\R^n$  so let $x:= p\cdot 0=-\pi(p)$.\medskip

By definition   under the action map  $G_2^x\cdot x$ an edge of  $G_2^x$ gives rise to a corresponding edge in the geometric graph and the image of $G_2^x$ lies the component $C_x$ of the geometric graph $\Gama$  containing $x$.  \smallskip

Conversely, given   the component $C_x$ of the geometric graph $\Gama$  containing $x\notin S$ we can {\em lift it}  in the sense that:
\begin{lemma}\label{lift} The map  $g\to g\cdot x$ from  $\widetilde C_x$  to $C_x$ is surjective.

\end{lemma}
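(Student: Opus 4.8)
\textbf{Proof proposal for Lemma \ref{lift}.}

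The plan is to reduce the statement to Lemma \ref{mcc}, applied to the group $G_2$ acting on the orbit $A = G_2\cdot x \subset \R^n$, with the subset $U = C_x$ and the lifted full subgraph playing the role of $\Lambda$. First I would fix a choice of $p\in\R^m$ with $p\cdot 0 = -\pi(p) = x$, which exists by Constraint \ref{c0}; set $a := K(p)$. By Lemma \ref{indp1} the graph $G_2^x$, and hence its connected component $\widetilde C_x$ of $0$, does not depend on this choice, so there is no loss of generality. The orbit map is $\rho: G_2 \to G_2\cdot x$, $\rho(g) = g\cdot x = -\pi(g\cdot p)+x$, which by Proposition \ref{coo} and Remark \ref{edg1} is a morphism of graphs from $(G_2)_X$ to the Cayley graph $\R^n_X$ restricted to $A$.

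The key step is to identify, inside $\R^n_X|A$, exactly which edges come from energy-preserving edges of $(G_2)_X$, and to see that these are precisely the edges of $C_x$. This is the content of Theorem \ref{fo}: an edge $\ell\in X$ joins $h\cdot x$ to $k = \ell h$, $k\cdot x$ in the geometric graph $\Gamma_S$ (i.e. is an edge of $C_x$) if and only if $K(h\cdot p) = K(k\cdot p)$, i.e. if and only if $\ell$ is an edge of $G_2^x$ between $h$ and $k$. Thus $\rho$ restricts to a graph morphism from $G_2^x$ onto the full subgraph of $A_X$ on the vertex set $C_x$, and this full subgraph is exactly $C_x$ since, as noted after Remark \ref{edg1}, $C_x \subseteq G_2\cdot x$ and $C_x$ is a connected component of $\Gamma_S$, hence already full. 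Now $C_x$ is connected, so $U := C_x$ satisfies the hypothesis of Lemma \ref{mcc} (with $\Lambda$ the full subgraph on $U$, which here equals $C_x$ itself): every connected component of $\rho^{-1}(C_x)$ maps \emph{onto} $C_x$. Since $x = \rho(0)\in C_x$, the component $\widetilde C_x$ of $0$ is one such component, so $\rho(\widetilde C_x) = C_x$; in particular $\rho: \widetilde C_x \to C_x$ is surjective.

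The one point requiring care — and the main obstacle — is the matching of edges: a priori the Cayley graph $\R^n_X$ could contain an edge between two vertices of $C_x$ carrying a label $\ell$ that does \emph{not} preserve $K$, while $C_x$ as a subgraph of $\Gamma_S$ would then not literally be a full subgraph of $A_X$ but only of $\Lambda_{S,\R}$'s image; one must check that the surjectivity argument of Lemma \ref{mcc} only needs a path in $C_x$ (which by definition uses geometric edges, hence energy-preserving lifts by Theorem \ref{fo}) and never needs to traverse a ``bad'' edge. Since we only propagate along paths that already lie in $C_x$, every edge we use lifts, and the induction in the proof of Lemma \ref{mcc} goes through verbatim. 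This gives surjectivity of $\rho:\widetilde C_x\to C_x$, which is the assertion of the lemma; injectivity is not claimed here and will be addressed separately (via Proposition \ref{bigi}) under the further constraints.
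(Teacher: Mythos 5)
Your proof is correct and, once the scaffolding is stripped away, it is the paper's own argument: the paper proves Lemma \ref{lift} by exactly the induction you describe in your last paragraph, namely propagating along edges of the connected component $C_x$ and using Theorem \ref{fo} to lift each geometric edge to an energy-preserving edge of $G_2^x$, so that the lifted path never leaves $\widetilde C_x$. The detour through Lemma \ref{mcc} is dispensable (and, as you yourself note, cannot be applied verbatim because the full Cayley subgraph on the vertices of $C_x$ may contain non--energy-preserving edges), but since you only ever lift paths lying in $C_x$, your argument is sound and essentially coincides with the paper's.
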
 
\begin{proof}
This is essentially trivial.  If one has a vertex  $q\in C_x$  of the form $q=g\cdot p,\ g\in\widetilde C_x$  and $q'\in C_x$ with $q'=\ell q,\ \ell\in X$ then by Theorem \ref{fo}, 
 $\ell\cdot g\in \widetilde C_x$.\end{proof}  In general the map  $\gamma:\widetilde C_x\to C_x$      is a       covering of graphs. We easily see that  the two graphs are isomorphic if and only if every circuit in $C_x$ is also a circuit in $\widetilde C_x$.   This is essentially the content of Proposition \ref{bigi}  which we reformulate with a different proof in our special setting.

In general  $\gamma$ is not injective, due to the fact that circuits   in  $C_x$ may unravel into larger circuits of infinite strings of edges in $G_2^x$.  The following are two examples.

    \medskip

 There can be two cases: 1.\quad  the circuit in $C_x$ contains an even number of {\em red edges}.  2. \quad the circuit in $C_x$ contains an odd number of {\em red edges}.

\begin{example}\label{ecv}[Case 1]
\quad suppose that the geometric graph contains a subgraph
 $$ \xymatrix{  &\ar@{->}[dl]_{\mathtt v_2-\mathtt v_4}k_3\ar@{<-}[dr]^{ \mathtt v_2-\mathtt v_3\quad}& \\ k_1\ar@{->}[rr]^{ \mathtt v_2-\mathtt v_1\quad}& & k_2  },$$ this happens if  we have the linear relation $0=\mathtt v_1-3\mathtt v_2+\mathtt v_3+\mathtt v_4$.  
 
  So for $g=e_1-3e_2+e_3+e_4\neq 0$  we have $g\cdot  k_1=k_1$. This graph is in $\Gamma_S$    provided that
 $$ \mathtt v_1-3\mathtt v_2+\mathtt v_3+\mathtt v_4=0 \,, \quad \left\{ \begin{aligned} 2(k_1, \mathtt v_2-\mathtt v_1)= |\mathtt v_2-\mathtt v_1|^2 + |\mathtt v_2|^2-|\mathtt v_1|^2 \\
 2(k_1, \mathtt v_4-\mathtt v_2)= |\mathtt v_4-\mathtt v_2|^2 +|\mathtt v_4|^2 -|\mathtt v_2|^2\end{aligned}\right.$$ 
 By hypothesis 
 $ \pi(g)=0$,  so that we have $ \alpha g\cdot k_1=k_1$ for all integer $\alpha$. This implies that the connected component of $\widetilde C_{k_1}$ has infinitely many vertices:
  $$\xymatrix{ \cdots  0\ar@{->}[r]^{e_1-e_2}& (  e_1- e_2)  \ar@{->}[r]^{e_3-e_2}&  ( e_1-2e_2+e_3)\ar@{->}[r]^{e_4-e_2}&   ( g) \ar@{->}[d]^{e_1-e_2}\\ \cdots \ar@{<-}[r]^{e_1-e_2}& (2g)  \ar@{<-}[r]^{e_4-e_2\qquad}&  ( g+ e_1-2e_2+e_3)\ar@{<-}[r]^{\quad e_3-e_2}&  (g+e_1-e_2) } $$ 
To avoid this pathology we simply require that $\mathtt v_1-3\mathtt v_2+\mathtt v_3+\mathtt v_4\neq 0$ so that this geometric graph does not have a realization.\smallskip

Of course since $m>n$ in general  we cannot impose that the $\mathtt v_i$ are linearly independent. So we need to show that imposing a finite number of constraints of linear independence plus other non linear constraints we can assume that all geometric  components satisfy these linear constraints.
\end{example}  
\begin{example}\label{ecv1}[Case 2] Suppose that the geometric graph contains a graph
 $$ \xymatrix{  &\ar@{->}[dl]_{\mathtt v_2-\mathtt v_4}k_3\ar@{=}[dr]^{ \mathtt v_2+\mathtt v_3\quad}& \\ k_1\ar@{->}[rr]^{ \mathtt v_2-\mathtt v_1\quad}& & k_2  },$$ which is the case provided that
 $$k_2+k_3=k_1+\mathtt v_2-\mathtt v_1+k_1+\mathtt v_4-\mathtt v_2= \mathtt v_2+\mathtt v_3 ,  $$ $$ 2k_1= \mathtt v_1+\mathtt v_2+\mathtt v_3-\mathtt v_4 \,, \quad \left\{ \begin{aligned} 2(k_1, \mathtt v_2-\mathtt v_1)= |\mathtt v_2-\mathtt v_1|^2 + |\mathtt v_2|^2-|\mathtt v_1|^2 \\
 2(k_1, \mathtt v_4-\mathtt v_2)= |\mathtt v_4-\mathtt v_2|^2 +|\mathtt v_4|^2 -|\mathtt v_2|^2\end{aligned}\right.$$
We substitute $2k_1$ in one of the linear equations and obtain that this geometric graph does not have realization if
$$  (\mathtt v_1+\mathtt v_2+\mathtt v_3-\mathtt v_4, \mathtt v_4-\mathtt v_2)\neq  |\mathtt v_4-\mathtt v_2|^2 +|\mathtt v_4|^2 -|\mathtt v_2|^2.$$ 
\end{example}

To repeat this reasonings in the general case we need the following trivial fact:
 \begin{lemma}\label{fco}
If $a=\sum_in_ie_i\in{\Z^m}$ resp. $(a,\tau)$ is a product of $d$  elements in $X $  we have that $\sum_i|n_i|\leq 2d  $.
\end{lemma}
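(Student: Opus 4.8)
The plan is a direct induction on the number $d$ of factors, using only the composition law \eqref{cofu} of $G = \Z^m \rtimes \Z/(2)$. The one observation that does all the work is this: writing a group element as $(a,\sigma)$ with $a\in\Z^m$ and $\sigma=\pm1$ (so $(a,1)=a$, $(a,-1)=a\tau$), the formulas \eqref{cofu} amount to $(a,\sigma)\circ(b,\rho)=(a+\sigma b,\ \sigma\rho)$. Hence the underlying $\Z^m$-vector of a product of two group elements is $a+\sigma b$ with $\sigma=\pm1$, and the triangle inequality for the $\ell^1$-norm $\|x\|_1:=\sum_i|x_i|$ gives $\|a+\sigma b\|_1\le\|a\|_1+\|b\|_1$. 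Thus $\|\cdot\|_1$ on the underlying vector is subadditive under group composition, independently of how the $d$ factors are parenthesized.

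For the base case $d=1$ I would simply note that every $\ell\in X=X_0\cup X_2$ has underlying vector $e_i-e_j$ (if $\ell\in X_0$) or $-e_i-e_j$ (if $\ell\in X_2$) with $i\neq j$, so its underlying vector has exactly two nonzero entries, each $\pm1$, and $\ell^1$-norm equal to $2=2\cdot1$. For the inductive step, if $g$ is a product of $d$ elements of $X$ write $g=\ell\circ g'$ with $\ell\in X$ and $g'$ a product of $d-1$ such elements; by induction the underlying vector of $g'$ has $\ell^1$-norm $\le 2(d-1)$, by the base case that of $\ell$ has $\ell^1$-norm $2$, and by subadditivity the underlying vector $a=\sum_i n_ie_i$ of $g$ satisfies $\sum_i|n_i|\le 2(d-1)+2=2d$. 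The argument is identical whether $g\in\Z^m$ or $g\in\Z^m\tau$, since only the underlying vector $a$ enters the estimate — which is exactly why the statement bundles the two cases ($a$ resp. $(a,\tau)$) together.

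There is essentially no obstacle: the only point worth a sentence of care is that the multiplier $\sigma$ occurring in the composition is always $\pm1$ and never a larger integer, which is immediate from \eqref{cofu}, or equivalently from \eqref{claw} and $\eta(a)+1\in\{1,-1\}$ for $a\in\Z^m_2$. Everything else is the triangle inequality and bookkeeping on the number of factors.
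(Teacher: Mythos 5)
Your proof is correct, and it is exactly the argument the paper has in mind: the paper states Lemma \ref{fco} without proof (calling it a trivial fact), and the intended justification is precisely that each generator in $X$ has underlying vector of $\ell^1$-norm $2$ while composition in $G$ only ever adds or subtracts underlying vectors, so the norm is subadditive. Your induction with the explicit form $(a,\sigma)\circ(b,\rho)=(a+\sigma b,\sigma\rho)$ supplies the omitted details cleanly.
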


It should be clear at this point that in order to {\em lift} the components of  $\Gama$ with at most $d$ vertices (part of Theorem \ref{noinf}) we must impose as many linear/quadratic inequalities on $S$ as the number of circuits which may appear in a component. Thus if we wish to impose only a finite number of constraints we cannot lift arbitrarily large components. 

Our strategy is the following: first we fix $d= 2n+2$ and impose constraints to ensure that all components with at most $d$ vertices can be lifted. Then we show, in \S \ref{detvar},  that there are no compatible graphs in  $\Gama$ with $d$ or more vertices. 

  This finally implies that the mapping $-\pi$ gives an isomorphism from each connected component of $\Lambda_S$ to its image in  $\Gama$.

\bigskip

By Constraint \ref{c3}   
  $\sum_i\ell_i\mathtt v_i\neq 0$, for all choices of the $\ell_i$ such that $\sum_i\ell_i=0,\ \sum_i|\ell_i|\leq 4(n+1)$ and  $\sum_i\ell_ie_i\neq 0$.

 \begin{proposition}\label{main1} Assume that the component $C_x$ of the geometric graph $\Gama$  containing $x=p\cdot 0$   has $d\leq 2n+2$ vertices.  Then the mapping     $\gamma:g\to g\cdot x$ from  $\widetilde C_x$  to $C_x$ is an isomorphism under Constraint \ref{c3} and the next Constraint \ref{c4}.\end{proposition}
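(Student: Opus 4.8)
The plan is to apply Proposition \ref{bigi} to the orbit map $\rho: G_2 \to G_2\cdot x$, $\rho(g)=g\cdot x$, restricted to the full subgraph $G_2^x$ of the Cayley graph $(G_2)_X$. By Lemma \ref{lift} the map $\gamma:\widetilde C_x\to C_x$ is already surjective, so the only issue is injectivity, and by Proposition \ref{bigi} (reformulated in our setting) this is equivalent to showing that the connected component $\widetilde C_x$ of $0$ in $G_2^x$ meets the stabilizer $H_x=\{g\in G_2\mid g\cdot x=x\}$ only in the identity. So the real content is: \emph{every circuit in $C_x$ lifts to a circuit in $\widetilde C_x$}, i.e.\ no nontrivial stabilizer element lies on a lifted closed path.

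First I would record what a lift of a circuit looks like. A simple circuit in $C_x$ of length $\le d\le 2n+2$ lifts, by Proposition \ref{coo} and the composition rule \eqref{claw}, to a path in $(G_2)_X$ whose endpoints differ by an element $g=\sum_i n_i e_i\in G_2$ with $\pi(g)$ acting trivially on $x$. By Lemma \ref{fco}, since the circuit has at most $d$ edges, $g$ is a product of at most $d$ elements of $X$, hence $\sum_i|n_i|\le 2d\le 4(n+1)$; and since $g\in G_2$ we have $\eta(g)=0$ or $-2$. I would split into the two cases of Examples \ref{ecv} and \ref{ecv1}: if the circuit contains an even number of red edges then $g\in G_{2,+}$, so $\eta(g)=0$ and $g\cdot x=x$ forces $\pi(g)=\sum_i n_i\mathtt v_i=0$; by Constraint \ref{c3} (in the displayed strengthened form, $\sum_i \ell_i\mathtt v_i\neq 0$ whenever $\sum_i\ell_i=0$, $\sum_i|\ell_i|\le 4(n+1)$ and $\sum_i\ell_i e_i\neq0$) this is impossible unless $g=0$, i.e.\ the lift is already closed. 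If the circuit contains an odd number of red edges then $g\in G_{2,-}$, $g=a\tau$ with $\eta(a)=-2$, and $g\cdot x=x$ forces $2x=-\pi(a)=-\sum_i n_i\mathtt v_i$ with $\sum_i|n_i|\le 4(n+1)$; this is a single linear condition on $x$ coming from the geometry of the circuit (as in Example \ref{ecv1}, where substituting $2k_1$ into a compatibility equation produced a polynomial inequality on $S$). The additional Constraint \ref{c4} — which at this point of the paper is exactly the finite list of such determinantal/quadratic inequalities, one for each combinatorial circuit of length $\le 2n+2$, obtained by eliminating $x$ via its linear/quadratic defining equations (Proposition \ref{eli}) — rules out all of these, so again $g=0$.

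The one subtlety, which I would handle explicitly, is that an element of $H_x$ realized on $\widetilde C_x$ need not come from a \emph{simple} circuit of $C_x$: a priori it is a product of edges along a closed walk that may revisit vertices. However, by Lemma \ref{leal} it suffices to look at the component $C_1$ of the identity, and any closed walk of $C_x$ based at $x$ decomposes (up to backtracking, which lifts to backtracking and contributes nothing) into simple circuits, each of length $\le d\le 2n+2$ since $C_x$ has $\le d$ vertices; the corresponding $g$ is a product of the per-circuit group elements, so the bound $\sum_i|n_i|\le 4(n+1)$ may fail for the product. To fix this I would argue circuit-by-circuit: it is enough to show each simple circuit of $C_x$ lifts to a circuit, because then $\widetilde C_x\to C_x$ has the unique-path-lifting-plus-circuit-lifting property that makes it a covering with trivial deck group, hence an isomorphism. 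For a single simple circuit the mass bound $\sum_i|n_i|\le 2d\le 4(n+1)$ does hold, and the two constraints above apply.

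The main obstacle I expect is purely bookkeeping rather than conceptual: making precise the claim that it suffices to lift \emph{simple} circuits (the reduction of an arbitrary closed walk, using that $\widetilde C_x\to C_x$ is a covering of graphs and that the fundamental group of a finite graph is generated by simple circuits of length bounded by the number of vertices), and then verifying that the finitely many inequalities packaged into Constraint \ref{c4} are precisely indexed by the finitely many combinatorial circuits of length $\le 2n+2$ (Remark \ref{fing}) with the odd-red-edge count, each inequality being the non-vanishing of the determinant obtained by eliminating $x$ from its linear equations and one quadratic equation as in Examples \ref{ecv} and \ref{ecv1}. Granting that, injectivity of $\gamma$ follows from Proposition \ref{bigi}, and surjectivity from Lemma \ref{lift}, giving the isomorphism $\widetilde C_x\cong C_x$.
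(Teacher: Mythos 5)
Your overall strategy is the one the paper itself announces and follows: surjectivity from Lemma \ref{lift}, injectivity reduced (via Proposition \ref{bigi}, or in the paper's version via an explicitly constructed section built from simple paths) to showing that a group element $g$ arising from a closed walk in $C_x$ and stabilizing the base point must be trivial, the mass bound $\sum_i|n_i|\le 4(n+1)$ from Lemma \ref{fco}, the even (black) case killed by Constraint \ref{c3}, and the odd (red) case $2x=-\pi(a)$ turned into a polynomial condition on $S$ by eliminating $x$, which is what Constraint \ref{c4} forbids. The packaging through circuit‑lifting instead of the paper's map $\lambda:C_x\to\widetilde C_x$ is fine (lift‑closedness depends only on the product of the edge labels being trivial in $G_2$, so conjugating a simple circuit by a path is harmless, even though your phrase ``generated by simple circuits'' should really be ``generated by conjugates of simple circuits'').

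The genuine gap is in the red case. You write that Constraint \ref{c4} ``rules out all of these'', but a genericity constraint on $S$ can only exclude a relation whose eliminated form is a \emph{nonzero} polynomial in the $\mathtt v_i$, and there are stabilizer elements for which the eliminated relation vanishes identically, so no choice of $S$ avoids it. Concretely, after arranging (by rotating the base point of the circuit to the endpoint of a red edge) that $x$ satisfies a sphere equation \eqref{lar}, the substitution $2x=-\pi(a)$ yields \eqref{pririn}; for $a=-2e_i$ (and similarly $a=-2e_j$) this identity holds for \emph{all} $\mathtt v$'s, because it just says $x=\mathtt v_i$. These cases cannot be excluded by Constraint \ref{c4} or any avoidable resonance; they are excluded only because then $x\in S$, contradicting that $C_x$ is a component different from the special one (Constraint \ref{c1} and Remark \ref{laspe}). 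This identical‑vanishing analysis — showing that vanishing forces $n_h=0$ for $h\neq i,j$, $n_i,n_j\in\{0,-2\}$, hence either $x\in S$ or $(\mathtt v_i,\mathtt v_j)=0$ — is exactly the nontrivial part of the paper's proof beyond the setup, and your proposal omits it. Relatedly, ``eliminating $x$ via its defining equations'' is only usable once you specify which equation $x$ actually satisfies (hence the reduction to a base point lying on a sphere) and verify non‑vanishing of the resulting polynomial; without that verification the constraint you invoke is not known to be satisfiable by any $S$.
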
 
 \begin{proof}
By Lemma \ref{lift} we need to show that the map is injective.   We first construct a map $\lambda:C_x\to \widetilde C_x$ so that $\gamma\circ \lambda =1$  the identity of $C_x$.\smallskip

Take a vertex $q\in C_x$ and a simple path  from $x$  to  $q$  (Def. \ref{term}) which thus has $\leq 2n+2$  steps. By Formula \eqref{gcom}  $q=g\cdot x$  with $g$ a product  of $\leq 2n+2$ edges and by Lemma \ref{fco}  $g=c,  \ (c,\tau),\ c:=\sum_in_ie_i\in{\Z^m}$    we have that $\sum_i|n_i|\leq 2n+2  $.

If $q=h\cdot x$  is reached by a different  path we have $h=b,  \ (b,\tau),\ b:=\sum_ir_ie_i\in{\Z^m}$    we have that $\sum_i|r_i|\leq 2n+2 . $
 
 We thus have $x=h^{-1} gx$   with $h^{-1} g=a,$ or $ h^{-1} g=a\tau,\  a:=\sum_in_ie_i\in{\Z^m}$ and  $\sum_i|n_i|\leq 4(n+1) . $ 
 
 We need to prove that $h=g$  that is $h^{-1} g=1$.\smallskip

 If $h^{-1} g=a$ is black,   then  $x=h^{-1} gx$  implies  that  $\pi(a)=\sum_in_i\mathtt v_i= 0$. By   Constraint \ref{c3}   $a=0$  and $h=g$.  

\smallskip

So assume that  $h^{-1} g=a\tau$ is red,  case 2,   $  \ a=\sum_is_ie_i,\ \sum_is_i=-2$. 
 $ h^{-1} gx=x$  if and only if, by Formula \eqref{lazz} $\pi(a)=-\sum_is_i\mathtt v_i=2x$.\smallskip

This implies that $x=-1/2\sum_in_i\mathtt v_i$ satisfies a  relation of type   \eqref{sfera}
\begin{equation}\label{pririn}  |\sum_h n_h\mathtt v_h|^2+2(\sum_h n_h\mathtt v_h,\mathtt v_i+\mathtt v_j)=-4 (  \mathtt v_i,\mathtt v_j).
\end{equation}  Let us first see what happens   if this formula  vanishes identically  as polynomial in the $\mathtt v_i$.  \smallskip

Then $n_h=0$ for $ h\neq i,j$  and so   $x=-1/2 (n_i\mathtt v_i+n_j\mathtt v_j)$ and
$$n_i^2=-2n_i, \ n_j^2=-2n_j\implies n_i,n_j=0,-2  .$$  If $n_i=n_j=-2$  we have 
$$4(\mathtt v_i+\mathtt v_j)^2 -4(\mathtt v_i+\mathtt v_j)^2=-4 (  \mathtt v_i,\mathtt v_j)$$ which implies $(  \mathtt v_i,\mathtt v_j)=0$ which we have excluded in Constraint \ref{c1} otherwise $x=\mathtt v_i,\mathtt v_j\in S$  contrary to our choice of a component outside the special one.\smallskip

Therefore we can  impose as constraints:
 \begin{constraint}\label{c4}
We assume that   for all choices of the $n_i$ such that $\sum_in_i=-2,\  \sum_i|n_i|\leq 4(n+1)$ all   equations \eqref{pririn} are not satisfied. 

\end{constraint}

  Thus under these new finitely many constraints we have a canonical lift of $C_x$  inside $G_2^x$. We need to prove that it is surjective to the connected component of 0.  
  
  By induction it is enough to prove that,  a vertex $v\in \widetilde C_x$ connected   by an edge $\ell$ to a vertex   $u=\lambda(q),\ q=\gamma(u)$ is  in the image of $\lambda$.\smallskip
  
   Since the map $\gamma:g\to g\cdot x$ is a morphism of graphs $t:=\gamma(v)$ is connected to  $\gamma(u)=q$  by the same  edge $\ell$.  Consider the path from $x$ to  $t$  which passes first through $q$  and then the edge $\ell$.  If this is a simple path  then by the previous discussion $v=\lambda(t)$  is the lift of $t$.
  
  Otherwise we have a simple path from $x$  to $t$ and then from $t$ to $q$ by $\ell^{-1}$ giving a simple path from $x$ to $q$. Then 
  $$v=\ell^{-1} u= \ell^{-1} \lambda(q)=  \lambda(t).$$\end{proof}
 
   \section{The equations defining a connected subgraph of $\Gama$.\label{eqgama}}  
   
   Take   a  connected subgraph $A$ of $\Gama$ which can be lifted (in particular this will be the case if $A$ has at most $2n+2$ vertices by the previous constraints).
   
    Choose a root $x\in A $, we lift $x=-\pi(a),\ a\in\R^m$, this  lifts $A$ to the component $\mathcal A_{ a }$ through $a$ in $\Lambda_S$. 
    
    
    For  each $h\in A$ we have an element $g_h \in G$  obtained by lifting a path in $A$ from $x$ to $h$ and such that $h=g_h x$. We set   
   \begin{equation}\label{lasig}
g_h :=( L(h),\sigma(h)),\quad L(h)\in\Z^m,\ \sigma(h)\in\{1,\tau\}\implies h=-\pi(L(h))+\sigma(h)x.
\end{equation}

We then can deduce the defining equations that is:
\begin{theorem}
 For each $h\in A$  we  have:
\begin{equation}\label{bacos}
\begin{cases}
  (x,\pi( L(h)))=  K(g_h )\quad \text{if}\  \sigma(h)=1\\
 |x|^2+(x,\pi( L(h)))= K(g_h )\quad \text{if}\  \sigma(h)=\tau\end{cases}
.\end{equation} 
 \end{theorem}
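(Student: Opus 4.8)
The plan is to deduce \eqref{bacos} directly from the composition formula for the energy function $K$, Proposition \ref{comC}, using as the only extra input that $K$ is constant on each connected component of $\Lambda_S$. The latter is built into Definition \ref{ilG}: the edges retained in $\Lambda_S$ are exactly those preserving $K$, so $K$ is constant along any path in $\Lambda_S$.

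First I would encode the chosen lift $a \in \R^m$ of the root as the group element $u := (a,1) \in G_\R$, so that $u \cdot 0 = -\pi(a) = x$, equivalently $\pi(a) = -x$; and I would write $\rho := 1$ if $\sigma(h)=1$ and $\rho := -1$ if $\sigma(h) = \tau$, so that $g_h = (L(h),\rho)$. By Proposition \ref{coo} the element $g_h$ is the composition $\ell_{k-1}\circ\cdots\circ\ell_1$ of the edges $\ell_i \in X$ of a path $x = p_1, \dots, p_k = h$ in $A$. Lifting this path starting from $u$ gives a path $u = u_1, \dots, u_k = g_h \circ u$ in $G_{X,\R}$ with $u_i \cdot 0 = p_i$; each step $u_i \to u_{i+1} = \ell_i \circ u_i$ preserves $K$ by Theorem \ref{fo}, because $p_i, p_{i+1}$ are joined by $\ell_i$ compatibly with $S$ (they are joined in $A \subseteq \Gamma_S$). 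Hence this path lies in $\Lambda_S$ and $K(g_h \circ u) = K(u)$; this is exactly what ``$A$ can be lifted'' gives for the vertex $h$.

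Next I would apply Proposition \ref{comC} with $g = g_h = (L(h),\rho)$ and $u = (a,1)$, which gives
$$K(g_h \circ u) = K(g_h) + K(u) + (\rho - 1)\tfrac12\,|\pi(a)|^2 + (\pi(a), \pi(L(h))).$$
Substituting $\pi(a) = -x$, so $|\pi(a)|^2 = |x|^2$ and $(\pi(a),\pi(L(h))) = -(x,\pi(L(h)))$, and using $K(g_h \circ u) = K(u)$, the $K(u)$ terms cancel and I am left with
$$0 = K(g_h) - (x, \pi(L(h))) + (\rho - 1)\tfrac12\,|x|^2.$$
For $\rho = 1$ (i.e. $\sigma(h) = 1$) this is $(x,\pi(L(h))) = K(g_h)$, and for $\rho = -1$ (i.e. $\sigma(h) = \tau$) it is $|x|^2 + (x,\pi(L(h))) = K(g_h)$. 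These are the two cases of \eqref{bacos}.

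I do not expect a genuine obstacle: once Proposition \ref{comC} and the constancy of $K$ on components of $\Lambda_S$ are available, the argument is a single substitution. The only points demanding care are the sign conventions — the minus sign in the geometric action \eqref{lazz}, which is what forces $\pi(a) = -x$, and the dictionary $\sigma(h) = \tau \leftrightarrow \rho = -1$ when feeding $g_h$ into Proposition \ref{comC} — together with the routine verification that $(g_h \circ u)\cdot 0 = h$, so that $g_h \circ u$ really is a lift of the vertex $h$ whose equation is being computed.
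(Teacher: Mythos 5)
Your proposal is correct and follows essentially the same route as the paper: the paper's proof likewise obtains $K(g_h\cdot a)=K(a)$ from Theorem \ref{fo} (via the lifted path defining $g_h$) and then reads off \eqref{bacos} from the composition formula, i.e.\ from \eqref{vee}--\eqref{vee1} with $\pi(a)=-x$. Your explicit substitution into Proposition \ref{comC} is just an unwinding of \eqref{vee1}, so there is no substantive difference.
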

 \begin{proof} By Theorem \ref{fo} $K(g_h a)=K(a)$  for all $h$ and $x=-\pi(a)$. Formula \eqref{bacos} follows then 
from  Formula \eqref{vee1}.

 To be explicit if $L(h)=\sum_im_ie_i$ by \eqref{ricon}:
\begin{equation}  
\pi(g_h)= \sum_im_i\mathtt v_i,\quad  K(g_h )=   \sigma(h)\frac12 (|\sum_im_i\mathtt v_i|^2+\sum_im_i|\mathtt v_i|^2).
\end{equation} \end{proof} 
Observe that 
$$|x|^2+(x,\pi(g_h))=|x+\frac{\pi(g_h)}2|^2-|\frac{\pi(g_h)}2|^2=|x-\frac{\sum_im_i\mathtt v_i}2|^2- \frac{|\sum_im_i\mathtt v_i |^2}4 $$ so the equation becomes
\begin{equation}\label{cicc}
|2x- \sum_im_i\mathtt v_i |^2=-  |\sum_im_i\mathtt v_i |^2  - 2  \sum_im_i|\mathtt v_i|^2 =-\sum_i(m_i^2+2m_i)|\mathtt v_i|^2-\sum_{i<j}2m_im_j (\mathtt v_i,\mathtt v_j).
\end{equation}Observe that these equations do not depend upon the choice of  $a$ with  $x=-\pi(a),\ a\in\R^m$.  
   We think of this system of equations as   associated to the graph.
\begin{proposition}\label{coset}
The equations on $x$ given in Formula \eqref{bacos} are a complete set of conditions  for the existence of a graph $A$  inside some connected component (which could also properly contain $A$) of $\Gama$ containing the point $x$. 

\end{proposition}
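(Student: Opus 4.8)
The plan is to read Proposition~\ref{coset} as an \emph{equivalence} and to prove its two directions separately, in each case translating the geometric statement into the energy function $K$ via Theorem~\ref{fo}.

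First I would dispatch necessity, which is essentially the content of the theorem stated just before the Proposition (the one asserting \eqref{bacos}). If $A$, with root realized as $x=-\pi(a)$, sits inside the connected component $C_x$ of $\Gama$, then for each $h\in A$ a path in $A$ from $x$ to $h$ lifts edge by edge to the element $g_h=(L(h),\sigma(h))$ with $h=g_h\cdot x$. Since every vertex of this path lies in $C_x\subset\Gama$, applying Theorem~\ref{fo} to each consecutive pair shows that $K$ is constant along the lifted path, whence $K(g_h\cdot a)=K(a)$. Expanding this identity through the criterion \eqref{vee1}, with $u=a$ and $\rho=\sigma(h)$, and substituting $x=-\pi(a)$, gives exactly equation \eqref{bacos} attached to $h$; as $h$ was arbitrary, $x$ solves the whole system. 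I would also record, via Proposition~\ref{indp}, that the validity of $K(g_h\cdot a)=K(a)$, hence of \eqref{bacos}, is independent of the chosen lift $a$ of $x$, as already noted after \eqref{cicc}.

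The substance is sufficiency. Assume $x=-\pi(a)$ solves \eqref{bacos}, i.e. $K(g_h\cdot a)=K(a)$ for every $h\in A$. I want to show that each edge $\ell$ of $A$ joining $h$ to $h'$ is realized as an edge of $\Gama$ between $h\cdot x$ and $h'\cdot x$. Because $A$ is liftable, the assignment $h\mapsto(L(h),\sigma(h))$ is path-independent, so I may compute $g_{h'}$ along the path to $h$ followed by $\ell$, obtaining $g_{h'}=\ell\circ g_h$ and hence $g_{h'}\cdot a=\ell\cdot(g_h\cdot a)$. Setting $u:=g_h\cdot a$ and $v:=g_{h'}\cdot a=\ell\cdot u$, the hypothesis gives $K(u)=K(v)=K(a)$, so Theorem~\ref{fo} applies and tells me that $u\cdot 0=h\cdot x$ and $v\cdot 0=h'\cdot x$ are joined in $\Gama$ by the edge marked $\ell$ and compatible with $S$. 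Running this over all edges of $A$ and using that $A$ is connected with $x\in A$, the image $A\cdot x$ is contained in the connected component $C_x$ of $\Gama$ through $x$ — which, as the statement allows, may strictly contain it.

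The only delicate point, and the step I expect to be the real obstacle, is the bookkeeping that the \emph{vertex}-indexed equations \eqref{bacos} already control the \emph{pairwise} edge conditions. This is precisely where liftability of $A$ is used — Proposition~\ref{main1} when $|A|\le 2n+2$, or the standing hypothesis otherwise — since it guarantees that the two endpoints of every edge $\ell$ carry lifts related by $g_{h'}=\ell\circ g_h$, so that the equations for $h$ and for $h'$ combine, through Theorem~\ref{fo}, into ``$\ell$ is an edge of $\Gama$''. Equivalently, one fixes the $g_h$ along a spanning tree of $A$ (which determines them) and then observes that for a non-tree (circuit) edge $\ell:h\to h'$ one still has $K(g_h\cdot a)=K(g_{h'}\cdot a)$, so that circuit edge is automatically realized as well.
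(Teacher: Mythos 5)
Your proposal is correct and follows exactly the route the paper intends: the paper states Proposition~\ref{coset} without a separate proof, treating it as an immediate consequence of the preceding theorem (necessity of \eqref{bacos}) together with Theorem~\ref{fo} and the criterion \eqref{vee1}, which is precisely how you argue both directions, with the liftability of $A$ guaranteeing $g_{h'}=\ell\circ g_h$ on every (including non-tree) edge. Your write-up merely makes explicit the edge-by-edge application of Theorem~\ref{fo} and the independence of the lift $a$ (Proposition~\ref{indp}), both of which the paper records nearby, so there is no substantive difference in approach.
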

The reader should notice that these equations are completely analogous to the ones of Definition \ref{lasfe}, given only for edges. Using the notations of Formula \eqref{orbG}  we set:
 \begin{definition}\label{cogrA}
Let $\GA\subset G_X\subset \Z^m_2$ be the  graph with  vertices the elements $ L(h) $ (and $  0 $), this is called the {\em combinatorial graph} associated to $A$ and the {\em root} $x$. \end{definition} 
 
\begin{remark}
Notice that the map which associates to each $h\in A$ the element $g_h=(L(h),\sigma(h))$ is well defined only if $A$ can be lifted.  \end{remark}

\begin{definition}
We call the set of complete  subgraphs of $G_X$ which contain $0$ and have at most $2n+2$ vertices the set of {\em possible combinatorial graphs}.  
\smallskip

We say that a possible combinatorial graph $\GA$ has a geometric realization (in $\Gama$) if  the equations \eqref{bacos}, associated to the graph have real solutions outside $S$. 
\end{definition} 
 \begin{remark}  First of all there are only finitely many possible combinatorial graphs.

Notice that in a possible combinatorial graph one may deduce the color of each vertex by computing its mass. Indeed all vertices $(a,+)$ must have $\eta(a)=0$ while $(a,-)$ corresponds to $\eta(a)=-2$.
\end{remark}
We have reduced our problem to that of understanding which possible combinatorial graphs have a geometric realization. 

For given $S$ and graph $\GA$  this amounts to checking wether the equations associated to the graph have a real solution outside the special component. 

 \begin{remark}
It should be clear that if $\GA$ has a geometric realization then so has any other equivalent possible combinatorial graph. Moreover the two identify the same subgraphs of $\Gama$ with a different choice of the root.
\end{remark}
 \section{Relations}
 \subsection{Basic definitions}\label{noveuno}
  We want to study the geometric realizations of a combinatorial graph $\GA\subset G_2$ in dimension exactly $n$ depending on the choices of the tangential sites $S$. 
  
  By definition $0\in \GA$  will be also called the root.

To $\GA$  are associated the equations \eqref{bacos}  for $x\in\R^n$ which express  the conditions that $\GA$  has a geometric realization with root $x$. 
\begin{definition}\label{rev}
We call  the set $R_{\GA}$ of points $(x,\mathtt v_1,\dots,\mathtt v_m)\in \R^{(m+1)n}$ which satisfy all the equations \eqref{bacos} associated to $\GA$ the variety    {\em of realizations of the graph}. \smallskip

 Call $\theta:R_\GA\to \R^{mn}$ the projection map $(x,\mathtt v_1,\dots,\mathtt v_m) \to (\mathtt v_1,\dots,\mathtt v_m)$. \smallskip

We say that the graph $R_{\GA}$ has no generic realization if  $\theta(R_{\GA})$ is contained in a proper subvariety, that is there is a non zero polynomial $f(\mathtt v_1,\cdots,\mathtt v_m)$  in the coordinates of the vectors $\mathtt v_i$  which vanishes on $R_{\GA}$.  The polynomial $f$ is also called an {\em avoidable resonance}.
\end{definition}
Our strategy is to describe all combinatorial graphs $\GA$ which  have an   avoidable resonance  $f_{\GA}(\mathtt v_1,\cdots,\mathtt v_m)$.  We then impose all these avoidable resonances as constraints.  

As a result we have that all these combinatorial graphs will not appear in $\Gama$  for $S$  generic, that is $f_{\GA}(\mathtt v_1,\cdots,\mathtt v_m)\neq 0.$\smallskip

As in Formula \ref{orbG}  we identify the vertices of a combinatorial graph with integer vectors $a_i$ with $\eta(a_i)\in\{0,-2\}$. We always refer at the vertices different from the root 0.
 \begin{definition}\label{ranghi}\strut
 
 \begin{itemize}
 \item If  $\GA$ has $k $ vertices plus the root 0, it is  said to be of {\em dimension} $k$.

\item  The dimension of the  lattice    generated by the vertices of $\GA$ is
the {\em rank}, ${\rm rk}\, \GA$,  of the graph $\GA$. The dimension of the lattice generated by the black vertices $(a,+)$ (resp. red)  is called the black (resp. red) rank of $\GA$.
\item If the rank of $\GA$ is strictly less than the dimension of $\GA$ we say that $\GA$ {\em is degenerate}.
\end{itemize}
\end{definition} Our main Theorem  \ref{main}  then follows from the following  basic but quite technical
\begin{proposition}\label{LaMa}
For each dimension $n$ for a generic choice of the set $S$  no degenerate graph appears in $\Gama$.  

Moreover the only non degenerate graphs which  appear  in $\Gama$ have rank $\leq n$.
\end{proposition}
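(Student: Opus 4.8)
The plan is to deduce Proposition~\ref{LaMa} --- hence Theorem~\ref{main} --- by combining the constraints introduced so far with a dimension count on the variety of realizations $R_{\GA}$. Recall from Proposition~\ref{main1} and the discussion of \S\ref{eqgama} that, under Constraints~\ref{c0}--\ref{c4}, any connected component of $\Gama$ with at most $2n+2$ vertices lifts isomorphically to a component of $\Lambda_S$, and is cut out by the system \eqref{bacos} of linear equations (one per black vertex) and quadratic equations of the form \eqref{cicc} (one per red vertex). So the first step is to observe that it suffices to bound, for \emph{generic} $S$, the possible combinatorial graphs $\GA$: we must show (i) no degenerate $\GA$ has a geometric realization for generic $S$, and (ii) every non-degenerate $\GA$ appearing for generic $S$ has $\mathrm{rk}\,\GA\le n$. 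Since there are only finitely many possible combinatorial graphs (Remark after Definition~\ref{cogrA}), for each one it is enough to produce an \emph{avoidable resonance} $f_\GA$, i.e.\ a nonzero polynomial in the coordinates of $S$ vanishing on $\theta(R_\GA)$; imposing the finitely many $f_\GA\neq 0$ then gives the generic constraints.

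Next I would attack (ii) first, as it is the cleaner half. Fix a non-degenerate $\GA$ with $k$ vertices (plus the root), so $\mathrm{rk}\,\GA=k$, and suppose $k\ge n+1$. Pick $n+1$ vertices $a_1,\dots,a_{n+1}$ of $\GA$ whose images $\pi(a_1),\dots,\pi(a_{n+1})$ span $\R^n$ together with $0$ --- possible generically by Constraint~\ref{c0}, since the $a_i$ are linearly independent in $\Z^m$ and $\pi$ is generically injective on any fixed rank-$(n+1)$ sublattice spanned by small vectors (here the relevant bound $\sum|\nu_i|\le 4(n+1)$ from Constraint~\ref{c3}/\ref{c4} applies, using Lemma~\ref{fco}). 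Among these $n+1$ equations of \eqref{bacos}, at least one must be the $(n+1)$-st constraint on $x\in\R^n$; subtracting equations in pairs (black minus black gives a linear relation, using two red vertices or a red and a black gives, after the substitution as in Example~\ref{ecv1}, another relation eliminating $|x|^2$) yields $n+1$ affine-linear equations for $x$ whose coefficient matrix, a function of the coordinates of $S$, has generically rank $n+1$ --- its maximal minors are polynomials in the $\mathtt v_i$ not identically zero, again by Constraints~\ref{c3},~\ref{c4}, which were precisely designed to kill the degenerate vanishings as in the ``vanishes identically'' analysis of Proposition~\ref{main1}. An overdetermined consistent system of full rank has no solution, so the corresponding minor (or its numerator after clearing the $|x|^2$-elimination) is the avoidable resonance $f_\GA$.

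For (i), suppose $\GA$ is degenerate: $\mathrm{rk}\,\GA=r<k$. Then there is a $\Z$-relation $\sum_i \nu_i a_i=0$ among the vertices with $\sum|\nu_i|$ bounded by $2\cdot(2n+2)=4(n+1)$ times the number of vertices involved --- here one must check the coefficient bound, which follows from Lemma~\ref{fco} applied to the paths realizing each $a_i$ inside a $\le(2n+2)$-vertex graph; this is the point where one has to be slightly careful, since a relation among the $a_i$ need not itself have small coefficients, and I expect rescaling/primitivity of the relation together with Cramer's rule on the $(\le 2n+2)\times m$ matrix of the $a_i$ to do it. Taking $\pi$ of such a relation and combining with the corresponding equations \eqref{bacos} for those vertices produces, after eliminating $x$ (black vertices contribute linearly, the $|x|^2$ from a red vertex is removed as in \eqref{cicc}), a nontrivial polynomial identity purely in the $\mathtt v_i$: that is $f_\GA$. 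The one subtlety is to ensure $f_\GA\not\equiv 0$; if it were identically zero we would be in a situation exactly paralleling the ``vanishes identically as a polynomial in the $\mathtt v_i$'' case of the proof of Proposition~\ref{main1}, which was shown to force the relation to be trivial or to put the root in $S$ --- excluded by Constraint~\ref{c1} and by our choice $x\notin S$. Thus the hardest part of the whole argument is the bookkeeping in step (i)--(ii) that \emph{every} relevant determinant/polynomial is covered by the finitely many explicit inequalities of Constraints~\ref{c1},~\ref{c3},~\ref{c4} (possibly after adding a few more of the same flavour), i.e.\ that the list of avoidable resonances really is finite and explicit; the linear algebra itself is routine once the coefficient bounds from Lemma~\ref{fco} are in place. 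Collecting all these $f_\GA$ and imposing $f_\GA\neq 0$ for the finitely many possible combinatorial graphs completes the proof.
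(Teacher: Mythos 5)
Your overall framing---reduce to finitely many possible combinatorial graphs and exclude each by an avoidable resonance---is the paper's starting point, but both halves of your argument collapse exactly where the paper has to work hardest. For the non-degenerate rank-$(n+1)$ case you claim that the compatibility determinant of the overdetermined affine system extracted from \eqref{bacos} is generically nonzero, citing Constraints \ref{c3}, \ref{c4}; this is false in general. Those constraints control linear combinations $\sum_i\nu_i\mathtt v_i$ and the stabilizer relation \eqref{pririn}, not the compatibility of the system, and the paper exhibits rank-$(n{+}1)$ graphs (for instance the example in \S\ref{detvar} whose equations are generically compatible with unique solution $x=\mathtt v_3$) for which no such resonance exists. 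For precisely these graphs the paper proves Theorem \ref{aMT}: by Theorem \ref{codim} (irreducibility of the determinantal hypersurfaces, Proposition \ref{kke}, and Zariski density of the real points, Lemma \ref{zade}) the generic solution is a \emph{polynomial} in the $\mathtt v_i$, and then Lemma \ref{spegra} uses $O(n)$-equivariance and invariant theory to force that polynomial to be some $\mathtt v_i$, i.e.\ the realization lies in the special component and so does not occur outside $S$. Your sketch contains no substitute for this step.

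The degenerate case has the same gap in sharper form. The resonance you extract from a relation $\sum_i n_ia_i=0$ is $\sum_i n_iK(a_i)$, which is a usable nonzero polynomial only when $\sum_i n_iC(a_i)\neq 0$ (Lemma \ref{CK}, Lemma \ref{avr}); Theorem \ref{ridma} settles the case of defective colored rank. But when \emph{every} relation satisfies $\sum_i n_iC(a_i)=0$ --- the degenerate--resonant graphs --- there is no avoidable resonance at all, and your appeal to the ``vanishes identically'' analysis inside Proposition \ref{main1} does not transfer: that computation concerned the single stabilizer relation \eqref{pririn}, not an arbitrary relation among vertices, and Remark \ref{bala} gives explicit degenerate graphs all of whose resonances vanish identically, so they cannot be excluded by any inequality on $S$. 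Removing them is the content of Theorem \ref{MM} and of the whole of Part 2 (minimal relations, encoding graphs, even and doubly odd circuits, the $18$-case computation of $C_u$), which shows such graphs are not allowable and hence, by Proposition \ref{ilpunto0}, have no realization outside the special component. In short, the two situations you describe as routine linear algebra plus bookkeeping are exactly the ones that cannot be handled by imposing determinant inequalities, so the proposal does not prove the proposition.
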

The proof of this Proposition  will take the rest of this paper. 

By Proposition   \ref{LaMa} only  non degenerate graphs which   have rank $\leq n$ may appear  in $\Gama$, then  Theorem \ref{main} follows from this and the following.
\begin{proposition}\label{avri}
In a graph with $\leq n$ linearly independent vertices (plus the root) a generic realization is formed by affinely independent points,
\end{proposition}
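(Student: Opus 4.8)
\textbf{Proof proposal for Proposition \ref{avri}.}

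The plan is to work inside a single connected component $A$ of $\Gamma_S$ that can be lifted, with a chosen root $x$, and to use the explicit equations \eqref{bacos} together with the combinatorial description of the vertices from Proposition \ref{eli}. By hypothesis the combinatorial graph $\GA$ associated to $A$ has vertices $L(h)$ spanning a lattice of rank $r\leq n$; I would first reduce to the case where these $r$ integer vectors $L(h_1),\dots,L(h_r)$ are a maximal linearly independent subset. The geometric vertices of $A$ are then $h=-\pi(L(h))+\sigma(h)x$, with $\sigma(h)\in\{1,\tau\}$, and affine independence of the points $\{h\}\cup\{x\}$ is equivalent to linear independence of the vectors $\{h-x\}$. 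The key observation is that $h-x = -\pi(L(h))$ when $\sigma(h)=1$ (black vertex) and $h-x=-\pi(L(h))-2x$ when $\sigma(h)=\tau$ (red vertex); so the span of all $h-x$ together with the extra vector $2x$ (if any red vertex occurs) equals the span of $\{\pi(L(h))\}\cup\{2x\}$.

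The first substantive step is the black case: if all vertices of $A$ are black, then $h-x=-\pi(L(h))$, and affine independence of the $d\le n$ points plus the root amounts to $\pi$ restricted to the rank-$r$ lattice $\langle L(h)\rangle$ being injective, i.e.\ to $\sum_i n_i\mathtt v_i\neq 0$ for the relevant integer combinations. Since the vertices live in $G_X$ with at most $2n+2$ of them, Lemma \ref{fco} bounds $\sum_i|n_i|\leq 4(n+1)$, so this is exactly guaranteed by Constraint \ref{c3}. The second step handles red vertices. Here one cannot avoid $x$: the equations \eqref{bacos} for a red vertex, rewritten as in \eqref{cicc}, force $2x$ to lie on a sphere $S_\ell$, so $x$ is constrained. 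I would argue that, for a generic choice of $S$, the point $x$ solving the system \eqref{bacos} is not a root of any of the finitely many ``bad'' polynomials expressing a linear dependence among $\{\pi(L(h))\}\cup\{2x\}$; the relevant dependencies, after clearing the equations \eqref{bacos} to express $x$ (or $2x$, or $|x|^2$) in terms of the $\mathtt v_i$, become polynomial relations $f(\mathtt v_1,\dots,\mathtt v_m)=0$ of the type in Definition \ref{rev}. These are avoidable resonances unless they vanish identically, and one checks — as in the sample computation at the end of Example \ref{ecv1} and in the proof of Proposition \ref{main1} — that identical vanishing would force degeneracies already excluded by Constraints \ref{c1}, \ref{c3}, \ref{c4} (e.g.\ $(\mathtt v_i,\mathtt v_j)=0$, or $x\in S$, or a forbidden short linear relation).

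Concretely I would organize the red case as follows: pick the maximal independent set of vertices, split it into $b$ black ones $h_1,\dots,h_b$ and $c$ red ones $h'_1,\dots,h'_c$ with $b+c=r\le n$. The vectors $h_i-x=-\pi(L(h_i))$ and the differences $h'_j-h'_1$ of red vertices (which again are pure $\pi$-images of integer vectors, the red contributions $-2x$ cancelling) span a space of dimension, generically, equal to the rank of the corresponding sublattice, by Constraint \ref{c3} as before. It remains to see that $h'_1-x=-\pi(L(h'_1))-2x$ is not in that span; were it so, we would get a relation expressing $2x$ as a fixed rational combination $\sum_i q_i\mathtt v_i$ of the $\mathtt v_i$, and substituting into the quadratic equation \eqref{cicc} attached to the red vertex $h'_1$ yields a nontrivial polynomial identity in the $\mathtt v_i$ — an avoidable resonance — which we impose as a further constraint, again checking non-identical-vanishing by the argument already used in Proposition \ref{main1}. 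This finishes the induction on the number of red vertices and hence the proof.

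The main obstacle I expect is the last point: proving that the finitely many polynomials $f_{\GA}(\mathtt v_1,\dots,\mathtt v_m)$ arising from potential affine dependences do not vanish identically. This is exactly the subtle bookkeeping that the paper defers to Part 2 (the determinantal and invariant-theory analysis referenced after Theorem \ref{main}), and in the genuinely degenerate situations it is false — which is why Proposition \ref{LaMa} must first rule out degenerate graphs. So the honest statement is that Proposition \ref{avri} is contingent on Proposition \ref{LaMa} plus the constraints; given those, the affine-independence conclusion follows from the linear-algebra reduction above, and the only real work is the resonance-nonvanishing check, handled uniformly by the technique of Proposition \ref{main1}.
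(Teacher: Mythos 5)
Your reduction is the same as the paper's: affine independence of the component is linear independence of the vectors $-\pi(a_i)$ (black) and $-\pi(b_j)-2x$ (red), and taking differences $b_j-b_1$ of red vertices removes the $2x$ from all but one of them. But the two places where you discharge the actual work do not hold up. First, linear independence over $\R$ of the vectors $\pi(a_i)$ (or of the $\pi(b_j-b_1)$) is \emph{not} what Constraint \ref{c3} gives you: that constraint only forbids integer relations $\sum_i n_i\mathtt v_i=0$ with bounded coefficients, whereas a real linear dependence among the specific vectors $\pi(a_1),\dots,\pi(a_h)\in\R^n$ is a determinantal condition on the coordinates of the $\mathtt v_i$ (e.g.\ $\mathtt v_1=(1,0,\dots)$, $\mathtt v_2=(\sqrt2,0,\dots)$ satisfies \ref{c3}-type conditions but $\pi(e_1),\pi(e_2)$ are dependent). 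The paper handles this by observing that, since the $a_i$ are linearly independent integer vectors, the maximal minors of the matrix of the $\pi(a_i)$ are nonzero polynomials in the coordinates of the $\mathtt v_i$, and it \emph{imposes these determinants} as new avoidable resonances; they are not consequences of \ref{c3}.

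Second, and more seriously, your treatment of the remaining red vector is circular at the key point. If $-\pi(b_1)-2x$ were in the span of the other vectors, the coefficients of that dependence are unknown \emph{real} numbers (depending on $x$ and the $\mathtt v_i$), not a ``fixed rational combination,'' so you cannot simply substitute $2x=\sum_i q_i\mathtt v_i$ into \eqref{cicc} and read off a polynomial in the $\mathtt v_i$: you must eliminate both the dependence coefficients and $x$ and then prove the resulting polynomial is not identically zero. This is exactly what the paper's proof does and what your proposal omits: it forms the Gram matrix of the $h+k$ vectors, uses the equations \eqref{bacos} to replace every occurrence of $(u_i,x)$ and $|x|^2+(\pi(b_1),x)$ by quadratic polynomials in the $\mathtt v_i$ with integer coefficients, imposes the resulting determinant $\Delta$ as an avoidable resonance, and proves $\Delta\not\equiv 0$ by reducing mod $2$, where the substituted terms (which carry factors $2$ and $4$) disappear and $\Delta$ becomes the generic Gram determinant of independent linear forms. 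Your proposal instead defers the non-vanishing check to ``the technique of Proposition \ref{main1}'' or to Part 2; neither applies: Proposition \ref{main1} and Constraints \ref{c3}, \ref{c4} concern specific circuit relations with integer coefficients, and Part 2 proves Proposition \ref{LaMa}, not the non-degeneracy needed here, which the paper settles inside the proof of Proposition \ref{avri} itself by the Gram-matrix-mod-$2$ argument. Without that (or an equivalent elimination) your argument has a genuine gap.
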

\begin{proof} Take one of these graphs  which has  $h+k\leq n$ linearly independent vertices $$a_1,\cdots,a_h, b_1,\cdots b_k\quad\text{with}\quad \eta(a_i)=0,\ \eta(b_i)=-2.$$  Given a  geometric realization of this graph, starting from a root $x$  the remaining vertices are  $$a_i\cdot x=-\pi(a_i)+x,\quad  b_i\cdot x=-\pi(b_i)-x=(-\pi(b_i)-2x)+x  .$$

   We need to prove that, for generic choices of $\mathtt v_i$,  the vectors $-\pi(a_i),\ -\pi(b_i)-2x$ are linearly independent.  This means that  some determinant  of  a maximal minor of the $h+k\times n$ matrix with columns these elements is different from 0. 
   
   Change the basis of  $\R^m$  from $e_i$ to some $f_j$ so that the first $h+k$ elements $f_j$ coincide with  $a_1,\cdots,a_h, b_1,\cdots b_k$.\smallskip
   
   If there are no $b_i$  then this matrix  can be thought of as a matrix of variables so all  determinants of maximal minors are polynomials  different from 0 in the $\mathtt v_i$ and can thus be imposed in the list of avoidable resonances, that is inequalities satisfied by the $\mathtt v_i$.  
   
   If on the other hand we have some $b_i$  the determinants of maximal minors are polynomials which, besides the coordinates of the $\mathtt v_i$ contain also the coordinates $x_i$ of the root.  So we need to approach the problem in a different way.\smallskip

   Let $c_i=b_i-b_1,\ i=2,\cdots, k$. The list of vectors to be proved to be linearly independent is now
\begin{equation}\label{ivve}
u_1=\pi(a_1),\cdots,  u_h=\pi(a_h),\ u_{h+2}=\pi(c_2),\cdots,u_{h+k}=\pi(c_k) ,\ \pi(b_1)+2x=u_{h+1}+2x.
\end{equation} The coordinates of the $h+k$ vectors  $u_i  $ are algebraically independent linear functions in the coordinates of the $\mathtt v_i$ so we can treat them as independent variables.  

Consider the matrix $A$ of scalar products  of the  $h+k$ vectors   of Formula  \eqref{ivve}.  In particular compute 
$$ (u_i,\pi(b_1)+2x)=(u_i,\pi(b_1))+2(u_i,x),\ i\neq h=1, $$$$\ (\pi(b_1)+2x,\pi(b_1)+2x)=  (\pi(b_1) ,\pi(b_1) )+4(\pi(b_1),x)+4|x|^2.$$
From equations \eqref{bacos} and the definition of $K(a)$  which is a quadratic polynomial  in the coordinates of $\mathtt v_i$ with integer coefficients  the terms  $(u_i,x)$  and $(\pi(b_1),x)+ |x|^2$ are quadratic polynomials  in the coordinates of $\mathtt v_i$ with integer coefficients.  Making this substitution we have that the matrix $A$ of scalar products has entries  quadratic polynomials in the coordinates of the vectors $\mathtt v_i$ with integer coefficients.

 If the determinant $\Delta$ of $A$  is a non zero polynomial  we take this as avoidable resonance so under these constraints the vectors   are linearly independent and thus the points of the corresponding component are affinely independent.
 
 In order to prove that $\Delta$  is non zero we can   pass modulo 2  where the terms deduced by substitution of  $2(u_i,x),\  4(\pi(b_1),x)+4|x|^2 $ disappear.  $\Delta$   becomes the determinant of scalar products  of  the vectors $u_i$ with indeterminate coefficients  which is clearly different from 0 and we impose it as avoidable resonance.  

  \end{proof}
Take a connected component $A$ of $\Gama$ and choose  a root $ x \in A.$  Assume that $A$ can be lifted. Let $\GA=\{g_a ,\ a\in A\}$ be the combinatorial  graph of which $A$ is a geometric realization. 
\begin{lemma}
The rank of $\GA$ does not depend on the choice of the root but only on $A$. 
\end{lemma}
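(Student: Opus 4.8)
The plan is to show that changing the root produces a combinatorial graph which is a translate (in the sense of Remark \ref{fus}) of the original one, and that translation by a group element preserves the rank of the lattice generated by the vertices.

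First I would recall the setup. Fix two roots $x, y \in A$ and lift them as $x = -\pi(a)$, $y = -\pi(b)$ with $a, b \in \R^m$ and, since $A$ is liftable, pick a lift $\widetilde A \subset \Lambda_S$ of $A$. For each $h \in A$ choose a lifted path from $x$ to $h$, giving $g_h \in G_2$ with $h = g_h \cdot x$; the combinatorial graph with root $x$ is $\GA_x = \{L(g_h) : h \in A\} \cup \{0\}$ by Definition \ref{cogrA}. Doing the same with root $y$ gives $\GA_y = \{L(g'_h) : h \in A\} \cup \{0\}$, where $g'_h = g_h \circ g_x^{-1}$ once we normalize $g_x$ to be the group element with $g_x \cdot y = x$ along the chosen path. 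The key structural fact is Remark \ref{fus}: the two combinatorial graphs are related by right translation, $\GA_y = \GA_x \cdot g^{-1}$ for a fixed $g \in G_2$ (up to the ambiguity in lifting, which only affects the outcome by an element of the stabilizer $H_x$, i.e.\ by a vector in $\ker \pi$ — but since we are using actual paths in the liftable graph $\widetilde A$, the lift is canonical and this ambiguity does not arise).

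Next I would translate the right-translation statement into lattice language. Write $g = (c, \rho)$ with $c \in \Z^m$, $\rho = \pm 1$. By the composition formula \eqref{claw}, for a vertex $(v, \epsilon) \in \GA_x$ (with $\epsilon = \eta(v) + 1 \in \{1, -1\}$) the translate $(v,\epsilon) \circ g^{-1}$ has underlying $\Z^m$-vector of the form $\pm v + w$ where $w$ depends only on $g$, not on $v$: concretely, composing on the right by a fixed group element adds a fixed vector and possibly flips the sign, according to whether $\epsilon = 1$ or $\epsilon = -1$. Hence the set of vertices of $\GA_y$ is $\{\sigma_i v_i + w_i\}$ where each $v_i$ ranges over the vertices of $\GA_x$, each $\sigma_i \in \{\pm 1\}$, and each $w_i$ lies in the fixed finite set $\{0, \text{(shift from }g)\}$. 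Because $0$ is a vertex of both graphs, taking differences of vertices shows that the lattice (or at least the affine lattice) spanned by the vertices of $\GA_y$ — i.e.\ the $\Z$-span of $\{L(g'_h)\}$ — equals the $\Z$-span of a set obtained from $\{L(g_h)\}$ by sign changes and a global shift killed upon passing to the lattice through $0$. Sign changes and adding a fixed element of the lattice itself do not change the lattice, so ${\rm rk}\,\GA_y = {\rm rk}\,\GA_x$.

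The cleanest way to package the middle step is probably to argue directly with the defining equations \eqref{bacos} rather than the group law: for each vertex $h$, the vector $L(g_h)$ satisfies $-\pi(L(g_h)) + \sigma(h) x = h$, so $\pi(L(g_h)) = \sigma(h) x - h$, and likewise $\pi(L(g'_h)) = \sigma'(h) y - h$. Since $\pi$ is injective on the lattice question modulo $\ker\pi$ — and Constraint \ref{c3} forces no nonzero short integer relation, so in the relevant range $\pi$ is injective on $\GA$ — the rank of the $\Z$-span of the $L(g_h)$ equals the rank of the $\Z$-span of the vectors $\sigma(h) x - h$ for $h \in A$, and changing the root from $x$ to $y$ replaces this by the $\Z$-span of $\sigma'(h) y - h$. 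I expect the main obstacle to be bookkeeping the signs $\sigma(h)$ versus $\sigma'(h)$: these can differ because the color (Remark \ref{edg}, \S\ref{colo}) of a vertex depends on the parity of red edges on the path from the root, and moving the root can flip every color. One must check that a global color flip corresponds exactly to an overall sign change $v \mapsto -v$ on a sub-lattice (the red vertices), combined with a shift, and that this still preserves rank. This is where Proposition \ref{coo} and the composition formula \eqref{cofu} do the work, and once the sign/shift structure is made explicit the rank invariance is immediate. I would finish by noting that the analogous statement for the black rank and red rank requires slightly more care — a color flip swaps ``black'' and ``red'' lattices up to the shift — but the total rank, which is all the lemma asserts, is clearly stable.
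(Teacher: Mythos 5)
Your main argument is correct and is essentially the paper's own proof: changing the root amounts to right composition with the fixed element $g_{y,x}^{-1}$, and the group law gives $L_x(a)=L_y(a)+\sigma_y(a)L_x(y)$ (the paper's formula \eqref{chro}), so each vertex lattice is contained in the other because $L_x(y)$, resp.\ $L_y(x)$, is itself a vertex, whence the ranks agree. Two small cautions: under right translation the underlying vector is shifted by $\pm L_x(y)$ according to the color (there is no sign flip on $v$ itself), which is harmless for your conclusion, but your proposed alternative packaging through $\pi$ and Constraint \ref{c3} would not be sound, since that constraint only excludes relations of small $\ell_1$-norm and in general the rank of the $\Z$-span of the $L(g_h)$ in $\Z^m$ can be strictly larger than the rank of its image under $\pi$ --- this is precisely the degenerate situation the paper must exclude later by imposing further avoidable resonances.
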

\begin{proof}  We can stress the role of the root in the notation $g_{a,x } =(L_x(a),\sigma_x(a))$. 

We change the root from $x$ to another $y=g_{y,x}x$,  and  have $a=g_{a,x}x=g_{a,x}g_{y,x}^{-1}y$. 

 Then   $g_{y,x }^{-1} =(-\sigma_x(y)L_x(y),\sigma_x(y))$ and
 $$(L_x(a),\sigma_x(a) )=g_{ a,x}=g_{ a,y}g_{y,x}^{-1},=(L_y(a),\sigma_y(a))(L_x(y),\sigma_x(y)),$$
\begin{equation}
\label{chro} \implies L_x(a)=L_y(a)+\sigma_y(a)L_x(y),\  \sigma_x(a)=\sigma_y(a)\sigma_x(y).  
\end{equation}   In particular $L_y(x)=- \sigma_x(y)L_x(y)$. This shows that   the notion of rank is independent of the root.\end{proof}

Notice that when we change the root in $A$ we have a simple way of changing the colors and the ranks of the vertices of $\GA$ that we leave to the reader.

\subsection{Degenerate graphs}

If $\GA$ is a degenerate  graph then there are non trivial relations,   $\sum_an_a a=0,\ n_a\in\Z$  where the sum runs among the vertices $a\in \GA$ different from 0.
\begin{remark}
\label{maxt} It is  useful to choose a maximal tree $T$ in $\GA$. 

This is a tree which contains all vertices of $\GA$. For each choice of $T$ there is   a triangular change of coordinates from the vertices to the    edges  of $T$. Hence the relation can be also expressed as a relation between these   edges.
\end{remark}
 
In the next discussion we treat the $\mathtt v_i$ as  {\em vector  variables} and we seek solutions of our equations  as functions of the $\mathtt v_i$.

 We must have, by linearity of the map $a\mapsto  a^{(2)}$, for every relation  $\sum_in_i a_i=0,\ n_i\in\Z$ that $0=\sum_in_i a_i^{(2)},$ where we recall that if $a=\sum m_i e_i$ we have that $a^{(2)}= \sum m_i e_i^2$. \smallskip
 
  Finally we have $ \ 0=\sum_in_i \pi(a_i)$ as linear polynomial in the $\mathtt v_i$  and $\sum_in_i \eta(a_i)=0$. \medskip
 
 Recalling that $\eta(a)=0,-2$ (resp. if $a$ is black or red), we have :\begin{equation}
0=\sum_{i\, |\, \eta(a_i)=-2}n_i.
\end{equation}
Applying Formula \eqref{bacos}  we deduce that, in order to ensure that the equations of $\GA$ are compatible,  we must have
\begin{equation}\label{riso}
\sum_in_iK(a_i )=\sum_in_i (x,\pi(a_i))+[\sum_{i \, |\, \eta(a_i)=-2}n_i]|x|^2  =(x,\sum_in_i \pi(a_i))=0.
\end{equation}   
  \begin{lemma}\label{CK}
If $\sum_in_iC(a_i) $ is non zero then $\sum_in_iK(a_i ) =\pi(\sum_in_iC(a_i) )$ is a non zero polynomial  in the coordinates of the vectors $\mathtt v_i$ for all dimensions $n$.
\end{lemma}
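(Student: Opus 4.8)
The plan is to reduce the statement to a question about the single polynomial $\pi(\sum_i n_i C(a_i))$ in the entries of the $\mathtt v_i$ and then to exhibit a specialization of the $\mathtt v_i$ on which it does not vanish, which suffices since a polynomial that is identically zero vanishes under every specialization. First I would recall from \eqref{ricon} that for a vertex $a=\sum_k m_k e_k$ with $\eta(a)=\sigma$-type (so the attached group element is $(a,\pm 1)$) we have $C(a)=\tfrac{\sigma}{2}(a^2+a^{(2)})$, hence $\sum_i n_i C(a_i)=\tfrac12\sum_i n_i\sigma_i(a_i^2+a_i^{(2)})\in S^2[\R^m]$, and $\pi$ turns $a_i^2$ into $|\pi(a_i)|^2$ and $a_i^{(2)}$ into $\sum_k (m_i)_k|\mathtt v_k|^2$. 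So $\sum_i n_i K(a_i)$ is a quadratic polynomial in the coordinates of $\mathtt v_1,\dots,\mathtt v_m$ with integer coefficients, and it is the image under $\pi$ of the fixed element $P:=\sum_i n_i C(a_i)\in S^2[\R^m]$, which by hypothesis is nonzero.

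The key point is that the linear map $\pi\colon S^2[\R^m]\to\{\text{quadratic polynomials on }\R^n\}$ sending $e_ke_l\mapsto(\mathtt v_k,\mathtt v_l)$ is, when the $\mathtt v_k$ are regarded as independent vector variables, \emph{injective}. Indeed, write $P=\sum_{k\le l}c_{kl}\,e_ke_l$ with not all $c_{kl}$ zero; then $\pi(P)=\sum_{k\le l}c_{kl}(\mathtt v_k,\mathtt v_l)$, and the scalar products $(\mathtt v_k,\mathtt v_l)$ for $k\le l$ are algebraically independent functions of the coordinates of the $\mathtt v_k$ provided $n\ge 1$: concretely, taking each $\mathtt v_k=(t_k,0,\dots,0)$ for indeterminates $t_k$ already gives $\pi(P)=\sum_{k\le l}c_{kl}t_kt_l$, a nonzero polynomial in the $t_k$ whenever $P\ne 0$. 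Hence $\pi(P)\ne 0$ as a polynomial in the coordinates of the $\mathtt v_k$, for every $n\ge 1$. This is the heart of the matter, and it is where the phrase ``for all dimensions $n$'' is used: the nonvanishing is witnessed already on the one-dimensional diagonal locus, so increasing $n$ cannot destroy it.

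Finally I would package this: by Proposition \ref{comC} / the definition of $K$ we have $K(a_i)=\pi(C(a_i))$ for each $i$, so by linearity of $\pi$, $\sum_i n_i K(a_i)=\pi(P)$; combined with the previous paragraph this is a nonzero polynomial in the coordinates of the $\mathtt v_i$, as claimed. The only mild subtlety — and the step I would expect to need the most care — is the passage from ``$P\ne 0$ in $S^2[\R^m]$'' to ``$\pi(P)\ne 0$ as a polynomial'', i.e.\ the injectivity of $\pi$ on symmetric squares; everything else is bookkeeping with the formulas \eqref{ricon}, \eqref{ipi}. One should also note that $P$ being nonzero is exactly the hypothesis of the lemma, so no further input about the combinatorial structure of $\GA$ or about the relation $\sum_i n_i a_i=0$ is needed here — that relation only guarantees (via \eqref{riso}) that $\pi(P)$, when nonzero, is the \emph{avoidable resonance} one wants to impose.
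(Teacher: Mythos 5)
Your proposal is correct and is essentially the paper's own proof: both reduce to the one-dimensional specialization $\mathtt v_k=(t_k,0,\dots,0)$, under which $\pi$ becomes the variable substitution $e_k\mapsto t_k$, so a nonzero element of $S^2[\R^m]$ maps to a nonzero polynomial. (Your parenthetical claim that the scalar products are \emph{algebraically} independent for all $n\ge 1$ is an overstatement — for $n<m$ they satisfy Gram determinantal relations — but you only use their linear independence, so the argument stands.)
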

 \begin{proof}It is clear that it is enough to prove this for $n=1$, by specializing the $\mathtt v_i$  to vectors in which only the first coordinate is not zero.
 
The expression $\sum_in_i {K(a_i)} =\pi(\sum_in_iC(a_i) )$ is a linear combination with integer coefficients of the scalar products $(\mathtt v_i,\mathtt v_j)$.
In dimension $n=1$  we have that the $\mathtt v_i$  are   variables and $(\mathtt v_i,\mathtt v_j)= \mathtt v_i \mathtt v_j$,   so in practice this is just a variable substitution $e_i\mapsto \mathtt v_i$.
 \end{proof}

 Let  $\GA$ be a   combinatorial graph   $\GA$ with a relation $\sum_an_a a= 0$: 
  \begin{lemma}\label{avr}
 If $\sum_an_aC(a) \neq 0$ the graph $\GA$    has no geometric realization for a generic choice of the $S:=\{\mathtt v_i\}$.
\end{lemma}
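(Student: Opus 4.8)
The plan is to derive a contradiction from assuming that $\GA$ has a geometric realization for a generic (Zariski-dense) set of tangential sites $S$. Suppose $\GA$ is realized with some root $x$; by Proposition \ref{coset} the point $x$ together with the $\mathtt v_i$ must satisfy all the equations \eqref{bacos}. Now I would exploit the relation $\sum_a n_a a = 0$ among the vertices of $\GA$: applying $\pi$ gives $\sum_a n_a \pi(a) = 0$ as a linear identity in the $\mathtt v_i$, and taking $\eta$ gives $\sum_{a\,|\,\eta(a)=-2} n_a = 0$. Combining the equations \eqref{bacos} linearly with the coefficients $n_a$ — exactly as in the displayed computation \eqref{riso} — the $|x|^2$ terms cancel (because the red coefficients sum to zero) and the linear-in-$x$ terms cancel (because $\sum_a n_a \pi(a)=0$), leaving the single scalar identity
\begin{equation*}
\sum_a n_a K(a) = 0.
\end{equation*}

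Next I would invoke Lemma \ref{CK}: since $\sum_a n_a C(a)\neq 0$ by hypothesis, the quantity $\sum_a n_a K(a) = \pi\big(\sum_a n_a C(a)\big)$ is a \emph{nonzero} polynomial $f_\GA(\mathtt v_1,\dots,\mathtt v_m)$ in the coordinates of the $\mathtt v_i$, in every dimension $n$. Therefore any $S$ that admits a geometric realization of $\GA$ must lie on the proper subvariety $\{f_\GA = 0\}$. In the language of Definition \ref{rev}, the equations \eqref{bacos} force $\theta(R_\GA)\subset\{f_\GA=0\}$, so $\GA$ has no generic realization and $f_\GA$ is an avoidable resonance. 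Hence imposing the additional constraint $f_\GA(\mathtt v_1,\dots,\mathtt v_m)\neq 0$ — one more polynomial inequality to add to the finite list — guarantees that $\GA$ does not appear in $\Gama$ for generic $S$, which is the assertion of the lemma.

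The only subtlety I anticipate is bookkeeping about the root: the coefficients $n_a$ and hence the polynomial $f_\GA$ are attached to a particular choice of root, and one should check (using the change-of-root formula \eqref{chro} and the preceding lemma on root-independence of the rank) that the vanishing or non-vanishing of $\sum_a n_a C(a)$ is a genuine property of the abstract degenerate graph, not an artifact of the root — but this is routine, since a relation among the vertices with respect to one root transforms into a relation with respect to another by the triangular substitution of Remark \ref{maxt}, and linearity of $a\mapsto C(a)$ in the relevant sense is already recorded above. I do not expect any real obstacle here: the heart of the argument is the cancellation \eqref{riso}, which is forced by the structure of the relation, together with Lemma \ref{CK}, which is already proved. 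So the proof is essentially a three-line assembly of \eqref{bacos}, \eqref{riso}, and Lemma \ref{CK}.
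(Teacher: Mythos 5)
Your proof is correct and follows essentially the same route as the paper: any realization forces $\sum_a n_a K(a)=0$ via the cancellation in \eqref{riso}, and Lemma \ref{CK} shows this is a nonzero polynomial in the $\mathtt v_i$, which is then imposed as an avoidable resonance. The extra remark about root-independence is harmless but not needed for the lemma as stated, since the relation and coefficients are taken for the given graph with its chosen root.
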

\begin{proof}
If the graph has a realization then  $\sum_in_iK(a_i )=0$ but this polynomial  is not identically zero by Lemma \ref{CK}, so we can impose it as one of the  constraints on $S$.\end{proof}
As alreadhy explained we restrict to impose these conditions to graphs with at most $2n+2$ vertices, so we have a finite number of constraints.
\begin{example}\label{pirir}
Consider the   degenerate combinatorial graph
 $$ 
 \GA= \xymatrix{e_1-e_2 \ar@{<-}[r]^{e_1-e_2}& 0     \ar@{=}[d]_{-e_1-e_2}
  \ar@{=}[r]^{-e_1-e_3}
  &
  {-e_1-e_3}\ar@{->}[r]^{e_1-e_3} & {-2e_3} \\&{-e_1-e_2}&& } 
 $$
The relation is $  (e_1-e_2)+2(-e_1-e_3)-(-2e_3)-(-e_1-e_2)=0  $.

We may write the value of $C(a)$
 of each vertex $a$, we get
 $$  \xymatrix{e_1^2-e_1e_2 \ar@{-}[r]& 0     \ar@{=}[d]\ar@{=}[r]&{-e_1e_3}\ar@{-}[r] & {-e_3^2} \\&{-e_1e_2}&& }$$
we have
$$ \sum_a n_a C(a)=
\qquad e_1^2-e_1e_2-2e_1e_3+e_3^2+e_1e_2 \neq 0
$$
so the equations of this graph are incompatible if $\pi( e_1^2  -2e_1e_3+e_3^2 )=|\mathtt v_1-\mathtt v_3|^2\neq 0$. This is an avoidable resonance.
\end{example}

We arrive now at the main Theorem of the section:
\begin{theorem}\label{ridma}
Given  a possible combinatorial graph  of rank $k$ for a  given color, then either it has  exactly $k $  vertices of that color or it produces an avoidable resonance.  \end{theorem}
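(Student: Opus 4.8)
The plan is to reduce the statement to a linear-algebra fact about the vertices and then quote Lemma \ref{avr}. Fix the colour, say black; the red case is the same with an overall sign and one extra observation (noted at the end). Let $a_1,\dots,a_r$ be the black vertices of $\GA$ other than the root, so $k={\rm rk}\,\langle a_1,\dots,a_r\rangle\le r$. If $r=k$ the $a_i$ are linearly independent, which is the first alternative ($\GA$ has exactly $k$ vertices of that colour), so assume $r>k$; then the lattice $R=\{(n_i)\in\Z^r\mid\sum_i n_ia_i=0\}$ of integer relations has rank $r-k\ge1$. The reduction is: it suffices to find $(n_i)\in R$ with $\sum_i n_iC(a_i)\neq0$ in $S^2[\R^m]$. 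Indeed, granting this, Lemma \ref{CK} makes $f:=\sum_i n_iK(a_i)=\pi\bigl(\sum_i n_iC(a_i)\bigr)$ a non-zero polynomial in the coordinates of the $\mathtt v_i$, while Formula \eqref{riso}---whose hypothesis $\sum_{i:\eta(a_i)=-2}n_i=0$ holds vacuously for a black relation---forces $f$ to vanish on $R_\GA$; thus $f$ is an avoidable resonance.

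Next I would compute $C$ on a relation. Since $a\mapsto a^{(2)}$ is linear, $\sum_i n_ia_i^{(2)}=\bigl(\sum_i n_ia_i\bigr)^{(2)}=0$ for every $(n_i)\in R$, so by \eqref{ricon} we have $\sum_i n_iC(a_i)=\tfrac12\sum_i n_ia_i^2$ for a black relation. Hence it is enough to exhibit $(n_i)\in R$ with $\sum_i n_ia_i^2\neq0$, i.e. to show $R\not\subseteq R'$ where $R'=\{(n_i)\mid\sum_i n_ia_i^2=0\}$. Suppose not, $R\subseteq R'$. Then the vectors $\widehat a_i:=(a_i,a_i^2)\in\R^m\oplus S^2[\R^m]$ have relation lattice $R\cap R'=R$, so they span a subspace of dimension $r-(r-k)=k$, the same as $V:=\langle a_1,\dots,a_r\rangle_\R$; therefore there is a linear map $\psi:V\to S^2[\R^m]$ with $\psi(a_i)=a_i^2$ for all $i$. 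I would then extract from $\{a_1,\dots,a_r\}$ a basis $e'_1,\dots,e'_k$ of $V$ (so $\psi(e'_j)=(e'_j)^2$) and write any remaining vertex as $a=\sum_{j=1}^k c_je'_j$, $c_j\in\Q$. Comparing $\psi(a)=\sum_j c_j(e'_j)^2$ with $a^2=\sum_{j,j'}c_jc_{j'}e'_je'_{j'}$ and using that the degree-$2$ monomials in the linearly independent $e'_1,\dots,e'_k$ are linearly independent in $S^2[\R^m]$, one gets $c_j^2=c_j$ and $c_jc_{j'}=0$ for $j\neq j'$, whence $a=0$ or $a=e'_j$ for some $j$---impossible, since $a$ is a vertex distinct from the root and from the vertices $e'_1,\dots,e'_k$. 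This contradiction produces the wanted relation.

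The point that makes the argument non-trivial---and the step I expect to be the real obstacle---is precisely this last one: a single relation can have $\sum_i n_iC(a_i)=0$ (for instance $3a-3(2a)+(3a)=0$ among collinear black vertices $a,2a,3a$), so Lemma \ref{avr} cannot be applied to an arbitrary relation; one must exploit the whole relation lattice and the combinatorial fact that the vertices are pairwise distinct and non-zero. Finally, for the red colour the same proof works once one notes that a relation among red vertices automatically has $\sum_i n_i=0$ (apply $\eta$ to $\sum_i n_ia_i=0$, using $\eta(a_i)=-2$): this again yields $\sum_i n_ia_i^{(2)}=0$, hence $\sum_i n_iC(a_i)=-\tfrac12\sum_i n_ia_i^2$, and it supplies exactly the hypothesis $\sum_{i:\eta(a_i)=-2}n_i=0$ needed in \eqref{riso}.
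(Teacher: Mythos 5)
Your proposal is correct and, despite the contrapositive packaging through the relation lattice and the auxiliary map $\psi$, it is essentially the paper's own argument: the paper likewise reduces, via linearity of $a\mapsto a^{(2)}$ together with Lemma \ref{CK} and Lemma \ref{avr}, to producing a relation with $\sum_i n_i a_i^2\neq 0$, and does so by expressing one surplus vertex against $k$ linearly independent vertices and comparing the square of the combination with the combination of squares, treating the independent vertices as independent variables, to force the surplus vertex to be $0$ or one of the basis vertices, the same contradiction you reach with $a=0$ or $a=e'_j$. The only cosmetic difference is that the paper verifies nonvanishing directly for that one well-chosen relation rather than assuming all relations resonate (and note that in the red case the reason $a=0$ is excluded is $\eta(a)=-2$, not distinctness from the root).
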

\begin{proof}    
Assume   that  we can choose   $k+1$ vertices $(a_0,a_1,\ldots,a_k) $, different from the root of the given color $\sigma=\pm 1$  so that we have a non trivial relation   $\sum_in_i  a_i =0 $ with $n_0\neq 0$ and the vertices $a_i,\ i=1,\ldots,k$ are linearly independent.  
We compute the resonance relation and need to show that it is different from 0:
$$2\sigma  \sum_in_i  C(a_i) = \sum_in_i  (a_i^2+ a_i^{(2)}).$$ 
  By the linearity of  the map $a\mapsto a^{(2)}$   we have $\sum_in_i  a_i =0\implies  \sum_in_i a_i^{(2)}=0$.
  
We deduce that  $$2\sigma \sum_in_i  C(a_i)= \sum_in_ia^2_i=n_0a_0^2+ \sum_{i=1}^nn_ia^2_i.$$  Now from $ n_0  a_0 =-( \sum_{i=1}^nn_ia_i ) $ we deduce
$$n_0^2a_0^2=( \sum_{i=1}^nn_ia_i )^2 \implies n_0^2a_0^2+ n_0\sum_{i=1}^nn_ia^2_i=( \sum_{i=1}^nn_ia_i )^2+ n_0\sum_{i=1}^nn_ia^2_i .$$Since the elements $a_i$ with $i=1,\ldots,k$ are  linearly independent they can be treated as {\em independent variables}. If  this expression is 0,  we  have that only one of the coefficients $n_i$ can be different from 0, say $n_1\neq 0$  so, if $ \sum_in_i  C(a_i) = 0$  the relations are $$n_0a_0+n_1a_1=0=n_0a_0^2
+n_1a_1^2\implies n_0^2a_0^2
+n_0n_1a_1^2=(n_1^2+n_0n_1 )a_1^2=0\implies  a_0 = a_1 $$ a contradiction.   Therefore $ \sum_in_i  C(a_i) \neq 0$.

   \end{proof}

 \begin{constraint}\label{co6}  We impose that the  vectors $\mathtt v_i$ are generic for  avoidable resonances arising from degenerate possible combinatorial graphs with  at  most $n+1$  elements of each color.
\end{constraint} There are finitely many  degenerate possible combinatorial graphs with  at  most $n+1$  elements of each color. For each one of these it is enough to choose a single relation giving an avoidable resonance. Thus this constraint is given by a finite number of inequalities.
\begin{remark}\label{bala}
 It is essential that we introduce the notion of coloured rank, otherwise our statement is false as can be seen with the following graph:
\begin{equation}
\label{mig} \xymatrix{ & \ar@{-}[d]_{ -e_2+e_1}\ar@{=}[r] ^{ -e_2-e_1}(   -e_2+e_1)&  ( -2e_1 )&\\ e_1- e_3&  \ar@{-}[l]^{e_1-e_3 }\ar@{=}[r]0&   ( -e_3-e_1 )&  }
\end{equation}   
Relation is $   ( -e_3-e_1 )-(  e_1-e_3 ) -( -2e_1 )=0$, we have 
$$ C( -e_3-e_1 ) =-e_1e_3,\quad  C(   -e_3+e_1)=e_1^2-e_1e_3,\quad C( -2e_1 )=-e_1^2$$ $$-e_1e_3-  (e_1^2- e_1e_3)+e_1^2=0.$$
Actually this graph does not really pose any problem since its only geometric realization is in $S$ (hence it is {\bf not} a true combinatorial graph).

A more complex example is

 $$ \xymatrix{&&&e_2-e_3\ar@{-}[d]^{ e_2-e_3}&\\
 -3e_1+e_2\ar@{=}[r]^{-e_1-e_4}&2e_1-e_2-e_4\ar@{-}[r]^{ e_1-e_4}&e_1-e_2\ar@{-}[r]^{e_1-e_2}&0\ar@{= }[r]^{-e_2-e_3}&-e_2-e_3}.$$
What is common of these two examples is that in each there is a pair of vertices $a,b$, not necessarily joined by an edge, of distinct colors,  with $a+b=-2e_i$ for some index $i$. In both cases  by changing root if necessary we have a vertex equal to $-2e_i$  or in group notation $-2e_i\tau.$ 
 
\begin{definition}\label{ilpunto1}
We shall say that a connected graph  $G$ is {\em allowable}  if there is no vertex $b=-2e_i, -3e_i+e_j$, otherwise it is {\em not allowable}.
\end{definition}
We may assume $a\in\Z^m$ black and $c=b\tau,\ b\in\Z^m$ red. We then easily see that 
\begin{proposition}\label{ilpunto0}
If a graph is not allowable then it has no geometric realization outside the special component (i.e. it is not compatible).\end{proposition}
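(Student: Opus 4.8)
We want to show that a connected combinatorial graph $G$ containing a vertex $c$ which (in group notation) equals $-2e_i\tau$ — equivalently, after changing the root, $G$ contains a pair of vertices $a$ (black) and $c=b\tau$ (red) with $a+b=-2e_i$ — admits no geometric realization outside the special component. First I would reduce to the normalized situation: by Remark \ref{fus} we may change the root so that $-2e_i\tau$ is literally a vertex of $G$, i.e. there is a vertex $h$ of the geometric realization $A\subset\Gama$ with $h=g_h\cdot x$ and $g_h=(-2e_i,\tau)$, so that by \eqref{lasig} we have $h=-\pi(-2e_i)+\tau x = 2\mathtt v_i - x$. The defining equation \eqref{bacos} for this red vertex, with $L(h)=-2e_i$, reads $|x|^2+(x,\pi(-2e_i))=K(g_h)$, i.e. $|x|^2 - 2(x,\mathtt v_i) = K((-2e_i)\tau)$.

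**Computing the energy.** Next I would compute $K((-2e_i)\tau)$ directly from \eqref{ricon}: with $a=-2e_i$ and $\sigma=-1$ we get $C((-2e_i)\tau)=\tfrac{-1}{2}(a^2+a^{(2)})=\tfrac{-1}{2}(4e_i^2+(-2)e_i^2)=\tfrac{-1}{2}(2e_i^2)=-e_i^2$, hence $K((-2e_i)\tau)=-|\mathtt v_i|^2$. So the equation becomes
\begin{equation}
|x|^2 - 2(x,\mathtt v_i) = -|\mathtt v_i|^2, \qquad\text{i.e.}\qquad |x-\mathtt v_i|^2 = 0,
\end{equation}
which forces $x=\mathtt v_i$. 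Thus any geometric realization of $G$ has its root $x$ equal to one of the tangential sites $\mathtt v_i\in S$. But a realization with root in $S$ lands inside the component of $S$, which under Constraints \ref{c0} and \ref{c1} is exactly the special component $S$ (Remark \ref{laspe}); hence $G$ has no realization outside the special component, i.e. it is not compatible. For the $-3e_i+e_j$ case one argues the same way: by changing root we reduce to a vertex $(-3e_i+e_j,\tau)$ (note $\eta(-3e_i+e_j)=-2$, so it is red), compute $C((-3e_i+e_j)\tau)=\tfrac{-1}{2}((-3e_i+e_j)^2+(-3e_i+e_j)^{(2)})=\tfrac{-1}{2}(9e_i^2-6e_ie_j+e_j^2-3e_i^2+e_j^2)=\tfrac{-1}{2}(6e_i^2-6e_ie_j+2e_j^2)=-3e_i^2+3e_ie_j-e_j^2$, so $K=-3|\mathtt v_i|^2+3(\mathtt v_i,\mathtt v_j)-|\mathtt v_j|^2$, and the corresponding equation $|x|^2+(x,-3\mathtt v_i+\mathtt v_j)=K$ should be seen to force $x$ onto $S$ or produce the analogous collapse; I would carry out this completion-of-squares check to confirm $x$ is pinned to a site.

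**Main obstacle.** The only delicate point is the bookkeeping in the change of root: one must verify that the equivalence of Remark \ref{fus} really does convert the hypothesis "there exist vertices $a,b$ of distinct colors with $a+b=-2e_i$" into "after some translate $Gg^{-1}$ there is a vertex equal to $-2e_i$ (resp. $-3e_i+e_j$)", and that compatibility/realizability is preserved under this equivalence (which is exactly the content of Remark \ref{fus} and the last Remark of \S\ref{eqgama}). Given that, the rest is the short energy computation above plus the observation that a root in $S$ means the realization is the special component. I expect the whole argument to be under half a page.
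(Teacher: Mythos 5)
Your treatment of the $-2e_i$ case is correct and is exactly the paper's argument: $K((-2e_i)\tau)=-|\mathtt v_i|^2$, the equation \eqref{bacos} collapses to $|x-\mathtt v_i|^2=0$, so $x=\mathtt v_i\in S$ and one concludes by Remark \ref{laspe}. The root-changing step you worry about is indeed just Remark \ref{fus} and is also how the paper phrases it, so that part is fine.

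The gap is the $-3e_i+e_j$ case, which is half of Definition \ref{ilpunto1} and which you leave undone: you compute $K((-3e_i+e_j)\tau)=-3|\mathtt v_i|^2+3(\mathtt v_i,\mathtt v_j)-|\mathtt v_j|^2$ correctly, but then only say you ``would carry out this completion-of-squares check to confirm $x$ is pinned to a site.'' That is not what the check yields, and the expectation is wrong: completing the square in $|x|^2+(x,-3\mathtt v_i+\mathtt v_j)=K$ gives
\begin{equation*}
\Bigl|x+\tfrac{-3\mathtt v_i+\mathtt v_j}{2}\Bigr|^2=K+\tfrac14\,|{-3\mathtt v_i+\mathtt v_j}|^2=-\tfrac34\,|\mathtt v_i-\mathtt v_j|^2<0,
\end{equation*}
so the equation has \emph{no real solutions at all} (the paper makes the same point by showing the relevant quadratic form $21e_i^2-18e_ie_j+5e_j^2$ is positive definite, with matrix determinant $24$); the vertex is excluded by emptiness of the sphere, not by the root landing in $S$. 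Since this positivity/negativity argument is the decisive step for that case and your proposal both defers it and predicts the wrong mechanism, the proof as written is incomplete; supplying the displayed computation (or the paper's positive-definiteness check) closes it.
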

\begin{proof} 
 The quadratic equation \eqref{bacos}, for a vertex $x$, corresponding to a red vertex $b $  
   can be written as   \begin{equation}\label{bak}
|x-\frac {\pi(b)}2|^2=-\frac 14 |\pi(b)|^2+K(b)=-\frac 14 |\pi(b)|^2 -\frac 12 |\pi(b)|^2 -\frac 12  \pi(b^{(2)}) = -\frac 14( 3|\pi(b)|^2+2\pi(b^{(2)})). 
\end{equation} In   case of a vertex $-3e_i+e_j$,    
$$3b^2+2b^{(2)}=3(-3e_i+e_j)^2 +2{ (-3e_i^2+e_j^2) } ] $$
  $$ = 27 e_i^2-18e_ie_j+3e_j^2-6e_i^2+2e_j^2 = 21  e_i^2-18e_ie_j +5e_j^2$$
 The symmetric matrix
 $$ X=\begin{vmatrix}21&-9\\
-9&5
\end{vmatrix},\quad \det X=24$$ is positive definite  so \eqref{bak} has no real solutions.\medskip

For the vertex $b=-2e_i$. Since $C(-2e_i)=-e_i^2,\ K(-2e_i)=-|\mathtt v_i|^2$      we get $$0= |x|^2+(x,\pi(-2e_i))-K(-2e_i)=|x|^2-2(x, \mathtt v_i )+|\mathtt v_i|^2= |x-\mathtt v_i|^2.$$  Hence the only real solution of $|x-\mathtt v_i|^2=0$ is $x=\mathtt v_i$. Then we apply  Remark \ref{laspe}  where we have shown that the  special component is an isolated component of the graph.\end{proof}

The fact that we can  exclude the existence of more complicated graphs of this form which may have realization in $S^c$ is quite difficult and will take the last part of this paper.
\end{remark}

  \section{Geometric realization \label{Magt}}
  {\em We now justify why, in dimension $n$,  we can impose our  constraints only to graphs with at most $2n+2$  vertices.}
 \subsection{The polynomial realizations} \subsection{Determinantal relations} 

1)\quad Given a combinatorial graph $\GA$ with $n$ linearly independent black vertices  $a_1  ,\cdots, a_n,\  a_i=\sum_{j=1}^m a_{i,j} e_j$   consider the $n$  vector valued linear functions  $\pi(a_i)= \sum_{j=1}^m a_{i,j} \mathtt v_j  ,\ \mathtt v_j\in\R^n$.  The $n$ coordinates of the functions $\pi(a_i)$  can be taken as the columns  of an $n\times n$ matrix  $A(\mathtt v)$  with entries linear functions in the coordinates of the vectors $\mathtt v_i$ which we are considering as independent variables, that is coordinates for the $mn$ dimensional vector space of $m$ tuples of $n$ dimensional vectors $\mathtt v_i$.

Since  the $a_i$ are linearly independent so are the  columns of the matrix $A(\mathtt v)$ (as functions)  and the determinant $d=\det A(\mathtt v)$ is a non zero polynomial in these entries.  We can thus impose  $ \det A(\mathtt v)\neq 0$ as a constraint.

In fact $d$ is a linear combination of the determinants of  the matrices with the columns $n$ of the various $\mathtt v_i$.\smallskip

We then solve the $n$ linear equations, out of the list \eqref{bacos} corresponding to the vertices $a_i$  by Cramer's rule. We  thus obtain the vector 
$x$ as a vector of rational functions $ x_i= {u_i}/ d$.  

\begin{remark}\label{gere} We substitute this vector of functions in  the remaining equations    \eqref{bacos}. 

 If under this substitution all other   equations vanish then   we call $x$ the {\em generic realization} of the graph $\GA$.
 In this case once  we specialize the $\mathtt v_i$ to vectors in $\R^n$ outside the hypersurface given by $d=0$ we have that $\GA$  has a unique  geometric realization obtained by specializing the generic one.\smallskip

If   the graph $\GA$ does not have  a  generic realization this means that at least one of the equations in \eqref{bacos}  with $x$  substituted as before is a non zero rational function  $u/d^2$  in the coordinates of the $\mathtt v_i$  with denominator $d$ or $d^2$.  When  we specialize the $\mathtt v_i$ to vectors in $\R^n$ outside the hypersurfaces given by $d=0,\ u=0$ then equations \eqref{bacos} are incompatible  and $\GA$  has no  geometric realization. 
\end{remark} 
\begin{constraint}\label{c6}
We  impose as inequalities all the functions $d,u$  or just $d$ arising from this algorithm for all graphs with $\leq 2n+2$ vertices and $n$ linearly independent black vertices.
\end{constraint} \medskip

2)\quad If now  $\GA$ has $n+1$ linearly independent black vertices  $a_1  ,\cdots, a_{n+1},\  a_i=\sum_{j=1}^m a_{i,j} e_j$  we can choose $n$ out of them in $n+1$  ways and we have $n+1$ different determinants $d_i$ and $n+1$ different ways  of writing the generic solution, if it exists,  as $x_i=u_i/d_i$. \smallskip

This on the other hand must be  the same rational function, in other words the system of $n+1$  linear equations out of the list \eqref{bacos}  relative to these vertices in $n$ variables  must be compatible. This is so only if the determinant of  the $n+1\times n+1$ matrix made from the columns  of the system and the constant coefficients is identically 0.

If it is not 0  then it generates an avoidable resonance and $\GA$ has no generic realization.\medskip
\begin{constraint}\label{c7}
We impose as inequality the non vanishing of these $n+1\times n+1$ determinants.
\end{constraint}
3)\quad  Assume now that $\GA$ has $n+1$   linearly independent   vertices  $h$ black and $k>0$    red $$a_1  ,\cdots, a_k, b_1  ,\cdots, b_k,\  a_i=\sum_{j=1}^m a_{i,j} e_j,\ b_i=\sum_{j=1}^m b_{i,j} e_j.$$ Replace the equations  \eqref{bacos}  for $b_i,\ i=1,k-1$  by subtracting the equation for $b_k$.  

We get a system of $n$ linear equations for $x$  which as in the previous case has a unique   generic solution  $x=u/d$.
 
 If this is a generic realization for $\GA$  it must satisfy the equation $|x|^2+(x,\pi(b_k))=K(b_k)$. That is
 $$ |u|^2+d(u,\pi(b_k))=d^2K(b_k).$$
 
 In the next section we shall prove that,      under the  hypotheses 2) or 3), if  the equations are compatible  the generic solution is a polynomial  in the $\mathtt v_i$ and then its generic realization is necessarily in the special component.  This will prove \begin{theorem}\label{aMT}
If $\GA$ is a    combinatorial graph   of rank   $n+1$    which has a realization   for generic  $\mathtt v_i$'s, then   its generic realization is in the special component (the solution $x$ belongs to the set $S$).
\end{theorem}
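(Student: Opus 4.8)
The plan is to reduce to the set-ups 2) and 3) described just above. Since $\GA$ has rank $n+1$ one can choose $n+1$ linearly independent vertices, say $h$ black ones $a_1,\dots,a_h$ and $k$ red ones $b_1,\dots,b_k$ with $h+k=n+1$. Following the procedure above, I would replace in \eqref{bacos} the equations for $b_1,\dots,b_{k-1}$ by their differences with the one for $b_k$, obtaining $n$ linear equations in $x$ whose determinant $d$ is a nonzero polynomial in the coordinates of the $\mathtt v_i$ (the $n$ vertices involved are linearly independent in $\Z^m$, so $\pi$ is generically injective on their span; this is imposed among the constraints, cf.\ Constraint \ref{c6}). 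Cramer's rule then writes $x=u/d$ as a vector of rational functions, and two features are immediate: $x$ is homogeneous of degree $1$ in the $\mathtt v_i$ (the equations are invariant under $\mathtt v\mapsto t\mathtt v$, $x\mapsto tx$, and $x$ is unique), and $x$ is $O(n)$-equivariant, since every relation in \eqref{bacos} is assembled out of scalar products.

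The decisive point — and the one I expect to be the main obstacle — is to show that \emph{compatibility of all of \eqref{bacos} forces $x$ to be a genuine polynomial}, i.e.\ that $d$ divides $u$. This is exactly what the next section and the determinantal Constraints \ref{c6}, \ref{c7} are arranged to prove, via the quadratic relation $|u|^2+d(u,\pi(b_k))=d^2K(b_k)$ in case 3) and the vanishing of the $(n+1)\times(n+1)$ compatibility determinant in case 2): a priori the determinants sit genuinely in the denominator, and removing them requires the full list of compatibility conditions together with the equivariance. Granting polynomiality, an $O(n)$-equivariant polynomial map homogeneous of degree $1$ is, by the first fundamental theorem for the orthogonal group (covariants valued in the standard representation are generated over the ring of invariants by the $\mathtt v_j$), of the form $x=\sum_j c_j\mathtt v_j=\pi(c)$ with $c=\sum_j c_je_j$, each $c_j$ an invariant of degree $0$, hence a constant.

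It then remains to substitute $x=\pi(c)$ into \eqref{bacos}. Using $\pi(AB)=(\pi(A),\pi(B))$, $K(g_h)=\pi(C(g_h))$ and the injectivity of $\pi$ on $S^2[\R^m]$ (the quadratics $(\mathtt v_i,\mathtt v_j)$, $i\le j$, involve pairwise disjoint monomials in the coordinates of the $\mathtt v_i$, hence are linearly independent), each equation becomes an identity in $S^2[\R^m]$: for a black vertex $a$ one gets $c\cdot a=\tfrac{1}{2}(a^2+a^{(2)})$, and for a red vertex $b$, after completing the square, $(c+\tfrac{1}{2}b)^2=-\tfrac{1}{4}(b^2+2b^{(2)})$. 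Comparing coefficients of the monomials $e_k^2$ and $e_ke_l$ I would read off: (i) for a nonzero black $a$ the $e_ke_l$-coefficients over its support force all pairs of support-coordinates of $a$ to sum to $0$, so the support is a pair $\{k,l\}$ with $a=a_k(e_k-e_l)$, and then $c_k=\tfrac{1}{2}(a_k+1)$, $c_l=\tfrac{1}{2}(1-a_k)$, $c_j=0$ for $j\notin\{k,l\}$, so $\mathrm{supp}(c)$ is contained in the support of every black vertex; (ii) for a red $b$ the $e_k^2$-coefficient is $(c_k+\tfrac{1}{2}b_k)^2=-\tfrac{1}{4}b_k(b_k+2)\ge 0$, forcing $b_k\in\{0,-1,-2\}$, so (since $\eta(b)=-2$) either $b=-2e_k$ — but then $\GA$ is not allowable and has no realization outside $S$ by Proposition \ref{ilpunto0} — or $b=-e_k-e_l$, in which case the $e_ke_l$-coefficient forces $\{c_k,c_l\}=\{1,0\}$, i.e.\ $c=e_k$ or $c=e_l$.

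Finally I would assemble the cases. In case 3) a single red vertex already yields $x=\pi(c)=\mathtt v_k\in S$. In case 2) all $n+1$ independent vertices are black; here $c\neq 0$, since $c=0$ would give $\pi(C(g_a))=0$, hence $C(g_a)=\tfrac{1}{2}(a^2+a^{(2)})=0$, hence (comparing coefficients once more) $a=0$ for every black vertex, contradicting ${\rm rk}\,\GA=n+1\ge 1$; moreover, as the black vertices span an $(n+1)$-dimensional space with $n+1\ge 2$ they cannot all be multiples of a single $e_k-e_l$, so two of them have distinct two-element supports, squeezing $\mathrm{supp}(c)$ down to one index $k$. For any black vertex with support $\{k,l\}$ one then has $c_l=\tfrac{1}{2}(1-a_k)=0$, so $a_k=1$, whence $c_k=1$ and $c=e_k$, i.e.\ $x=\mathtt v_k\in S$. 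In every case the generic realization lies in the special component, as claimed.
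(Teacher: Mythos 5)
Your proposal is correct and shares the paper's skeleton (Cramer's rule on $n$ linear equations extracted from \eqref{bacos}, polynomiality of the generic solution, then $O(n)$-equivariance and invariant theory giving $x=\pi(c)=\sum_j c_j\mathtt v_j$ with constant $c_j$), but the last step is carried out by a genuinely different argument, so let me compare. For polynomiality you do exactly what the paper's own two-line proof does: you invoke the determinantal machinery (Theorem \ref{codim}) rather than proving it, which is consistent with how the paper is organized; note, however, that your aside that clearing the denominator ``requires \dots the equivariance'' misdescribes the mechanism — the paper clears it via irreducibility of $\det\circ\rho$ (Lemma \ref{irre}), the distinctness of the irreducible denominators obtained by dropping different rows (Proposition \ref{kke}), and Zariski density of the real points of the determinantal hypersurface (Lemma \ref{zade}); equivariance enters only afterwards, in Lemma \ref{spegra}. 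For the identification $x\in S$, the paper's Lemma \ref{spegra}, once $x=\sum_s c_s\mathtt v_s$ is known, specializes to dimension $1$ and uses a single equation, the one attached to a vertex adjacent to the root, which becomes $(x-v_j)(v_i-v_j)=0$ or $(x-v_h)(x-v_k)=0$ and forces $x=\mathtt v_j$ (resp.\ $\mathtt v_h,\mathtt v_k$) at once. You instead turn every equation of the chosen $n+1$ independent vertices into an identity in $S^2[\R^m]$ via the injectivity of $\pi$ on quadratics (correct: the $(\mathtt v_i,\mathtt v_j)$, $i\le j$, involve disjoint monomials) and pin down $c$ by coefficient comparison; I checked your computations — $c_ka_k=\tfrac12a_k(a_k+1)$ and $c_ka_l+c_la_k=a_ka_l$ forcing a black vertex to be a multiple of $e_k-e_l$ with $\mathrm{supp}(c)\subseteq\{k,l\}$; $(c_k+\tfrac12b_k)^2=-\tfrac14b_k(b_k+2)$ forcing a red vertex to be $-2e_k$ (handled by Proposition \ref{ilpunto0}, or directly $c=e_k$) or $-e_k-e_l$ with $c\in\{e_k,e_l\}$; and the all-black case settled by $c\neq 0$ plus linear independence of $n+1\ge 2$ vertices — and they are sound, though in excluding $a=-e_k$ when $c=0$ you should say explicitly that you are using $\eta(a)=0$. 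The trade-off: the paper's one-dimensional, one-edge specialization is much shorter; your version is longer but does not need to single out a root-adjacent edge and extracts along the way the explicit list of vertex shapes compatible with a polynomial solution, so it is a valid alternative to Lemma \ref{spegra}, with Theorem \ref{codim} still supplying the hard polynomiality input in both treatments.
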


  \medskip

\section{Determinantal varieties}\label{detvar} Consider the space  $V=\R^n$  and $n$ linear maps  $w_j:(\mathtt v_1,\cdots,\mathtt v_m)\mapsto  \sum_{i=1}^ma_{j,i}\mathtt v_i$     from $V^{\oplus m}$ to $V=\R^n$  given by the $n \times  m$ matrix $A:=(a_{j,i})$. In an equivalent formulation this is a linear map $ \rho :\R^m\otimes V =V^{\oplus m} \to  \R^n\otimes V =V^n$ with Matrix $A\otimes 1$.

 \begin{lemma}
An  $m$--tuple  of vector values functions  $m_i:= \sum_j a_{ij}\mathtt v_j$ is formally linearly independent -- that is the $n\times m$ matrix of the $a_{ij}$ has rank $n$-- if and only if the associated   map  $\rho : V^{\oplus m} \to  V^n$ is surjective. 
\end{lemma}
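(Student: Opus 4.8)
The plan is to observe that $\rho$ is nothing but $A\otimes\mathrm{id}_V$ under the identifications $\R^m\otimes V=V^{\oplus m}$ and $\R^n\otimes V=V^n$, and then to reduce the statement to the elementary fact that, over a field, a linear map and its tensor product with a nonzero finite--dimensional space are simultaneously surjective.

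First I would unwind the hypothesis. To say that the functions $m_i=\sum_j a_{ij}\mathtt v_j$ are \emph{formally} linearly independent means that no nontrivial relation $\sum_i c_i m_i=0$ holds identically in the $\mathtt v_j$, regarded as independent vector variables. Since $\sum_i c_i m_i=\sum_j\bigl(\sum_i c_i a_{ij}\bigr)\mathtt v_j$, this is equivalent to the implication $c^{T}A=0\Rightarrow c=0$, i.e. to the $n$ rows of the $n\times m$ matrix $A=(a_{ij})$ being linearly independent, i.e. to $\mathrm{rank}\,A=n$, i.e. to $A\colon\R^m\to\R^n$ being surjective. This part is purely formal.

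It then remains to show that $\rho$ is surjective if and only if $A$ is. Fixing a basis $e_1,\dots,e_n$ of $V$ and writing $\mathtt v_j=\sum_\ell x_{j\ell}e_\ell$, the $i$-th component of $\rho(\mathtt v_1,\dots,\mathtt v_m)$ equals $\sum_\ell\bigl(\sum_j a_{ij}x_{j\ell}\bigr)e_\ell$, so in these coordinates $\rho$ is the linear map $X\mapsto AX$ from the space of $m\times n$ matrices to the space of $n\times n$ matrices. Applying this to the matrices $X$ having a single nonzero column shows that $\mathrm{Im}\,\rho$ consists exactly of those $n\times n$ matrices each of whose columns lies in $\mathrm{Im}\,A$; hence $\rho$ is surjective precisely when $\mathrm{Im}\,A=\R^n$, i.e. when $\mathrm{rank}\,A=n$. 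Combining the two paragraphs gives the lemma. (If one prefers an intrinsic argument: a right inverse $A'$ of $A$ yields the right inverse $A'\otimes\mathrm{id}_V$ of $\rho$, giving one direction, while $\mathrm{Im}\,\rho=\mathrm{Im}\,A\otimes V$ has dimension $n\cdot\dim\mathrm{Im}\,A$, which equals $\dim(V^n)=n^2$ only if $\mathrm{rank}\,A=n$, giving the other.) There is no genuine difficulty here; the one point to keep in mind is that, although the $\mathtt v_j$ take values in the whole of $V$, the image of $\rho$ is constrained column--by--column to $\mathrm{Im}\,A$, which is exactly what the coordinate computation records.
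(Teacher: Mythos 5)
Your proof is correct and follows the same route as the paper: identify $\rho$ with $A\otimes\mathrm{id}_V$ and use that a matrix $A$ is surjective exactly when $A\otimes\mathrm{id}_V$ is. The paper simply asserts this last equivalence in one line, whereas you supply the (easy) verification via the column-by-column description of $\mathrm{Im}\,\rho$, together with the unwinding of formal linear independence as $\mathrm{rank}\,A=n$; this is a fleshed-out version of the same argument, not a different one.
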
 \begin{proof}
$A$ is surjective if and only if $A\otimes 1$     is surjective.
\end{proof}We may identify  $ \R^n\otimes V =V^{\oplus n}$ with $n\times n$ matrices and  we have   the determinantal variety  $D_n$ of  $V^{\oplus n},$ defined by the vanishing of the determinant  $\det$ (an irreducible polynomial),  and formed by all the $n$--tuples of vectors $u_1,\ldots,u_n$ which are linearly dependent. 

 The variety $D_n$ defines a similar determinantal variety $D_\rho:= \rho ^{-1}(D_n)$ in $V^{\oplus m}$, defined by the vanishing of the polynomial   $\det\circ\rho$,   which   depends on  the map $\rho$. This is a proper hypersurface if and only if $\rho$ is surjective otherwise  $\det\circ\rho=0$.
 \begin{lemma}\label{irre}
If $\det\circ\rho\neq 0$ it is an  irreducible polynomial. 
\end{lemma}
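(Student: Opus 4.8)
The plan is to reduce the statement to the elementary fact that pulling back an irreducible polynomial along a surjective linear map of affine spaces preserves irreducibility, and then to verify that fact by the standard observation that a prime element of a polynomial ring stays prime after adjoining more variables.

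First I would use the hypothesis: since $\det\circ\rho\neq 0$, the preceding lemma tells us that $\rho\colon V^{\oplus m}\to V^{\oplus n}$ is surjective (here $V^{\oplus m}$ has dimension $mn$ and $V^{\oplus n}\cong\R^n\otimes V$ has dimension $n^2$). Because $\rho$ is a surjective linear map, I can choose linear coordinates $x_1,\dots,x_{n^2}$ on $V^{\oplus n}$ and linear coordinates $x_1,\dots,x_{n^2},y_1,\dots,y_t$, with $t=n(m-n)$, on $V^{\oplus m}$, so that $\rho$ becomes the coordinate projection that forgets the $y_j$. In these coordinates $\det\circ\rho$ is literally the polynomial $\det$, now viewed inside the larger ring $\R[x,y]=\R[x_1,\dots,x_{n^2}][y_1,\dots,y_t]$.

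It then remains to check that $\det$, which is irreducible in $\R[x_1,\dots,x_{n^2}]$ (as recalled before the lemma, the determinant of a generic $n\times n$ matrix is an irreducible polynomial), stays irreducible in $\R[x_1,\dots,x_{n^2},y_1,\dots,y_t]$. Since $\R[x]$ is a UFD, irreducibility of $\det$ there means that $\det$ is a prime element, i.e. $R:=\R[x]/(\det)$ is an integral domain. Then $\R[x,y]/(\det)\cong R[y_1,\dots,y_t]$ is again an integral domain, so $\det$ is prime, in particular irreducible, in $\R[x,y]$. Hence $\det\circ\rho$ is irreducible.

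I do not expect a genuinely hard step here; the only point needing care is to keep straight which polynomial ring the word \emph{irreducible} refers to, and to record that $D_\rho=\rho^{-1}(D_n)$ is obtained from $D_n$ by a pure coordinate projection after a linear change of variables, so that no extra components or multiplicities can appear. Alternatively one can phrase the argument geometrically: $\rho$ is a surjective linear map, hence flat with all fibres affine spaces (in particular irreducible and reduced), so $\rho^{-1}(D_n)$ is irreducible and reduced because $D_n$ is, and its ideal is therefore generated by the irreducible polynomial $\det\circ\rho$.
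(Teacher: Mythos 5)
Your proposal is correct and follows the same route as the paper: surjectivity of $\rho$ (forced by $\det\circ\rho\neq 0$) lets you change coordinates so that $\rho$ becomes a coordinate projection, and then $\det\circ\rho$ is the irreducible polynomial $\det$ viewed in a larger polynomial ring. The only difference is that you spell out the step the paper calls ``clear'' (irreducibility is preserved when adjoining the extra variables, via $\R[x,y]/(\det)\cong(\R[x]/(\det))[y]$ being a domain), which is a welcome but not essentially different elaboration.
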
\begin{proof}
If $\rho$ is surjective, up to a linear coordinate change it can be identified with the projection on the first $n$ summands,  so it is clear that in this case  $D_\rho$ is an irreducible hypersurface  with equation the irreducible polynomial $\det\circ\rho$.\end{proof}  We need to see when different maps give rise to different determinantal  varieties in $V^{\oplus m}$.  
\begin{lemma}\label{tras} Given a surjective map $\rho :V^{\oplus m}\to V^{\oplus n},$
a vector $a\in V^{\oplus m}$ is such that $a+b\in D_\rho,\ \forall b\in D_\rho$ if and only if $\rho(a)=0$.
\end{lemma}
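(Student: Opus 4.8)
The plan is to reduce the statement to the single determinantal hypersurface $D_n\subset V^{\oplus n}$ and then to the nondegeneracy of the trace form on $n\times n$ matrices. The easy direction is that $\rho(a)=0$ makes $a$ a translational symmetry of $D_\rho$: indeed $\rho(a+b)=\rho(b)$, so $b\in D_\rho$ (i.e. $\rho(b)\in D_n$) forces $a+b\in D_\rho$. So the real content is the converse, and I would carry it out in three steps.

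First I would push the hypothesis down along $\rho$. Writing $u:=\rho(a)\in V^{\oplus n}$, surjectivity of $\rho$ lets me realize every $c\in D_n$ as $c=\rho(b)$ for some $b$, which then lies in $D_\rho$, whence $u+c=\rho(a+b)\in D_n$. Thus the claim reduces to the following statement about $n\times n$ real matrices (recall $V^{\oplus n}$ is identified with such matrices): \emph{if $\det(X+u)=0$ for every singular real matrix $X$, then $u=0$.}

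Next I would turn this set-theoretic vanishing into a polynomial identity. Since $\det$ is an irreducible polynomial, $D_n=\{\det=0\}$ is an irreducible hypersurface defined over $\R$, and it has a smooth real point, for instance $\mathrm{diag}(1,\dots,1,0)$, at which the differential of $\det$ is $H\mapsto \mathrm{tr}(\mathrm{adj}(\mathrm{diag}(1,\dots,1,0))\,H)=H_{nn}\neq 0$; hence the real locus of $D_n$ is Zariski dense in $D_n$. Therefore the polynomial $\det(X+u)$ in the entries of $X$, vanishing on all real singular $X$, vanishes on $D_n$, so it is divisible by $\det(X)$; a degree count gives $\det(X+u)=\lambda\det(X)$, and comparing the degree-$n$ homogeneous parts in $X$ gives $\lambda=1$, i.e. $\det(X+u)=\det(X)$ identically.

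Finally, from $\det(X+u)=\det(X)$ I would iterate to $\det(X+ku)=\det(X)$ for all $k\in\Z$, hence $\det(X+tu)=\det(X)$ for all $t\in\R$ by polynomiality in $t$; differentiating at $t=0$ via Jacobi's formula yields $\mathrm{tr}(\mathrm{adj}(X)\,u)=0$ for all $X$, and specializing to invertible $X$ gives $\det(X)\,\mathrm{tr}(X^{-1}u)=0$, so $\mathrm{tr}(Yu)=0$ for every invertible $Y$, hence for every $Y$ since invertible matrices span $M_n(\R)$, and nondegeneracy of the trace form forces $u=0$, i.e. $\rho(a)=0$. The only delicate point is the middle step, namely passing from vanishing on the real singular matrices to the polynomial identity; concretely this is the Zariski density of the real locus of the irreducible hypersurface $D_n$, which is where I would be most careful. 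The two outer steps are purely formal.
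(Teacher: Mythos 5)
Your proof is correct, but it follows a genuinely different route from the paper. The paper argues by contraposition with pure linear algebra: if $B:=\rho(a)\neq 0$, then either $B$ is invertible (and then $b=0\in D_\rho$ already gives $a+b\notin D_\rho$), or $B$ has rank $0<h<n$, in which case one picks a matrix $C$ of rank $n-h$ with $\det(B+C)\neq 0$, lifts it by surjectivity to some $b\in D_\rho$, and gets $a+b\notin D_\rho$ — no algebraic geometry at all. You instead push the hypothesis down to the matrix statement ``$\det(X+u)=0$ for all real singular $X$'', use Zariski density of the real points of the irreducible hypersurface $\{\det=0\}$ (which is exactly the paper's Lemma \ref{zade}, proved there by the $AI_{n-1}B$ parametrization rather than your smooth-point criterion) together with irreducibility of $\det$ and a degree count to upgrade this to the identity $\det(X+u)=\det(X)$, and then finish with Jacobi's formula and nondegeneracy of the trace form. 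Every step of this checks out (the reduction via surjectivity, the divisibility and comparison of top-degree parts, the passage from $\det(X+ku)=\det(X)$ for $k\in\Z$ to constancy in $t$, and the spanning of $M_n(\R)$ by invertible matrices), so the argument is sound; what the paper's version buys is brevity and elementarity, while yours buys a stronger intermediate conclusion (translation by $u$ preserves $\det$ identically, not just the singular locus) at the cost of invoking real-points density machinery that the paper only needs later, in Lemma \ref{aMT1}.
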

\begin{proof}
Clearly  if $\rho(a)=0$ then $a$ satisfies the condition. Conversely if $\rho(a)\neq 0$, we think of $\rho(a)$ as a non zero matrix  $B$.

If $\det(B)\neq 0$ then  $\rho(a)+0\notin D_\rho$. Otherwise $B$ has rank $0<h<n$ and there is an other matrix $C$  of rank $n-h$ so that $\det(B+C)\neq 0$. Then there is a $b$ so that  $C=\rho(b) \in D_n$  and  $B+C=\rho(a+b)\notin D_n$.\end{proof}
Let $\rho_1,\rho_2:V^{\oplus m}=V\otimes\R^{\oplus m} \to V^{\oplus n}=V\otimes\R^{\oplus m}$ be two surjective maps,  given by $\rho_1=1_V\otimes A,\ \rho_2= 1_V\otimes B$  for  two $n\times m$ matrices $ A=(a_{i,j}),\ B=(b_{i,j});\ a_{i,j},b_{i,j}\in\mathbb C$ .
\begin{proposition}\label{kke}
$\rho_1^{-1}(D_n)=\rho_2^{-1}(D_n)$ if and only if  the two matrices $A,B$ have the same kernel.
\end{proposition}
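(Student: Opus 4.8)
The plan is to deduce both implications of Proposition \ref{kke} from Lemma \ref{tras}, which says that for a surjective $\rho$ the set of $a\in V^{\oplus m}$ with $a+b\in D_\rho$ for all $b\in D_\rho$ is exactly $\ker\rho$; thus the ``translations'' of the hypersurface $D_\rho=\rho^{-1}(D_n)$ recover $\ker\rho$ from $D_\rho$ purely as a subset of $V^{\oplus m}$. Since $\rho_1$ and $\rho_2$ are surjective, each $\det\circ\rho_i$ is nonzero, so $D_{\rho_i}$ is a proper hypersurface and Lemma \ref{tras} applies to both.

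First I would treat the implication $\ker A=\ker B\Rightarrow\rho_1^{-1}(D_n)=\rho_2^{-1}(D_n)$. Since $A$ and $B$ are surjective $n\times m$ matrices with the same kernel, both factor through $\R^m/\ker A$ as isomorphisms onto $\R^n$, so $B=gA$ for some invertible $n\times n$ matrix $g$, and tensoring with $1_V$ gives $\rho_2=(1_V\otimes g)\circ\rho_1$. It then suffices to observe that $1_V\otimes g$ maps $D_n$ onto itself: it acts on $V^{\oplus n}$ by replacing the $n$ vectors $u_1,\dots,u_n$ by the combinations $\sum_j g_{ij}u_j$, which multiplies their determinant by the nonzero scalar $\det g$ and so preserves the zero locus $D_n$. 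Hence $\rho_2^{-1}(D_n)=\rho_1^{-1}\bigl((1_V\otimes g)^{-1}D_n\bigr)=\rho_1^{-1}(D_n)$.

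For the converse, assume $\rho_1^{-1}(D_n)=\rho_2^{-1}(D_n)$ and denote this common set by $D$. Lemma \ref{tras} applied to $\rho_1$ gives $\ker\rho_1=\{a:a+b\in D\ \forall b\in D\}$, and the right-hand side is defined in terms of $D$ alone; Lemma \ref{tras} applied to $\rho_2$ identifies the same set with $\ker\rho_2$, so $\ker\rho_1=\ker\rho_2$. To pass from the maps to the matrices, fix a nonzero $v_0\in V$: for $w\in\R^m$ we have $(1_V\otimes A)(v_0\otimes w)=v_0\otimes(Aw)$, which vanishes if and only if $Aw=0$ (since $v_0\neq0$), so $w\in\ker A\iff v_0\otimes w\in\ker\rho_1=\ker\rho_2\iff w\in\ker B$. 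Thus $\ker A=\ker B$.

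The one step that needs real care is the claim in the first implication that $1_V\otimes g$ carries $D_n$ to $D_n$ and not to a different determinantal hypersurface --- i.e.\ bookkeeping on which tensor factor $g$ acts, together with the fact that recombining the columns of a square matrix by an invertible matrix merely rescales its determinant. Everything else is a formal consequence of Lemma \ref{tras} and the elementary observation that $v_0\otimes w=0$ with $v_0\neq0$ forces $w=0$.
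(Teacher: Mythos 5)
Your argument is correct and follows essentially the same route as the paper: Lemma \ref{tras} recovers $\ker\rho_i$ from $D_{\rho_i}$ for the implication from equal preimages to equal kernels, and for the converse you write $B=gA$ with $g$ invertible and use that $(1_V\otimes g)$ preserves $D_n$, which is exactly the paper's ``$B=CA$ with $C$ invertible and $CD_n=D_n$'' step. You merely make explicit the details the paper leaves implicit (the identification $\ker A=\ker\rho_1$ via $v_0\otimes w$ and the determinant-rescaling argument), which is fine.
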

\begin{proof} The two matrices $A,B$ have the same kernel if and only if $\rho_1,\rho_2 $   have the same kernel.
By Lemma \ref{tras}, if  $\rho_1^{-1}(D_n)=\rho_2^{-1}(D_n)$ then  the two matrices $A,B$ have the same kernel. Conversely if  the two matrices $A,B$ have the same kernel we can write $B=CA$ with $C$ invertible.  Clearly $CD_n=D_n$ and the claim follows.\end{proof}
We shall also need the following well known fact:
\begin{lemma}\label{zade}
Consider the determinantal variety $D,$ given by  $d(X)=0,$ of $n\times n$  complex  matrices of determinant zero. The real points of $D$ are Zariski dense in $D$.\footnote{this means that a polynomial vanishing on the real points of $D$ vanishes also on the complex points.}
\end{lemma}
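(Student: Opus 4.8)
The plan is to produce an explicit polynomial parametrization, defined over $\R$, of a Zariski dense subset of $D$, and then to push the (essentially trivial) Zariski density of the real points of an affine space forward along it. By the footnote, the assertion to prove is that any polynomial $f$ in the $n^{2}$ matrix entries which vanishes on every real matrix of $D$ must vanish on all of $D$; so it suffices to cover a Zariski dense part of $D$ by images of real points.

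First I would recall that $\det$ is an irreducible polynomial, so $D$ is an irreducible affine hypersurface. Then I would introduce the map $\Phi\colon(\C^{n})^{n-1}\times\C^{n-1}\to M_n(\C)$ sending a tuple $(u_1,\dots,u_{n-1},\lambda_1,\dots,\lambda_{n-1})$ to the matrix whose columns are $u_1,\dots,u_{n-1}$ and $\sum_{i=1}^{n-1}\lambda_i u_i$. This is a polynomial map with real coefficients; every matrix in its image has its last column linearly dependent on the others, so the image lies in $D$; and real tuples are sent to real matrices of $D$. The key verification is that the image of $\Phi$ is Zariski dense in $D$: the set $U\subset D$ of matrices whose first $n-1$ columns are linearly independent is a non-empty Zariski open subset of $D$ (it contains the matrix with standard basis columns in the first $n-1$ slots and zero last column), hence dense because $D$ is irreducible; and any $X\in U$ lies in the image of $\Phi$, since its last column necessarily belongs to the $(n-1)$-dimensional span of its first $n-1$ columns.

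To conclude, given $f$ vanishing on all real points of $D$, the composite $f\circ\Phi$ is a polynomial on $\C^{N}$ with $N=n(n-1)+(n-1)$ that vanishes at every real point of $\C^{N}$; a polynomial vanishing on $\R^{N}$ is identically zero, so $f\circ\Phi\equiv 0$, whence $f$ vanishes on the image of $\Phi$ and therefore on its Zariski closure $D$. The only point needing any care is the Zariski density of the image of $\Phi$ — equivalently, the reduction to rank-exactly-$(n-1)$ matrices with independent leading columns — which rests on the irreducibility of $D$; I do not expect a genuine obstacle here, this being the concrete incarnation of the general fact that an irreducible real variety carrying a dense parametrizable real locus has Zariski dense real points.
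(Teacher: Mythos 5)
Your proof is correct and follows essentially the same strategy as the paper: parametrize a Zariski dense part of $D$ by a polynomial map defined over $\R$ and push forward the fact that a polynomial vanishing on the real points of affine space is identically zero. The only difference is the choice of parametrization (columns with the last one a combination of the first $n-1$, rather than the paper's $A I_{n-1} B$ with $A,B$ invertible), and you correctly supply the extra density step this requires, namely that the open set $U$ is dense in $D$ by irreducibility of $\det$.
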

\begin{proof}
Consider in    $D$   the set of real matrices    of rank exactly $n-1$. This set is    obtained from a  fixed matrix (for instance the diagonal matrix $I_{n-1}$ with all 1 except one 0)  by multiplying $AI_{n-1}B$  with $A,B$ invertible matrices.  If a polynomial $f$  vanishes on the real points of  $D$ then $F(A,B):=f(AI_{n-1} B)$  vanishes  for all $A,B$  invertible matrices and real. This set is the set of  points in $\mathbb R^{2n^2}$  where a polynomial (the product of the two determinants) is non zero. But a polynomial which vanishes in all the  points of any space $\mathbb R^{s}$  where another  polynomial   is non zero is necessarily the zero polynomial. So $f$ vanishes also on complex points. This is the meaning of  Zariski dense.\end{proof}
So let $\GA$ be a graph of rank $\geq n+1$, consider as before the variety  $R_{\GA}$    of realizations of the graph, with its map $\theta:R_{\GA}\to \C^{mn}$. Assume that $\GA$ has  a generic realization, so that    $\theta(R_{\GA})$  is not contained in any real algebraic hypersurface.

  \begin{theorem}\label{codim}  There is an irreducible hypersurface   $W$ of $\C^{mn}$    such that the map $\theta$ has an inverse on $\C^{mn}\setminus W$. The inverse is a polynomial map given by the generic realization.
\end{theorem}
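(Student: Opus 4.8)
I would work over $\C$, build $W$ as a determinantal hypersurface attached to $n$ of the $\ge n+1$ independent vertices of $\GA$, invert $\theta$ over its complement by Cramer's rule, and then upgrade the rational inverse to a polynomial one by exploiting that having \emph{more than} $n$ independent vertices produces several coprime determinants.

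First, pass to complex varieties: a complex polynomial vanishing on $\theta(R_\GA)$ has real and imaginary parts which are real polynomials vanishing there, so since by hypothesis $\theta(R_\GA)$ lies in no real hypersurface it lies in no complex one, and $\theta\colon R_\GA\to\C^{mn}$ is dominant (this $\R\to\C$ passage being the routine one, cf.\ Lemma \ref{zade}). Using $\mathrm{rk}\,\GA\ge n+1$, choose $n+1$ linearly independent vertices and, as in the analysis of cases 2) and 3) of \S\ref{Magt}, extract from them $n$ \emph{formally} linearly independent affine-linear equations for $x$ (black vertices give such equations directly; for red vertices one subtracts a fixed red equation to kill the $|x|^2$ term). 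Let $\rho=1_V\otimes A\colon V^{\oplus m}\to V^{\oplus n}$ be the surjection given by the $n\times m$ matrix $A$ of the chosen vertex combinations, set $d:=\det\circ\rho$ and $W:=D_\rho=\{d=0\}$; by Lemma \ref{irre} $d$ is irreducible, so $W$ is an irreducible hypersurface of $\C^{mn}$.

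Over $\C^{mn}\setminus W$ Cramer's rule solves the selected system uniquely, $x_i(\mathtt v)=u_i(\mathtt v)/d(\mathtt v)$ with $u_i\in\C[\mathtt v]$. By the generic-realization hypothesis, for $\mathtt v$ in the dense set $\theta(R_\GA)\setminus W$ this $x(\mathtt v)$ is \emph{the} realization, hence satisfies every remaining equation of \eqref{bacos}; since this holds on a dense set, each such equation becomes a polynomial identity on $\C^{mn}$ after clearing the denominator $d$ (otherwise it would give a nonzero polynomial vanishing on $\theta(R_\GA)$, an avoidable resonance, contradicting the hypothesis). Therefore $\sigma\colon\mathtt v\mapsto(x(\mathtt v),\mathtt v)$ maps $\C^{mn}\setminus W$ into $R_\GA$ with $\theta\circ\sigma=\mathrm{id}$, and since the selected linear subsystem already pins $x$ down off $W$ also $\sigma\circ\theta=\mathrm{id}$ on $\theta^{-1}(\C^{mn}\setminus W)$. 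So $\theta$ is invertible on $\C^{mn}\setminus W$, the inverse being the generic realization $\mathtt v\mapsto(u(\mathtt v)/d(\mathtt v),\mathtt v)$.

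The remaining step — that this inverse is a \emph{polynomial} map, i.e.\ $d\mid u_i$ for every $i$ — is where I expect the real work, and where $\mathrm{rk}\,\GA\ge n+1$ (not just $\ge n$) is used. If the chosen $n+1$ independent vertices all have the same colour, drop each one in turn: one gets $n+1$ equal expressions $x=u^{(j)}/d_j$, and the matrices $A_j$ (rows $=$ the vertices other than the $j$-th) have pairwise distinct kernels because independence keeps the $j$-th vertex out of the span of the others; by Proposition \ref{kke} (together with Lemma \ref{tras}) the $D_{\rho_j}$ are then distinct, so the irreducible polynomials $d_j$ are pairwise non-associate, hence coprime, and the reduced denominator of each $x_i$ divides $\gcd_j d_j=1$, forcing $x_i\in\C[\mathtt v]$. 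In the mixed-colour case the differences of the red vertices span only a single fixed $n$-dimensional lattice, so the $\gcd$ trick must be supplemented: the leftover quadratic equation $|u|^2+d\,(u,\pi(b_k))=d^2K(b_k)$, being forced to hold identically, yields $d\mid|u|^2$, and combining this with the linear identity $M(\mathtt v)\,u=d\,K(\mathtt v)$ read modulo the prime $d$ forces $u\equiv0\pmod d$ — the only possible obstruction, a degeneration of the standard quadratic form on the span of the coefficient vectors along $W$, being itself excluded as one further avoidable resonance. In every case $x$ is polynomial, so $\sigma$ is a polynomial map inverting $\theta$ over the irreducible hypersurface complement $\C^{mn}\setminus W$.
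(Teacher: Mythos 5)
Your overall architecture is essentially the paper's: pass to $\C$, select from the $n+1$ independent vertices a system of $n$ formally independent linear equations (subtracting a fixed red equation to remove $|x|^2$), take $W$ to be the irreducible determinantal hypersurface $\{d=0\}$ (Lemma \ref{irre}), invert $\theta$ off $W$ by Cramer's rule, and in the all-black case obtain polynomiality because the $n+1$ determinants $d_j$ are pairwise non-associate irreducible polynomials (distinct kernels, Proposition \ref{kke}), so $u_i d_j = u_j d_i$ forces $d_i \mid u_i$. Up to that point the proposal is correct and coincides with the paper's proof.

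The gap is in the mixed-colour case, precisely at the step you flag as ``the real work''. From the identity $|u|^2 + d\,(u,\pi(b_k)) = d^2 K(b_k)$ you correctly deduce $d \mid \sum_i u_i^2$, but over $\C$ this does \emph{not} imply $d \mid u_i$. Your reduction modulo the prime $d$ only gives $u \equiv \lambda c \pmod d$, where $c$ spans the kernel of the coefficient matrix over the function field of $W$, and hence $\lambda^2 \sum_i c_i^2 \equiv 0 \pmod d$; the alternative $\sum_i c_i^2 \equiv 0 \pmod d$ (the kernel line being isotropic for the standard quadratic form along $W$) is a real possibility over $\C$ and cannot be ``excluded as one further avoidable resonance''. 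Whether $\sum_i c_i^2 \equiv 0 \pmod d$ holds is a polynomial identity attached to the fixed graph $\GA$ and the chosen subsystem, not a locus in the space of tangential sites: imposing genericity constraints on $S$ restricts which points $(\mathtt v_1,\dots,\mathtt v_m)$ are allowed but cannot change such an identity; and in any case the hypothesis of Theorem \ref{codim} is that $\GA$ has \emph{no} avoidable resonance, so at this stage you have no license to impose new ones. The paper closes exactly this gap by staying real: $d$ and the $u_i$ have real coefficients, the real points of the hypersurface $d=0$ are Zariski dense in it (Lemma \ref{zade}), and at a real point $\sum_i u_i(\mathtt v)^2 = 0$ forces every $u_i(\mathtt v)=0$; hence each $u_i$ vanishes on all of $W$ and $d \mid u_i$ (Lemma \ref{aMT1}). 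If you wish to keep your mod-$d$ formulation, you must prove that the kernel line cannot be isotropic along $W$, and the only available argument for that is again this reality/Zariski-density one — so the reality step should be restored rather than replaced by an appeal to avoidable resonances.
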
 
\begin{proof}
{\it Black vertices} Assume first that we have $n+1$ linearly independent black vertices $a_i$, the functions  $\pi(a_i)$  of the $\mathtt v_i$  are $n+1$ linearly independent linear maps  from $V^{\otimes m}$ to $V$  or in an equivalent formulation this is a linear map $ \rho :\R^m\otimes V =V^{\oplus m} \to  \R^{n+1}\otimes V $ with Matrix $B\otimes 1$, and $B$ an $(n+1)\times m$ matrix of rank  $n+1$..
\smallskip

We have $n+1$ linear equations $ (x,\pi(a_i))= b_i$ which are generically compatible.

 We  solve them by Cramer's rule  choosing an index $j$ and   discarding the equation \eqref{bacos} associated to the vertex $a_{j}$. Since the equations are always compatible we must obtain, generically,   the same solution for all choices of $ j$. Consider  the matrix $M_{j}$ with rows the $\pi(a_i)$, $ i=1,\dots,n+1$ $i\neq j$. The solution  is  a rational function $u_j/d_j$ of the $\mathtt v_i$ having as denominator the determinant $d_j$ of    $M_{j}$.

From Lemma \ref{irre} each   of these determinants is an irreducible polynomial so it defines an irreducible hypersurface $H_j$.

 We claim that these hypersurfaces are   all unequal  so the $d_j$ are all different. In fact  
   the matrices are obtained by $B$ dropping one row define the various determinantal varieties, $H_j$.  These projections have different kernels so  the result follows by Proposition \ref{kke}.
  
  Therefore for two different indices $i\neq j$ we have  $u_i/d_i=u_j/d_j$ with $d_i,d_j$ two  different irreducible polynomials. Then $u_id_j=u_jd_i$ implies that              $d_i$ divides $u_i$ so that $u_i/d_i$ is a polynomial.
 \smallskip

{\it Red vertices}
\smallskip

When we also have red edges we select $n+1$ linear and quadratic equations  associated to the $n+1$ vertices which are formally independent. By subtracting a given quadratic equation  to the others we see that the equations \eqref{bacos}  (for these vertices) are clearly equivalent to a system on $n$ linear equations associated to formally linearly independent vectors  in $\R^m$, plus the given  quadratic equation chosen arbitrarily among the ones appearing in \eqref{bacos}.   

Thus a realization of $\GA$ is obtained by solving the system of  $n$   linear equations   $$\sum_{j=1}^ma_{ij}(x,\mathtt v_j)=(x,t_i)=b_i,\quad i=1,\cdots,n$$  with the $t_i=\sum_{j=1}^ma_{ij} \mathtt v_j$ linearly independent (as functions)    and $b_i$ equals some quadratic expression $\sum_{h,k}b^i_{h,k}(\mathtt v_h,\mathtt v_k) $.\smallskip

We solve these equations  by Cramer's rule    considering the $\mathtt v_i$ as parameters and   obtain   $ x_i=f_i/d $, where  $d :=\det(A(\mathtt v))$ is the determinant of the matrix $A(\mathtt v)$ with rows $t_i$. 

We have thus expressed  the coordinates $x_i$ as rational functions  of the coordinates of the vectors $\mathtt v_i$. The denominator is an irreducible polynomial  vanishing exactly on the determinantal variety of the $\mathtt v_i$ for which the matrix of rows $t_j,\ j=1,\ldots,n$ is degenerate.\smallskip

By hypothesis, this  solution satisfies  a further  quadratic equation in  \eqref{bacos}  identically.

 \begin{lemma}
\label{aMT1} Given $x=(x_1,\ldots,x_n)=(f_1/d,\ldots,f_n/d)$ with  the $f_i$ polynomials in the $\mathtt v_i$ with real coefficients.

 Assume there are two real polynomials  $a , b$  in the $\mathtt v_i$, such that $\sum_ix_i^2+(x,a)+b=0 $ holds identically (in the parameters $\mathtt v_i$);
then $x$ is a polynomial in the $\mathtt v_i$.  \end{lemma}
\begin{proof} Substitute  $x_i=f_i/d$ in the quadratic equation and get 
$$d^{-2}(\sum_if_i^2)+d^{-1}\sum_if_ia_i+b=0,\implies \sum_if_i^2 +d \sum_if_ia_i+d^{ 2} b=0.$$
Since $d=d(v)=\det(A(\mathtt v))$ is irreducible this implies that $d$ divides $\ \sum_if_i^2 $ (in the space of real polynomials).

Since the  $f_i$ are real, for those $v:=(\mathtt v_1,\ldots,\mathtt v_m)\in\R^{mn}$ for which   $d(A(\mathtt v))=0$,  we have $f_i(v)=0,\ \forall i$; so $f_i$ vanishes on all real solutions of $d=   d(A(\mathtt v))=0$.  

These solutions are Zariski dense, by Lemma \ref{zade},  in the determinantal variety  $d =0$. 

In other words  $f_i(v)$ vanishes on all the  $v$ solutions of $d(A(\mathtt v))=0$ and thus $d $  divides $f_i(v)$ for all $i$, hence  $x $ is a polynomial.\end{proof}
This finishes the proof of Theorem \ref{codim}.\end{proof}

Summarizing, we impose  
\begin{constraint}\label{c6b}   For any colored--non--degenerate possible combinatorial graph $\GA$ with at most $2n+2$ vertices (including the root) with red and/or black rank $ n+1$, we impose that the  vectors $\mathtt v_i$ are generic for all resonances described above. That is the determinants we need to invert  are  resonance inequalities.
 \end{constraint}
\begin{example}
We consider the combinatorial graph  in dimension $n=2$. 
\begin{equation}\label{EGA3}  \xymatrix{ & \ar@{-}[d]^{-e_2-e_1}(-e_2-e_1,-)&&\\ (e_1-e_3,+)\ar@{->}[r]^{e_3-e_1\quad}& ( 0,+)  \ar@{->}[r]^{e_3-e_2}&  (  e_3-e_2,+)  }  \end{equation}   
The equations are
\begin{equation}\label{EGB3}
\left\{\begin{array}
{l} (x,\mathtt v_1-\mathtt v_3)= |\mathtt v_1|^2-(\mathtt v_1,\mathtt v_3) \\ (x, \mathtt v_3-\mathtt v_2)= |\mathtt v_3|^2-(\mathtt v_2,\mathtt v_3) \\ |x|^2-(x,\mathtt v_2+\mathtt v_1)= - (\mathtt v_2,\mathtt v_1)
\end{array}\right. 
\end{equation} 

In order to solve the first two equations \eqref{EGB3} by Cramer's rule we impose that the determinant $$d=(v_{1,1}-v_{3,1})(v_{3,2}-v_{2,2})-( v_{1,2}-v_{3,2})(v_{3,1}-v_{2,1}) \neq 0.$$ We obtain the solution $x= (x_1,x_2)$: 
$$ x_1=(|v_1|^2-(v_1,v_3))(v_{3,2}-v_{2,2})-( v_{1,2}-v_{3,2})(|v_2|^2-(v_2,v_3))/d\,,$$ $$x_2=(v_{1,1}-v_{3,1})(|v_2|^2-(v_2,v_3))-(|v_1|^2-(v_1,v_3))(v_{3,1}-v_{2,1})/d .$$ We substitute for $x$ in the last equation, rationalize and obtain that a realization exists only if

$$ \left( (v_1,v_2)-(v_1,v_3)+|v_3|^2-(v_2,v_3) \right)\cdot \,\left( {v_{1,1}}^3\,v_{2,1} + v_{1,1}\,{v_{1,2}}^2\,v_{2,1} + 
    {v_{1,2}}^2\,{v_{2,1}}^2 +\right.$$   $$ \left.{v_{1,1}}^2\,v_{1,2}\,v_{2,2} + {v_{1,2}}^3\,v_{2,2} - 
    2\,v_{1,1}\,v_{1,2}\,v_{2,1}\,v_{2,2} +\right.$$ $$\left. {v_{1,1}}^2\,{v_{2,2}}^2 - {v_{1,1}}^3\,v_{3,1} - 
    v_{1,1}\,{v_{1,2}}^2\,v_{3,1} - 3\,{v_{1,1}}^2\,v_{2,1}\,v_{3,1} - 3\,{v_{1,2}}^2\,v_{2,1}\,v_{3,1} + \right.$$ $$\left.
    2\,v_{1,2}\,v_{2,1}\,v_{2,2}\,v_{3,1} - 2\,v_{1,1}\,{v_{2,2}}^2\,v_{3,1} + 3\,{v_{1,1}}^2\,{v_{3,1}}^2 + 
    2\,{v_{1,2}}^2\,{v_{3,1}}^2 + 3\,v_{1,1}\,v_{2,1}\,{v_{3,1}}^2 - \right.$$ $$\left. v_{1,2}\,v_{2,2}\,{v_{3,1}}^2 + 
    {v_{2,2}}^2\,{v_{3,1}}^2 - 3\,v_{1,1}\,{v_{3,1}}^3 -  v_{2,1}\,{v_{3,1}}^3 + {v_{3,1}}^4 - \right.$$ $$\left.
    {v_{1,1}}^2\,v_{1,2}\,v_{3,2} - {v_{1,2}}^3\,v_{3,2} - 2\,v_{1,2}\,{v_{2,1}}^2\,v_{3,2} - 
    3\,{v_{1,1}}^2\,v_{2,2}\,v_{3,2} - 3\,{v_{1,2}}^2\,v_{2,2}\,v_{3,2} + \right.$$ $$\left. 2\,v_{1,1}\,v_{2,1}\,v_{2,2}\,v_{3,2} + 
    2\,v_{1,1}\,v_{1,2}\,v_{3,1}\,v_{3,2} + 4\,v_{1,2}\,v_{2,1}\,v_{3,1}\,v_{3,2} +  
    4\,v_{1,1}\,v_{2,2}\,v_{3,1}\,v_{3,2} \right.$$ $$\left.- 2\,v_{2,1}\,v_{2,2}\,v_{3,1}\,v_{3,2} - 
    3\,v_{1,2}\,{v_{3,1}}^2\,v_{3,2} - v_{2,2}\,{v_{3,1}}^2\,v_{3,2} + \right.$$ $$\left.2\,{v_{1,1}}^2\,{v_{3,2}}^2 + 
    3\,{v_{1,2}}^2\,{v_{3,2}}^2 - v_{1,1}\,v_{2,1}\,{v_{3,2}}^2 + {v_{2,1}}^2\,{v_{3,2}}^2 + 
    3\,v_{1,2}\,v_{2,2}\,{v_{3,2}}^2 - \right.$$ $$\left. 3\,v_{1,1}\,v_{3,1}\,{v_{3,2}}^2 - v_{2,1}\,v_{3,1}\,{v_{3,2}}^2 + 
    2\,{v_{3,1}}^2\,{v_{3,2}}^2 - 3\,v_{1,2}\,{v_{3,2}}^3 - v_{2,2}\,{v_{3,2}}^3 + {v_{3,2}}^4 \right)=0.$$
    This is one of the resonances we want to avoid.
\end{example}

 We thus have the final definition of generic for tangential sites $S$.
\begin{definition}\label{fincon}
We say that the tangential sites are {\em generic} if they do not  vanish for any of the polynomials given by Constraints \ref{c0},  \ref{c1} through \ref{c6b} applied to combinatorial graphs with at most $2n+2$ vertices.
\end{definition}
  We have ensured that for generic choices of $S$ only those graphs   which are generically realizable are realized.
  \begin{example}
Consider the possible combinatorial graph:
  $$   \xymatrix{ & \ar@{=}[d]^{-e_3-e_4}(-e_3-e_4,-)\ar@{=}[dl]_{-e_1-e_4}\ar@{=}[dr]^{-e_2-e_4}&&\\ (e_3-e_1,+)\ar@{<-}[r]^{e_3-e_1\quad}& ( 0,+)  \ar@{->}[r]^{e_3-e_2}&  (  e_3-e_2,+)  }\,,$$  It is easily seen that in dimension $n=2$ this graph is generically realizable, and its equations  have the unique solution $x=\mathtt v_3$ so it is in the special component.

\end{example}

We now want to study those graphs of rank $n+1$  which are generically realizable in dimension $n$. As we have seen, on a Zariski open set of the space $\mathtt v_1,\ldots,\mathtt v_m$  we have a unique realization given by solving a system of $n$  linear equations and thus given by a vector $x$ whose coordinates are rational functions in the vectors $\mathtt v_i$. 

We have proved, Theorem \ref{codim},  that in fact the coordinates are polynomials and have called this function  the {\em generic realization}.
 
 \begin{lemma}\label{spegra}
If   a   graph of rank $\geq n+1$ has a  generic solution to the associated system, in dimension $n$, which is given by a polynomial then the graph is special and the polynomial is of the form $\mathtt v_i$ for some $i$.
\end{lemma}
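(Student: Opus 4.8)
The plan is the following. Fix a graph $\GA$ of rank $\ge n+1$ and assume its system \eqref{bacos} admits, for generic tangential sites, a solution $x=x(\mathtt v_1,\dots,\mathtt v_m)$ that is a vector of polynomials in the coordinates of the $\mathtt v_i$; since rank $\ge n+1$ means there are more than $n$ linearly independent vertices, after discarding one quadratic equation (when some vertex is red) the remaining equations still contain at least $n$ generically independent linear equations, so this polynomial solution is in fact the unique solution for generic $\mathtt v$. I must show that $x=\mathtt v_k$ for some $k$ and that the whole realization of $\GA$ lies in $S$.

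\emph{First I would prove $\deg x\le 1$.} If $\GA$ contains a red vertex $b$, then $x$ satisfies $|x|^2+(x,\pi(b))=K(g_b)$, whose right side is homogeneous of degree $2$ in $\mathtt v$; if $d:=\deg x\ge 2$ the degree-$2d$ part of the left side is $|x^{(d)}|^2$ (with $x^{(d)}$ the leading homogeneous component of $x$), and since $2d>\max\{2,d+1\}$ this must vanish identically, forcing $x^{(d)}=0$ because a sum of squares of real polynomials is zero only termwise — a contradiction. If $\GA$ has no red vertex then all its vertices and edges are black; I pick $n$ linearly independent vertices $a_1,\dots,a_n$ (possible as $\mathrm{rk}\,\GA>n$), note that $\pi(a_1),\dots,\pi(a_n)$ are formally linearly independent (Constraint \ref{c6}), and compare the top homogeneous parts of $(x,\pi(a_i))=K(g_{a_i})$: for $d\ge 2$ this gives $(x^{(d)},\pi(a_i))=0$ for all $i$, hence again $x^{(d)}=0$. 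Finally $x$ is not constant: if it were, the equation \eqref{bacos} of a vertex adjacent to the root $0$ (one exists since $\GA$ is connected with at least two vertices) would force $K(\ell)=0$ for some $\ell\in X$, contradicting Constraint \ref{c1} since $K(e_i-e_j)=(\mathtt v_i,\mathtt v_i-\mathtt v_j)$ and $K((-e_i-e_j)\tau)=-(\mathtt v_i,\mathtt v_j)$ are nonzero for generic $S$. Thus $\deg x=1$.

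\emph{Next I would reduce $x$ to a scalar combination of the $\mathtt v_l$, and then to a single $\mathtt v_k$.} The system \eqref{bacos} is equivariant under the simultaneous orthogonal action on $(x,\mathtt v_1,\dots,\mathtt v_m)$, because $K(g_h)$ involves only scalar products of the $\mathtt v_i$ while $|x|^2$ and $(x,\pi(L(h)))$ are $O(n)$-invariant; uniqueness of the generic solution then yields $x(O\mathtt v)=O\,x(\mathtt v)$ for all $O\in O(n)$, and a degree-$1$ equivariant map has the form $x=\sum_l A_l\mathtt v_l$ with $A_lO=OA_l$, so $A_l=c_l\,\mathrm{Id}$ by Schur and $x=\sum_l c_l\mathtt v_l$, $c_l\in\R$. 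Substituting this into the equation \eqref{bacos} of a vertex $a=\sum_l m_le_l$ gives an identity all of whose terms are linear in the Gram entries $(\mathtt v_l,\mathtt v_{l'})$; specializing $\mathtt v_l=t_lw_0$ for a unit vector $w_0$ and comparing coefficients in the $t_l$ one obtains, for a black vertex, $m_l(2c_l-m_l-1)=0$ and $c_lm_{l'}+c_{l'}m_l=m_lm_{l'}$, and for a red vertex $2c_l^2+2c_lm_l+m_l^2+m_l=0$ and $2c_lc_{l'}+c_lm_{l'}+c_{l'}m_l+m_lm_{l'}=0$. Applying these to a root-neighbour $a=e_i-e_j$ forces $c_i=1$ and $c_l=0$ for $l\ne i$, while $a=-e_i-e_j$ forces $c_l=0$ for $l\ne i,j$ and $\{c_i,c_j\}=\{0,1\}$; in every case $x=\mathtt v_k$ for some $k$.

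\emph{Finally I would conclude.} Since $x=\mathtt v_k\in S$, the root lies in the special component; moreover, feeding $c=e_k$ back into the relations above for an arbitrary vertex $a=\sum_l m_le_l$ forces $m_k=1$ and one further $m_l=-1$ when $a$ is black, and $m_k\in\{-1,-2\}$ with the remaining weight on a single $e_l$ when $a$ is red, i.e.\ every vertex of $\GA$ equals $e_k-e_l$ or $-e_k-e_l$ (possibly $l=k$). With root $\mathtt v_k$ such a vertex is realized at $\mathtt v_l\in S$, so the entire realization of $\GA$ lies in $S$: $\GA$ is special and $x=\mathtt v_k$, as claimed. The step I expect to be the main obstacle is the degree bound: the real content of the lemma is that an honest real-polynomial solution together with the bounded degree of \eqref{bacos} forces linearity, via positivity of sums of squares; once linearity is in hand, the rest is a finite computation with the explicit relations. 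The only other points needing care are the uniqueness of the generic realization (used for the equivariance argument) and the harmless passage to rank-one Gram matrices, both of which are justified by rank $\ge n+1$ and by the equations being linear in the Gram coefficients.
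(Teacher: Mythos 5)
Your proof is correct and follows essentially the same route as the paper's: a degree argument forcing linearity of the polynomial solution, $O(n)$-equivariance plus elementary invariant theory to get $x=\sum_l c_l\mathtt v_l$, and specialization to one-dimensional (rank-one) vectors together with the equation of a vertex adjacent to the root to conclude $x=\mathtt v_k$. The paper leaves the "simple degree computation" and the fact that the whole realization then sits in $S$ implicit, which you spell out, but the approach is the same.
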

\begin{proof}Denote by $a_i$  resp. $b_j$ the black and red vertices.
 
 The root $x$ is a solution of  the equations \eqref{bacos}
 $$(x,\pi(a_i))=K(a_i),\quad |x|^2+(x,\pi(b_j))=K(b_j).$$
 If the  solution $x$ is polynomial in the $\mathtt v_i$,   it is linear by a simple degree computation. 
 
 Let $g\in O(n)$  be an element of the orthogonal group of $\R^n$,  substitute in the   equations $\mathtt v_i\mapsto  g\cdot \mathtt v_i$.     By their definition the functions $K$ are   invariant under $g$  and the transformed equations have as solution $x(g)$ with  $(x(g),g\pi(a_i))= K(a_i)$.
 
 We have   $(x(g), \pi(a_i))= (g^{-1}x(g),\pi(a_i))$  so $ x(g)=g x$    is also  an equivariant linear map under the orthogonal group of $\R^n$. It follows by simple invariant theory that it has the form $x=\sum_sc_s\mathtt v_s$ for some numbers $c_s$.

  By Lemma \ref{CK} and the fact that the given system of equations is satisfied for all $n$ dimensional vectors $\mathtt v_i$ it is valid for the vectors $\mathtt v_i$  with only one coordinate  $v_i$ different from 0, or if we want for 1--dimensional vectors so that now the symbols $\mathtt v_i$ represent simple variables (and not vector variables). 
  
    So choose a vertex adjacent to the root, this is an edge either black $e_i-e_j$ or red $-e_h-e_k$.  The corresponding equation  for $x$ is
  $$x( v_i- v_j)\stackrel{\eqref{lan}}= v_j( v_i- v_j),\quad\text{or}\quad x(x v_h- v_k)\stackrel{\eqref{lar}}=-  v_h  v_k.$$
  In the first case $x= \mathtt v_j$ in the second  $x=\mathtt v_h,\ \mathtt v_k$.

   \end{proof}
   
\begin{proof}[Proof of Theorem \ref{aMT}]  By Theorem \ref{codim},  if we have a generic solution $x=F(v)$ this   is a   polynomial in $\mathtt v_1,\ldots,\mathtt v_m$.  By Lemma \ref{spegra} this is of the form $F(v)=\mathtt v_i$.\end{proof}  
We arrive at the conclusion of this first part.
\begin{theorem}\label{bou}\strut  Under the finitely many constraints  \ref{c0} through \ref{c6b} a combinatorial graph with $h$ black and $k$ red vertices has no geometric realization in the following  cases:
\begin{enumerate}\item The black or the red vertices  are linearly dependent.
\item It has  $n+1$  linearly independent vertices.
\item It has at least  $\geq 2n+1$ vertices.\end{enumerate}
\end{theorem}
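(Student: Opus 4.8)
The plan is to establish the three cases in the order (2), (1), (3): case (2) carries the real weight, and the other two reduce to it. Throughout, a \emph{realization} of a combinatorial graph will mean a solution $x\notin S$ of the associated system \eqref{bacos}, and the elementary fact I would use repeatedly is that a realization of $\GA$ restricts to a solution of the equations \eqref{bacos} attached to \emph{any} full subgraph $B\ni 0$: by Proposition \ref{coo} the vertices of $B$ are reached from the root by paths, so by Theorem \ref{fo} the corresponding points satisfy exactly those equations, with the same $x$. Hence for each case it is enough to locate inside $\GA$ a sub-configuration of controlled type whose own equations have no solution outside $S$ for generic $S$; since up to the relevant discrete data there are only finitely many such sub-configurations, the needed inequalities are among Constraints \ref{co6}, \ref{c7}, \ref{c6b}, and in particular the argument never requires $\GA$ itself to be small.

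For (2): suppose $\GA$ has $n+1$ linearly independent vertices, fix $n+1$ of them — say $h$ black and $k$ red, $h+k=n+1$ — and let $B$ be the full subgraph on them together with $0$. The subsystem of \eqref{bacos} attached to $B$ is exactly the one analysed in \S\ref{Magt}, items 2) and 3). I would then split: if the relevant $(n+1)\times(n+1)$ compatibility determinant is not identically zero, Constraint \ref{c7} (when $k=0$) or Constraint \ref{c6b} (when $k>0$) forces it nonzero for generic $S$, the subsystem is incompatible, and $B$ has no realization at all; if that determinant vanishes identically, the subsystem is generically compatible with a unique Cramer solution $x=u/d$, which by Theorem \ref{codim} (and, when red vertices are present, Lemma \ref{aMT1}) is a polynomial in the $\mathtt v_i$, hence by Lemma \ref{spegra} equals some $\mathtt v_i\in S$. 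In both cases $B$, and therefore $\GA$, has no realization with $x\notin S$.

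For (1): assume the black vertices of $\GA$ are linearly dependent (the red case is symmetric). If their rank is $\geq n+1$ we are back in case (2), so assume the black rank is $r\leq n$. Choose a basis $a_1,\dots,a_r$ of their span among the black vertices and a further black vertex $a_0$; expressing $a_0$ in this basis and clearing denominators gives an integer relation $\sum_{i=0}^{r}n_ia_i=0$ with $n_0\neq 0$. The computation in the proof of Theorem \ref{ridma} then gives $\sum_i n_iC(a_i)\neq 0$ (equality would force $a_0=a_1$, impossible for distinct vertices), so by Lemma \ref{CK} the polynomial $\sum_i n_iK(a_i)=\pi(\sum_i n_iC(a_i))$ in the $\mathtt v_i$ is nonzero. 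Now $\{0,a_0,\dots,a_r\}$ spans a degenerate possible combinatorial graph with $r+1\leq n+1$ vertices of one colour and none of the other, so Constraint \ref{co6} makes the $C$-sum of one of its relations not resonate for generic $S$; but by \eqref{riso} a realization of $\GA$ makes the $K$-sum of \emph{every} relation among its vertices vanish — the contradiction.

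For (3): suppose $\GA$ has at least $2n+1$ vertices (not counting the root) and, for contradiction, a realization with $x\notin S$. By case (1) the black vertices are linearly independent and the red vertices are linearly independent, so the number of black (resp. red) vertices equals the black (resp. red) rank; by case (2) neither rank can reach $n+1$, so each is $\leq n$, whence $\GA$ has at most $2n$ vertices — a contradiction. The main obstacle is case (2): granting the earlier results, cases (1) and (3) are short bookkeeping, but case (2) rests entirely on the determinantal analysis of \S\ref{detvar}, and the genuinely delicate ingredient imported there is that, once red vertices appear, the Cramer solution $x=f/d$ is forced to be an honest polynomial (Lemma \ref{aMT1}), via the Zariski density of the real points of the hypersurface $d=0$.
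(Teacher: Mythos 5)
Your proposal is correct and takes essentially the same route as the paper: case (1) is the resonance computation of Theorem \ref{ridma} together with Constraint \ref{co6}, case (2) is Theorem \ref{aMT} obtained through the determinantal analysis (Theorem \ref{codim}, Lemma \ref{aMT1}, Lemma \ref{spegra}), and case (3) is the restriction-to-a-small-subgraph principle plus a counting argument. Your bookkeeping in (3) (both colour ranks $\leq n$, hence at most $2n$ vertices) is marginally cleaner than the paper's, which instead notes that a red vertex is independent of $n$ independent black ones since $\eta(b)=-2$ and applies Theorem \ref{aMT} again, but the substance is identical.
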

\begin{proof} i)  is the content of   Theorem \ref{ridma}. \smallskip

ii)  follows from   Theorem   \ref{aMT}.\smallskip

iii) Given a combinatorial graph $\GA$    contained in a larger combinatorial graph $\GA'$ if  $\GA$  has no generic realization then so is for  $\GA'$. If  $\GA$ has $2n+1$ vertices different from the root, then it has at least $n+1$ elements of the same color or $n$ vertices of each color.  

 If $n+1$ elements of the same color are linearly independent then the statement follows from case ii).
  
  For a combinatorial graph with $n$ linearly independent black and $n\geq k>0$ linearly independent red vertices we can still apply  Theorem  \ref{aMT} since a red vertex  $b$ is linearly independent from the black ones as $\eta(b)=-2$.   \end{proof}        

\begin{remark}
In the next sections we will show that for generic $\mathtt v_i$  the graphs with a realization have at most $n+1$ vertices which are affinely independent. However this is  hard to prove, it will take the next 40 pages.

\end{remark} \part{Degenerate resonant graphs}

\section{Degenerate resonant graphs\label{drg}} {\em  The purpose of this section is to prove Theorem \ref{MM}.  }

 \subsection{Degenerate resonant graphs}

\begin{definition} We say that a combinatorial graph $A$ is {\em degenerate--resonant},   if it is degenerate and, for
all the possible linear relations $\sum_in_ia_i=0$  among its
vertices we have also $\sum_in_iC  (a_i)=0.$
\end{definition}

What we claim is that a  degenerate--resonant graph $A$ has no geometric realizations outside the special component.

\begin{remark}
One may easily verify that the previous condition,   although
expressed using a chosen root,   does not depend on the choice of
the root.
\end{remark}
 
\begin{theorem}\label{MM}
A degenerate--resonant graph $A$ is {\em not allowable} hence it has no geometric realizations outside the special component.
\end{theorem}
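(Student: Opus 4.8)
The plan is to deduce the statement from Proposition~\ref{ilpunto0}: a graph that is \emph{not allowable} has no geometric realization outside the special component, so it suffices to prove that a degenerate--resonant graph $A$ is not allowable. Unwinding Definition~\ref{ilpunto1} with the group law, re-rooting $A$ at a black vertex $\alpha$ (in particular at the root $0$) replaces a red vertex $\beta$, i.e.\ the element $\beta\tau$, by $(\alpha+\beta)\tau$ and preserves colours; hence $A$ is not allowable as soon as there are a black vertex $\alpha$ (possibly $0$) and a red vertex $\beta$ of $A$ with $\alpha+\beta\in\{-2e_i\}\cup\{-3e_i+e_j\}$, and conversely any re-rooting exhibiting a forbidden vertex produces such a pair. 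So the task is to manufacture one such pair from degeneracy and resonance; by the remark following the definition of degenerate--resonant the hypothesis is root-independent, so I am free to re-root at will.

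The observation to exploit is that being resonant says exactly that $a\mapsto C(a)$ extends from the vertices of $A$ to a \emph{linear} map $\Phi$ on their rational span, and this clashes with the genuinely non-linear shape $C(a)=\tfrac{\sigma(a)}{2}\bigl(a^2+a^{(2)}\bigr)$, where $\sigma(a)=\pm1$ according to whether $a$ is black or red. For a black edge $u\to v=u+e_i-e_j$ (both of colour $\sigma$) one computes in $S^2[\R^m]$ that $C(v)-C(u)=\sigma\,(e_i-e_j)(u+e_i)$; linearity of $\Phi$ forces this to depend on the label $e_i-e_j$ only, so for two distinct black edges with this label, based at $u_1,u_2$ of colours $\sigma_1,\sigma_2$, one gets $(e_i-e_j)\bigl(\sigma_1(u_1+e_i)-\sigma_2(u_2+e_i)\bigr)=0$, hence $\sigma_1(u_1+e_i)=\sigma_2(u_2+e_i)$ since a product of linear forms vanishes only if a factor does. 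Distinct edges force $\sigma_1\ne\sigma_2$, and then $u_1+u_2=-2e_i$: a forbidden pair, and we are done in that case. In general (no repeated label) I would instead take any dependency $\sum_a n_a a=0$ (it exists, $A$ being degenerate), apply resonance together with the linearity of $b\mapsto b^{(2)}$ and with $\sum_{a\ \mathrm{red}}n_a=0$ (obtained from $\eta$), to get the identity in $S^2[\R^m]$
$$\sum_{a}n_a\sigma(a)\,a^2=-2\,w^{(2)},\qquad w:=\sum_{a\ \mathrm{black}}n_a a,$$
i.e.\ $\sum_a n_a\sigma(a)\,a^2$ has no mixed term $e_ie_j$.

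From here I would run a case analysis on an inclusion-minimal dependency. The vanishing of all mixed coefficients of $\sum_a n_a\sigma(a)a^2$ — rather than merely the definiteness of the associated form — together with $\eta(a)\in\{0,-2\}$ and the absence of proper sub-dependencies, restricts the support to a short list of shapes; in each of them, possibly after re-rooting to normalise the dependency or restricting to a smaller degenerate--resonant subgraph, one locates a black vertex $\alpha$ and a red vertex $\beta$ with $\alpha+\beta$ of the form $-2e_i$ or $-3e_i+e_j$. Then $A$ is not allowable and Proposition~\ref{ilpunto0} gives the theorem.

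The hard part is exactly this last step: the diagonality identity and the arithmetic constraints $\eta(a)\in\{0,-2\}$ must be pushed to produce a forbidden pair in \emph{every} degenerate--resonant graph, and one must dispose of the delicate sub-cases in which the naive computation only yields $\alpha+\beta=-e_i-e_j$ (which is not yet forbidden, being an ordinary red edge label) and one is forced to iterate the construction on a smaller graph.
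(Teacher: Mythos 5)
Your reductions are sound and in fact parallel the paper's own: you reduce to showing ``not allowable'' (so that Proposition~\ref{ilpunto0} finishes), you use the re-rooting formula \eqref{chro} correctly (re-rooting at a black vertex $\alpha$ sends a red vertex $\beta$ to $\alpha+\beta$, so a forbidden pair of opposite colours with sum $-2e_i$ or $-3e_i+e_j$ suffices --- this is exactly the observation of Remark~\ref{bala}), your treatment of a repeated black edge label is a correct analogue of Proposition~\ref{dueed}, and your ``diagonality'' identity $\sum_a n_a\sigma(a)a^2=-2w^{(2)}$ is precisely the paper's starting point: it is equivalent to the vanishing of all mixed terms in the resonance, i.e.\ to $C_u(\mathcal R)=0$ for every index $u$ (Formula \eqref{cur}).

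The genuine gap is everything after that. The step you describe as ``run a case analysis on an inclusion-minimal dependency \dots restricts the support to a short list of shapes \dots one locates a forbidden pair'' is not a proof sketch but a restatement of the theorem: it is exactly the content of Part~2 of the paper, and there is no reason the list of shapes is short or that the analysis closes up from the single identity you wrote down. To make it work one needs, at a minimum: uniqueness of the minimal relation on a maximal tree (Lemma~\ref{endp}); the classification of the encoding graphs of that relation into even circuits, doubly odd circuits, and circuits with an extra edge, with coefficients $\pm1,\pm2$ (Propositions~\ref{pari}, \ref{dodd}, \ref{icin}); the extraction, for each non-critical index $u$, of the coefficient of $e_u$ in the mixed part and the resulting $18$-case computation yielding the dichotomy type~I/type~II (Proposition~\ref{leduei}); the tree-geometry arguments forcing the edges of $A$, $B$, $C$ into segments (Theorems~\ref{lu0}, \ref{lu1}, \ref{itre}, Corollaries~\ref{ePo}, \ref{lasca}); and only then the final computations producing a vertex $-2e_i$ or $-3e_i+e_j$ after re-rooting (Corollaries~\ref{PrC0}, \ref{PrC}, Lemmas~\ref{chiave}, \ref{chiave1} and the concluding propositions). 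None of this is present or replaced by an alternative argument in your proposal; you acknowledge as much in your last paragraph, and in particular the sub-case you flag (when the naive computation only yields $\alpha+\beta=-e_i-e_j$) is precisely where the paper's long analysis is unavoidable. As it stands the proposal is an outline of the easy reductions plus a promissory note for the hard part, so it does not prove Theorem~\ref{MM}.
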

From this Theorem  follows the final description of thre connected components of $\Gamma_S$:
\begin{theorem}\label{affind}
For generic $\mathtt v_i$  the graphs with a realization have at most $n+1$ vertices which are affinely independent.
\end{theorem}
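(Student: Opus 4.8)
The plan is to deduce the theorem from Theorem~\ref{MM} together with the structural results of Part~1 (Theorems~\ref{bou} and~\ref{aMT}, Lemma~\ref{avr}, Propositions~\ref{main1} and~\ref{avri}). Concretely I would take a connected component $A$ of $\Gamma_S$ different from the special component, realize it as a finite combinatorial graph $\mathcal{A}$, show that for generic $S$ this $\mathcal{A}$ is forced to be non-degenerate of rank at most $n$, and then invoke Proposition~\ref{avri} to conclude that the points of $A$ are affinely independent, hence at most $n+1$ of them. In effect this re-derives, through Theorem~\ref{MM}, the instance of Proposition~\ref{LaMa} that is needed here.

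First I would reduce to a combinatorial graph. Fix $S$ generic in the sense of Definition~\ref{fincon} and let $A$ be a component of $\Gamma_S$ with $A\ne S$; by Remark~\ref{laspe} the special component is isolated, so $A\cap S=\emptyset$ and every vertex of $A$ lies outside $S$. To bound the size of $A$ I argue by contradiction: were $A$ to have at least $2n+2$ vertices, one could grow a subtree from any vertex to obtain a connected subgraph $B\subset A$ with exactly $2n+2$ vertices; since $|B|\le 2n+2$, $B$ can be lifted (Constraints~\ref{c3} and~\ref{c4}, as in Proposition~\ref{main1}) to a combinatorial graph with $2n+1$ vertices besides the root, and $B$ is a geometric realization of it with root outside $S$, contradicting Theorem~\ref{bou}(iii). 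Hence $A$ is finite with at most $2n+1$ vertices, so $A$ itself lifts to a combinatorial graph $\mathcal{A}$ (Definition~\ref{cogrA}) with a geometric realization whose root $x$ lies in $A\setminus S$.

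Next I would pin down $\mathcal{A}$. Because $\mathcal{A}$ is realizable and has at most $2n+2$ vertices, Theorem~\ref{bou}(ii) prevents it from having $n+1$ linearly independent vertices, so ${\rm rk}\,\mathcal{A}\le n$. Suppose $\mathcal{A}$ were degenerate; then its vertices satisfy some nontrivial relation $\sum_a n_a a=0$. If some such relation has $\sum_a n_a C(a)\ne 0$, then Lemma~\ref{avr} — applicable since $\mathcal{A}$ has at most $2n+2$ vertices, so only finitely many such resonances are imposed in Definition~\ref{fincon} — shows $\mathcal{A}$ has no geometric realization for generic $S$. If instead every relation among the vertices satisfies $\sum_a n_a C(a)=0$, i.e.\ $\mathcal{A}$ is \emph{degenerate--resonant}, then Theorem~\ref{MM} shows $\mathcal{A}$ has no geometric realization outside the special component. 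Both alternatives contradict $x\in A\setminus S$, so $\mathcal{A}$ is non-degenerate; it therefore has exactly $k:={\rm rk}\,\mathcal{A}\le n$ linearly independent vertices besides the root, and Proposition~\ref{avri} applied to $\mathcal{A}$ gives that for generic $S$ the realization $A$ consists of affinely independent points, whence $|A|=k+1\le n+1$.

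With Theorem~\ref{MM} assumed, the remainder is essentially bookkeeping, and the only place that needs real care is the reduction step: cutting an a priori possibly infinite component down to a subgraph of at most $2n+2$ vertices that is simultaneously liftable (so Constraints~\ref{c3} and~\ref{c4} apply) and lies inside the finite family of combinatorial graphs to which the avoidable--resonance constraints of Definition~\ref{fincon} were attached. All the genuine difficulty sits in the assumed Theorem~\ref{MM} and, upstream of it, in Theorem~\ref{bou} — that is where the long combinatorial analysis of (non)allowability is carried out.
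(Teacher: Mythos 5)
Your proposal is correct and follows essentially the same route as the paper: Theorem~\ref{affind} is obtained there exactly by combining Theorem~\ref{MM} with Lemma~\ref{avr} (to exclude degenerate graphs), Theorem~\ref{aMT}/Theorem~\ref{bou} (to exclude rank $n+1$ and large graphs), and Proposition~\ref{avri} (affine independence), which is precisely your chain of deductions. The only difference is that you spell out the reduction of an a priori large component to a liftable subgraph with at most $2n+2$ vertices, a step the paper leaves implicit in its stated strategy before \S\ref{detvar}, and your treatment of it is consistent with Proposition~\ref{main1} and Constraints~\ref{c3}, \ref{c4}.
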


\subsubsection{Minimal degenerate resonant graphs}\quad Clearly, in order to prove  Theorem \ref{MM}  it is enough to prove it for  minimal degenerate resonant graphs  $\GA$, that is  graphs  which do not contain any proper degenerate resonant graph. \smallskip

We choose a maximal tree $T\subset \GA$ and then we  have noticed, in Remark 
\ref{maxt},   that a relation on the vertices implies a relation on the edges and conversely.  
\begin{lemma}\label{endp}
Every relation among the vertices of $T$  contains the end points of $T$ with non zero coefficient.

There is a unique (up to scale) relation among the vertices.

There is a unique (up to scale) relation among the edges.
\end{lemma}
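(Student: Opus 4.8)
Let $\GA$ be a minimal degenerate resonant graph, let $v_1,\dots,v_k$ be its vertices other than the root $0$ (viewed as integer vectors in $\Z^m_2$), and fix a maximal tree $T\subset\GA$, so $T$ has $k+1$ vertices and $k$ edges. Since $\GA$ is degenerate the $v_i$ are $\Z$-linearly dependent, so the space $\mathcal R$ of rational relations $\sum_i n_iv_i=0$ has $\dim_{\mathbb Q}\mathcal R\ge 1$; also $k\ge 2$, since $k\le 1$ would force $v_1=0$. The whole argument runs on one observation: if $\ell$ is an endpoint (leaf) of $T$ other than the root, then the full subgraph $\GA\setminus\{\ell\}$ is again a combinatorial graph (Definition \ref{comg}), because $T\setminus\{\ell\}$ is a spanning tree of it, so it is connected, it still contains $0$, and it has fewer vertices. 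I would establish the three statements in the stated order.

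\textbf{Endpoints appear in every relation.} Suppose a relation $R=\sum_i n_iv_i=0$ had coefficient $0$ on some non-root endpoint $\ell$ of $T$. Then $R$ is a nontrivial dependency among the vertices of $\GA':=\GA\setminus\{\ell\}$, so $\GA'$ is degenerate; moreover every relation among the vertices of $\GA'$, extended by the coefficient $0$ on $\ell$, is a relation of $\GA$, hence satisfies $\sum_a n_aC(a)=0$ because $\GA$ is resonant, so $\GA'$ is degenerate resonant. This contradicts the minimality of $\GA$. Hence every relation in $\mathcal R$ has nonzero coefficient on every non-root endpoint of $T$, which is the first assertion.

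\textbf{Uniqueness up to scale.} If $\dim_{\mathbb Q}\mathcal R\ge 2$, fix a non-root endpoint $\ell$ of $T$ (it exists: $T$ has $k+1\ge 3$ vertices, hence at least two leaves, at least one of which is not the root) and pick independent relations $R=\sum n_iv_i$, $R'=\sum n'_iv_i$. By the previous step $n_\ell,n'_\ell\neq0$, so $n'_\ell R-n_\ell R'$ is a \emph{nonzero} relation with vanishing coefficient on $\ell$, contradicting the first assertion. Therefore $\dim_{\mathbb Q}\mathcal R=1$ and the relation among the vertices is unique up to scalar.

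\textbf{From vertices to edges.} Finally, to pass to the edges I would make the triangular change of coordinates of Remark \ref{maxt} explicit: order the vertices so that the $T$-path from $0$ to $v_j$ only passes through $v_1,\dots,v_{j-1}$, and let $\ell_j$ be the last edge of that path, joining the parent $v_{p(j)}$ (with $p(j)<j$) to $v_j$; then the composition law \eqref{claw} gives $L(v_j)=L(\ell_j)+(\eta(L(\ell_j))+1)\,L(v_{p(j)})$, so in the basis $(\ell_1,\dots,\ell_k)$ the matrix expressing $(v_1,\dots,v_k)$ is lower triangular with $1$'s on the diagonal, hence invertible over $\Z$. Consequently $n\mapsto$ (the vector of coefficients of the $\ell_i$ in $\sum_i n_iv_i$) is a linear isomorphism of $\mathcal R$ onto the space of relations among the edges of $T$, which is therefore also one-dimensional. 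The only points needing care in the whole proof are checking that $\GA\setminus\{\ell\}$ really lies in the class of graphs to which minimality applies, and the sign/$\tau$ bookkeeping in this last step; both are routine given Definition \ref{comg} and Remark \ref{maxt}, so I do not expect a genuine obstacle.
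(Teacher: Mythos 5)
Your proposal is correct and follows essentially the same route as the paper: delete an endpoint absent from a relation to produce a proper degenerate–resonant subgraph contradicting minimality, combine two independent relations to kill an endpoint coefficient, and transfer uniqueness from vertices to edges via the triangular change of coordinates of Remark \ref{maxt}. You merely make explicit the details the paper leaves terse (that $\GA\setminus\{\ell\}$ is again a combinatorial graph inheriting resonance, and the unitriangular matrix coming from the composition law), which is consistent with the paper's argument.
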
 
\begin{proof}
If an end  vertex of $T$  does not appear we can remove it from $T$ and obtain a proper degenerate resonant graph contrary to the assumption.

If we have two different relations and we choose an end  vertex of $T$ we can build a linear combination of these two relations in which this vertex does not appear contradicting the previous statement.

Finally since the edges are as many as the vertices  this follows since they span the same vector space\end{proof}
   Our first task is to understand the nature of these relations among the edges $\ell_i$. \medskip 

{\bf Some examples.}\quad  
\begin{proposition}\label{dueed}

A combinatorial graph in which the same edge $\ell$ appears twice has no generic geometric realization. Also in case $\ell$ black if $\ell$ and $-\ell$ both appear.
\end{proposition}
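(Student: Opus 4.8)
The plan is to convert the repeated edge into a linear relation among the vertices of $\GA$ and then apply Lemma \ref{avr}. First I would note that both clauses describe the same situation: in a Cayley graph an edge labelled $\ell$ and the same edge traversed backwards are identified (this is the point of $X=X^{-1}$, and for a black $\ell$ the backward label is $-\ell$), so ``$\ell$ appears twice'', resp. ``$\ell$ and $-\ell$ both appear'', means precisely that $\GA$ contains two \emph{distinct} edges carrying the label $\ell$. Writing the vertices of $\GA$ as elements of $\Z^m_2$ as in \eqref{orbG}, these are $b_1\to b_1':=\ell\circ b_1$ and $b_2\to b_2':=\ell\circ b_2$ with $b_1\neq b_2$ (two edges with the same pair of endpoints coincide). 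Since $b_1'-b_1=b_2'-b_2\,(=e_i-e_j)$ when $\ell=e_i-e_j$ and $b_1+b_1'=b_2+b_2'\,(=-e_i-e_j)$ when $\ell=(-e_i-e_j)\tau$, the vertices satisfy the nontrivial relation
\begin{equation}\label{repedge}
b_1'-b_1-b_2'+b_2=0\quad(\ell\ \text{black}),\qquad b_1+b_1'-b_2-b_2'=0\quad(\ell\ \text{red}).
\end{equation}
In particular a combinatorial graph with a repeated edge is automatically degenerate. Since $\sum_a n_a\eta(a)=0$ in \eqref{repedge}, Formula \eqref{riso} shows that $\sum_a n_aK(a)=\pi\bigl(\sum_a n_aC(a)\bigr)$ vanishes on the variety of realizations of $\GA$; by Lemma \ref{CK} it is a nonzero polynomial in the $\mathtt v_i$ as soon as $\sum_a n_aC(a)\neq 0$, and then it is an avoidable resonance, which proves the statement.

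So the one computation to carry out is $\sum_a n_aC(a)$ for \eqref{repedge}. Using the composition formula \eqref{vee}, the values $C(e_i-e_j)=e_i^2-e_ie_j$ and $C((-e_i-e_j)\tau)=-(e_i^2+e_ie_j+e_j^2)$, and the linearity of $a\mapsto a^{(2)}$, one gets $C(\ell\circ b)-C(b)=\varepsilon(b)\bigl(C(\ell)+\ell\cdot b\bigr)$ for black $\ell$ and $C(\ell\circ b)+C(b)=\varepsilon(b)\bigl(b^{(2)}-e_ie_j-(e_i+e_j)\cdot b\bigr)$ for red $\ell$, where $\varepsilon=+1$ on black vertices and $-1$ on red ones. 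Substituting into \eqref{repedge}: if $b_1,b_2$ have the same colour then $\sum_a n_aC(a)$ is $\pm(e_i-e_j)\cdot(b_1-b_2)$ (black) or $\pm\bigl((b_1-b_2)^{(2)}-(e_i+e_j)\cdot(b_1-b_2)\bigr)$ (red), which is a nonzero element of $S^2(\R^m)$ unless $b_1-b_2\in\Z(e_i-e_j)$; if $b_1,b_2$ have opposite colours then $\sum_a n_aC(a)$ is $\pm\bigl(2C(e_i-e_j)+(e_i-e_j)\cdot(b_1+b_2)\bigr)$ (black) or $\pm\bigl((b_1+b_2)^{(2)}-2e_ie_j-(e_i+e_j)\cdot(b_1+b_2)\bigr)$ (red), which is nonzero unless $b_1+b_2$ is supported on $\{i,j\}$ --- so, taking $\eta$ into account, unless $b_1+b_2=-2e_i$ in the black case, or $b_1+b_2\in\{-2e_i,\,-2e_j,\,-3e_i+e_j,\,e_i-3e_j,\dots\}$ in the red case. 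In every configuration off these short lists, $\sum_a n_aC(a)\neq 0$ and Lemma \ref{avr} applies.

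It then remains to dispose of the exceptional configurations, where \eqref{repedge} happens to be a $C$-relation. Here the claim to prove is that $\GA$ is \emph{not allowable}, so that Proposition \ref{ilpunto0} gives the conclusion. Changing the root so that $b_1$ (or $b_2$) becomes $0$, Formula \eqref{chro} shows the other source vertex becomes $b_1+b_2$ in the opposite-colour case and $\pm(b_1-b_2)$ in the same-colour case; combining this with the identity $b+\ell\circ b=-e_i-e_j$ for a red edge, one checks that after at most one further change of root $\GA$ carries a vertex of the forbidden form $-2e_k$ or $-3e_k+e_l$ for $b_1+b_2\in\{-2e_i,-2e_j,-3e_i+e_j,e_i-3e_j\}$, for $b_1+b_2=-2e_i$, and --- via one intermediate vertex forced in by connectedness together with the length bound of Lemma \ref{fco} --- for $b_1-b_2\in\Z(e_i-e_j)$, which collapses to a black repeated edge of the already treated mixed kind; the remaining larger $e_i,e_j$-patterns are handled by the same device (and are further constrained in the intended application, where $\GA$ is a \emph{minimal} degenerate resonant graph so that \eqref{repedge} is its only relation). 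In all cases $\GA$ is not allowable. The main obstacle is exactly this last paragraph: the ``generic'' part is the short $C$-computation above, whereas certifying that \emph{every} degenerate configuration of \eqref{repedge} is not allowable --- equivalently, reduces to a simpler repeated edge --- is the delicate combinatorial book-keeping.
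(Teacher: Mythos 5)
Your first half is correct and is the same strategy as the paper's own proof: the repeated edge yields a nontrivial linear relation among the four (or three) vertices involved, you compute $\sum_a n_aC(a)$ with the composition rule \eqref{vee}, and Lemma \ref{avr} (via Lemma \ref{CK}) eliminates every configuration in which this element of $S^2(\R^m)$ is nonzero. (Small slip: $C((-e_i-e_j)\tau)=-e_ie_j$, not $-(e_i^2+e_ie_j+e_j^2)$; the increment formulas you then derive from \eqref{vee} are nevertheless right.)

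The genuine gap is the vanishing case, which is where the paper's proof does its actual work: there the paper compares coefficients of $e_k^2$ and $e_ke_l$ directly and concludes $c=0$, $b=\ell$, i.e.\ the putative second copy is the first one, so the Proposition is obtained purely as an avoidable resonance, with no appeal to allowability. Your final paragraph replaces this by the claim that every exceptional configuration is not allowable, and that claim is not proved: ``one checks'', ``handled by the same device'' and the retreat to minimality ``in the intended application'' are exactly the nontrivial content, as you yourself concede. Concretely: (i) in the same-colour black case there is no exceptional configuration at all, since $(e_i-e_j)\cdot(b_1-b_2)=0$ in $S^2(\R^m)$ already forces $b_1=b_2$ (coefficients of $e_ie_k$, $k\neq i,j$, then of $e_i^2,e_j^2$), so that branch of your list is spurious; (ii) your reduction of the same-colour red case to a mixed black repetition works only when $b_1-b_2=\pm(e_i-e_j)$ (fullness of the subgraph then really does supply the black edge and a vertex $-2e_i$ after re-rooting), but for $b_1-b_2=c(e_i-e_j)$ with $|c|\ge 2$, and likewise for opposite-colour red repetitions with $b_1+b_2=s_ie_i+s_je_j$, $s_i+s_j=-2$ general (e.g.\ $-4e_i+2e_j$), no re-rooting of the four vertices produces a vertex of the forbidden shape $-2e_k$ or $-3e_k+e_l$, so Proposition \ref{ilpunto0} does not apply and excluding these patterns needs the connectedness/fullness/minimality bookkeeping you only allude to (Lemma \ref{fco} by itself gives no such intermediate vertex); (iii) even where the fallback works it yields ``no realization outside the special component'', a different and weaker statement than the avoidable-resonance conclusion the paper's computation delivers. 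Your instinct that the mixed-colour vanishing configurations are genuinely there is sound --- the four vertices $0,\ e_i-e_j,\ -2e_i,\ -e_i-e_j$ carry the black edge $e_i-e_j$ twice with vanishing $C$-relation, compare the example \eqref{mig} discussed in Remark \ref{bala} --- but your proposal does not close that case; it relocates it to the unproven last paragraph.
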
 \begin{proof}Suppose we have twice the same edge  $\ell $.   We take the root at one end of one of the two $\ell$ and denote by $a=\ell$ the other end.  If $\ell= e_1-e_2$, consider the other $\pm \ell$ and  say $b,c$ are the two vertices of the same color $\sigma$ so that $b-c=a$. By Lemma \ref{avr}  we have to treat only the case  in which the resonance relation is identically zero. Then  we have
$$\frac\sigma 2(b^2+b^{(2)}- c^2-c^{(2)})= C(b)- C(c)= C(\ell)=   e_1^2 - e_1e_2.  $$  
If $b=\sum_j u_je_j,\ c=\sum_j w_je_j$  we have  $u_i=w_i$ for $i\neq 1,2$ and $u_1=w_1+1,\  u_2=w_2-1$.
$$b^2=\sum_j  u_j^2 e_j^2 +2\sum_{i<j}u_iu_je_ie_j,\ c^2=\sum_j  w_j^2 e_j^2 +2\sum_{i<j}w_iw_je_ie_j.$$ Comparing the   terms                         in the $e_i^2$ on both sides  we have
$$  2e_1^2=\sigma \sum_j (u_j^2+u_j-w_j^2-w_j)e_j^2=\sigma (u_1^2+u_1-w_1^2-w_1)e_1^2 +\sigma (u_2^2+u_2-w_2^2-w_2)e_2^2$$
substituting   $u_1=w_1+1,\  u_2=w_2-1$ we have:
$$\implies 0=( w_2-1)^2+ w_2-1-w_2^2-w_2=-2w_2\implies w_2=0,\  u_2=-1 $$
Next compare the mixed terms      $e_ie_j,\ i\neq j$      
 \begin{equation}\label{ausi}
 - e_1e_2= \sum_{i<j}u_iu_je_ie_j-\sum_{i<j}w_iw_je_ie_j\implies u_1u_2-w_1w_2=-1,\implies u_1=1,\ w_1=0.
\end{equation}If there is a $j\neq 1,2$ with $u_j=w_j\neq 0$ then, since  the coefficients of $e_1e_j,\ e_2e_j$ in the left of Formula \eqref{ausi} are 0 we deduce $u_1=w_1,\ u_2=w_2$  a contradiction.
Therefore $b=e_1-e_2,\ c= 0 $ and the two edges are the same.\bigskip

   If $\ell= -e_1-e_2$ say $b,c$ are the two vertices of opposite colors $1 , -1$ so that $b+c=a$. Hence the resonance relation is 
\begin{equation}\label{RRis}
\frac 1 2(b^2+b^{(2)}- c^2-c^{(2)})= C(b)+ C(c)= C(\ell)=  - e_1e_2  .
\end{equation}If $b=\sum_j u_je_j,\ c=\sum_j w_je_j$  from $b+c=\ell$ we have  $u_i=-w_i$ for $i\neq 1,2$ and $u_1=-w_1-1,\  u_2=-w_2-1$.

Comparing  the   terms                         in the $e_i^2$ on both sides
$$0=u_i^2+u_i- w_i^2-w_i\implies u_i=w_i=0,\ \forall  i\neq 1,2$$$$0=(w_i-1)^  2+w_i-1- w_i^2-w_i=-2w_i, \ i=1,2 ,\  u_1=u_2=-1 .     $$ 
  
We thus have $c=0, \ b=a$ the same edge.\end{proof}
\subsubsection{Recall the basic formulas\label{BaF}} We  work with $G_{2}$  identified with elements in $\Z^m$  either with $\eta(a)=0$,   {\em black} or $\eta(a)=-2$   {\em red}.  

We have set  $C(a)=\frac{1}{2}  (  a^2+  a^{  (2)})$ for $a$ black and  $C(a)=-\frac{1}{2}  (  a^2+  a^{  (2)})$ for $a$ red.

In our computations we use always the rules: 
\begin{itemize}
\item for $u,  v$ black, we have $u+v$ black and

$$1)\quad  C  (u+v)=\frac{1}{2}  (  (u+v)^2+  (u+v)^{  (2)})=C  (u)+C  (v)+uv$$
\item for $u$ black $v$ red, we have $u+v$ red and

$$2)\quad C  (u+v)=-\frac{1}{2}  (  (u+v)^2+  (u+v)^{  (2)})= -C  (u)+C  (v)-uv$$

\item for $u,  v$ red, we have $u-v$ black and
$$3)\quad C  (u-v)=\frac{1}{2}  (  (u-v)^2+  (u-v)^{  (2)})=\frac{1}{2}  (  (u^2+v ^2-2uv+  (u-v)^{  (2)})$$$$=\frac{1}{2}  (  (u^2+v ^2-2uv+  (u-v)^{  (2)})=-C  (u)+C
(v)+v^2-uv$$
\item for $u$ black, we have $-u$ black and
$$4)\quad C  (-u)=C  (u)-u^{  (2)}. $$\end{itemize}

\subsection{Encoding graphs}  

 In order to understand relations among edges,   consider   the complete graph $T_m$ on the vertices  $1,  \ldots,  m$.    If we are given a     list  $P$ of   edges $\ell_i\in X$ we associate to it   the subgraph $\Lambda_P$ of  $T_m$,   called   {\em the encoding graph of $P$}   in which, the vertices are the indices appearing in $P$ and we join two vertices $i,  j$ with a black edge  if $P$ contains an edge marked $e_j-e_i$ or  $e_i-e_j$ and by a    red edge edge if $P$ contains an edge marked $-e_j-e_i$. 
 
 We mark $=$ the red edges. A priori it is possible that both markings appear but by Proposition  \ref{dueed} each appears at most once.  
In order to distinguish combinatorial from encoding graphs we refer to {\em indices} the vertices of an encoding graph.\smallskip

In particular given a  degenerate resonant graph $\Gamma$ we choose a maximal tree, as in Lemma \ref{endp}, which determines  a minimal relation among its edges and define  $\mathfrak E:=(\mathcal V,\mathcal E)$  the vertices and the edges of the encoding graph of  the edges appearing in this relation.   $\mathfrak E$ depends on the choice of the tree, but one can analyze what happens changing this choice, as in \eqref{esc}. \smallskip

{\bf Examples}\quad For the graph of Formula \eqref{mig}, which is already a tree,  the encoding graph of the graph and of the minimal relation coincide:
\begin{equation}\label{esmd}
 \xymatrix{ &3\ar@/^/[r]^{-e_1-e_3}&\ar@/^/[l]^{e_1-e_3}1\ar@/^/[r]^{-e_1-e_2}&\ar@/^/[l]^{e_1-e_2}2& }.
\end{equation} 

In example \eqref{prie} some maximal trees  and their encoding graphs:$$ \xymatrix{ &  \bullet\ar@{<- }[d] _{2,  1}& &\\  &  \bullet & &\\  x\ar@{ ->}[ru] _{3,  2} &&    \textcolor{Red}{\bullet}   \ar@{=}[lu] ^{1,  3} &   } \ 
\xymatrix{ &  \bullet & &\\  &  \bullet & &\\  x\ar@{ ->}[ru] _{3,  2}\ar@{<-}[ruu] ^{  1,3}\ar@{=}[rr]_{1,  2} &&    \textcolor{Red}{\bullet}    &   }\ 
\xymatrix{ &  \bullet & &\\  &  \bullet & &\\  x \ar@{<-}[ruu] ^{  1,3}\ar@{=}[rr]_{1,  2} &&    \textcolor{Red}{\bullet}    \ar@{=}[lu] ^{1,  3} &   } $$

$$\xymatrix{   &  2& &\\  1\ar@{ ->}[ru] \ar@{=}[rr]    &&    3  \ar@{=}[lu]   &   }\ 
\xymatrix{   &  2& &\\  1\ar@{=}[ru] \ar@{->}[rr]    &&    3  \ar@{<-}[lu]   &   }\ 
\xymatrix{   &  2& &\\  1\ar@{ =}[ru] \ar@{=}[rr]    &&    3\ar@/^/[ll]    &    },\ 
$$ In this case there is no relation.

\smallskip

We use the symbol $\mathcal V$ also for the indices and by $V_{\mathcal V}$ the lattice spanned by the $e_j,\ j\in \mathcal V$.\smallskip

 Recall that the {\em valency} of a
vertex in a graph is the number of edges which admit it as vertex.
\begin{lemma}\label{com}
The encoding graph  $\mathfrak E:=(\mathcal V, \mathcal E)$  of a minimal relation is connected and each of its vertices has valency $\geq 2$.
\end{lemma}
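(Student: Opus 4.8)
The plan is to read both statements off the coefficient vector of the essentially unique minimal relation on the edges. Recall the setup: $\GA$ is a minimal degenerate resonant graph, $T\subset\GA$ the chosen maximal tree, and by Lemma \ref{endp} there is a relation $\sum_{\ell\in\mathcal E}\lambda_\ell\,\ell=0$ inside $V_{\mathcal V}$ with every $\lambda_\ell\neq 0$, unique up to an overall scalar. Each $\ell\in\mathcal E$ is one of the generators $e_a-e_b$ or $-e_a-e_b$ with $a\neq b$, so it contributes $\pm 1$ to the $e_j$-coefficient of the relation precisely when $j$ is one of its two indices, and $0$ otherwise; by Proposition \ref{dueed} distinct elements of $\mathcal E$ give distinct edges of $\mathfrak E$, so the valency of an index $j$ in $\mathfrak E$ equals the number of $\ell\in\mathcal E$ containing $j$.

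For the valency bound I would fix $j\in\mathcal V$ and look at the coefficient of $e_j$ in $\sum_{\ell\in\mathcal E}\lambda_\ell\ell=0$. Since $j\in\mathcal V$ its valency $v(j)$ is at least $1$. If $v(j)=1$, with $\ell_0$ the unique incident edge, then that coefficient equals $\pm\lambda_{\ell_0}$, which is nonzero, contradicting the vanishing of the sum. Hence $v(j)\geq 2$ for every $j\in\mathcal V$.

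For connectedness, suppose to the contrary $\mathfrak E=\mathfrak E_1\sqcup\mathfrak E_2$ is a disjoint union with vertex sets $\mathcal V_1,\mathcal V_2$ both nonempty and no edge between the two parts. Then every edge of $\mathcal E$ has both endpoints in one part, so $\mathcal E=\mathcal E_1\sqcup\mathcal E_2$ with $\mathcal E_r$ the edges lying inside $\mathcal V_r$, and (already because every index of $\mathcal V$ occurs in some edge, whose other endpoint is then in the same part) both $\mathcal E_1$ and $\mathcal E_2$ are nonempty. Projecting the relation onto $\mathrm{span}\{e_j:j\in\mathcal V_1\}$ and onto its complement leaves the $\mathcal E_1$-, respectively $\mathcal E_2$-terms unchanged and annihilates the rest, giving $\sum_{\ell\in\mathcal E_1}\lambda_\ell\ell=0$ and $\sum_{\ell\in\mathcal E_2}\lambda_\ell\ell=0$. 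These are two relations among the edges of $T$ with disjoint nonempty supports, hence linearly independent, contradicting the uniqueness up to scalar asserted in Lemma \ref{endp}. Therefore $\mathfrak E$ is connected.

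The whole argument is elementary coefficient bookkeeping; the only point needing a little care is the last step, namely checking that both halves of the split relation are genuinely nonzero, so that one really obtains a two-dimensional space of edge-relations and thereby violates Lemma \ref{endp}. The valency part is immediate, and I would present it first since the connectedness argument can reuse it.
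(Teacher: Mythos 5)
Your proof is correct and follows essentially the same route as the paper: the valency bound comes from the impossibility of cancelling the $e_j$-coefficient of an index lying on only one edge, and connectedness comes from splitting the relation along the direct sum of the spans $\mathrm{span}\{e_j: j\in\mathcal V_r\}$ and contradicting the uniqueness in Lemma \ref{endp}. Your version merely makes explicit the final step (two disjointly supported nonzero sub-relations are independent) that the paper leaves implicit.
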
\begin{proof}
For each connected component $C$ of $\mathfrak E$ consider the subspace $V_C$ spanned by the vectors $e_i,\ i\in C$, which contains the span of the edges in $C$. 
 
The subspaces $V_C$ form a direct sum, so the   relation decomposes into a sum of terms each supported in a componente $V$ and each a relation. Hence the encoding graph of a minimal relation is connected.  \medskip
 
   The graph $\mathfrak E$ cannot have any vertex of valency 1,
since this would appear in only one edge of $\mathcal E$  which is clearly linearly independent from
the others and does not appear in a relation.
\end{proof}
   A  basic relation among the edges $\ell_i$ is the {\em circular relation}. We can visualize the algorithm as a substitution of   two consecutive edges with a single one:
\begin{equation}\label{esc}
(e_i-e_j)+(e_j-e_k)+(e_k-e_i)=0,\quad \xymatrix{i\ar@{-}[rd]\ar@{-}[rr]&&k\\&j\ar@{-}[ru]}
\end{equation} $$ (e_i-e_j)-(-e_j-e_k)+(-e_k-e_i)=0,\quad \xymatrix{i\ar@{-}[rd]\ar@{=}[rr]&&k\\&j\ar@{=}[ru]}$$
$$ -2e_i=-(e_i-e_j)+(-e_i-e_j).$$
In general the encoding graph of such a  relation, {\em with signs $\delta=\pm$  see \eqref{priR}}, is a simple circuit as:  
   $$ \xymatrix{   &  1\ar@{ ->}[rd]^+\ar@{ =}[ld]_{-}&  \\   5\ar@{ ->}[d]_+&   &2\ar@{ ->}[d] ^+&  \\  4   \ar@{=}[rr]_+   & &3      },\  \xymatrix{   &  1\ar@{->}[rd]^+\ar@{<-}[ld]_{+}&  \\   5\ar@{->}[d]^{-}&   &2\ar@{->}[d]^+ &  \\  4     & &3  \ar@{<-}[ll]_{-}      },\ \xymatrix{   &  1\ar@{ ->}[rd]^+\ar@{ =}[ld]_{-}&  \\   5\ar@{=}[d]_+&   &2\ar@{ =}[d] ^+&  \\  4   \ar@{=}[rr]_{-}   & &3      },\  $$
 \begin{lemma}\label{icamm}
Consider $k$ edges  $\ell_i=\theta_ie_i-e_{i+1},\ \theta_i=\pm 1,\ i=1,\cdots,k$.

1)\quad The edges $\ell_i$ are linearly independent and there exist unique $\  \delta_i=\pm 1$:
\begin{equation}\label{priR}
\sum_{i=1}^k\delta_i \ell_i= \theta  e_1- e_{k+1},\quad \theta=
\prod_{i=1}^k\theta_i.
\end{equation}2)\quad Moreover $\delta_k=1$ and for all $1<u\leq k$ we have $\delta_u=\delta_{u-1}$ if   $\delta_u$ is black,  $\delta_u=-\delta_{u-1}$ if   $\delta_u$ is red, $\delta_1=
\theta\theta_1$.
\smallskip

3)\quad As element in $G_2$  we have that  $ \theta  e_1- e_{k+1}$ is the composition $\ell_k\circ\ell_{k-1}\circ\cdots\circ\ell_1$ of the $\ell_i$ as group  elements.\end{lemma}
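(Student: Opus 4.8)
The plan is to treat parts~1 and~2 together as a short piece of linear algebra and part~3 as an induction on $k$ via the composition formula \eqref{cofu}.

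For \textbf{part~1} I would first record linear independence: writing $\ell_1,\dots,\ell_k$ as the rows of the $k\times(k+1)$ matrix over $e_1,\dots,e_{k+1}$, the row $\ell_i$ has entry $\theta_i$ in column $i$ and $-1$ in column $i+1$. Deleting the last column leaves an upper bidiagonal $k\times k$ matrix with diagonal $\theta_1,\dots,\theta_k$, whose determinant is $\prod_i\theta_i=\theta\neq0$; hence the $\ell_i$ are linearly independent and any representation of $\theta e_1-e_{k+1}$ as a combination of them is unique. To produce the combination I would simply \emph{define} $\delta_j:=\theta\prod_{i=1}^{j}\theta_i\in\{\pm1\}$ and verify \eqref{priR} coefficient by coefficient: the coefficient of $e_1$ is $\delta_1\theta_1=\theta\theta_1^2=\theta$; for $2\le j\le k$ the coefficient of $e_j$ is $-\delta_{j-1}+\delta_j\theta_j$, which vanishes since $\delta_j\theta_j=\delta_{j-1}\theta_j^2=\delta_{j-1}$; and the coefficient of $e_{k+1}$ is $-\delta_k=-\theta\prod_{i=1}^{k}\theta_i=-\theta^2=-1$. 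Uniqueness of the signs $\delta_i$ then follows from the linear independence just shown. Note that this last check is exactly the point at which the hypothesis $\theta=\prod_i\theta_i$ is used, and it is the only nontrivial consistency condition in the whole argument.

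\textbf{Part~2} is then immediate from the closed form $\delta_j=\theta\prod_{i\le j}\theta_i$: one reads off $\delta_1=\theta\theta_1$, $\delta_k=\theta\prod_{i=1}^{k}\theta_i=\theta^2=1$, and $\delta_u=\theta_u\delta_{u-1}$ for $1<u\le k$. Using the identification of signs with colors --- the edge $\ell_u=\theta_u e_u-e_{u+1}$ is black precisely when $\theta_u=1$ (it lies in $X_0$) and red precisely when $\theta_u=-1$ (it is $(-e_u-e_{u+1})\tau\in X_2$) --- the relation $\delta_u=\theta_u\delta_{u-1}$ says exactly that $\delta_u=\delta_{u-1}$ when $\ell_u$ is black and $\delta_u=-\delta_{u-1}$ when $\ell_u$ is red, which is the assertion.

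For \textbf{part~3} I would induct on $k$. Each $\ell_i$ is the group element $(\theta_i e_i-e_{i+1},\theta_i)$, black with $\sigma=1$ when $\theta_i=1$ and red with $\sigma=-1$ when $\theta_i=-1$, and composition is governed by \eqref{cofu}, i.e. $a\circ b=a+b$ and $a\tau\circ b=(a-b)\tau$, together with the consequences $a\circ(b\tau)=(a+b)\tau$ and $a\tau\circ(b\tau)=a-b$ (which follow from $\tau^2=0$ and the rule $a\tau=-\tau a$, or equivalently from the action \eqref{lazz}). Assuming inductively $\ell_{k-1}\circ\cdots\circ\ell_1=(\theta'e_1-e_k,\ \theta')$ with $\theta'=\prod_{i=1}^{k-1}\theta_i$, one evaluates $\ell_k\circ(\ell_{k-1}\circ\cdots\circ\ell_1)$ in the four cases $\theta_k,\theta'\in\{\pm1\}$; in every case the $e_k$--terms telescope and what remains has first component $(\theta'\theta_k)e_1-e_{k+1}=\theta e_1-e_{k+1}$ and $\sigma$--part $\theta'\theta_k=\theta$, which is the claim. (Equivalently, this is the special case of Proposition~\ref{coo} for the path through $0$ with successive edges $\ell_1,\dots,\ell_k$, obtained by iterating the composition rule as in its proof.)

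The argument has no genuine obstacle; the only thing to watch is keeping the three-way bookkeeping straight --- a combinatorial edge $\theta_i e_i-e_{i+1}$, its group form with or without the trailing $\tau$, and its contribution $\theta_i=\pm1$ to the determinant $\theta$ --- so that the color-dependent sign flip in part~2 and the $\tau$--tracking in part~3 come out consistently.
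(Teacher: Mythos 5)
Your proof is correct, and it follows the same underlying telescoping computation as the paper, but packaged differently. The paper proves part 1 by induction on $k$ (setting $\delta_k=1$ and $\delta_i=\theta_k\eta_i$ from the case $k-1$), never spelling out linear independence or uniqueness, and it obtains the sign rule of part 2 not from a formula for the $\delta_i$ but from the cancellation of $e_u$ in \eqref{priR} together with the composition identity \eqref{comc} $\ell_u\circ\ell_{u-1}=\theta_u\theta_{u-1}e_{u-1}-e_{u+1}$; part 3 is then the same induction on that identity that you perform. Your version replaces the induction by the closed form $\delta_j=\theta\prod_{i\le j}\theta_i$ (which is just the solved recursion $\delta_u=\theta_u\delta_{u-1}$, $\delta_k=1$) and verifies \eqref{priR} coefficient by coefficient, which makes part 2 an immediate corollary rather than a separate cancellation argument; and you add the bidiagonal-determinant argument, which is the only place where linear independence and hence uniqueness of the $\delta_i$ are actually proved --- a point the paper's proof leaves implicit. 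Your four-case check of the composition in part 3 is consistent with \eqref{cofu} and with the identification of $\theta_u=\pm1$ with the black/red colors (the statement's ``$\delta_u$ is black'' should indeed be read as ``$\ell_u$ is black'', as you did). So: same skeleton, with your write-up slightly more self-contained on independence/uniqueness and slightly more explicit on the signs, at the cost of carrying the closed formula instead of the one-line inductive step.
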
\begin{proof} 
1)\quad By induction there exist $\eta_i=\pm 1$ so that   $\sum_{i=1}^{k-1}\eta_i\ell_i= \prod_{i=1}^{k-1}\theta_i e_1- e_{k} $.  

Set $\delta_k=1,\ \delta_i=\theta_k\eta_i,\ i=1,\cdots, k-1$  and we have $$\ell_k+\sum_{i=1}^{k-1}\theta_k  \eta_i\ell_i=\theta_k ( \prod_{i=1}^{k-1}\theta_i e_1- e_{k})+  (\theta_k e_k-e_{k+1})= \prod_{i=1}^{k }\theta_i e_1- e_{k+1}  .$$ The Formula $\delta_1=
\theta\theta_1$ is proved by induction.

2)\quad Since each $1<u\leq k$ does not appear  in the right hand side of Formula  \eqref{priR}  we must have cancellation from the only two edges in which $e_u$ appears, that is, cf. Formula \eqref{cofu}
\begin{equation}\label{comc}
\ell_u\circ\ell_{u-1}=(\theta_{u}e_u -e_ {u+1})+ \theta_{u}(\theta_{u-1}e_{u-1}-e_u)= \theta_{u}\theta_{u-1}e_{u-1} -e_ {u+1}.
\end{equation}
3)\quad This follows from the previous Formula by induction.\end{proof}
 Now choose an index $p\in \mathcal V$ and consider a maximal simple path in $\mathcal V$ from $p$,  that is a sequence of distinct indices $p=p_1,\cdots, p_k$ with $p_i,p_{i+1}$ joined by an edge $\ell_j\in\mathcal E$.  Since $p_k$ has valency  $>1$  there is an edge $\ell_{k+1}\neq \ell_k$ joining $p_k$ with a vertex $p_{k+1}$. 
 
 Since the path is maximal  we must have  $p_{k+1}=p_i$ for some $i<k$. We have thus a {\em simple circuit} originating in $p_i$,  in the graph $\mathfrak E$.\smallskip
 
 In order to simplify the notations and changing name to the indices we may assume that the circuit is $1,2,\ldots, j,1$.
  So we have for each pair $i,i+1$  an edge $\ell_i=  \theta_ie_i-e_{i+1}, \ i=1.\cdots, j-1,\   \ell_j=  \theta_je_j-e_1$ in the minimal relation of which $\mathfrak E$ is the encoding.\smallskip
 
 From formula \eqref{priR} we deduce, since $e_{j+1}=e_1$:
 \begin{equation}\label{larr}
 \sum_{i=1}^j\delta_i\ell_i=(\delta_j-1)e_{1},\qquad \delta_j=   \prod_{i=1}^j \theta_i.
\end{equation}
 If $\delta_j=1$ this is a relation and by Definition \eqref{priR}  the number of red edges in the list is even, otherwise this number  is odd, $\sum_{i=1}^j\delta_i\ell_i=-2e_{1}$  and these edges are linearly independent and span a lattice of index 2 in $\Z^j$ (see Lemma \ref{zita}).
 \begin{definition}\label{evc}
We say that a simple circuit in  $\mathfrak E$ is even if it contains an even number of red edges, otherwise it is odd.
\begin{remark}\label{mino}
A minimal odd circuit may be formed by just two edges $  \xymatrix{1\ar@/_/[r]_{e_1-e_2}&\ar@/_/[l]_{-e_1-e_2}2  }$ cf.   \eqref{esmd}.
\end{remark}
\end{definition}

Thus we have proved:
 \begin{proposition}\label{pari}
  Take a list of  edges $L:=\{\ell_1,\cdots,\ell_j\}$,  and  $k$ of this list are red edges,   with encoding graph a  simple path  from $p_1$ to $p_{j+1}$ which adding an edge $\ell_{j+1}$  becomes a circuit from $p_1$  to $p_1$. \begin{enumerate}\item The edges $L$  are linearly independent.  
\item  A linear combination  $\Sigma$ with signs of these elements is $e_i- (-1)^ke_{j+1}$. 

\item  If the circuit is even  there is a unique relation, up to sign:

$$R=\sum_{i=1}^{j+1}\delta_i\ell_i=0,\quad \delta_i=\pm 1$$ for the edges  $L':= \{\ell_1,\cdots,\ell_j, \ell_{j+1}\}$  with coefficients $\pm 1$.

\item  If  the circuit is odd 
   the edges  $ \{\ell_1,\cdots,\ell_j, \ell_{j+1}\}$ are linearly independent and span the vector  space with basis the $e_i$  for $i$  the vertices of the circuit. \item  In this last case there is a linear combination of the edges $L'$  with coefficients $\pm 1$ equal to $2 e_i$ for each index $i$ in the vertices of the circuit.\end{enumerate}
  
 \end{proposition}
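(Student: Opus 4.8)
The plan is to reduce the whole statement to Lemma~\ref{icamm} by relabelling the vertices of the circuit. Since the circuit obtained by adjoining $\ell_{j+1}$ to the path is simple, the vertices $p_1,\dots,p_{j+1}$ are pairwise distinct; after renaming $p_i$ as $i$ we may assume that $\ell_i$ joins $i$ to $i+1$ for $1\le i\le j+1$, indices read cyclically, so that in particular $\ell_{j+1}$ joins $j+1$ to $1$. An edge of $X$ between two consecutive indices $i,i+1$ is $\pm(e_i-e_{i+1})$ if it is black and $-e_i-e_{i+1}$ if it is red; replacing some $\ell_i$ by $-\ell_i$ when necessary, an operation which changes neither the span nor the linear independence of the list and only flips the corresponding sign in any linear relation, we may assume each $\ell_i=\theta_ie_i-e_{i+1}$ with $\theta_i=1$ for $\ell_i$ black and $\theta_i=-1$ for $\ell_i$ red. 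With this normalisation $\prod_{i=1}^{j}\theta_i=(-1)^k$, and $\prod_{i=1}^{j+1}\theta_i=(-1)^{k'}$ where $k'$ is the number of red edges of the full circuit, so that the circuit is even exactly when $k'$ is even.

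With the edges in this form, parts (1) and (2) are Lemma~\ref{icamm}(1) applied to the $j$ path edges: they are linearly independent, and $\sum_{i=1}^{j}\delta_i\ell_i=\theta e_1-e_{j+1}$ with $\theta=\prod_{i=1}^{j}\theta_i=(-1)^k$, which up to overall sign is the vector $e_1-(-1)^ke_{j+1}$ of (2). For parts (3)--(5) I would next bring in the closing edge $\ell_{j+1}=\theta_{j+1}e_{j+1}-e_1$ and combine it with (2), obtaining
\[
\sum_{i=1}^{j}\delta_i\ell_i\pm\ell_{j+1}=(\theta\mp1)\,e_1+(\pm\theta_{j+1}-1)\,e_{j+1}.
\]
If the circuit is even then $\theta\theta_{j+1}=1$: choosing the sign $+$ when $\theta=1$ and $-$ when $\theta=-1$ makes the right-hand side vanish, which is the relation $\sum_{i=1}^{j+1}\delta_i\ell_i=0$ of (3). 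Its uniqueness up to sign follows because $\ell_1,\dots,\ell_j$ are independent while $\ell_{j+1}$ now lies in their span, so the space of linear relations among the $j+1$ vectors of $L'$ is one-dimensional, and a relation with all coefficients $\pm1$ is a primitive integral vector on that line, hence determined up to sign.

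If the circuit is odd then $\theta\theta_{j+1}=-1$, and the displayed identity, read with its two choices of sign, exhibits $\pm2e_1$ and $\pm2e_{j+1}$ as combinations with coefficients $\pm1$ of the edges of $L'$. Rereading the circuit cyclically from any of its vertices $p_m$---the number of red edges, hence the oddness, being unchanged---and applying Lemma~\ref{icamm}(1) to the $j$ edges of the resulting path, the same computation shows that $2e_m$, hence $e_m$, is such a combination for every vertex $m$ of the circuit; this is (5). Consequently the span of $L'$ contains the $j+1$ linearly independent vectors $e_m$, $m$ ranging over the vertices of the circuit, so $L'$---which consists of $j+1$ edges---is a basis of the vector space they span, giving (4) along with the linear independence asserted there. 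I expect the only genuinely delicate point to be the sign bookkeeping in the normalisation $\ell_i=\theta_ie_i-e_{i+1}$ and the check that the cyclic rereading really reduces each $2e_m$ to the case already treated; the remainder is a direct application of Lemma~\ref{icamm}.
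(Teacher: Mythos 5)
Your proof is correct and follows essentially the same route as the paper, which simply reduces the proposition to Lemma \ref{icamm} applied to the re-indexed, sign-normalised path edges and then adjoins the closing edge. The details you supply --- the $\pm\ell_i$ normalisation, the uniqueness of the $\pm1$ relation in the even case, and the cyclic relabelling together with the dimension count yielding (4) and (5) --- are exactly what the paper's one-line proof leaves implicit.
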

 \begin{proof} This is the content of Lemma \ref{icamm}.  

A degenerate example of these two cases is for  a circuit with two edges   $\ell_1,\ell_2$ between $1,2$ (cf. figure \eqref{esmd}), The case $\ell,-\ell$ black has been excluded by Proposition \ref{dueed}. \end{proof}
 
 \begin{corollary}\label{evx}
A circuit in the encoding graph corresponds to a relation  between the corresponding edges, and so to the entire encoding graph of the relation, if and only if it contains an even number of red edges and we call it an {\em even circuit}.
\end{corollary}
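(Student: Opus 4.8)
The plan is to deduce Corollary \ref{evx} directly from Lemma \ref{icamm}, the discussion culminating in Formula \eqref{larr}, and Lemma \ref{endp}; essentially no new computation is needed.

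First I would fix a simple circuit $C$ in the encoding graph $\mathfrak E=(\mathcal V,\mathcal E)$ and normalize notation exactly as in the paragraph preceding Proposition \ref{pari}: after renaming indices, $C$ runs through $1,2,\dots,j,1$, and the pair $i,i+1$ (indices read cyclically, so that $e_{j+1}:=e_1$) is joined by an edge $\ell_i\in\mathcal E$. After possibly replacing some $\ell_i$ by $-\ell_i$ --- which changes neither the lattice spanned by the $\ell_i$ nor whether a relation exists among them --- each $\ell_i$ has the form $\theta_i e_i-e_{i+1}$ with $\theta_i=+1$ for a black edge and $\theta_i=-1$ for a red edge (a red edge being marked $-e_i-e_{i+1}$ it cannot be flipped, but it need not be). In particular $\prod_{i=1}^{j}\theta_i=(-1)^r$, where $r$ is the number of red edges of $C$.

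Next I would apply Lemma \ref{icamm} in the cyclic form \eqref{larr}: there are signs $\delta_i=\pm1$ with $\sum_{i=1}^{j}\delta_i\ell_i=(\delta_j-1)e_1$ and $\delta_j=\prod_{i=1}^{j}\theta_i$. If $r$ is even then $\delta_j=1$, so $\sum_{i=1}^{j}\delta_i\ell_i=0$ is a genuine nontrivial relation among the edges of $C$. If $r$ is odd then $\delta_j=-1$, the sum equals $-2e_1\ne0$, and --- as recorded in the discussion following \eqref{larr} --- the $\ell_1,\dots,\ell_j$ are then linearly independent, so no relation among them exists. This establishes the biconditional: a simple circuit yields a relation if and only if it is even.

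For the clause ``and so to the entire encoding graph of the relation'' I would use that, by the very definition of $\mathfrak E$, the edge set $\mathcal E$ consists precisely of the edges occurring with nonzero coefficient in the minimal relation $R_0$ of $\mathfrak E$, while by Lemma \ref{endp} $R_0$ is unique up to a scalar. When $C$ is even, the relation $R:=\sum_{i=1}^{j}\delta_i\ell_i=0$ produced above is nonzero, hence proportional to $R_0$; comparing supports, the edge set of $C$ must contain $\mathcal E$, and since $C$ is a subgraph of $\mathfrak E$ this forces $C=\mathfrak E$ (the vertex sets agreeing too, both being the set of indices incident to $\mathcal E$). Thus an even circuit is the whole encoding graph. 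I expect the only delicate part to be the bookkeeping of the first step --- fixing signs so that ``red'' corresponds cleanly to $\theta_i=-1$ and the cyclic closure $e_{j+1}=e_1$ is legitimate --- together with the care needed in the last step to make sure that ``the edges appearing in the relation'' are exactly $\mathcal E$, so that uniqueness up to a scalar genuinely forces the circuit to exhaust $\mathfrak E$ rather than merely sit inside it.
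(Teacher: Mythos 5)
Your proposal is correct and follows essentially the paper's own route: the even/odd dichotomy is exactly the content of Lemma \ref{icamm} and Formula \eqref{larr} (packaged in Proposition \ref{pari}), and the identification of an even circuit with the whole encoding graph comes from the uniqueness up to scale of the minimal relation (Lemma \ref{endp}) plus a comparison of supports. You merely spell out the sign normalization and the support argument more explicitly than the paper, which leaves these steps implicit.
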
   \subsubsection{Doubly odd circuits}
If the circuit $\mathcal C=\{1,\cdots, j,1\}$ we have chosen is odd we have seen that its edges are linearly independent so it cannot  coincide with the entire encoding graph $\mathfrak E$  of the relation.   We need to {\em double} the circuit. We can visualize the algorithm in 2 simple cases, depending on the color of the edge between  $i,p$:
\begin{equation}\label{esc1}
\xymatrix{(e_i-e_j)+(e_j-e_k)-(-e_k-e_i)=2e_i,\ &j\ar@{<-}[rd]^+\ar@{->}[rr]^+&&i \ar@/^/[r]^{ 2} \ar@{=}[r]_{2} &p\ar@{<-}[rd]^{-}\ar@{<-}[rr]^{-}&&\ell&\\
\!\!\!\!\!\!\!\!\!\!\!-(e_p-e_m)-(e_p-e_\ell)+(-e_\ell-e_m)=-2e_p,& &k\ar@{=}[ru]^{-}&&&m\ar@{=}[ru]^+&}
\end{equation} In one case $2e_i-2e_p+2(e_p-e_i)=0$  in the other $2e_i+2e_p+2(-e_p-e_i)=0$ the sum is 0. \smallskip

We want to show that, if in the encoding graph  $\mathfrak E$ oi of a minimal relation, we have encountered an odd circuit $\mathcal C$, which sums with signs to some $2e_i$ we will find a picture which generalizes the previous figure \eqref{esc1}. Assume the indices of $\mathcal C$ are  $1,\cdots, j $. 
\smallskip

Since  $\mathfrak E$ is connected  there is a vertex in $\mathcal C$, which without loss of generality we may take 1, and from this vertex starts a new simple path $m_1,\ldots, m_a$ with  vertices outside $1,\cdots, j $. Without loss of generality we may assume the new path to be    $j+1, \cdots , j+a+1$. Since no vertex has valency 1 at some point  there is  a further edge $m_{a+1}$ from $ j+a+1$ to one of the preceding vertices  $b<j+a+1$.  \smallskip
  
A priori we have two possibilities, the first is  $b\in \{2,\cdots, j \}$.  We claim that this case can be excluded since then we have in the encoding graph an even circuit which gives the relation and coincides with the encoding graph $\mathfrak E$. 

 In fact let us prove this with a picture:  The graph of the entire path looks as 
$$\xymatrix{            2 \ar@{-}[rrr] \ar@{-}[d] &    & & \ar@{-}[d]\\     1\ar@{-}[rrr] \ar@{-}[d] & &   & b\ar@{-}[d]\\    j+1  \ar@{-}[rrr]  & &   &  j+a   } $$  
Here we see 3 possible circuits and then at least one of them is even.

So the other alternative is that we  have a second circuit which is also odd and which is either disjoint from the first circuit  and connected by a path,   or  $b=1$. 

We call this a {\em doubly odd encoding graph}, the simplest examples being as in \eqref{esc1} and \eqref{esmd}:

\begin{example} An even and a doubly odd encoding graph:\bigskip

$$\xymatrix{       & 1   \ar@{=}[r] &10\ar@{-}[r]    &9\ar@{-}[r]  &8\ar@{-}[r] &7    && \\   &2 \ar@{-}[u] \ar@{=}[r]   &3\ar@{ =}[r]    &4\ar@{-}[r]  &5\ar@{-}[r] &6\ar@{=}[u]      &&  }$$\bigskip

\begin{equation}\label{doppio}
\xymatrix{    &12\ar@{-}[r]&13\ar@{=}[r]&14\ar@{-}[r]&15\ar@{=}[r]&16&    && \\ B\!\!\!\!\!\!\!\!\!\!\!& 11\ar@{-}[u] &   &19\ar@{-}[r]  \ar@{-}[u] _{\quad \quad \quad  C}&18\ar@{-}[r] &17\ar@{-}[u]     &&  \\   & 1 \ar@{-}[u]  \ar@{=}[r] &10\ar@{-}[r]    &9  \ar@{-}[r]  &8\ar@{=}[r] &7      && \\   &2 \ar@{-}[u]_{\quad \quad \quad  A} \ar@{=}[r]   &3\ar@{ =}[r]    &4\ar@{-}[r]  &5\ar@{-}[r] &6\ar@{=}[u]      &&  }
\end{equation}\end{example}
 
 We also have the special case $b=1$  where the two odd circuits have a vertex in common, as in the minimal case of \eqref{esmd}, depicted by the example
 
\begin{equation}\label{dops}
 \xymatrix{       &     \ar@{-}[dr]   \ar@{-}[dl] &   &&&&&&\\    \ar@{-}[dr]  & A &  \ar@{-}[dl] 1   \ar@{-}[ur]  & C  &  \ar@{-}[ul]  && \\       & \ar@{-}[ur]   \ar@{-}[ul]& &         \ar@{-}[ur]    \ar@{-}[ul]&}
\end{equation}

 \begin{proposition}\label{dodd}
A doubly odd circuit gives a minimal relation $R$. The coefficients in the two circuits are $\pm 1$ while in the path $P$  joining the two circuits the coefficients  are $\pm 2.$
\end{proposition}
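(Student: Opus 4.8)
The plan is to construct the relation $R$ explicitly from the pieces supplied by Proposition \ref{pari}, applied separately to each of the two odd circuits and to the path joining them, and then to use the uniqueness of the minimal relation (Lemma \ref{endp}) to identify $R$, up to sign, with the minimal relation of $\mathfrak E$.

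By the analysis preceding the statement, $\mathfrak E$ contains two odd simple circuits $\mathcal C_1$ and $\mathcal C_2$, which are either vertex-disjoint and joined by a simple path $P$, or meet in a single vertex (in which case we take $P$ empty). Let $i$ be the endpoint of $P$ on $\mathcal C_1$ and $p$ its endpoint on $\mathcal C_2$ (so $i=p$ when $P$ is empty). Since $\mathcal C_1$ is an odd circuit, Proposition \ref{pari}(5) gives a signed sum $\Sigma_1=\sum_{\ell\in\mathcal C_1}\delta_\ell\,\ell=2e_i$ with all $\delta_\ell=\pm1$, and likewise $\Sigma_2=\sum_{\ell\in\mathcal C_2}\delta_\ell\,\ell=2e_p$ with coefficients $\pm1$. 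Applying Proposition \ref{pari}(2) to the path $P$, with $k$ the number of red edges along $P$, yields a signed sum $\Sigma_P=\sum_{\ell\in P}\delta_\ell\,\ell=e_i-(-1)^ke_p$, again with coefficients $\pm1$.

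Now split on the parity of $k$. If $k$ is even, then $2\Sigma_P=2e_i-2e_p=\Sigma_1-\Sigma_2$, so $R:=\Sigma_1-\Sigma_2-2\Sigma_P=0$; if $k$ is odd, then $2\Sigma_P=2e_i+2e_p=\Sigma_1+\Sigma_2$, so $R:=\Sigma_1+\Sigma_2-2\Sigma_P=0$ (when $P$ is empty, $k=0$ and $R=\Sigma_1-\Sigma_2=0$). One then checks that the edge sets of $\mathcal C_1$, $\mathcal C_2$ and $P$ are pairwise disjoint: $\mathcal C_1$ and $\mathcal C_2$ share no edge (being vertex-disjoint, or meeting only at the vertex where $P$ degenerates), while the first edge of $P$ leaves $\mathcal C_1$ for a vertex outside it, all intermediate edges of $P$ stay among the new vertices, and the last edge of $P$ enters $\mathcal C_2$ without being one of its edges. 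Hence in $R$ every edge of $\mathcal C_1\cup\mathcal C_2$ has coefficient $\pm1$ and every edge of $P$ has coefficient $\pm2$, which is exactly the asserted shape; this reproduces and generalizes the two sample computations in \eqref{esc1}.

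Finally, $R$ is nonzero — the distinct circuits $\mathcal C_1\neq\mathcal C_2$ share no edge, so any edge of $\mathcal C_1$ has coefficient $\pm1\neq0$ in $R$ — and its nonzero coefficients are $\pm1$ (together with $\pm2$ when $P$ is nonempty), hence have no common factor, so $R$ is primitive. Since $R$ is a relation among the edges of $\mathfrak E$ and the minimal relation of the (minimal degenerate resonant) graph is unique up to a scalar by Lemma \ref{endp}, $R$ must be, up to sign, that minimal relation; in particular every edge of $\mathfrak E$ occurs in $R$, which forces $\mathfrak E=\mathcal C_1\cup P\cup\mathcal C_2$ and gives the stated description of the coefficients. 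The only genuine bookkeeping — and the step most likely to hide a sign slip — is matching the endpoints $i,p$ of $P$ with the "$2e_i$" normalizations of $\Sigma_1,\Sigma_2$ and tracking how the parity $k$ selects between $\Sigma_1-\Sigma_2$ and $\Sigma_1+\Sigma_2$; everything else is immediate from Proposition \ref{pari}.
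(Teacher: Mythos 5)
Your construction is essentially the paper's own proof: it applies Proposition \ref{pari}(v) to each odd circuit to get signed sums equal to $2e_i$ and $2e_p$, Proposition \ref{pari}(ii) to the connecting path to get $e_i\pm e_p$ with signs $\pm1$, and combines them as $2e_i-2(e_i\pm e_p)\pm 2e_p=0$, with uniqueness of the relation handled via Lemma \ref{endp} (the paper argues it by linear independence after removing one edge, you by primitivity plus uniqueness up to scale — an equivalent observation). The extra bookkeeping you add (edge-disjointness of the two circuits and the path, the explicit parity split on the number of red edges in $P$, the degenerate case $i=p$) is correct and matches the paper's intent.
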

\begin{proof}
Let $a,b$  be the end points of the path joining the two odd circuits. 

By Proposition  \ref{pari} $v)$ we have a linear combination  of the edges in each of these circuits equal to $2e_a,\ 2e_b$. 

By  Proposition  \ref{pari} $ii)$ we have a linear combination of the edges in $P$  with coefficients $\pm 1$ equal to $e_a\pm e_b$.   So $ 2e_a-2(e_a\pm e_b)\pm  2e_b=0$ gives the required relation which is clearly unique, since  removing the last edge the remaining are linearly independent, and  satisfies the constraint on the coefficients $\pm1,\ \pm 2$.

Of course in the special case $a=b$ we have no path.\end{proof}

Up to changing the indices we may assume that we walk the circuit first from 1  back to 1  in part $A$  then to $j$ on path $B$ and then back to $j$ on circuit $C$  so  that the indices are increasing from 1  to $k$. So the double odd circuit has the form:

\begin{equation}\label{doppiou}
\xymatrix{    &u\ar@{-}[r]& u+1\ar@{.}[r]&j\ar@{-}[r]&k\cdots \ar@{=}[r]& &    && \\ B\!\!\!\!\!\!\!\!\!\!\!&  u-1\ar@{-}[u] &   &j+1 \ar@{-}[r]  \ar@{-}[u] _{\quad \quad \quad  C}&\cdots \ar@{-}[r] & \ar@{-}[u]     &&  \\   & 1 \ar@{.}[u]  \ar@{=}[r] &h\ar@{-}[r]    &\cdots   \ar@{-}[r]  & \ar@{=}[r] &       && \\   &2 \ar@{-}[u]_{\quad \quad \quad  A} \ar@{=}[r]   &3\ar@{ =}[r]    & \ar@{-}[r]  &\cdots \ar@{-}[r] &6\ar@{=}[u]      &&  }
\end{equation} If $\theta $ is the color of the path $B$, we have   a unique choice of $\delta_i,\eta_j$  so that, Formula \eqref{larr}:
    \begin{equation}\label{boB}\theta e_{j}= \sum_{i=h+1}^{j-1} \eta_i\ell_i+e_1,\  
 -2e_1=\sum_{i=1}^h\delta_i\ell_i,\  -2\theta  e_j=\sum_{i=j}^{k}\delta_i\ell_i,\    \end{equation}
 \begin{equation}\label{boBf}\mathcal R:\qquad
 0=\sum_{i=1}^h\delta_i\ell_i  + 2\sum_{i=h+1}^{j-1} \eta_i\ell_i +\sum_{i=j}^{k}\delta_i\ell_i , \  \eta_i,\delta_i=\pm 1.
\end{equation}
\begin{remark}\label{dw} 1)\quad Notice that $\ell_i=\vartheta_ie_i-e_{i+1}$ if $j\neq  h+1,k$.  Then $\ell_{h+1}=\vartheta_{h+1} e_1-e_{h+2},\ell_k=\vartheta_ke_k-e_j$.

2)\quad We can also think of a doubly odd circuit as a form of degenerate  even circuit in which we {\em walk back}  on the path  joining the two odd circuits, then the values of the signs $\delta_i,\ \eta_j$  is determined by the same rules as in Lemma \ref{icamm}.

\end{remark}
\begin{proposition}\label{pmR}
$\mathcal R$  is the form of a minimal relation. By Lemma \ref{endp}  we know that this is unique up to scale, so if there is another relation among the edges $\mathcal R'$ and one of its coefficients is $\pm 1$  then $\mathcal R'=\pm\mathcal R$.
\end{proposition}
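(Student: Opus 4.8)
The plan is to exploit the rigidity already in hand: by Lemma~\ref{endp} the linear relations among the edges occurring in $\mathfrak E$ span a one--dimensional space, so it suffices to (a) check that the element $\mathcal R$ displayed in \eqref{boBf} is a genuine, nonzero relation with the stated shape of coefficients, and then (b) read off the normalization claim from the arithmetic of those coefficients.

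For (a) I would appeal to the construction already carried out in the proof of Proposition~\ref{dodd}. The two odd sub--circuits admit signed $\pm1$--combinations of their edges equal to $2e_a$ and $2e_b$ (Proposition~\ref{pari}(v), equivalently Lemma~\ref{icamm}), the joining path admits a signed $\pm1$--combination of its edges equal to $e_a\pm e_b$ (Proposition~\ref{pari}(ii)), and the identity $2e_a-2(e_a\pm e_b)\pm2e_b=0$ then produces a relation whose coefficients are $\pm1$ on the edges of the two circuits and $\pm2$ on the edges of the joining path; after unwinding the labelling of \eqref{boB} this is exactly $\mathcal R$ of \eqref{boBf}. This $\mathcal R$ is nonzero: deleting the last edge $\ell_k$ leaves the remaining $\ell_i$ linearly independent by the path/circuit structure, so by Lemma~\ref{endp} the space of relations among the edges of $\mathfrak E$ is indeed one--dimensional and $\mathcal R$ spans it. In particular all coefficients of $\mathcal R$ are nonzero and coprime integers, $\pm1$ on the edges of the two odd circuits and $\pm2$ on the edges of the joining path $P$.

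For (b): a relation among the edges has integer coefficients, and since the coefficients of $\mathcal R$ are coprime and $\mathcal R$ spans the line of relations, any such relation is an integer multiple $\lambda\mathcal R$ with $\lambda\in\Z$. If a relation $\mathcal R'$ has some coefficient equal to $\pm1$, then, each coefficient of $\mathcal R$ having absolute value at least $1$, each coefficient of $\lambda\mathcal R$ has absolute value at least $|\lambda|$; hence $|\lambda|\le1$, so $\lambda=\pm1$ and $\mathcal R'=\pm\mathcal R$. (When $P$ is nonempty this moreover forces the coefficient $\pm1$ of $\mathcal R'$ to lie on an edge of one of the two odd circuits, the path edges all carrying coefficient $\pm2$.)

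The only step that genuinely requires care is the sign bookkeeping in (a): one has to fix once and for all the direction of the walk --- first around the odd circuit $A$, then along $P$, then around the odd circuit $C$ --- and propagate the signs $\delta_i,\eta_i$ through Lemma~\ref{icamm} (the recursion for them being the one recorded in Remark~\ref{dw}), so that the weighted sum cancels identically rather than leaving a nonzero multiple of some $e_i$. This is purely mechanical once the orientations are pinned down; everything after it is a one--line consequence of Lemma~\ref{endp} and of the coprimality of the coefficients of $\mathcal R$.
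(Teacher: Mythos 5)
Your proof is correct and follows essentially the same route as the paper, which states Proposition~\ref{pmR} without a separate proof, as the immediate consequence of the construction \eqref{boB}--\eqref{boBf} (Lemma~\ref{icamm}, Propositions~\ref{pari} and~\ref{dodd}) together with the uniqueness up to scale of the relation from Lemma~\ref{endp}. Your only addition is to make the normalization step explicit via integrality and coprimality of the coefficients of $\mathcal R$ (ruling out the scale factor $\pm\tfrac12$, which a coefficient $\pm1$ on a path edge would otherwise allow); this relies, as the paper implicitly does, on relations being taken with integer coefficients.
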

\label{zita}Let  $e_1,
\ldots,  e_{ k}$ be the basis vectors appearing in the minimal relation $\mathcal R$ in Formula \eqref{boBf}.\smallskip

 Set $\zeta:\Z^k\to\Z,  \ \zeta  (e_i)= \zeta_{i }$  recursively
 $$\zeta  (e_1) =1,\ \zeta  (e_{i+1})=\vartheta_i\zeta(e_i) \implies \zeta(\ell_i)=\vartheta_i\zeta_{i }-\zeta_{i+1}=0,\ i<k .$$ 

\begin{lemma}In case 1) the $\ell_i$  span the codimension 1 sublattice of the lattice $\Z^k$ with basis $e_1,
\ldots,  e_{ k}$     formed by the vectors $a$ such that \begin{equation}
\label{par}a=\sum_i\alpha_ie_i \ |\,   \zeta  (a)=\sum_i
 \zeta(e_{i })\alpha_i=0.
\end{equation}
 
In case 2) the $\ell_i $    span over $\Z$  the lattice  of index 2 in    $\Z^k$ given by  \begin{equation}
\label{par1}a=\sum_i\alpha_ie_i \ |\,   \eta   (a)=\sum_i
  \alpha_i\cong 0,\ \text{modulo}\ 2.
\end{equation}
\end{lemma}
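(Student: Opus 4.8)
The plan is to make the circuit explicit, reduce everything to a triangular change of basis, and then read off the two cases. Write the chosen simple circuit as $1,2,\dots,k,1$ with edges $\ell_i=\vartheta_i e_i-e_{i+1}$, $\vartheta_i=\pm1$, indices read cyclically so that $e_{k+1}=e_1$; as in Remark~\ref{dw} here $\vartheta_i=+1$ means $\ell_i$ is black and $\vartheta_i=-1$ means $\ell_i$ is red, so $\prod_{i=1}^k\vartheta_i=+1$ in case 1) (the circuit is even, Definition~\ref{evc}) and $\prod_{i=1}^k\vartheta_i=-1$ in case 2). Let $L\subseteq\Z^k$ be the lattice generated over $\Z$ by $\ell_1,\dots,\ell_k$.

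First I would record the easy containments. From the recursion $\zeta_{i+1}=\vartheta_i\zeta_i$ one gets $\zeta(\ell_i)=\vartheta_i\zeta_i-\zeta_{i+1}=0$ for $i<k$, whereas $\zeta(\ell_k)=\vartheta_k\zeta_k-\zeta_1=\bigl(\prod_{i=1}^k\vartheta_i\bigr)-1$, which is $0$ precisely in case 1). Also $\eta(\ell_i)=\vartheta_i-1\in\{0,-2\}$ is even for every $i$. Hence in case 1) $L\subseteq\ker\zeta$, a rank $k-1$ sublattice (since $\zeta$ is onto $\Z$, as $\zeta_1=1$), while in both cases $L\subseteq\Lambda:=\{a\in\Z^k:\eta(a)\text{ even}\}$, a sublattice of index $2$ in $\Z^k$.

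The crux is a structural step: $\{e_1,\ell_1,\dots,\ell_{k-1}\}$ is a $\Z$-basis of $\Z^k$. I would prove this from $e_{i+1}=\vartheta_i e_i-\ell_i$: by induction on $i$ each $e_{i+1}$ is $\pm e_1$ plus an integral combination of $\ell_1,\dots,\ell_i$, and conversely each $\ell_i\in\Z e_i+\Z e_{i+1}$, so the two $k$-tuples generate each other over $\Z$ — equivalently the transition matrix is triangular with diagonal entries $\pm1$, hence unimodular (this is exactly the triangular vertex-to-edge change for the spanning path $1-2-\cdots-k$, cf. Remark~\ref{maxt}). With this in hand, case 1) is immediate: write an arbitrary $a\in\Z^k$ as $a=c_0e_1+\sum_{i=1}^{k-1}c_i\ell_i$ with $c_i\in\Z$; applying $\zeta$ and using Step~1 gives $\zeta(a)=c_0$, so $a\in\ker\zeta\iff c_0=0\iff a$ lies in the $\Z$-span of $\ell_1,\dots,\ell_{k-1}\subseteq L$. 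Together with $L\subseteq\ker\zeta$ this forces $L=\ker\zeta$ and shows $\ell_1,\dots,\ell_{k-1}$ is already a basis (so the unique minimal relation of Lemma~\ref{endp} is the one expressing $\ell_k$). For case 2), Formula~\eqref{larr} with $\prod_{i=1}^k\vartheta_i=-1$ gives $2e_1=-\sum_{i=1}^k\delta_i\ell_i\in L$, so $L$ contains $L':=2\Z e_1\oplus\bigoplus_{i=1}^{k-1}\Z\ell_i$, which by the basis step has index $2$ in $\Z^k$; hence $[\Z^k:L]\mid 2$, and since $L\subseteq\Lambda\subsetneq\Z^k$ we conclude $L=\Lambda$. (Equivalently: the $k\times k$ matrix with rows $\ell_i$ has determinant $\prod_{i=1}^k\vartheta_i-1=-2$, so $L$ has index $2$, and lying inside the index-$2$ lattice $\Lambda$ it coincides with it.)

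The only point that needs genuine care is the unimodularity of the vertex-to-edge transition in the crux step, together with keeping the two cases apart — concretely, that $\zeta$ closes up consistently around the whole circuit only when $\prod_i\vartheta_i=1$, which is exactly why case 2) admits no relation and instead yields the index-$2$ lattice; everything else is bookkeeping with the formulas of Lemma~\ref{icamm}.
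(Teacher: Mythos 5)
Your case 1) is correct and is essentially the paper's own argument: the containment in $\ker\zeta$ is immediate from $\zeta(\ell_i)=0$, and your key step --- that $e_1,\ell_1,\dots,\ell_{k-1}$ is a triangular, hence unimodular, $\Z$-basis of $\Z^k$ --- is exactly what the paper compresses into ``when we add to the $\ell_i$ the vector $e_1$ they span $\Z^k$''; reading off $\zeta(a)=c_0$ in that basis is a clean way to finish.

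The gap is in case 2): you prove the statement for a single odd circuit, but in the paper ``case 2)'' is the situation in which the encoding graph of the minimal relation \eqref{boBf} is a \emph{doubly odd} circuit, i.e.\ two odd circuits $A,C$ joined by a path $B$ (or sharing a vertex, as in the minimal example \eqref{esmd}). There ``the $\ell_i$'' are all the edges of that graph: they outnumber the vertices, they do not form a single cycle $1,2,\dots,k,1$, and the $k\times k$ matrix with rows $\ell_i$ (whose determinant you compute as $\prod_i\vartheta_i-1=-2$) does not exist, nor does \eqref{larr} apply verbatim to produce $2e_1$. That case 2) must cover the doubly odd configuration is forced by how the lemma is used immediately afterwards, namely to conclude that $2E$ is an integral combination of \emph{all} the edges of the relation when the encoding graph is doubly odd. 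The repair is short and in the spirit both of your argument and of the paper's ``induction adding $e_1$ as before'': the containment in your even-mass lattice $\Lambda$ is still edgewise, since $\eta(\ell_i)\in\{0,-2\}$ for every edge; the circuit $A$ alone already gives $\pm 2e_1\in L$ by Formula \eqref{boB}; and adjoining $e_1$ to the edges of a spanning tree of the doubly odd graph again gives a triangular unimodular basis of the lattice on its vertex set (each tree edge, ordered outward from the vertex $1$, introduces one new basis vector with coefficient $\pm1$), so $L$ contains a sublattice of index $2$, and being contained in the index-$2$ lattice $\Lambda$ it equals it. Your single-odd-circuit computation is still worth keeping, since the paper also invokes this lemma in that situation right after \eqref{larr}, but as written your proposal does not establish the case that the subsequent arguments (the lemma on the element $E$ and $T=T'$) actually rely on.
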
\begin{proof} In case 1) 
$\zeta(\ell_i) =0$, so the $\ell_i,\ i<k$ are in this proper subspace, but also $\ell_k$ is in this subspace since it is a linea r combination of the preceding ones, but when we add to the $\ell_i$ the vector $e_1$ they span  $\Z^k$  hence the claim.

In case 2) $\eta (\ell_i) \cong 0,\ \forall i$ modulo 2, so the $\ell_i$ are in this sub--lattice,  the fact that they span  is easily seen by induction adding $e_1$ as before.\end{proof}

\subsection{Minimal relations}We have taken a minimal degenerate resonant graph $\Gamma$,  and  a given maximal tree  $T$ in  $\Gamma$. The   relation for the vertices gives a relation for the edges and, in the previous paragraph, we have described the possible encoding graphs of this relation.\medskip

Call  $\mathcal E$ the set of edges appearing in the minimal relation.  Call $|\mathcal E|$  the subgraph of $T$  formed by
the edges $\mathcal E$.   $|\mathcal E|$  need not be a priori
connected  but only a {\em forest} inside $T$.  \smallskip

From what we have seen in the previous  paragraph  the encoding graph of   $\mathcal E$ is either 
  an even circuit  and the relation is a sum of edges
$\sum_j\delta_j\ell_j=0$,   with signs $\delta_j=\pm 1$  or a doubly odd circuit   and  we may have some coefficients $ \pm 2$ corresponding to the
edges appearing in the segment connecting the two odd circuits, relation $\mathcal R$  of Formula \eqref{boBf}.  

{\bf Warning}\label{warn}\quad  From now on we will write  instead of Formula \eqref{boBf}\ for $\mathcal R$ a compact Formula  $\sum_{i\ }\delta_i \ell_i$ but with the proviso that some $\delta_i=2\eta_i$ may be $\pm 2$.\medskip

 In
any case we list the edges appearing in the relation as $\ell_i$.  

\begin{definition}\label{abi}[of $a_i,b_i$]
Each $\ell_i$
black  i.e. $\theta_i=1$  is $\ell_i=a_i-b_i$  with $a_i,  b_i$,   its vertices of the
same color.  

For  $\ell_i$
 red i.e. $\theta_i=-1$ we have  $\ell_i=a_i+b_i$  with $a_i$ red and
$b_i$ black,  its vertices.
\end{definition}

The relation $\mathcal R$  \eqref{boBf} is thus in term of vertices with $\delta_i=\pm 1,\ \pm 2$
\begin{equation}
\label{leqed}\sum_{i\ }\delta_i  (a_i-\theta_ib_i)=\sum_{i\ \text{black}}\delta_i  (a_i-b_i)+\sum_{j\ \text{red}}\delta_j  (a_j+b_j)=0 .
\end{equation}

Note that
a vertex in $T$  need not appear in  $R$ 
however all end-points  in $T$   must appear by Lemma \ref{endp}.

 We say that an index is critical if the corresponding vertex in the encoding graph has valency $>1$. In Figure \eqref{doppiou}  1 and $k$ are critical. 
 
 In Proposition \ref{icin} we will describe precisely the entire encoding graph of $T$ and then in the even case we may also have two critical indices for this larger encoding graph.

\begin{remark}\label{crii}
The  non critical indices are divided in 2 or 3  sets (depending if we have only one critical index or two) which we denote $A,B,C$ as in the figures.
If $ u$  is not critical  we have $\delta_u=\vartheta_u\delta_{u-1}$ by Lemma \ref{icamm}.
 
\end{remark}

\medskip
\section{The resonance \label{TR}}
 \subsection{The resonance relation}

This section is devoted to the proof of Theorem \ref{MM}.

\subsubsection{Signs} With the notations of the previous paragraph    
we choose a root  $r$  in $T$  and then each vertex $x$ acquires a color  $\sigma_x=\pm1=\eta(x)+1$. Recall that the color of $x$ is   red, $\eta(x)=-2$ and $\sigma_x=-1$ if the path  from the root to $x$ has an odd number of red edges, the color is black  $\eta(x)=0$ and $\sigma_x=1$ if  the path  is even, cf. figure \eqref{prie}.

By convention by $\ell_i$ we mean $e_i-e_{i+1}$ if black, otherwise $\ell_i=-e_i-e_{i+1}$ with the proviso of Remark \ref{dw} 1) for the critical indices.  We use also the formula $\ell_i=\vartheta_ie_i-e_{i+1},\ \vartheta_i=\pm 1 $ when the color is not specified.
\begin{definition}\label{biai}\strut
 \begin{enumerate}\item Each red edge $\ell_i$  (that is $\vartheta_i=-1$) appears as edge with one
end denoted by $a_i$ red and the other denoted by $b_i$ black,   we have $\ell_i=a_i+b_i$.
\item 
 For a black edge  $\vartheta_i=1$  we define $a_i,b_i$ so that   $a_i=b_i+\ell_i$, and $a_i,b_i$ have the same color.

 We thus write $\ell_i=a_i-\vartheta_ib_i$. The relation becomes  in term of the vertices:
 \begin{equation}
\label{retr0} R:=\sum_{i }  \delta_i  (a_i-\vartheta_ib_i)= \sum_{i\,|\,\vartheta_i=-1} \delta_i   ( a_i+b_i )+\sum_{i\,|\,\vartheta_i= 1}  \delta_i  (a_i-b_i)=0.
\end{equation}
In particular for the resonant trees:
 \begin{equation}
\label{retr01}\mathcal  R:=  \sum_{i\,|\,\vartheta_i=-1} \delta_i   ( C(a_i)+C(b_i ))+\sum_{i\,|\,\vartheta_i= 1}  \delta_i  (C(a_i)-C(b_i))=0.
\end{equation}
\item An edge $\ell_i$ is connected to the root $r$ by a unique path $\pi_i$ ending with $\ell_i$, 

\item We denote $x_i$ the final vertex of $\pi_i$ and we set  $\sigma_i:=\sigma_{x_i}$. 

\item  If $\ell_i$ is black we set  $\lambda_i=1$ if  the edge is equioriented with the path, that is it points outwards, $\lambda_i=-1$ if it points inwards. Finally we set $\lambda_i=1$ if  the edge is red.  \end{enumerate}

\end{definition}

\begin{equation}
   \xymatrix{   r    \ar@{.}[r] \quad    \cdots  \cdots  \ar@{->}[r]^{\quad\ \ \ell_i}  & x_i    }\quad  \lambda_i=1,\qquad  \xymatrix{   r    \ar@{.}[r]  \quad    \cdots  \cdots  \ar@{<-}[r] ^{\quad\ \ -\ell_i}  & x_i    }\quad  \lambda_i=-1.
\end{equation}
\begin{remark}\label{finn}
A vertex $v$  can be equal to one or more elements $a_h,b_h$  according to its valency in the tree $T$.
\end{remark}
\begin{lemma}\label{calc}

1)\quad For $a_i$ and $    \ell_i=-e_i-e_{i+1}$ red, we have $b_i+a_i=  \ell_i$  and $b_i$ is black:
\begin{equation}\label{c+c}
C(a_i)+C(b_i)=-a_i^{(2)}  -\ell_ia_i+e_ie_{i+1}
\end{equation} 2)\quad  For $a_i=b_i+\ell_i$ and $\ell_i=e_i-e_{i+1}$ black we have with $\sigma_i$ the sign of $a_i,b_i$:
\begin{equation}\label{c-c}
C(a_i)-C(b_i)=\sigma_i[ -e_{i+1}^2+ e_ie_{i+1} +\ell_ia_i   ].
\end{equation}
\end{lemma}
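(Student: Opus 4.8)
The plan is to prove both identities by the same direct computation straight from the definition $C(a)=\tfrac{\sigma}{2}\,(a^2+a^{(2)})$, where $\sigma=\sigma(a)\in\{+1,-1\}$ is the color of $a$. The only tools I would use are the linearity of the map $a\mapsto a^{(2)}$ together with the two elementary facts, for the quadratic form $q(a):=a^2+a^{(2)}$, that $q(u+v)=q(u)+q(v)+2uv$ and $q(-a)=q(a)-2a^{(2)}$ (the latter because $(-a)^{(2)}=-a^{(2)}$ while $(-a)^2=a^2$). These are just repackagings of the identities recalled in \S\ref{BaF}; phrasing everything through $q$ is what lets one treat both colors uniformly instead of splitting into the cases $\sigma_i=\pm1$.

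For 1), since $a_i$ is red and $b_i$ is black one has $C(a_i)+C(b_i)=\tfrac12\big(q(b_i)-q(a_i)\big)$. I would substitute $b_i=\ell_i-a_i$ and expand with the two facts above to get $q(b_i)-q(a_i)=q(\ell_i)-2a_i^{(2)}-2\ell_i a_i$; it then remains only to evaluate $q(\ell_i)=\ell_i^2+\ell_i^{(2)}$ for $\ell_i=-e_i-e_{i+1}$, which collapses to $2e_ie_{i+1}$. This yields \eqref{c+c}. For 2), since $a_i$ and $b_i$ share the color $\sigma_i$ one has $C(a_i)-C(b_i)=\tfrac{\sigma_i}{2}\big(q(a_i)-q(b_i)\big)$; substituting $b_i=a_i-\ell_i$ and expanding in the same way gives $q(a_i)-q(b_i)=2\ell_i a_i+\ell_i^{(2)}-\ell_i^2$, and for $\ell_i=e_i-e_{i+1}$ one computes $\ell_i^{(2)}-\ell_i^2=2e_ie_{i+1}-2e_{i+1}^2$, which produces \eqref{c-c}.

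I do not expect any genuine obstacle: the whole argument is bookkeeping once the sign conventions are pinned down. The single point that needs care is that $q$ is \emph{not} invariant under $a\mapsto-a$, precisely because $a\mapsto a^{(2)}$ is linear and hence flips sign; this asymmetry is exactly what makes the ``red plus black'' identity \eqref{c+c} and the ``same color'' identity \eqref{c-c} have different shapes. If one prefers to avoid $q$ altogether, the same result follows by applying identities 1)--4) of \S\ref{BaF} directly, splitting according to the color of $a_i$; that is the identical computation with a little extra casework, and I would note it but not carry it out in full.
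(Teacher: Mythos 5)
Your proposal is correct and is essentially the paper's own argument: both prove \eqref{c+c} and \eqref{c-c} by direct expansion from the definition of $C$, substituting $b_i=\ell_i-a_i$ (resp. $b_i=a_i-\ell_i$), using the linearity of $a\mapsto a^{(2)}$, and evaluating $\ell_i^2\pm\ell_i^{(2)}$ for the two kinds of edges. Packaging the computation through $q(a)=a^2+a^{(2)}$ and the identities $q(u+v)=q(u)+q(v)+2uv$, $q(-a)=q(a)-2a^{(2)}$ is only a cosmetic reorganization of the same steps, and all your sign bookkeeping checks out.
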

\begin{proof}1)\quad  When $\ell_i=-e_i-e_{i+1}$ red, $\ell_i ^2 +\ell_i^{(2)}= 2e_ie_{i+1}$ we have:\smallskip

$$C(a_i)+C(b_i)=-\frac12(a_i^2+a_i^{(2)})+\frac12(b_i^2+b_i^{(2)}) =-\frac12(a_i^2+a_i^{(2)})+\frac12((\ell_i-a_i)^2 +\ell_i^{(2)}-a_i^{(2)})$$
$$=-\frac12(a_i^2+a_i^{(2)})+\frac12( \ell_i^2-2\ell_ia_i+a_i^2  +\ell_i^{(2)}-a_i^{(2)}) =-a_i^{(2)}  -\ell_ia_i+e_ie_{i+1}.$$
\medskip

2)\quad When  $    \ell_i= e_i-e_{i+1}$ black $\ell_i^2 -\ell_i^{(2)}=2e_{i+1}^2- 2e_ie_{i+1}  $ we have:\smallskip

 $$C(a_i)-C(b_i)=\sigma_i[\frac12(a_i^2+a_i^{(2)})-\frac12(b_i^2+b_i^{(2)})] =\sigma_i[\frac12(a_i^2+a_i^{(2)})-\frac12((a_i-\ell_i)^2 -\ell_i^{(2)}+a_i^{(2)})]$$
$$=\sigma_i[ -\frac12( \ell_i ^2 -2\ell_ia_i-\ell_i^{(2)} )]=\sigma_i[ -e_{i+1}^2+ e_ie_{i+1} +\ell_ia_i   ].$$ \end{proof}

In particular for the resonant trees Formula \eqref{retr01} becomes:
\begin{proposition}
 \begin{equation}\mathcal R:=
\label{retr} \sum_{i\,|\,\vartheta_i=-1} \delta_i   ( -a_i^{  (2)}- \ell_i a_i +e_ie_{i+1} )+\sum_{i\,|\,\vartheta_i= 1}  \delta_i \sigma_i( -e_{i+1}^2+ e_ie_{i+1}+\ell_i a_i )=0.
\end{equation}
$$\sum_{i\,|\,\vartheta_i=-1} \delta_i   ( b_i^{  (2)} +\ell_ib_i  -e_ie_{i+1} )+\sum_{i\,|\,\vartheta_i= 1}  \delta_i \sigma_i(e_i ^2-e_ie_{i+1}+\ell_i  b_i )=0 $$
\end{proposition}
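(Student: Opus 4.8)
The plan is to read off \eqref{retr} directly from the resonance identity \eqref{retr01} by substituting, term by term, the two formulas of Lemma \ref{calc}. Recall from \eqref{retr01} that, for a resonant tree,
$$\mathcal R=\sum_{i\,|\,\vartheta_i=-1}\delta_i\bigl(C(a_i)+C(b_i)\bigr)+\sum_{i\,|\,\vartheta_i=1}\delta_i\bigl(C(a_i)-C(b_i)\bigr)=0 .$$
One small bookkeeping remark: although this is written with the canonical labels $\ell_i=\vartheta_ie_i-e_{i+1}$, for the (at most two) critical indices one has instead $\ell_{h+1}=\vartheta_{h+1}e_1-e_{h+2}$ and $\ell_k=\vartheta_ke_k-e_j$ by Remark \ref{dw}. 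These are still two--term expressions of the same shape, and in every formula below the symbol $e_ie_{i+1}$ (and likewise $e_i^2$, $e_{i+1}^2$) is to be read as the product (resp.\ square) of the two basis vectors that actually occur in $\ell_i$; with this reading the computation is uniform over all indices.

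First I would split $\mathcal R$ into its two types of summand. For a red index $i$ (so $\vartheta_i=-1$, $\ell_i=a_i+b_i$ with $b_i$ black) Lemma \ref{calc} 1) gives $C(a_i)+C(b_i)=-a_i^{(2)}-\ell_ia_i+e_ie_{i+1}$; for a black index $i$ (so $\vartheta_i=1$, $a_i=b_i+\ell_i$ with $a_i,b_i$ of the common colour $\sigma_i$) Lemma \ref{calc} 2) gives $C(a_i)-C(b_i)=\sigma_i\bigl(-e_{i+1}^2+e_ie_{i+1}+\ell_ia_i\bigr)$. Inserting these expressions into the displayed relation is nothing but the first line of the Proposition, i.e.\ \eqref{retr}.

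For the second line I would re-express each summand through the vertex $b_i$ instead of $a_i$. For a black edge $a_i=b_i+\ell_i$, so by linearity of $L^{(2)}$ one has $a_i^{(2)}=b_i^{(2)}+\ell_i^{(2)}$, and using $\ell_i^2=e_i^2-2e_ie_{i+1}+e_{i+1}^2$ one checks $-e_{i+1}^2+e_ie_{i+1}+\ell_ia_i=e_i^2-e_ie_{i+1}+\ell_ib_i$; hence the black summand becomes $\delta_i\sigma_i\bigl(e_i^2-e_ie_{i+1}+\ell_ib_i\bigr)$. For a red edge $a_i=\ell_i-b_i$, so $a_i^{(2)}=\ell_i^{(2)}-b_i^{(2)}$ and $\ell_ia_i=\ell_i^2-\ell_ib_i$, and using the identity $\ell_i^2+\ell_i^{(2)}=2e_ie_{i+1}$ for a red edge (established inside the proof of Lemma \ref{calc} 1)) one gets $-a_i^{(2)}-\ell_ia_i+e_ie_{i+1}=b_i^{(2)}+\ell_ib_i-e_ie_{i+1}$; hence the red summand becomes $\delta_i\bigl(b_i^{(2)}+\ell_ib_i-e_ie_{i+1}\bigr)$. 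Summing over $i$ gives the second displayed line and finishes the proof. Since every step is an algebraic identity there is no genuine obstacle here; the only things to watch are the sign and colour bookkeeping — the signs $\delta_i$ may equal $\pm2$ on the path joining the two odd circuits of a doubly odd encoding graph, but this is harmless because all the manipulations are $\Z$--linear in the $\delta_i$ — and the convention, already flagged, for the meaning of $e_ie_{i+1}$ and $e_i^2$ at the two critical edges.
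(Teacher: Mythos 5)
Your proposal is correct and follows essentially the paper's own route: substitute the two formulas of Lemma \ref{calc} into the resonance identity \eqref{retr01} to get the first line, then pass to the second line by the substitutions $a_i=b_i+\ell_i$ (black) and $a_i=\ell_i-b_i$ (red), which is exactly what the paper does, only stated more tersely there. Your explicit verification of the identities $-e_{i+1}^2+e_ie_{i+1}+\ell_ia_i=e_i^2-e_ie_{i+1}+\ell_ib_i$ and $-a_i^{(2)}-\ell_ia_i+e_ie_{i+1}=b_i^{(2)}+\ell_ib_i-e_ie_{i+1}$, together with the bookkeeping remark about the critical indices, fills in details the paper leaves implicit.
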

\begin{proof}
We start from the relation $\sum_i\delta_i\ell_i=0$  written in  the previous formula \eqref{retr0}  $$0=\sum_{i }  \delta_i  (a_i-\vartheta_ib_i)=\sum_{i\,|\,\vartheta_i=-1} \delta_i (a_i+b_i)+\sum_{i\,|\,\vartheta_i= 1}  \delta_i (a_i-  b_i).  $$

We next  have by the resonance hypothesis $$\sum_{i\,|\,\vartheta_i=-1} \delta_i   ( C(a_i)+C(b_i) )+\sum_{i\,|\,\vartheta_i= 1}  \delta_i  (C(a_i)-C(b_i))=0.$$
We then apply Lemma \ref{calc}.
The second identity follows from the first by substituting $a_i=b_i\pm \ell_i$ in the two cases. \end{proof}

 \subsubsection{Some reductions}
Denote by $b_i=\sum_{h=1}^mb_{i,  h}e_h$ and expand the second Formula
\eqref{retr}. Observe that  the coefficients  of the mixed terms  $e_ie_j,\ i\neq j$   come all  from the sum

$$B:=\sum_{i\,|\,\vartheta_i=-1}   \delta_i   (   \ell_i b_i -e_ie_{i+1} )+\sum_{i\,|\,\vartheta_i= 1} \delta_i  \sigma_i( -e_ie_{i+1}+\ell_i b_i ).
$$ where $i\in [1,  \ldots,   k]$  the support of the relation \eqref{boBf}.

If $h\notin[1,  \ldots,   k],   $  the
coefficient of $e_h$ in $B$  (which must be equal to 0)  is thus
$$ \sum_{i\,|\,\vartheta_i=-1}   \delta_i       \ell_i b_{i,  h} +\sum_{i\,|\,\vartheta_i= 1} \delta_i  \sigma_i  \ell_i b_{i,  h} =0.
$$ 
By the uniqueness of the relation it follows that this relation is a multiple of \eqref{boBf} (with the conventions that some $\delta_i=2\eta_i$)   hence the numbers $b_{i,  h},\ \vartheta_i=-1$ and $ \sigma_i    b_{i,  h},\ \vartheta_i= 1$ are all equal.  

Since  now we can choose as root one of the elements $b_i$  we deduce that all these coefficients $b_{i,  h}$ equal to 0. Thus:
\begin{lemma}\label{supp}
With this choice of root, all $b_i,a_i$ have support in the vertices $[1,2,\cdots,k]$ of the encoding graph of the relation.

\end{lemma}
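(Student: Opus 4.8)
The plan is to exploit the rigidity guaranteed by Lemma \ref{endp}: since the minimal relation among the edges $\ell_1,\dots,\ell_k$ of $\mathcal E$ is unique up to a scalar, any \emph{further} relation among these edges that we can manufacture must be proportional to it. The idea is to manufacture such relations by isolating, in the resonance identity \eqref{retr}, the monomials that are ``transverse'' to the support $[1,\dots,k]$ of the relation.

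First I would regard the second identity of \eqref{retr} as an identity in $S^2(\R^m)$, so that the coefficient of each monomial $e_pe_q$ vanishes on its own. The diagonal contributions $b_i^{(2)}$, $e_i^2$, $e_{i+1}^2$ involve only squares, and the terms $e_ie_{i+1}$ live over $[1,\dots,k]$; hence for an index $h\notin[1,\dots,k]$ the only source of monomials $e_je_h$ (with $j\in[1,\dots,k]$) is the part $B=\sum_{\vartheta_i=-1}\delta_i(\ell_ib_i-e_ie_{i+1})+\sum_{\vartheta_i=1}\delta_i\sigma_i(-e_ie_{i+1}+\ell_ib_i)$ already singled out in the text. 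Writing $b_i=\sum_h b_{i,h}e_h$ and using $\ell_ib_i=\sum_h b_{i,h}\,\ell_ie_h$ together with the fact that $\ell_i$ is supported on $[1,\dots,k]$, the ``$e_h$-component'' of $B$ is the linear form $\big(\sum_{\vartheta_i=-1}\delta_i b_{i,h}\,\ell_i+\sum_{\vartheta_i=1}\delta_i\sigma_i b_{i,h}\,\ell_i\big)$, which must vanish identically. This is a $\Z$-linear relation among $\ell_1,\dots,\ell_k$, so by the uniqueness statement in Lemma \ref{endp} (together with Proposition \ref{pmR}, which covers the $\pm2$ conventions) its coefficient vector is proportional to $(\delta_i)_i$. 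Thus there is a constant $\mu_h$ with $b_{i,h}=\mu_h$ for every red $\ell_i$ and $\sigma_i b_{i,h}=\mu_h$ for every black $\ell_i$.

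The final step is to kill $\mu_h$ using the freedom in the choice of root of $T$. The relation $\sum_i\delta_i\ell_i=0$ among edges, its encoding graph, and hence the support $[1,\dots,k]$, are intrinsic to the pair $(\Gamma,T)$ and do not depend on the root; so the whole derivation above may be re-run after re-rooting $T$ at one of the vertices $b_{i_0}$. For that choice $b_{i_0}$ is the zero vector of $\Z^m$, so $b_{i_0,h}=0$, forcing $\mu_h=0$ and therefore $b_{i,h}=0$ for every $i$ and every $h\notin[1,\dots,k]$. Since $a_i=b_i\pm\ell_i$ and $\ell_i$ is supported on $[1,\dots,k]$, the $a_i$ are supported there too, which is the assertion.

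The step I expect to be delicate is this last re-rooting argument: one must check that recomputing $a_i,b_i$ and the signs $\sigma_i,\delta_i$ for the new root is harmless (it is, because we only use the conclusion $\mu_h=0$, and the edge relation itself is root-independent), and that the transverse monomials are really exactly accounted for by $B$ — i.e.\ that $\ell_ie_h$ never produces a square $e_h^2$ when $h\notin[1,\dots,k]$. The rest is the routine monomial bookkeeping of \eqref{retr} already carried out in the text.
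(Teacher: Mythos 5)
Your argument is correct and is essentially the paper's own proof: you extract the mixed monomials $e_je_h$ with $h\notin[1,\dots,k]$ from the sum $B$ in \eqref{retr}, obtain a linear relation among the $\ell_i$ with coefficients built from the $b_{i,h}$, invoke the uniqueness of the minimal relation (Lemma \ref{endp}) to conclude the $b_{i,h}$ (resp.\ $\sigma_ib_{i,h}$) are a common constant, and then kill that constant by rooting at one of the $b_i$, just as in the text. The extra care you take with the re-rooting and with checking that $\ell_ie_h$ produces no square $e_h^2$ is consistent with (and slightly more explicit than) the paper's presentation.
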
 

 \smallskip

 Let $T'$ be the forest  support of the edges $\ell_i$, of the relation.   If this is a tree it must coincide with $T$ by minimality of $\Gamma$.

If $T'$ is not a tree the edges in $T\setminus T'$  are linearly independent  with respect to the span of the edges in $T'$ otherwise we would have a second relation contrary to Lemma \ref{endp}.

There is at least one segment $S$ (a simple path) in $T\setminus T'$  joining two end points in $T'$, the edges in $S$   are linearly independent from the edges  in the relation, a typical case will be that in Figure \eqref{minG0}.  
 \smallskip

 Since $S$ connects two points   $p,q\in T'$ the element $g\in G_2$ with $g\cdot p=q$ is of the form  $E, E\tau,\ E\in\Z^m_2$.  Since $p,q$ have both support in  $[1,2,\cdots,k]$ and $g=q\circ p^{-1}$    we have that    $g$ has the form $E=\sum_{i=1}^k\alpha_ie_i$ and $\eta(E)=0,-2$.

\begin{lemma} \strut \begin{enumerate}[1)]\item If we are in case 2) $T=T'$.
\item If we are in case 1)  we must have $\zeta  (E)\neq 0$.
\item 
The element $g$ is either an edge or it is of the form $-2e_i$ for some index $i$.  In this case     the graph is not allowable since  we found the desired pair of Proposition \ref{ilpunto0}.

\item $E$ is    either a red edge of the form $-e_i-e_j$ with
$i,  j$ of the same value of $\zeta$ or  a black edge of the form $-e_i+e_j$
with $i,  j$ of the opposite  value of $\zeta$.\end{enumerate}

\end{lemma}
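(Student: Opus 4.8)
The plan is to read off the shape of the group element $g$ from the path $S$ and then to play its possible forms against the uniqueness of the relation (Lemma \ref{endp}). First I record the basic facts about $g=q\circ p^{-1}\in G_2$. Since $p,q$ are vertices of $T'$, hence supported in $[1,\dots,k]$ by Lemma \ref{supp}, the composition rule \eqref{claw} gives $E=q-p$ if $g$ is black and $E=q+p$ if $g$ is red; in both cases $E=\sum_{i=1}^k\alpha_ie_i$ with $\eta(E)\in\{0,-2\}$, and $E\ne 0$ (if $g$ is black this is just $p\ne q$; if $g$ is red then $q$ and $-p$ have different mass, so $q+p\ne 0$). Also, since $g$ is the ordered composition of the labels of the edges of $S$ (Proposition \ref{coo}), an easy induction on \eqref{cofu} shows that $E$ equals a signed sum $\sum_j\epsilon_j m_j$, $\epsilon_j=\pm1$, over \emph{all} the edge-labels $m_j$ of $S$; in particular those edge-labels are linearly independent, for a relation among them would be a relation among edges of $T$ with vanishing coefficients on $S$, hence not proportional to the unique relation of Lemma \ref{endp}.

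For 1) and 2) I argue by contradiction using that the edge-labels $\ell_i$ of the relation span a known sublattice (Lemma \ref{zita}). Suppose we are in case 2). There the $\ell_i$ generate over $\Z$ the whole sublattice $\{a\in\Z^k:\eta(a)\equiv 0\pmod 2\}$, and since $\eta(E)\equiv 0\pmod 2$ we get $E=\sum_ic_i\ell_i$. Comparing with $E=\sum_j\epsilon_jm_j$ and moving everything to one side produces a relation among the edges of $T$ whose coefficients on the (nonempty) list of $S$-edges are all $\pm1$, contradicting Lemma \ref{endp}. Hence no segment $S$ exists, i.e. $T=T'$. In case 1) the $\ell_i$ span $\{a\in\Z^k:\zeta(a)=0\}$; if $\zeta(E)=0$ the very same computation again yields a forbidden second relation, so $\zeta(E)\ne 0$.

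For 3) I would analyse the path $S$ through Lemma \ref{icamm}. Because its edge-labels are linearly independent and $E$ is supported on $[1,\dots,k]$, every index that $S$ visits outside $[1,\dots,k]$ must cancel telescopically, so the product of the $S$-edges collapses either to a single generator of $X$ — in which case $g$ is an edge — or to the element produced by an odd circuit based at an index $i\in[1,\dots,k]$, which by the computation behind Remark \ref{mino} and \eqref{esc} is $g=-2e_i\tau$, i.e. $E=-2e_i$. In that last case $p+q=-2e_i$ with $p,q$ of opposite colour, so after moving the root along $S$ the graph contains the vertex $-2e_i$; by Proposition \ref{ilpunto0} it is then not allowable and has no geometric realization outside the special component — this is exactly the pair demanded there. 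I expect this to be the main obstacle: turning the support constraint ``$E$ lives on $[1,\dots,k]$'' together with the sign bookkeeping of Lemma \ref{icamm} into the clean dichotomy ``edge or $-2e_i\tau$'' requires a careful discussion of the encoding graph of $S$ (in particular ruling out an odd circuit sitting in its interior).

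Finally 4) is a direct computation from 2) and 3). Assume $g$ is an edge. If it is black then $E=-e_i+e_j$, whence $\zeta(E)=\zeta_j-\zeta_i$; since each $\zeta_\ell\in\{\pm1\}$ and $\zeta(E)\ne 0$ by 2), necessarily $\zeta_i=-\zeta_j$, i.e. $i,j$ have opposite $\zeta$-value. If it is red then $E=-e_i-e_j$ and $\zeta(E)=-\zeta_i-\zeta_j\ne 0$ forces $\zeta_i=\zeta_j$, i.e. $i,j$ have the same $\zeta$-value, as claimed.
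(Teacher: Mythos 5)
Your parts 1), 2) and 4) are correct and in substance identical to the paper's own argument: you write $E=\sum_j\epsilon_j m_j$ over the edges of $S$, play this against Lemma \ref{zita} and the uniqueness of the relation (Lemma \ref{endp}) to exclude $E$ in the span of the $\ell_i$ (case 2)) and $\zeta(E)=0$ (case 1)), and then read 4) off the fact that all $\zeta$-values are $\pm1$. The preliminary bookkeeping ($E=q-p$ or $q+p$, $E\neq0$, $\eta(E)\in\{0,-2\}$, support in $[1,\dots,k]$ via Lemma \ref{supp}, linear independence of the $S$-edges) is also sound.

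The genuine gap is exactly where you flag it, in 3). The ``telescoping'' heuristic does not give the dichotomy ``edge or $-2e_i$'': a signed sum of distinct elements of $X$ arising from a path can be supported inside $[1,\dots,k]$ without having either shape, e.g. $(e_3-e_2)\circ(e_1-e_2)=e_1+e_3-2e_2$, so support and sign considerations alone cannot rule out such $E$. The paper closes this by invoking uniqueness of the relation once more, through the encoding graph $U$ of the $S$-edges: first $U$ is connected (otherwise splitting $E$ over the components of $U$ yields two independent combinations of $S$-edges supported in $[1,\dots,k]$, hence a second relation); then either $U$ meets $[1,\dots,k]$ in a single index $i$, and $E\neq0$, $|E|\subset\{i\}$, $\eta(E)\in\{0,-2\}$ force $E=-2e_i$, or $U$ contains two indices $i,j\in[1,\dots,k]$ joined by a minimal simple path, and Lemma \ref{icamm} applied to that path produces an edge $E'$ supported in $[1,\dots,k]$ which must equal $E$, since otherwise, by Lemma \ref{zita} i), $E$ and $E'$ would produce a second relation among the edges of $T$. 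Without an argument of this kind your claim 3) is unproved, and since your 4) is stated only after $g$ is known to be an edge, the dichotomy is an essential missing step, not a technicality.
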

\begin{proof}
 We have that $g=m_k\circ m_{k-1}\circ \cdots \circ m_1 $ so $E=\sum_i \delta_im_i=\sum_{i=1}^k\alpha_ie_i$.
  
1)\quad If we are in case 2) then, by Lemma \ref{zita} ii), $2E$ is a linear combination of the $\ell_i$ with integer coefficients.  Adding to this $-2E=\sum_i -2\delta_im_i$ we have  a new relation containing edges not supported in $T'$  contradicting the hypotheses. \smallskip

2)\quad If we are in case 1)  we must have $\zeta  (E)\neq 0$ otherwise, by Lemma \ref{zita} i),   $E$ is in the span of the edges $\ell_i$ and we have another relation among the edges of $T$ contradicting minimality.    \smallskip

3)\quad   Let   $U$  be the encoding graph of the  edges   $m_i\in S$ and $V$ its vertices. We have $|E|\subset [1,  2,
\ldots,  k]\cap V$, where  by $|L|$  we denote   the support   of a vector $L=\sum_a\beta_a e_a$, that is the set of indices $a$ appearing in $L$.
\smallskip

We claim that  $U$  is  connected, in fact  if $U=\bigcup U_i$ with $U_i$ connected  we    decompose $E=\sum_iE_i$ where $E_i$  is the part of the linear combination of the $m_i$  with support in $U_i$. We have observed that  linear combinations  of connected components are linearly independent.  Therefore each  $E_i$ given by each component must have support  in $|E|$.  

If $U$  is  not connected we deduce the existence at least two different linear combinations  $E_1,E_2$ of edges in $\Gamma$  with support in  $[1,  2,
\ldots,  k]$, which gives a new relation, a contradiction.  \smallskip

 Next if $V\cap [1,  2,
\ldots,  k]=\{i\}$ then we must have $E=-2e_i$ and we are in case 3).\smallskip

So there are at least  two different indices $i,j$  in  $[1,  2,
\ldots,  k]\cap V$  connected  by a   minimal simple path in $U$.  By Lemma \ref{icamm}   a linear combination $L$ of the edges $m_i\in S$ is an edge $E'$ supported in $[1,  2,
\ldots,  k]\cap V$. But then this edge must be  equal to $E$  since otherwise we have another relation for $\Gamma$ by Lemma \ref{zita} i) and 3) is proved.

As for 4  one must have $E$ linearly independent from the space spanned by the vectors of the relation so the statement follows again from Lemma  \ref{zita}.   \end{proof} 
\smallskip

Since $\Gamma$ is a full graph, the edge $E$ joining $p,q$ is in $\Gamma$.  If $S$ is not $E$, that is it is a path with at least two edges we    construct a new maximal tree $\tilde T $  in $\Gamma$ by replacing the  last edge of the path $S$ with the  edge $E$. 
\begin{lemma}\label{comt} Either $\Gamma$ is not allowable or 
$\tilde T =T'\cup E$.\smallskip

The encoding graph of $\tilde T$ is the encoding graph of the relation which is an even circuit plus the edge $E$ which separates this circuit in two odd circuits.
\end{lemma}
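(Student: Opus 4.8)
The plan is to analyze the new maximal tree $\tilde T$ obtained from $T$ by replacing the last edge of the segment $S$ with the edge $E$, and to show that—unless $\Gamma$ is not allowable—$\tilde T$ must coincide with $T' \cup E$. The key observation is that $\tilde T$ is again a maximal tree in the minimal degenerate resonant graph $\Gamma$, so by Lemma \ref{endp} applied to $\tilde T$, the minimal relation among the edges of $\tilde T$ is unique up to scale. First I would note that $E$, together with the edges $\ell_i$ of the original relation $\mathcal R$, must satisfy a relation: indeed $E$ is a linear combination of the edges $m_j$ of the segment $S$, and by the preceding Lemma part 3), $E$ is actually an edge (otherwise $\Gamma$ is not allowable because $g = -2e_i$). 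So the edge set $T' \cup E$ contains the edges of $\mathcal R$ plus one more edge $E$ whose endpoints $p,q$ both lie in the vertex set $[1,\dots,k]$ of the encoding graph $\mathfrak E$ of $\mathcal R$.

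The main step is to identify the encoding graph of $T' \cup E$. Since $\mathfrak E$ is an even circuit (in case 1, the relevant case here since case 2 forces $T = T'$ by the previous Lemma part 1), adding the edge $E$ between two vertices $i,j$ of the circuit splits that circuit into two arcs; together with $E$ each arc forms a circuit. I would invoke part 4) of the preceding Lemma: $E$ is either a red edge $-e_i-e_j$ with $i,j$ of the same $\zeta$-value, or a black edge $-e_i+e_j$ with $i,j$ of opposite $\zeta$-value. In either case, a direct parity count (using that along each arc of the even circuit the $\zeta$-values propagate by the rule $\zeta(e_{u+1}) = \vartheta_u \zeta(e_u)$, so the product of the $\vartheta$'s along an arc equals the ratio of endpoint $\zeta$-values, and that the number of red edges on an arc has the opposite parity to that product's sign being $+1$) shows that each of the two sub-circuits formed by $E$ and an arc is \emph{odd}. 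Hence the encoding graph of $T'\cup E$ is a doubly odd circuit of the type described in Proposition \ref{dodd}, where the two odd circuits share the vertex structure of the old even circuit and are ``joined'' along $E$ (in the degenerate $a=b$ sense, or with $E$ itself as the joining path).

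To finish I would verify that $\tilde T = T' \cup E$ as sets of edges. Since $\tilde T$ was defined by removing one edge $m_a$ (the last edge of $S$) from $T$ and adding $E$, and since $T = T' \cup S \cup (\text{edges not in a relation})$—wait, more carefully: $T \supseteq T'$, and $T \setminus T'$ contains the segment $S$. The edges of $T$ outside $T' \cup S$ are, by minimality of $\Gamma$ and Lemma \ref{endp}, linearly independent from everything; but $\tilde T$ must be a tree on the same vertex set as $T$, with exactly $|\text{vertices}|-1$ edges. The point is that $T'$ together with $E$ already spans (as a graph) all the vertices appearing in $\mathcal R$, and the rank count forces $\tilde T$ to have no edges beyond $T' \cup E$ unless such extra edges existed in $T$—but any such extra edge, being linearly independent of the relation's span, would survive into a maximal tree and could not be part of the unique relation, contradiction with $\tilde T$ being chosen so that its relation is $\mathcal R' $ supported on $T' \cup E$. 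So either we hit the not-allowable case via $g=-2e_i$, or $\tilde T = T'\cup E$ exactly, and by the analysis above its encoding graph is the even circuit $\mathfrak E$ with the extra chord $E$ dividing it into two odd circuits.

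\textbf{Main obstacle.} The delicate point is the parity bookkeeping: one must show carefully that $E$ as described in part 4) of the previous Lemma really does split the even circuit into two \emph{odd} circuits and not, say, two even ones (which would give a spurious second relation and contradict minimality, or would mean $\tilde T$ is not a tree). This requires tracking simultaneously (a) the color/parity of $E$, (b) the parity of the number of red edges on each arc, and (c) the $\zeta$-values of the split points, and checking the two cases (red $E$ with equal $\zeta$, black $E$ with opposite $\zeta$) consistently against the rules of Lemma \ref{icamm} and the definition $\zeta(e_{i+1}) = \vartheta_i\zeta(e_i)$. Everything else is a rank count plus invoking uniqueness of the minimal relation (Lemma \ref{endp}) and the structure theorem for doubly odd circuits (Proposition \ref{dodd}).
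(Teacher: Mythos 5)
Your parity computation for the second half of the statement is fine: using part 4) of the preceding (unlabeled) lemma, the product of the colours $\vartheta$ along an arc of the even circuit between the endpoints of $E$ equals $\zeta_i\zeta_j$, so both arcs carry an even number of red edges when $E$ is red with $\zeta_i=\zeta_j$, and an odd number when $E$ is black with $\zeta_i=-\zeta_j$; in both cases each arc together with $E$ is an odd circuit. This is a legitimate alternative to the paper's one-line argument, which instead observes that an even sub-circuit would produce a second relation, contradicting the uniqueness in Lemma \ref{endp}.

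The genuine gap is in the first, and main, assertion: you never actually prove that either $\Gamma$ is not allowable or $\tilde T=T'\cup E$. Your ``rank count'' does not work. By construction $\tilde T=(T\setminus\{m_a\})\cup\{E\}$ still contains the remaining edges $m_1,\dots,m_{a-1}$ of $S$ and every other edge of $T\setminus T'$, and there is nothing contradictory about a maximal tree containing edges that do not appear in the minimal relation --- the original $T$ does exactly that through $S$ --- so the sentence ``any such extra edge \dots could not be part of the unique relation, contradiction'' proves nothing ($\tilde T$ was not ``chosen so that its relation is supported on $T'\cup E$''; it was obtained by a single edge swap). What is actually needed, and what the paper supplies, are two ingredients you omit: (i) when $T'\cup E$ is a tree, the \emph{minimality of $\Gamma$ as a degenerate resonant graph} forces its vertex set to be all of $\Gamma$, and since $\tilde T\supseteq T'\cup E$ are both trees this gives $\tilde T=T'\cup E$; (ii) when $T'\cup E$ is not a tree, one must re-run the previous lemma's argument on a new segment of $\tilde T$ outside $T'\cup E$, obtaining either a vertex of the form $-2e_i$ (the not-allowable alternative, which is where that branch of the dichotomy in Lemma \ref{comt} really comes from, not from the already-settled $g=-2e_i$ case you cite) or a second chord $E'$ supported in $[1,\dots,k]$; then the codimension-one statement of Lemma \ref{zita} shows $E$ and $E'$ are dependent modulo the span of the $\ell_i$, producing a second relation and contradicting Lemma \ref{endp}. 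Without these two steps the central claim $\tilde T=T'\cup E$ is unsupported.
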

\begin{proof} If $T'\cup E$ is a tree then it must be equal  to $\tilde T$ by the assumption  of minimality. 

If $T'\cup E$ is not a tree  we can repeat the argument of the previous Lemma and find either a not allowable graph or a new $E'$  linear combination of the edges $\ell_i$. 

Since the span  of the edges $\ell_i$  is of codimension 1 in the span of the vectors $e_i,\ i=1,\cdots,k$ (Lemma  \ref{zita}) we have that $E,E'$ are linearly dependent modulo the span of the $\ell_i$. This generates a new relation and so a contradiction.\smallskip

 The circuits we generate in the encoding graph are odd since otherwise we would have a second even circuit and a new relation.\end{proof}

  \medskip

  {\bf Example}
$$ I)\quad\xymatrix{       & 1    \ar@{=}[r] &10\ar@{=}[r]    &9  \ar@{-}[r]  &8\ar@{=}[r] &7      && \\   &2 \ar@{=}[u] \ar@{=}[r]   &3\ar@{ =}[r] \ar@{ ->}[u]    &4\ar@{-}[r]  &5\ar@{=}[r] &6\ar@{=}[u]      &&  }$$
\begin{equation}
\label{dude}II)\quad \xymatrix{       & 1    \ar@{=}[r] &10\ar@{=}[r]    &9  \ar@{=}[r]  &8\ar@{-}[r] &7      && \\   &2 \ar@{=}[u] \ar@{=}[r]   &3\ar@{ -}[r] \ar@{ =}[ru]    &4\ar@{=}[r]  &5\ar@{=}[r] &6\ar@{=}[u]      &&  }\end{equation}

\begin{proposition}\label{icin}
Thus we have 5 possible pictures for the encoding graph of $T$. 
\begin{enumerate}[1]\item It is an even circuit.
\item It is a doubly odd circuit $ABC$ and $B\neq\emptyset$.
\item   It is a doubly odd circuit $AC$ and $B=\emptyset$.
\item It is an even circuit plus a black edge dividing it in two odd circuits.
\item It is an even circuit plus a red edge dividing it in two odd circuits.\end{enumerate}
\end{proposition}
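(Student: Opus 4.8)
The statement collects the structural analysis of the preceding pages, so the plan is to organize it into a clean case split governed by whether the subforest $T'=|\mathcal E|$ of edges occurring in the minimal relation — which is unique up to scale by Lemma~\ref{endp} — exhausts the chosen maximal tree $T$.

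First I would normalize the root: by Lemma~\ref{supp} choose it so that every vertex $a_i,b_i$ of the relation, hence every edge of the relation, has support in the index set $[1,\dots,k]$ of the encoding graph $\mathfrak E$ of $\mathcal E$. By Corollary~\ref{evx} together with Propositions~\ref{pari} and~\ref{dodd}, $\mathfrak E$ is either an \emph{even circuit}, or a \emph{doubly odd circuit}; the latter is either of the generic form $ABC$ with connecting path $B\neq\emptyset$, or of the degenerate form $AC$ in which the two odd circuits meet in a single vertex ($B=\emptyset$). When $T'=T$ the encoding graph of $T$ is literally $\mathfrak E$, and these three possibilities are precisely pictures 1, 2 and 3.

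It remains to treat $T'\subsetneq T$. In the doubly odd case one automatically has $T'=T$ (shown above via Lemma~\ref{zita}), so here $\mathfrak E$ is an even circuit. Pick a segment $S$ of $T\setminus T'$ joining endpoints $p,q$ of $T'$ and let $g\in G_2$ satisfy $g\cdot p=q$; as established above, $g$ is either of the form $-2e_i$ — in which case $\Gamma$ is not allowable and Proposition~\ref{ilpunto0} already yields Theorem~\ref{MM} — or a single edge $E$, which lies in $\Gamma$ because $\Gamma$ is a full graph. If $S=E$ then $T=T'\cup E$; otherwise replace the last edge of $S$ by $E$ to obtain a maximal tree $\tilde T$ and apply Lemma~\ref{comt}: either $\Gamma$ is not allowable (done), or $\tilde T=T'\cup E$. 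Replacing the maximal tree $T$ by $\tilde T$ if necessary, its encoding graph is the even circuit $\mathfrak E$ together with the chord $E$, which splits the circuit into two odd circuits; since $E$ is black or red this gives pictures 4 and 5.

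The only step requiring genuine care, and the one I regard as the main obstacle, is exhaustiveness: one must rule out that $T\setminus T'$ contributes two independent segments, i.e. that more than one chord is needed. This is the codimension-one argument already used in the proof of Lemma~\ref{comt}: in the even circuit case the edges $\ell_i$ span a sublattice of codimension one in $\Z^k$ (Lemma~\ref{zita}, case 1), and every segment-element has support in $[1,\dots,k]$, so two independent such elements together with the $\ell_i$ would force a second $\Z$-linear relation among the edges of $T$, contradicting Lemma~\ref{endp}. Throughout, the Warning after \eqref{boBf} is in force (some $\delta_i=2\eta_i$ may be $\pm2$), and the independence of all these notions from the choice of root was recorded earlier.
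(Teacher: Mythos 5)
Your proposal is correct and follows essentially the same route as the paper: Proposition \ref{icin} is there a summary of the preceding analysis, namely Lemma \ref{supp}, the classification of the encoding graph of a minimal relation (even circuit or doubly odd circuit with $B\neq\emptyset$ or $B=\emptyset$, via Propositions \ref{pari} and \ref{dodd} and Corollary \ref{evx}), the dichotomy $T'=T$ versus $T'\subsetneq T$ with the extra element $g$ being either $-2e_i$ (not allowable) or a single edge $E$, and Lemma \ref{comt} with the codimension-one argument from Lemma \ref{zita} excluding a second chord. You assemble exactly these ingredients, including the case split on the color of $E$ for pictures 4 and 5, so there is nothing substantively different to flag.
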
 In the first 3 cases the encoding graph of $T$ coincides with the encoding graph of the minimal relation, In the last 2 cases  we also have the extra edge $E$.
\begin{corollary}\label{frec}
We  can now free  the statement of  Lemma \ref{supp} from the choice of the root to be one of the $b_i$.
 In fact from Proposition \ref{icin}  each vertex of $T$  is of the form $a_i$ or $b_i$ for some $i\in  [1,2,\cdots,k]$.
\end{corollary}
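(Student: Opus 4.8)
The plan is to read the Corollary off the case list of Proposition \ref{icin}, adding only a short bookkeeping step based on the change--of--root identity \eqref{chro}.

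\textbf{Step 1: every vertex of $T$ is an $a_i$ or a $b_i$ with $i\in[1,2,\cdots,k]$.} By Proposition \ref{icin} the encoding graph of $T$ has one of five shapes. In cases 1)--3) it coincides with the encoding graph of the minimal relation, so the edges of $T$ are exactly $\ell_1,\dots,\ell_k$; in cases 4)--5) they are $\ell_1,\dots,\ell_k$ together with the single extra edge $E$, which by Lemma \ref{comt} satisfies $\tilde T=T'\cup E$, i.e. $E$ joins two vertices already lying in $T'$. Since $T$ is a tree, hence connected with at least two vertices, every vertex of $T$ is an endpoint of some edge of $T$. By Definition \ref{biai} the two endpoints of $\ell_i$ are precisely $a_i$ and $b_i$; and in cases 4)--5) the two endpoints of $E$, being vertices of $T'$, are themselves endpoints of some $\ell_j$, hence belong to $\{a_j,b_j\}$. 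This establishes the displayed assertion, and it mentions no root.

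\textbf{Step 2: removing the root hypothesis of Lemma \ref{supp}.} As stated, Lemma \ref{supp} gives the support claim only for the particular root $r_0$ used there (one of the $b_i$): each $L_{r_0}(a_i)$ and $L_{r_0}(b_i)$ is supported in $[1,2,\cdots,k]$. Now let $r$ be any other root which is a vertex of $T$. By Step 1, $r\in\{a_j,b_j\}$ for some $j$, so $L_{r_0}(r)$ is supported in $[1,2,\cdots,k]$ as well. For an arbitrary vertex $v$ of $T$, formula \eqref{chro}, combined with $L_r(r_0)=-\sigma_{r_0}(r)\,L_{r_0}(r)$, expresses $L_r(v)$ as a $\Z$--linear combination of $L_{r_0}(v)$ and $L_{r_0}(r)$, hence $L_r(v)$ is again supported in $[1,2,\cdots,k]$. (The colors $\sigma$ also change with the root, but this plays no role for the support.) Thus the conclusion of Lemma \ref{supp} holds for every choice of root among the vertices of $T$, which is the ``freeing'' the Corollary claims.

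The argument is essentially formal. The one point that needs care is cases 4)--5) of Step 1: there one must genuinely invoke Lemma \ref{comt} ($\tilde T=T'\cup E$) to be sure that the extra edge $E$ introduces no new vertex into $T$, rather than merely that $E$ is supported in $[1,2,\cdots,k]$ --- otherwise a vertex of $T$ could fail to be one of the $a_i,b_i$. Everything else is a direct reading of Proposition \ref{icin} and of the change--of--root identity already used in the proof of the invariance of the rank.
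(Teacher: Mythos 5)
Your proposal is correct and follows essentially the same route as the paper: the paper's justification is precisely your Step 1 (reading off Proposition \ref{icin} together with Lemma \ref{comt} that the tree consists of the relation edges plus possibly $E$, whose endpoints already lie in $T'$, so every vertex is some $a_i$ or $b_i$). Your Step 2, transferring the support statement to an arbitrary root via the change-of-root identity \eqref{chro}, is just the bookkeeping the paper leaves implicit, and it is done correctly.
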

\begin{remark}\label{divido}
In case 2), 3) we divide the edges in three sets $A,B,C$ where $A$ are the edges of the first circuit, $C$ the ones of the second circuit  and $B$ (possibly empty) the edges of the segment joining the two circuits.

See figures \eqref{doppio} where $B$ is formed by 4 edges and  \eqref{dops} where $B$ is empty.\medskip

In case 4)and 5) with an extra edge we divide the edges in two sets $A$, $B$ separated by the extra edge $E$. Figure \eqref{dude}.\medskip

 The encoding graphs are all connected with all vertices of valency   2 only in case 1.
 
 A vertex of valency $>2$ will be called {\em critical}. Without loss of generality, if there are critical  vertices we may assume that 1 is critical.
 
In 2), 4), 5) we have  two vertices of valency 3 and one of valency 4 in case 3). 

As for a non critical index $u$ we shall say that $u\in A$  resp. $u\in B,C$ if the two edges $\ell_{u-1},\ell_u$ are in $A$ (resp. $B,C$).
\end{remark}
 
\section{The contribution of an index $u$}
  \subsection{The strategy}
We want to exploit Formula \eqref{retr} in order to understand  the graph. We proceed as follows.  
\begin{definition}
Given a quadratic expression $Q$ in the elements $e_i$ and any index $u$  we set $e_uC_u(Q)$  to be the  sum of all terms in $Q$ which contain $e_u$ but not $e_u^2$.
\end{definition}
Notice that $C_u$  is a linear map from quadratic expressions to linear expressions in the $e_i,\ i\neq u$. By Formula \eqref{retr} we    have  $C_u(\mathcal R)=0,\ \forall u$.  We observe that only the terms $\ell_ia_i$ or $-e_ie_{i+1}$  may contribute to  $C_u(\mathcal R)$ hence:
\begin{equation}\label{cur}
C_u(\mathcal R)=C_u\left(\sum_{i\,|\,\vartheta_i=-1} \delta_i   (-\ell_i a_i + e_ie_{i+1} )+\sum_{i\,|\,\vartheta_i= 1}  \delta_i \sigma_i(  e_ie_{i+1}+ \ell_i a_i )\right)=0.
\end{equation}

 We choose an index $u$  of valency 2, which appears thus only  in $\ell_{u-1}=\vartheta_{u-1}e_{u-1}-e_u$ and  in $\ell_{u }=\vartheta_{u }e_{u }-e_{u+1}$. This is any index in case 1)  of Proposition \ref{icin}  with no extra edge while it excludes the  {\em critical indices } in the other cases (see Remarks \ref{crii} and \ref{divido}). 
 
 In particular by our conventions we take $u\neq 1$.\smallskip
 
 \begin{remark}\label{prov}
It is possible that $u-1$ or $u+1$ or both  are critical, then the formula  for  $\ell_{u-1},\ \ell_{u+1}$ has to be interpreted as in Remark \ref{dw} 1).
\end{remark}
 \begin{definition}\label{Su}
If $u$ is a non critical index   denote by  $S_u$ the segment generated by the two edges $\ell_{u-1}, \ell_u $ in the tree $T$.
\end{definition}
 We now choose the root $r$ so that the segment $S_u$, generated by the two edges $\ell_{u-1}, \ell_u $,  appears as follows:
  \begin{equation}
\label{esub} \quad  \xymatrix{   r    \ar@{-}[r]^{\ell_u} &s\ar@{--}[rr]^{\bar a_{u-1}} &&  y \ar@{-}[r]^{\ell_{u-1}}  & x_{u-1}   } .
\end{equation} 
Depending on the color and for black edges the orientation, we have 9 different possibilities:
\begin{align*}\label{le9}
 \xymatrix{   r    \ar@{->}[r]^{\ell_u} & \ldots   \ar@{->}[r]^{\ell_{u-1}}  & x_{u-1}   }; \quad&
 \xymatrix{   r    \ar@{->}[r]^{\ell_u} & \ldots   \ar@{<-}[r]^{\ell_{u-1}}  & x_{u-1}   }; &
 \xymatrix{   r    \ar@{<-}[r]^{\ell_u} & \ldots  \ar@{->}[r]^{\ell_{u-1}}  & x_{u-1}   } ;&\\
  \xymatrix{   r    \ar@{<-}[r]^{\ell_u} & \ldots   \ar@{<-}[r]^{\ell_{u-1}}  & x_{u-1}   } ;\quad& 
  \xymatrix{   r    \ar@{=}[r]^{\ell_u} & \ldots   \ar@{->}[r]^{\ell_{u-1}}  & x_{u-1}   } ;&
 \xymatrix{   r    \ar@{=}[r]^{\ell_u} & \ldots   \ar@{<-}[r]^{\ell_{u-1}}  & x_{u-1}   } ;&
\\
 \xymatrix{   r    \ar@{->}[r]^{\ell_u} & \ldots   \ar@{=}[r]^{\ell_{u-1}}  & x_{u-1}   }; \quad& 
 \xymatrix{   r    \ar@{<-}[r]^{\ell_u} & \ldots   \ar@{=}[r]^{\ell_{u-1}}  & x_{u-1}   } ;& 
 \xymatrix{   r    \ar@{=}[r]^{\ell_u} & \ldots   \ar@{=}[r]^{\ell_{u-1}}  & x_{u-1}   } \\
\end{align*} When we   add the color $\sigma_{u-1} $ of the vertex $x_{u-1} $ we have 18 cases to treat with $x_{u-1}= a _{u-1},\ b_{u-1}$.\smallskip

  \begin{definition}\label{icolo} 1)\quad The choice of a root $r$ in $T$  induces a partial order in the edges and vertices where $a\preceq  b$ means that $a$ is in the segment joining $r$ to $b$ and possibly it is $b$. $a\prec   b$  means $a\preceq  b,\ a\neq b$.

2)\quad By $\sigma_\ell$ we denote  the color of the endpoint $v_\ell$ of the segment starting from the root and ending with $\ell$,  and for a vertex $v$  by $\sigma_v$ we denote  its  color, ($\sigma_\ell=\sigma_{v_\ell}$).
\end{definition}
\begin{theorem}\label{ilve} Given an edge $\ell_0$ we have
\begin{equation}\label{ukvef}
v:=v_{\ell_0}=\sigma_{\ell_0}  \sum_{\ell\preceq v}\sigma_\ell\lambda_\ell\ell =\sigma_{v}  \sum_{\ell\preceq v}\sigma_\ell\lambda_\ell\ell.
\end{equation}
\end{theorem}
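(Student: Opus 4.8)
The plan is to translate the statement into the group $G_2$ and compute $v$ as a product of edges. First I would normalize so that the chosen root $r$ is the identity $0\in G_2$ (by Remark~\ref{fus} we may replace the graph by an equivalent one with root $0$); then for every vertex $x$ the color $\sigma_x=\eta(x)+1$ equals $(-1)^{\rho(x)}$, where $\rho(x)$ is the number of red edges on the unique path in $T$ from $0$ to $x$, because a black edge leaves $\eta$ unchanged while a red edge sends $\eta(a)\mapsto -2-\eta(a)$. Write the path from $0$ to $v=v_{\ell_0}$ as $0=w_0,w_1,\dots,w_s=v$ with consecutive edges $m_1,\dots,m_s$ (so $m_s=\ell_0$), and for each $k$ let $n_k\in X$ be the element with $w_k=n_k\circ w_{k-1}$: this is $m_k$ if $m_k$ is traversed equioriented with the path and $m_k^{-1}$ otherwise. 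A black edge $e_i-e_j$ has inverse $e_j-e_i=-(e_i-e_j)$, while a red edge $(-e_i-e_j)\tau$ is an involution; together with the convention $\lambda_\ell=1$ for red edges (Definition~\ref{biai}), this gives that the vector part of $n_k$ equals $\lambda_k$ times $\beta_k$, where $\beta_k$ denotes the vector part of $m_k$, and $n_k$ carries a $\tau$ precisely when $m_k$ is red. Since $w_0=0$ is the identity, $v=n_s\circ n_{s-1}\circ\cdots\circ n_1$.

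Next I would compute the vector part of this product by induction from the right using the composition rules \eqref{cofu}: left multiplication by a black element adds its vector part and keeps the $\tau$-component, while left multiplication by a red element subtracts its vector part and flips the $\tau$-component. One gets by induction that the vector part of $n_k\circ\cdots\circ n_1$ equals $\sum_{j=1}^{k}(\prod_{j<i\le k}\mu_i)\lambda_j\beta_j$, where $\mu_i=+1$ if $m_i$ is black and $\mu_i=-1$ if $m_i$ is red. Taking $k=s$ and recalling that under the identification $G_2\equiv\Z^m_2$ a vertex is the vector part of its group element, this yields $v=\sum_{j=1}^{s}(-1)^{\kappa_j}\lambda_j\beta_j$, where $\kappa_j$ is the number of red edges among $m_{j+1},\dots,m_s$.

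It then remains to identify $(-1)^{\kappa_j}$ with $\sigma_v\sigma_{m_j}$. By the first paragraph $\sigma_v=(-1)^{\rho(w_s)}$, and by Definition~\ref{icolo} the color attached to the edge $m_j$ is $\sigma_{m_j}=\sigma_{w_j}=(-1)^{\rho(w_j)}$, since the segment from the root ending with $m_j$ is $w_0,\dots,w_j$; because $\rho(w_s)-\rho(w_j)=\kappa_j$ we get $\sigma_v\sigma_{m_j}=(-1)^{\kappa_j}$. Substituting, and observing that $\{\,\ell:\ell\preceq v\,\}=\{m_1,\dots,m_s\}$ (with $\ell$ read as $\beta_j$ when $\ell=m_j$), gives $v=\sigma_v\sum_{\ell\preceq v}\sigma_\ell\lambda_\ell\ell$, which is \eqref{ukvef}; the first form in \eqref{ukvef} follows because $\sigma_{\ell_0}=\sigma_{v_{\ell_0}}=\sigma_v$.

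The main obstacle is precisely the noncommutativity of $G_2$: one must track how each $\tau$ introduced by a red edge reverses the sign with which the vector parts of the \emph{earlier} edges in the product enter, and then recognize that this running sign is exactly $\sigma_v\sigma_\ell$. The only other points requiring care are the orientation bookkeeping for black edges (where $n_k=m_k^{\pm1}$ contributes $\lambda_k$ times its vector part, made uniform with the red case by the convention $\lambda=1$) and the normalization $r=0$, without which the right-hand side of \eqref{ukvef} would differ from $v$ by a fixed translate of the root.
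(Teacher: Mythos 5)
Your proof is correct and takes essentially the same route as the paper's: the paper proves \eqref{ukvef} by induction on the last edge of the path from the root, using precisely the relations you encode via the group law (for a black edge $v=\lambda_{\ell_0}\ell_0+w$ with $\sigma_v=\sigma_w$, for a red edge $v=\ell_0-w$ with $\sigma_v=-\sigma_w$), so your product formula $v=n_s\circ\cdots\circ n_1$ with the running sign $(-1)^{\kappa_j}=\sigma_v\sigma_{m_j}$ is just that induction unrolled. One small wording slip: by \eqref{cofu} left multiplication by a red element negates the previously accumulated vector part and adds its own (rather than ``subtracting its vector part''), which is in fact what your displayed induction formula uses, so nothing breaks.
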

\begin{proof}
By induction.  Il only one edge $\ell =\ell_0
$ precedes $v$   then $    v=\lambda_\ell\ell=\sigma_\ell^2\lambda_\ell\ell$. Otherwise  $\ell_0$   ends in $v$ and originates in $w\prec v$. 

We have $\sigma_v=  \sigma_w$ if $\ell_0$ is black and, by induction $$v=\lambda_{\ell_0}\ell_0+w=\lambda_{\ell_0}\ell_0+\sigma_v  \sum_{\ell\preceq w}\sigma_\ell\lambda_\ell\ell=\sigma_v \sum_{\ell\preceq v}\sigma_\ell\lambda_\ell\ell$$
If $\ell_0$ is red we have $\sigma_v=  -\sigma_w,\ \lambda_{\ell_0}=1$   and, by induction $$v=\lambda_{\ell_0}\ell_0-w=\lambda_{\ell_0}\ell_0+\sigma_v \sum_{\ell\preceq w}\sigma_\ell\lambda_\ell\ell=\sigma_v\sum_{\ell\preceq v}\sigma_\ell\lambda_\ell\ell$$\end{proof}
\begin{corollary}\label{ilve1} If  $w\prec v$  we have
\begin{equation}\label{ukveg}
v =\sigma_{v} ( \sum_{w\preceq \ell\preceq v}\sigma_\ell\lambda_\ell\ell+ \sigma_{w}w) .
\end{equation}
\end{corollary}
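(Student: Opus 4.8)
The plan is to obtain \eqref{ukveg} as a direct consequence of Theorem~\ref{ilve}, by breaking the segment of $T$ from the root $r$ to $v$ at the intermediate vertex $w$. Since $T$ is a tree, the unique path from $r$ to $v$ factors through $w$: writing it as $r=u_0,u_1,\dots,u_j=w,\dots,u_k=v$ with edge $m_i$ joining $u_{i-1}$ and $u_i$, the edges $\ell$ with $\ell\preceq v$ split, disjointly, into $\{m_1,\dots,m_j\}=\{\ell\mid\ell\preceq w\}$ and $\{m_{j+1},\dots,m_k\}=\{\ell\mid w\preceq\ell\preceq v\}$. Hence
\begin{equation*}
\sum_{\ell\preceq v}\sigma_\ell\lambda_\ell\,\ell \;=\; \sum_{\ell\preceq w}\sigma_\ell\lambda_\ell\,\ell \;+\; \sum_{w\preceq\ell\preceq v}\sigma_\ell\lambda_\ell\,\ell .
\end{equation*}

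Next I would apply Theorem~\ref{ilve} to the vertex $w$ itself, which gives $w=\sigma_w\sum_{\ell\preceq w}\sigma_\ell\lambda_\ell\,\ell$; multiplying by $\sigma_w$ and using $\sigma_w^2=1$ yields $\sum_{\ell\preceq w}\sigma_\ell\lambda_\ell\,\ell=\sigma_w w$. Substituting this into the displayed identity and multiplying through by $\sigma_v$, Theorem~\ref{ilve} applied to $v$ gives
\begin{equation*}
v=\sigma_v\sum_{\ell\preceq v}\sigma_\ell\lambda_\ell\,\ell=\sigma_v\Bigl(\sum_{w\preceq\ell\preceq v}\sigma_\ell\lambda_\ell\,\ell+\sigma_w w\Bigr),
\end{equation*}
which is exactly \eqref{ukveg}.

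The only point that needs a little care is the combinatorial bookkeeping of the partial order: one must check that the edge $m_j$ of $T$ ending at $w$ is counted in the sum over $\ell\preceq w$ and not a second time in the sum over $w\preceq\ell\preceq v$, so that the two index sets genuinely partition $\{\ell\mid\ell\preceq v\}$. With that understood there is no real obstacle here; the statement is a formal corollary of Theorem~\ref{ilve}, and its role in the sequel is simply to allow the telescoping expression for a vertex to be restarted from an arbitrary earlier vertex $w$ rather than from the root.
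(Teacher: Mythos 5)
Your argument is correct and is essentially the paper's own proof: split the sum in Formula \eqref{ukvef} for $v$ at the vertex $w$ and apply Theorem \ref{ilve} again at $w$ to identify the initial portion with $\sigma_w w$. Your bookkeeping remark, that the edge ending at $w$ belongs to the part $\ell\preceq w$ so the two index sets partition $\{\ell\mid \ell\preceq v\}$, is exactly the reading implicit in the paper's one-line computation.
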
\begin{proof}
Split  Formula  \eqref{ukvef}
$$v=\sigma_{v}  \sum_{\ell\preceq v}\sigma_\ell\lambda_\ell\ell=\sigma_{v}  \sum_{w\preceq \ell\preceq v}\sigma_\ell\lambda_\ell\ell+ \sigma_{v}  \sum_{\ell\preceq w}\sigma_\ell\lambda_\ell\ell=\sigma_{v}  \sum_{w\preceq \ell\preceq v}\sigma_\ell\lambda_\ell\ell+ \sigma_{v} \sigma_{w}  w $$
\end{proof}
We  write $ \mathcal R = \mathcal R'+\mathcal R''$ (Formula \eqref{retr}) and separately compute the contributions of 
$$ \mathcal R':=\sum_{i\,|\,\vartheta_i=-1} \delta_i    e_ie_{i+1}  +\sum_{i\,|\,\vartheta_i= 1}  \delta_i \sigma_i e_ie_{i+1}   ,\quad \mathcal R'':=-\sum_{i\,|\,\vartheta_i=-1} \delta_i     \ell_i a_i + \sum_{i\,|\,\vartheta_i= 1}  \delta_i \sigma_i  \ell_i a_i\,,$$Set $\gamma_i=-\delta_i$  if  $\theta_i=-1$  and  $\gamma_i=\sigma_i\delta_i$  if  $\theta_i=1$ so  $\mathcal R''=\sum_{i } \gamma_i   \ell_i a_i$.

$$C_u(\mathcal R)=  C_u(\mathcal R') +C_u(\mathcal R''),\  \mathcal R'' =\sum_{i } \gamma_i    \ell_i a_i .$$\medskip

We need the following formulas for the elements $a_j$, with color $\sigma_j$, easily proved from Theorem \ref{ilve}.
The notations are those of Definition \ref{biai}:\begin{equation} 
\label{La}a_j=\begin{cases}\begin{matrix}
1)\quad -\sum_{\ell\preceq \ell_j}\sigma_\ell\lambda_\ell\ell  ,&\sigma_j=-1, &\ell_j\ \text{red}\quad \\2)\quad
 - \sum_{\ell\prec  \ell_j}\sigma_\ell\lambda_\ell\ell  ,&\sigma_j= 1,\ \ &\ell_j\ \text{red}\quad \\3)\quad
\sigma_j\sum_{\ell\preceq \ell_j}\sigma_\ell\lambda_\ell\ell , &\lambda_j= 1,\ \ &\ell_j\ \text{black}\\4)\quad
\sigma_j\sum_{\ell\prec  \ell_j}\sigma_\ell\lambda_\ell\ell , &\lambda_j= -1, &\ell_j\ \text{black} 
\end{matrix}
\end{cases}
\end{equation}
\begin{proof} From Formula \eqref{ukvef}  let $v,w$ be the two end points of $\ell_j$; we have 4 cases due to the definition of $a_j,\ \sigma_j,v$.  
If $\ell_j$  is red (that is $\theta_j=-1$) or if  it is black (that is $\theta_j=1$)  and we have $\lambda_j=1$, then $a_j=v$  these are cases 1), 3). Otherwise $a_j=w$.  In case 2), $\ell_j$ red  and $\sigma_j=1$  we have   $w=  - \sum_{\ell\prec  w}\sigma_\ell\lambda_\ell\ell=  - \sum_{\ell\prec  \ell_j}\sigma_\ell\lambda_\ell\ell$ since $\sigma_w=-1$. In case 4) we have $\sigma_v=\sigma_w$  and the Formula holds.\end{proof}
\subsection{Computations of  $C_u$}

 \medskip

If $i\neq u-1,u$ set $\mu_u(i)$  to be the coefficient of $e_u$ in $a_i$. 
\begin{proposition}\label{vamu}
The value of  $\mu_u(i)$  depends upon the relative position  of  the edge $\ell_i$ with respect to the segment $S_u$.  We have 4 different possibilities, cf. Definition \ref{colo} 1).
\begin{enumerate}\item $\ell_u\not\prec \ell_i$.
\item $\ell_u  \prec \ell_i\prec\ell_{u-1}$.
\item $\ell_{u-1} \prec \ell_i $.
\item $\ell_u \prec \ell_i\not \prec\ell_{u-1}$ and  $\ell_{u-1}\not \prec \ell_i $.\end{enumerate}
In case i)   $\mu_u(i)=0.$ In case ii) and iv)  $\mu_u(i)=\pm 1.$  In case iii)   $\mu_u(i)=0 $ if $\bar L_u=0$  otherwise 
$\mu_u(i)=\pm 2$

\end{proposition}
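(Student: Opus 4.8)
\emph{Proof plan.} The idea is to read $\mu_u(i)$ straight off Formula \eqref{La}, which expresses $a_i$, up to an overall sign ($\pm 1$ or $\pm\sigma_i$), as the signed sum $\sum\sigma_\ell\lambda_\ell\ell$ over the edges $\ell$ of $T$ lying on the segment from the root to $\ell_i$. Since $i\neq u-1,u$, the distinction between the $\preceq$ and $\prec$ ranges in \eqref{La} plays no role, so everything reduces to deciding which edges of $T$ carrying the basis vector $e_u$ occur on that segment.

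First I would note that, $u$ being non-critical, the only edges of $T$ with $u$ as an endpoint are the two consecutive edges $\ell_{u-1}$ and $\ell_u$ of the segment $S_u$: this is visible in each of the five pictures of Proposition \ref{icin}, since the material added to an even circuit in the doubly-odd and even-plus-edge cases attaches only at critical vertices, so in particular the extra edge $E$ of cases 4)--5) never meets $u$. An edge $\vartheta e_a-e_b$ contributes to the $e_u$-coefficient precisely when $u\in\{a,b\}$, and in both $\ell_{u-1}=\vartheta_{u-1}e_{u-1}-e_u$ and $\ell_u=\vartheta_u e_u-e_{u+1}$ that coefficient is $\pm 1$; hence $\mu_u(i)$ is, up to the overall sign, a $\pm 1$-combination of at most two unit terms --- the $\ell_u$-term, present iff $\ell_u\preceq\ell_i$, and the $\ell_{u-1}$-term, present iff $\ell_{u-1}\preceq\ell_i$.

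Then I would run the case split. From the arrangement \eqref{esub} the segment from the root to $\ell_{u-1}$ runs through $\ell_u$, so $\ell_u\prec\ell_{u-1}$, and the four positions of $\ell_i$ relative to $S_u$ are exhaustive and mutually exclusive for $i\neq u-1,u$. In case i) neither edge lies on the segment, so $\mu_u(i)=0$. In cases ii) and iv) only $\ell_u$ does (in ii) because $\ell_i\prec\ell_{u-1}$, in iv) by the stated hypothesis), so $\mu_u(i)=\pm 1$. In case iii), $\ell_{u-1}\prec\ell_i$, so both edges lie on the segment and $\mu_u(i)$ equals, up to sign, the sum of the two unit contributions; this sum no longer depends on $i$ --- it is exactly the quantity $\bar L_u$ attached to $S_u$ --- so it is $0$ if the two contributions cancel, i.e. $\bar L_u=0$, and $\pm 2$ otherwise. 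The only part needing real care is the sign layer: combining the overall $\pm\sigma_i$ of \eqref{La} with the $\sigma_\ell\lambda_\ell$ of the two edges, and checking that when $u-1$ or $u+1$ is critical the reinterpretation of $\ell_{u-1},\ell_u$ prescribed by Remarks \ref{prov} and \ref{dw} only relabels the $e_{u\pm1}$ endpoint and leaves the $e_u$-coefficient untouched; together with the observation in case iii) that the value is the same constant $\bar L_u$ for every $\ell_i$ below $\ell_{u-1}$. The rest is a direct reading of \eqref{La}.
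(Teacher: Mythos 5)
Your reduction has the same skeleton as the paper's argument: by Formula \eqref{La} (equivalently Theorem \ref{ilve} and Corollary \ref{ilve1}) the vertex $a_i$ is, up to an overall sign, the signed sum of the edges of $T$ on the path from the root of \eqref{esub} to $\ell_i$; since $u$ is non-critical, the only edges of $T$ whose expression involves $e_u$ are $\ell_{u-1}$ and $\ell_u$ (the extra edge $E$ of cases 4)--5) of Proposition \ref{icin} joins the critical indices), so $\mu_u(i)$ is governed by which of these two edges lie on that path. This correctly settles cases i), ii) and iv), exactly as the paper does by folding them into Corollary \ref{ilve1}, and your remark that $\ell_u\prec\ell_{u-1}$ with the root of \eqref{esub} makes the four cases exhaustive is fine.

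The gap is in case iii), which is the only substantive case. You write that the sum of the two unit contributions ``is exactly the quantity $\bar L_u$'' and conclude that it vanishes iff $\bar L_u=0$. That identification is not an identity, and it is not even type-correct: the sum in question is the coefficient of $e_u$ in the end point $x_{u-1}$ of $S_u$, an integer built from the signs $\sigma,\lambda,\vartheta$ of the two edges $\ell_{u-1},\ell_u$, whereas $\bar L_u$ is defined in \eqref{LuR} as $C_u(\mathcal R')+\gamma_{u-1}C_u(\tilde a_{u-1}\ell_{u-1})+\gamma_uC_u(a_u\ell_u)$ and takes the values $0$ or $\pm 2e_{u\pm1}$. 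That the two vanish simultaneously, and that otherwise the coefficient of $e_u$ in $x_{u-1}$ is $\pm 2$, is precisely the content of Proposition \ref{leduei}; its proof requires the explicit evaluation of $\bar L_u$ in the 18 cases of \S\ref{18} (giving $\bar L_u=0\iff\sigma_{u-1}=\lambda_{u-1}\lambda_u$) together with the check that this same sign condition is equivalent to the cancellation of the two $e_u$-contributions in $x_{u-1}$. Your proposal supplies neither computation: the ``i.e.''\ equating ``the two contributions cancel'' with ``$\bar L_u=0$'' is exactly the assertion to be proved in case iii), so as written the argument begs the question there. To close it you must either cite Proposition \ref{leduei} and Corollary \ref{leduei1} (which is how the paper proves the proposition, by a forward reference) or reproduce the sign analysis that links the vanishing of $\bar L_u$ to the relation $\sigma_{u-1}=\lambda_{u-1}\lambda_u$; deferring it as ``the sign layer'' does not suffice.
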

\begin{proof}
The statements follow from the next Corollary \ref{leduei1} and Corollary \ref{ilve1}. 
\end{proof}

Then
\begin{lemma}\label{primifa} 
If $i\neq u-1,u$ we have $C_u(\ell_i a_i )=\mu_u(i)\ell_i.$\smallskip

The contribution $C_u(\mathcal R')$ depends on the two colors $\theta_{u-1},\ \theta_{u }$ of $\ell_{u-1},\ell_u$ (and $\delta_u=\theta_u \delta_{u-1} $ see Remark   \ref{crii}, Formula \eqref{boBf}) according to the following table:\smallskip

\begin{equation}\label{Rp}
\begin{matrix}
colors\ of\ u-1,u&&& contribution\ of\ \mathcal R'\\
rr  &\delta_{u-1}=-\delta_{u}&& \delta_{u-1}e_{u-1}+\delta_{u }e_{u +1}= -\delta_{u }[e_{u-1}- e_{u+1 }]\\
rb    &\delta_{u-1}=\delta_{u}\,\quad && \delta_{u-1}e_{u-1}+ \delta_{u }e_{u +1}\ \ = \delta_{u }[e_{u-1}+ e_{u +1}]\\
br  &\delta_{u-1}=-\delta_{u}&&  \delta_{u-1}\sigma_{u-1}e_{u-1}+ \delta_{u }e_{u +1}= -\delta_{u}[\sigma_{u-1}e_{u-1}-  e_{u +1}]\\
bb   &\delta_{u-1}=\delta_{u}\, \quad &&  \delta_{u-1}\sigma_{u-1}e_{u-1}+  \delta_{u }e_{u +1}\ \ = \delta_{u }[\sigma_{u-1}e_{u-1}+ e_{u+1 }]
\end{matrix}
\end{equation}
\end{lemma}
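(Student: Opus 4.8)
The plan is to evaluate $C_u$ one term at a time on the two pieces $\mathcal R'$ and $\mathcal R''$ of $\mathcal R$ (Formula \eqref{retr}), exploiting the fact noted before Proposition \ref{vamu} that a non--critical index $u$ has valency $2$ in the encoding graph. Consequently $e_u$ occurs in exactly the two edges $\ell_{u-1},\ell_u$ of the relation \eqref{boBf} (to be read with the convention of Remark \ref{dw} 1)/Remark \ref{prov} when $u-1$ or $u+1$ is itself critical), and in a vertex $a_i$ its only effect is through its $e_u$--coefficient $\mu_u(i)=a_{i,u}$.

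For the first assertion I would fix $i\neq u-1,u$ and write $a_i=\sum_h a_{i,h}e_h$, so that $\ell_i a_i=\sum_h a_{i,h}\,\ell_i\,e_h$. Since $u$ has valency $2$ the edge $\ell_i=\vartheta_ie_i-e_{i+1}$ carries no $e_u$, hence the only monomials of $\ell_i a_i$ divisible by $e_u$ arise from $h=u$; because $i\neq u$ and $i+1\neq u$ these equal $a_{i,u}(\vartheta_ie_ie_u-e_{i+1}e_u)=a_{i,u}\,\ell_i\,e_u$, and no $e_u^2$ occurs. By the definition of $C_u$ this is precisely $e_uC_u(\ell_ia_i)=\mu_u(i)\,\ell_i\,e_u$, i.e. $C_u(\ell_ia_i)=\mu_u(i)\ell_i$.

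For the table I would observe that a monomial $e_ie_{i+1}$ of $\mathcal R'$ is divisible by $e_u$, and then necessarily only to first order, exactly when $i=u-1$ or $i=u$, so only these two indices contribute to $C_u(\mathcal R')$. The $(u-1)$--term of $\mathcal R'$ is $\delta_{u-1}e_{u-1}e_u$ if $\ell_{u-1}$ is red and $\delta_{u-1}\sigma_{u-1}e_{u-1}e_u$ if $\ell_{u-1}$ is black, contributing $\delta_{u-1}e_{u-1}$, resp. $\delta_{u-1}\sigma_{u-1}e_{u-1}$. The $u$--term is $\delta_ue_ue_{u+1}$ if $\ell_u$ is red and $\delta_u\sigma_ue_ue_{u+1}$ if $\ell_u$ is black; here I would invoke the root configuration \eqref{esub}, in which $\ell_u$ is the edge incident to the root $r$. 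Since $r$ is black by the colouring convention, the final vertex $x_u$ of $\pi_u$ is joined to $r$ by the single edge $\ell_u$, so $\sigma_u=1$ whenever $\ell_u$ is black; thus the $u$--contribution is $\delta_ue_{u+1}$ in both colours. Adding the two contributions and substituting $\delta_u=\vartheta_u\delta_{u-1}$ (Remark \ref{crii}, Formula \eqref{boBf}) one reads off the four lines of \eqref{Rp} by running through the colour patterns $rr,rb,br,bb$ of $(\ell_{u-1},\ell_u)$.

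The argument is essentially mechanical bookkeeping. The one point I would single out as the main subtlety is the reduction $\sigma_u=1$ in the black case: it is not automatic but is forced by the normalisation of the root in \eqref{esub}, and it is exactly what makes the summand $\delta_ue_{u+1}$ come out the same in all four rows of the table, so that the only colour/orientation dependence of $C_u(\mathcal R')$ is carried by the $(u-1)$--end of the segment $S_u$.
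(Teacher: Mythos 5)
Your proof is correct and follows essentially the same route as the paper's: identify the only terms of $\mathcal R'$ and of $\ell_i a_i$ that can contain $e_u$ (using that the non-critical index $u$ has valency $2$, so $e_u$ occurs only in $\ell_{u-1},\ell_u$), read off the coefficients, and consolidate with $\delta_u=\vartheta_u\delta_{u-1}$. Your explicit observation that the root normalisation \eqref{esub} forces $\sigma_u=1$ when $\ell_u$ is black is precisely the step the paper leaves implicit (its proof produces $\sigma_u\delta_u e_{u+1}$, which matches the table \eqref{Rp} only because $\sigma_u=1$), so this is a clarification rather than a deviation.
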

\begin{proof}
The first statement is clear since the edge $\ell_i$ does not contain the term $e_u$.  

For the second  we see  that the contribution  to $C_u(\mathcal R')$ comes from the two terms $e_{u-1}e_u,\ e_ue_{u+1}$. 

 The term  $e_{u-1}e_u$ if  $\theta_{u-1}=-1$, i.e. $\ell_{u-1}$ is red,   appears from $C_u( \delta_{u-1} e_{u-1}e_u)= \delta_{u-1} e_{u-1}$. 
 
 If $\theta_{u-1}= 1$, i.e. $\ell_{u-1}$ is black,     appears from $C_u( \sigma_{u-1}\delta_{u-1} e_{u-1}e_u)= \sigma_{u-1}\delta_{u-1} e_{u-1}$.\smallskip

The term  $e_ue_{u+1}$, if  $\theta_{u }=-1$, i.e. $\ell_{u }$ is red,   gives rise to $C_u( \delta_{u } e_ue_{u+1})= \delta_{u } e_{u+1}$.

 If   $\theta_{u }= 1$, i.e. $\ell_{u }$ is black,   gives rise to $C_u( \sigma_{u }\delta_{u } e_ue_{u+1})= \sigma_{u }\delta_{u } e_{u+1}$.

We then use the fact that $\delta_u=\delta_{u-1}$ if $\delta_u$ is black, while $\delta_u=-\delta_{u-1}$ if $\delta_u$ is red.\end{proof}
We thus write
\begin{equation}\label{laff}
0=C_u(\mathcal R)=- \sum_{i\,|\,\vartheta_i=-1,\ i\neq u-1,u} \delta_i \mu_u(i)\ell_i +\sum_{i\,|\,\vartheta_i= 1,\ i\neq u-1,u}   \delta_i \sigma_i  \mu_u(i)\ell_i+L_u
\end{equation} where $L_u$  is the contribution from $C_u(\mathcal R')  $, which we have computed in the Table \eqref{Rp},  plus the contribution    from the terms associated to $a_{u-1}\ell_{u-1},\ a_u\ell_u  $.  \begin{definition}\label{abau}
In $a_{u-1}$ given by   Formula \eqref{La}, consider the part $\bar a_{u-1}$ of the sum   formed by the edges $\ell_i,\ \ell_u\prec\ell_i\prec\ell_{u-1}$. 

 Write $a_{u-1}=\bar a_{u-1}+\tilde a_{u-1}$  we have   $C_u(\bar a_{u-1}\ell_{u-1})=- \bar a_{u-1}.$
\end{definition} Recall we have set $\gamma_i=-\delta_i$  if  $\theta_i=-1$  and  $\gamma_i=\sigma_i\delta_i$  if  $\theta_i=1$ so  $\mathcal R''=\sum_{i } \gamma_i   \ell_i a_i$.
\begin{equation}\label{LuR}
L_u=C_u(\mathcal R') + \gamma_{u-1} C_u(\bar a_{u-1}\ell_{u-1})+ \gamma_{u-1} C_u(\tilde a_{u-1}\ell_{u-1})+\gamma_u C_u(a_u\ell_u)
\end{equation}$$=C_u(\mathcal R') - \gamma_{u-1}  \bar a_{u-1} + \gamma_{u-1} C_u(\tilde a_{u-1}\ell_{u-1})+\gamma_u C_u(a_u\ell_u)=- \gamma_{u-1}  \bar a_{u-1} +\bar L_u.
 $$\smallskip

   The value of $\bar L_u$ depends upon 3 facts, 1) the two colors of  $\ell_{u-1},\ell_u $. 2) The orientation $\lambda$  of the edges  $\ell_{u-1},\ell_u $ which are black. 3)  The color $\sigma_{u-1}$ of  $x_{u-1}$. 
   
    We thus obtain 18 different cases described in \S \ref{18}, see the pictures after \eqref{esub}. \smallskip
   
   The final computation is summarized in Proposition \ref{leduei}. The proof is very lengthy due to the case analysis but otherwise straightforward.

\subsubsection{The contribution of  $\gamma_uC_u( a_u\ell_u ) $ to Formula \eqref{LuR}}
   If $\ell_u=-e_u-e_{u+1}$ is red we have $a_u=\ell_u,\ \gamma_u=-\delta_u$ and $C_u( \ell_ua_u)=2 e_{u+1}$. 
   
    If  $\ell_u=e_u-e_{u+1}$ is black we have $\sigma_u=1,\ \gamma_u=\delta_u$, if $\lambda_u=1$ we have $a_u=\ell_u$ and $C_u( \ell_ua_u)=-2  e_{u+1}$. If $\lambda_u=-1$ we have $a_u=0$ and $C_u(\ell_ua_u)=0.$  
   
   Summarizing:\smallskip

   \begin{equation}
\label{ellu} \begin{matrix}
\quad\quad  C_u(\gamma_u\ell_ua_u)=-2\delta_ue_{u+1},\quad &\ell_u\quad \text{is red}\quad&\\
C_u(\gamma_u\ell_ua_u)=-2\delta_u e_{u+1},\quad &\ell_u\quad \text{is black}& \lambda_u=1\quad\\
C_u(\gamma_u\ell_ua_u)=0,\quad\quad \quad \quad &\ell_u\quad \text{is black}& \lambda_u=-1.
\end{matrix}\end{equation}
\subsubsection{The contribution of  $ \gamma_{u-1} C_u(\tilde a_{u-1}\ell_{u-1})$ }
    
   The vertex $a_{u-1}$ is one of the two end points $ y, \  x_{u-1}$  of   the edge $\ell_{u-1}$.

 We have $a_{u-1}=\bar a_{u-1}+\tilde a_{u-1}$, see Figure \eqref{esub}, where  by   Formula \eqref{La}

\begin{equation}
\label{La1}\tilde a_{u-1}=\begin{cases}\begin{matrix}
-\sigma_{u}\lambda_u\ell_u  +\ell_{u-1}  ,\ &\text{ if}\ \sigma_{u-1}=-1,\quad  &\ell_{u-1}\ \quad\text{red}\quad\\
-\sigma_{u}\lambda_u\ell_u  ,\quad\ &\text{if}\ \sigma_{u-1}= 1,\ \quad&\ell_{u-1}\  \quad\text{red}\quad\\
\sigma_{u-1}\sigma_{u}\lambda_u\ell_u  +\ell_{u-1}  ,\quad\ &\text{if}\  \lambda_{u-1}= 1,\ \quad&\ell_{u-1}\  \quad\text{black}\\
\sigma_{u-1}\sigma_{u}\lambda_u\ell_u  ,\quad\ &\text{if}\  \lambda_{u-1}= -1,\ &\ell_{u-1}\   \quad\text{black}\\
\end{matrix}
\end{cases}
\end{equation}

%

The contribution $L_u$   is split in $ \gamma_{u-1} C_u(\bar a_{u-1}\ell_{u-1})$   and and a final term $\bar L_u=C_u(\mathcal R') + \gamma_{u-1} C_u(\tilde a_{u-1}\ell_{u-1})+\gamma_u C_u(a_u\ell_u)$  coming  from  $\tilde a_{u-1}\ell_{u-1},\ a_u\ell_u  $.\smallskip

We are left  to compute $C_u(\tilde a_{u-1}\ell_{u-1})$ and  we need:
    \begin{equation}\label{iCu}
C_u(\ell_{u-1}\ell_u)=\vartheta_{u-1}\vartheta_u e_{u-1}+e_{u+1},\ \quad C_u(\ell_{u-1}^2)=-\vartheta_{u-1}2e_{u-1}.
\end{equation} 
    $$C_u(\ell_{u-1}\tilde a_{u-1}) \stackrel{\eqref{La1}}=\begin{cases}\begin{matrix}
-\sigma_{u}\lambda_uC_u(\ell_{u-1}\ell_u) +C_u(\ell_{u-1}^2)  ,\ &\ \sigma_{u-1}=-1,\ &\ell_{u-1}\ \text{red}\quad \\
-\sigma_{u}\lambda_uC_u(\ell_{u-1}\ell_u)  ,\quad & \!\! \sigma_{u-1}= 1,\   &\ell_{u-1}\ \text{red}\quad \\
\sigma_{u-1}\sigma_{u}\lambda_uC_u(\ell_{u-1}\ell_u)  +C_u(\ell_{u-1}^2)  , \ &\  \lambda_{u-1}= 1,\ \quad &\ell_{u-1}\ \text{black}\\
\sigma_{u-1}\sigma_{u}\lambda_uC_u(\ell_{u-1}\ell_u)  , \ &\  \lambda_{u-1}= -1,\ &\ell_{u-1}\ \text{black}
\end{matrix}
\end{cases} $$ gives   ($\gamma_i=-\delta_i$  if  $\theta_i=-1$  and  $\gamma_i=\sigma_i\delta_i$  if  $\theta_i=1$) $\gamma_{u-1} C_u(\ell_{u-1}\tilde a_{u-1})\stackrel{\eqref{iCu}}=$
$$=\begin{cases}\begin{matrix}
\delta_{u-1}\sigma_{u}\lambda_u(-\vartheta_u e_{u-1}+e_{u+1}) -2\delta_{u-1}e_{u-1}  ,\ & \sigma_{u-1}=-1,\   &\ell_{u-1}\  \text{red}\quad \\
\delta_{u-1}\sigma_{u}\lambda_u(-\vartheta_u e_{u-1}+e_{u+1})  ,\quad\ & \sigma_{u-1}= 1,\ \ \ &\ell_{u-1}\  \text{red}\quad \\
\delta_{u-1} \sigma_{u}\lambda_u( \vartheta_u e_{u-1}+e_{u+1})  -2\delta_{u-1}\sigma_{u-1}e_{u-1}  ,\  & \lambda_{u-1}= 1,\ \quad &\ell_{u-1}\  \text{black}\\
\delta_{u-1}\sigma_{u}\lambda_u( \vartheta_u e_{u-1}+e_{u+1})  , \ & \lambda_{u-1}= -1,\ &\ell_{u-1}\  \text{black} 
\end{matrix}
\end{cases} $$ 

   If $\ell_{u-1}$ is red  from the previous Formula we deduce $\gamma_{u-1} C_u(\ell_{u-1}\tilde a_{u-1})=$ \begin{equation}
\label{ru-1}=\begin{cases}\begin{matrix}
-\delta_{u-1}( e_{u+1}+3e_{u-1}),\ &\  \sigma_{u-1}=-1,   \ &\ell_{u} &\text{red}\quad \\
 -\delta_{u-1}( e_{u+1}+e_{u-1}) ,\quad\ &\  \sigma_{u-1}= 1,\  \quad &\ell_{u} &\text{red}\quad \\
 \delta_{u-1} \lambda_u[e_{u+1}-e_{u-1}] -2\delta_{u-1} e_{u-1} ,\ &\  \sigma_{u-1}=-1,  \ &\ell_{u} &\text{black}\\
\delta_{u-1} \lambda_u [e_{u+1}-e_{u-1}] ,\quad\ &\  \sigma_{u-1}= 1,\  \quad  &\ell_{u} &\text{black}
\end{matrix}
 \end{cases}
\end{equation}
   If $\ell_{u-1}$ is black      the contribution  to     $L_u$   of
   $C_u(\ell_{u-1}\tilde a_{u-1})= $    
   \begin{equation}
\label{bu-1}=\begin{cases}\begin{matrix}
-\delta_{u-1}\sigma_{u-1}  ( -e_{u-1}+e_{u+1})  -2\delta_{u-1}\sigma_{u-1}e_{u-1}  ,\  & \lambda_{u-1}= 1,\ \quad &\ell_{u }\  \text{red}\\
-\delta_{u-1}  ( - e_{u-1}+e_{u+1})  , \ & \lambda_{u-1}= -1,\ &\ell_{u }\  \text{red} 
\\
\delta_{u-1} \lambda_u(   e_{u-1}+e_{u+1})  -2\delta_{u-1}\sigma_{u-1}e_{u-1}  ,\  & \lambda_{u-1}= 1,\ \quad &\ell_{u }\  \text{black}\\
\delta_{u-1} \lambda_u(  e_{u-1}+e_{u+1})  , \ & \lambda_{u-1}= -1,\ &\ell_{u }\  \text{black} 
\end{matrix}
\end{cases} \end{equation}

%
%
%
%
 {\bf Warning}\quad We have been a bit sloppy since we need to recall  Remark \ref{prov}, we should have written  $f_i$  instead of   $e_i$ so that if an index $i$ is not critical  $f_i=e_i$  and if it is critical  $f_i$ is given by the Formulas of  Remark \ref{prov}.  This hopefully should not  generate confusion.
\subsection{The 18  cases\label{18} for the value of $\bar L_u $} So now we expand  $ \bar L_u $ as sum of the 3 terms, by Formula \eqref{LuR}, from    Formulas \eqref{Rp},\eqref{ellu}, and \eqref{ru-1} or \eqref{bu-1}.\medskip

\noindent 1)\  $\ell_{u-1},\ell_u$ both red $\sigma_{u-1}=1,\ \delta_u=-\delta_{u-1}.$
$$-\delta_{u }[e_{u-1}- e_{u+1 }]-2\delta_ue_{u+1}+\delta_u(e_{u+1}+e_{u-1})\ =0.$$
2)\  $\ell_{u-1},\ell_u$ both red $\sigma_{u-1}=-1,\ \delta_u=-\delta_{u-1}.$
$$-\delta_{u }[e_{u-1}- e_{u+1 }]-2\delta_ue_{u+1}+\delta_u[e_{u+1}+3e_{u-1}]= 2\delta_u e_{u-1}.$$
3)\  $\ell_{u-1}$   red, $\ell_u$ black $\sigma_{u-1}=1,\lambda_u=1,\ \delta_u= \delta_{u-1}.$
$$ \delta_{u }[e_{u-1}+e_{u +1}]-2\delta_u e_{u+1}+\delta_u   [e_{u+1}-e_{u-1}]=0$$
4)\  $\ell_{u-1}$   red, $\ell_u$ black $\sigma_{u-1}=-1,\lambda_u=1,\ \delta_u= \delta_{u-1}.$
$$ \delta_{u }[e_{u-1}+e_{u +1}]-2\delta_u e_{u+1}+\delta_u [e_{u+1}-e_{u-1}] +2\delta_ue_{u-1}=2\delta_u e_{u-1}$$
5)\  $\ell_{u-1}$   red, $\ell_u$ black $\sigma_{u-1}=1,\lambda_u=-1,\ \delta_u= \delta_{u-1}.$
$$ \delta_{u }[e_{u-1}+e_{u +1}]-\delta_u  [e_{u+1}-e_{u-1}]=2\delta_u e_{u-1}$$
6)\  $\ell_{u-1}$   red, $\ell_u$ black $\sigma_{u-1}=-1,\lambda_u=-1,\ \delta_u= \delta_{u-1}.$
$$ \delta_{u }[e_{u-1}+e_{u +1}]-\delta_u  [e_{u+1}-e_{u-1}] -2\delta_ue_{u-1}=0$$
7)\  $\ell_{u-1}$   black, $\ell_u$ red $\sigma_{u-1}=1,\lambda_{u-1}=1,\ \delta_u=-\delta_{u-1}.$
$$-\delta_{u}[ e_{u-1}-  e_{u +1}] -2\delta_ue_{u+1}+\delta_{u}  [e_{u+1}-e_{u-1} ] +2\delta_{u} e_{u-1}=0$$
8)\  $\ell_{u-1}$   black, $\ell_u$ red $\sigma_{u-1}=-1,\lambda_{u-1}=1,\ \delta_u=-\delta_{u-1}.$
$$-\delta_{u}[-e_{u-1}-  e_{u +1}]-2\delta_ue_{u+1}-\delta_{u}  [e_{u+1}-e_{u-1} ] -2\delta_{u} e_{u-1}=-2\delta_ue_{u+1}.$$
9)\  $\ell_{u-1}$   black, $\ell_u$ red $\sigma_{u-1}=1,\lambda_{u-1}=-1,\ \delta_u=-\delta_{u-1}.$
$$-\delta_{u}[ e_{u-1}-  e_{u +1}]-2\delta_ue_{u+1}-\delta_{u}  [e_{u+1}-e_{u-1} ]= -2\delta_ue_{u+1} $$
10)\  $\ell_{u-1}$   black, $\ell_u$ red $\sigma_{u-1}=-1,\lambda_{u-1}=-1,\ \delta_u=-\delta_{u-1}.$
$$-\delta_{u}[-e_{u-1}-  e_{u +1}]-2\delta_ue_{u+1}+\delta_{u}  [e_{u+1}-e_{u-1} ] =0. $$
11)\  $\ell_{u-1}$,  $\ell_u$  both black,  $\sigma_{u-1}=1,\lambda_{u-1}=1,\ \lambda_u=1,\ \delta_u= \delta_{u-1}.$
$$\delta_{u }[ e_{u-1}+ e_{u+1 }]-2\delta_ue_{u+1}+\delta_u [e_{u-1}+e_{u+1} ]  -2\delta_ue_{u-1} =0$$
12)\ $\ell_{u-1}$,  $\ell_u$  both black $\sigma_{u-1}=-1,\lambda_{u-1}=1,\ \lambda_u=1,\ \delta_u= \delta_{u-1}.$
$$\delta_{u }[-e_{u-1}+ e_{u+1 }]-2\delta_ue_{u+1}+\delta_u[e_{u-1}+e_{u+1} ]  +2\delta_ue_{u-1} =2\delta_ue_{u-1}$$
13)\ $\ell_{u-1}$,  $\ell_u$  both black $\sigma_{u-1}=1,\lambda_{u-1}=-1,\ \lambda_u=1,\ \delta_u= \delta_{u-1}.$
$$ \delta_{u }[ e_{u-1}+ e_{u+1 }] -2\delta_ue_{u+1}+\delta_u [e_{u-1}+e_{u+1} ]=2\delta_u e_{u-1}$$
14)\ $\ell_{u-1}$,  $\ell_u$  both black $\sigma_{u-1}=-1,\lambda_{u-1}=-1,\ \lambda_u=1,\ \delta_u= \delta_{u-1}.$
$$\delta_{u }[-e_{u-1}+ e_{u+1 }] -2\delta_ue_{u+1}+\delta_u[e_{u-1}+e_{u+1} ]=0$$
15)\  $\ell_{u-1}$,  $\ell_u$  both black,  $\sigma_{u-1}=1,\lambda_{u-1}=1,\ \lambda_u=-1,\ \delta_u= \delta_{u-1}.$
$$\delta_{u }[ e_{u-1}+ e_{u+1 }]-\delta_u[e_{u-1}+e_{u+1} ]  -2\delta_ue_{u-1}=-2\delta_ue_{u-1} $$
16)\ $\ell_{u-1}$,  $\ell_u$  both black $\sigma_{u-1}=-1,\lambda_{u-1}=1,\ \lambda_u=-1,\ \delta_u= \delta_{u-1}.$
$$\delta_{u }[-e_{u-1}+ e_{u+1 }]-\delta_u[e_{u-1}+e_{u+1} ] +2\delta_ue_{u-1} =0$$
17)\ $\ell_{u-1}$,  $\ell_u$  both black $\sigma_{u-1}=1,\lambda_{u-1}=-1,\ \lambda_u=-1,\ \delta_u= \delta_{u-1}.$
$$\delta_{u }[ e_{u-1}+ e_{u+1 }]-\delta_u[e_{u-1}+e_{u+1} ]=0 $$
18)\ $\ell_{u-1}$,  $\ell_u$  both black $\sigma_{u-1}=-1,\lambda_{u-1}=-1,\ \lambda_u=-1,\ \delta_u= \delta_{u-1}.$
$$\delta_{u }[-e_{u-1}+ e_{u+1 }]-\delta_u[e_{u-1}+e_{u+1} ]=-2\delta_ue_{u-1}$$
By inspection we see that we have proved the following remarkable:
\begin{proposition}\label{leduei}
The contribution of $\bar L_u$ equals to 0 if and only if $\sigma_{u-1}= \lambda_{u-1}\lambda_u $. 

In this case the coefficient of $e_u$ in the end point $x_{u-1}$ of  the segment $S_u$ (defined in \eqref{esub}) is $0$.\medskip

If $\sigma_{u-1}= -\lambda_{u-1}\lambda_u $ the contribution of $\bar L_u$ equals to $\pm 2e_{u\pm 1}$. In this case the coefficient of $e_u$ in the end point $x_{u-1}$ of  the segment $S_u$ is $\pm 2$.
\end{proposition}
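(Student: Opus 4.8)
The plan is to compute $\bar L_u$ explicitly from its definition in \eqref{LuR},
\[
\bar L_u \;=\; C_u(\mathcal R') \;+\; \gamma_{u-1}\,C_u(\tilde a_{u-1}\ell_{u-1}) \;+\; \gamma_u\,C_u(a_u\ell_u),
\]
and to exploit the fact that each of the three summands is a purely \emph{local} quantity: it depends only on the two colours $\vartheta_{u-1},\vartheta_u$ of the edges $\ell_{u-1},\ell_u$ bounding the segment $S_u$ of \eqref{esub}, on the orientations $\lambda_{u-1},\lambda_u$ of whichever of these two edges is black, and on the colour $\sigma_{u-1}$ of the far endpoint $x_{u-1}$. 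Observe that the part $\bar a_{u-1}$ of $a_{u-1}$ supported strictly between $\ell_u$ and $\ell_{u-1}$ has already been split off into the separate term $-\gamma_{u-1}\bar a_{u-1}$ of \eqref{LuR} (Definition \ref{abau}), so $\bar L_u$ genuinely involves only the boundary data of $S_u$. Since $\vartheta_i,\lambda_i\in\{\pm1\}$ with the convention $\lambda_i=1$ when $\ell_i$ is red, and $\sigma_{u-1}=\pm1$, there are exactly $18$ admissible configurations — precisely the pictures drawn after \eqref{esub} — and for each of them the first summand is read off Table \eqref{Rp}, the third from \eqref{ellu}, and the middle one from \eqref{ru-1} (when $\ell_{u-1}$ is red) or \eqref{bu-1} (when $\ell_{u-1}$ is black), these last two being obtained from \eqref{iCu} and from the formula \eqref{La1} for $\tilde a_{u-1}$.

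First I would carry out this substitution in all $18$ cases, using the sign rule $\delta_u=\vartheta_u\delta_{u-1}$ of \eqref{boBf} (Remark \ref{crii}) to eliminate $\delta_{u-1}$. In every case the three contributions — each a short integer combination of $e_{u-1}$ and $e_{u+1}$, together with the terms $-2\delta_u e_{u\pm1}$ coming from $\gamma_u C_u(a_u\ell_u)$ — either cancel identically or collapse to a single monomial $\pm2\delta_u e_{u\pm1}$; this is exactly the content of the list $1)$–$18)$ displayed above. Reading off that list, $\bar L_u=0$ occurs precisely in the cases $1,3,6,7,10,11,14,16,17$, and checking the defining data of those cases (recalling $\lambda=1$ for red edges) shows that they are exactly the configurations with $\sigma_{u-1}=\lambda_{u-1}\lambda_u$; in the complementary nine cases one has $\sigma_{u-1}=-\lambda_{u-1}\lambda_u$ and $\bar L_u=\pm2e_{u\pm1}$ (with the sign $\delta_u$ absorbed). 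This establishes the first assertion.

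For the second assertion I would use \eqref{ukvef} (and Corollary \ref{ilve1}, \eqref{ukveg}): the endpoint $x_{u-1}$ is the signed sum $\sigma_{u-1}\sum_{\ell\preceq x_{u-1}}\sigma_\ell\lambda_\ell\ell$ of the edges on the path from the root to it, and among those edges only $\ell_{u-1}$ and $\ell_u$ (the latter because $\ell_u\prec\ell_{u-1}$ in the configuration \eqref{esub}) carry the variable $e_u$; hence the coefficient of $e_u$ in $x_{u-1}$ is $0$ or $\pm2$ according to whether these two $e_u$-contributions cancel or reinforce, which by the same sign bookkeeping as above happens in exactly the two complementary cases $\sigma_{u-1}=\lambda_{u-1}\lambda_u$ and $\sigma_{u-1}=-\lambda_{u-1}\lambda_u$. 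The only real obstacle is the bulk of the case analysis and the care needed with the critical-index convention of Remark \ref{prov}, where each $e_i$ with $i\in\{u-1,u+1\}$ must be read as the corresponding $f_i$; none of the individual computations is delicate, each being the few-line manipulation exhibited in the list above.
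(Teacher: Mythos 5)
Your proposal is correct and follows essentially the same route as the paper: the first assertion is obtained exactly as you describe, by assembling $\bar L_u$ from \eqref{Rp}, \eqref{ellu} and \eqref{ru-1}/\eqref{bu-1} in the $18$ configurations (your list of vanishing cases $1,3,6,7,10,11,14,16,17$ matches the paper's table) and observing the pattern $\sigma_{u-1}=\lambda_{u-1}\lambda_u$, while the second assertion is proved, as you propose, by noting via Theorem \ref{ilve} that only $\ell_{u-1},\ell_u$ contribute the variable $e_u$ to $x_{u-1}$ and doing the same sign bookkeeping.
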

 \begin{proof}  The first is by inspection, as for the second we check a few cases.
 
This coefficient comes from the two contributions of  $\ell_{u-1},\ell_u$. 

They appear by $\sigma_{u-1}[\sigma_u\lambda_u\ell_u+\sigma_{u-1}\lambda_{u-1}\ell_{u-1}]$.  \smallskip

Now $\sigma_u\lambda_u\ell_u=-\ell_u=e_u+e_{u+1}$  if $\ell_u$ is red and similarly $\sigma_{u-1}\lambda_{u-1}\ell_{u-1}=e_u+e_{u-1}$  if $\ell_{u-1}$ is red and $\sigma_{u-1}=-1$.  This is case 2).   \smallskip

 If $\ell_{u-1}$ is black  then the coefficient of $e_u$ in $\sigma_{u-1}\lambda_{u-1}\ell_{u-1}$ is 1 if and only if $\sigma_{u-1}\lambda_{u-1}=-1$  and in this case this is equivalent to $\sigma_{u-1}=-\lambda_{u-1} \lambda_{u }.$  These are cases 8,9.\smallskip

Similar argument when $\ell_u$ is black.\end{proof} \begin{corollary}\label{leduei1}
If $\ell_{u-1}\prec\ell_j$ we have $\mu_u(j)=0$ if the contribution of $\bar L_u$ is 0, otherwise $\mu_u(j)=\pm 2$.
\end{corollary}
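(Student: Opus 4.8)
The plan is to reduce the statement to a single bookkeeping observation — that the variable $e_u$ occurs in only two edges of the tree — and then to quote Proposition \ref{leduei}. First I would keep the root fixed as in \eqref{esub}, so that the segment $S_u$ is positioned with $\ell_u$ nearer to $r$ than $\ell_{u-1}$; in particular $\ell_u\prec\ell_{u-1}$, and the hypothesis $\ell_{u-1}\prec\ell_j$ then forces $\ell_u\prec\ell_{u-1}\prec\ell_j$ (so automatically $j\neq u-1,u$). Next I would write $a_j$ by means of Formula \eqref{La}: in each of its four cases $a_j$ equals $\pm\sum_{\ell\preceq\ell_j}\sigma_\ell\lambda_\ell\ell$ or $\pm\sum_{\ell\prec\ell_j}\sigma_\ell\lambda_\ell\ell$, the sole difference being whether $\ell_j$ itself is counted.

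The key step is the claim that, among all edges of $T$, the variable $e_u$ occurs only in $\ell_{u-1}=\vartheta_{u-1}e_{u-1}-e_u$ and $\ell_u=\vartheta_ue_u-e_{u+1}$. Indeed $u$ is a non--critical index (Definition \ref{Su}, Remark \ref{crii}), so in the encoding graph of $T$ classified by Proposition \ref{icin} it has valency exactly $2$, i.e. $\ell_{u-1},\ell_u$ are the only edges of the relation incident to it; and when the extra edge $E$ of cases 4)--5) occurs, $E$ joins two vertices of the even circuit, hence two \emph{critical} indices, so $E$ does not involve $e_u$ either. Since $\ell_{u-1}\prec\ell_j$ and $\ell_u\prec\ell_j$, both $\ell_{u-1}$ and $\ell_u$ appear in the signed sum defining $a_j$, whichever of $\ell\preceq\ell_j$ or $\ell\prec\ell_j$ is used (the edge $\ell_j$, being distinct from $\ell_{u-1}$ and $\ell_u$, contributes no $e_u$). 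Hence $\mu_u(j)$ equals, up to an overall sign, the coefficient of $e_u$ in $\sigma_{\ell_{u-1}}\lambda_{\ell_{u-1}}\ell_{u-1}+\sigma_{\ell_u}\lambda_{\ell_u}\ell_u$.

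Finally I would compare this with the endpoint $x_{u-1}$ of $S_u$. By Theorem \ref{ilve}, $x_{u-1}=\sigma_{x_{u-1}}\sum_{\ell\preceq\ell_{u-1}}\sigma_\ell\lambda_\ell\ell$, and by the same valency argument the coefficient of $e_u$ in $x_{u-1}$ is $\sigma_{x_{u-1}}$ times the coefficient of $e_u$ in $\sigma_{\ell_{u-1}}\lambda_{\ell_{u-1}}\ell_{u-1}+\sigma_{\ell_u}\lambda_{\ell_u}\ell_u$; therefore $\mu_u(j)=\pm(\text{coefficient of }e_u\text{ in }x_{u-1})$. Proposition \ref{leduei} states exactly that this coefficient is $0$ when the contribution of $\bar L_u$ vanishes (equivalently $\sigma_{u-1}=\lambda_{u-1}\lambda_u$) and $\pm 2$ otherwise, which is the assertion. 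An even quicker route is to apply Corollary \ref{ilve1}, which writes $v_{\ell_j}$ as a signed sum of $x_{u-1}$ with edges $\ell$ satisfying $\ell_{u-1}\preceq\ell\preceq\ell_j$, none of which contains $e_u$, so the $e_u$--coefficient of $a_j$ agrees up to sign with that of $x_{u-1}$ and the conclusion again follows from Proposition \ref{leduei}. I do not expect any genuine obstacle: the argument is entirely formal, and the only point demanding attention is the valency check for the extra edge $E$ in cases 4)--5), which is precisely what confines the occurrences of $e_u$ to $\ell_{u-1}$ and $\ell_u$.
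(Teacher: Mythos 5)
Your proof is correct and follows essentially the same route as the paper: the paper's own argument applies Corollary \ref{ilve1} (Formula \eqref{ukveg}) to write $a_j$ as a signed combination of $x_{u-1}$ and edges with $x_{u-1}\preceq\ell\preceq a_j$, none of which contains $e_u$, so that $\mu_u(j)=\pm$ the coefficient of $e_u$ in $x_{u-1}$, and then concludes by Proposition \ref{leduei} — exactly your ``quicker route''. Your explicit valency-$2$ check (including the observation that the extra edge $E$ joins only critical indices) just spells out what the paper leaves implicit.
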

\begin{proof}
By   Formula \eqref{ukveg}
  the  vertex $a_j= \sigma_{a_j} ( \sum_{x_{u-1}\preceq \ell\preceq a_j}\sigma_\ell\lambda_\ell\ell+ \sigma_{x_{u-1}}x_{u-1})$.  
   
The edges $\ell$  with $x_{u-1}\preceq \ell\preceq a_j$
do not contain $e_u$ so  $\mu_u(j)$  equals $\pm $  the coefficient of $e_u$ in the end point $x_{u-1}$ of  the segment $S_u$.\end{proof}
Formula \eqref{laff}  can be written as  \begin{equation}\label{laff1}
\gamma_{u-1}\bar a_{u-1} =- \sum_{i\,|\,\vartheta_i=-1,\ i\neq u-1,u} \delta_i \mu_u(i)\ell_i +\sum_{i\,|\,\vartheta_i= 1,\ i\neq u-1,u}   \delta_i \sigma_i  \mu_u(i)\ell_i+\bar L_u
\end{equation} 
\begin{proposition}\label{4po}
We have 4 possibilities for $\bar a_{u-1}$ given by Definition \ref{abau}.

  If $\ell_{u-1}$ is red
\begin{equation}
\label{relr3} \begin{matrix}
1)\qquad \delta_{u-1}\bar a_{u-1}=& \sum_{i\,|\,\vartheta_i=-1,\atop i\neq u-1,u} \delta_i \mu_u(i)\ell_i  -\sum_{i\,|\,\vartheta_i= 1,\atop i\neq u-1,u}   \delta_i \sigma_i  \mu_u(i)\ell_i \\\\
2)\qquad \delta_{u-1}\bar a_{u-1}=& \sum_{i\,|\,\vartheta_i=-1,\atop i\neq u-1,u}\delta_i \mu_u(i)\ell_i  -\sum_{i\,|\,\vartheta_i= 1,\atop i\neq u-1,u}   \delta_i \sigma_i  \mu_u(i)\ell_i\pm 2\delta_ue_{u\pm 1}
\end{matrix}
\end{equation}

If $\ell_{u-1}$ is black
\begin{equation}
\label{relb3} \begin{matrix}
1)\qquad -\sigma_{u-1}\delta_{u-1}\bar a_{u-1}=& \sum_{i\,|\,\vartheta_i=-1,\atop i\neq u-1,u} \delta_i \mu_u(i)\ell_i  -\sum_{i\,|\,\vartheta_i= 1,\atop i\neq u-1,u}   \delta_i \sigma_i  \mu_u(i)\ell_i .\\\\2 )\qquad 
-\sigma_{u-1}\delta_{u-1}\bar a_{u-1}= &\sum_{i\,|\,\vartheta_i=-1,\atop i\neq u-1,u} \delta_i \mu_u(i)\ell_i  -\sum_{i\,|\,\vartheta_i= 1,\atop i\neq u-1,u}   \delta_i \sigma_i  \mu_u(i)\ell_i \pm 2\delta_ue_{u\pm 1}.
\end{matrix}
\end{equation}
\end{proposition}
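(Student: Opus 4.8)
The plan is to read Proposition \ref{4po} straight off Formula \eqref{laff1}, which is precisely the identity obtained by applying $C_u$ to the resonance relation $\mathcal R$ once the explicit value of $\bar L_u$ has been substituted. Since the term $\bar a_{u-1}$ has already been isolated on the left of \eqref{laff1} (using $C_u(\bar a_{u-1}\ell_{u-1})=-\bar a_{u-1}$ from Definition \ref{abau} and the grouping performed in \eqref{LuR}), the only thing left is to clear the unit $\gamma_{u-1}$ and to plug in what Proposition \ref{leduei} says about $\bar L_u$.

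Concretely, I would recall that $\gamma_{u-1}=-\delta_{u-1}$ if $\ell_{u-1}$ is red and $\gamma_{u-1}=\sigma_{u-1}\delta_{u-1}$ if $\ell_{u-1}$ is black. Multiplying \eqref{laff1} by $-1$ and inserting these values yields, when $\ell_{u-1}$ is red,
\[
\delta_{u-1}\bar a_{u-1}=\sum_{i\,|\,\vartheta_i=-1,\ i\neq u-1,u}\delta_i\mu_u(i)\ell_i-\sum_{i\,|\,\vartheta_i=1,\ i\neq u-1,u}\delta_i\sigma_i\mu_u(i)\ell_i-\bar L_u,
\]
and, when $\ell_{u-1}$ is black,
\[
-\sigma_{u-1}\delta_{u-1}\bar a_{u-1}=\sum_{i\,|\,\vartheta_i=-1,\ i\neq u-1,u}\delta_i\mu_u(i)\ell_i-\sum_{i\,|\,\vartheta_i=1,\ i\neq u-1,u}\delta_i\sigma_i\mu_u(i)\ell_i-\bar L_u.
\]
Now I would invoke Proposition \ref{leduei}: if $\sigma_{u-1}=\lambda_{u-1}\lambda_u$ then $\bar L_u=0$, and the two displays are exactly the first alternatives of \eqref{relr3} and \eqref{relb3}; if $\sigma_{u-1}=-\lambda_{u-1}\lambda_u$ then, by the case inspection summarized there, $\bar L_u=\pm2\delta_u e_{u\pm1}$, so $-\bar L_u$ is again of that shape and the two displays become the second alternatives. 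Since $\ell_{u-1}$ is either red or black and $\bar L_u$ is either zero or not, these four combinations are exhaustive, which is the content of the Proposition.

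I do not expect a genuine obstacle here: all the substance lies behind us in the $C_u$-analysis — the table \eqref{Rp}, Formulas \eqref{ellu}, \eqref{ru-1}, \eqref{bu-1}, and above all the eighteen-case verification feeding Proposition \ref{leduei}. The only care needed is consistency of the sign conventions ($\gamma_{u-1}$ versus $\delta_{u-1}$, together with $\delta_u=\vartheta_u\delta_{u-1}$) and, as flagged in the Warning preceding \S\ref{18}, the reading of $e_{u\pm1}$ as the corresponding $f_{u\pm1}$ whenever the neighbour $u-1$ or $u+1$ happens to be critical; neither of these alters the displayed form of $\bar a_{u-1}$.
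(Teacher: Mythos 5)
Your proposal is correct and matches the paper's own proof: the paper likewise derives the four cases directly from Formula \eqref{laff1} by substituting $\gamma_{u-1}=-\delta_{u-1}$ (red) or $\gamma_{u-1}=\sigma_{u-1}\delta_{u-1}$ (black), with Proposition \ref{leduei} supplying the dichotomy $\bar L_u=0$ or $\bar L_u=\pm 2\delta_u e_{u\pm 1}$. Your sign bookkeeping (the overall multiplication by $-1$, the absorption of the sign of $-\bar L_u$ into the $\pm$, and the $f_{u\pm1}$ caveat for critical neighbours) is consistent with the paper's conventions.
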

\begin{proof}
Since $\gamma_{u-1}=-\delta_{u-1}$  if  $\theta_{u-1}=-1$  and  $\gamma_{u-1}=\sigma_{u-1}\delta_{u-1}$  if  $\theta_{u-1}=1$  this follows from  Formula \eqref{laff1}.
\end{proof}\section{The possible graphs}
{\em We now discuss the implications of the previous sections to the form of  the possible minimal degenerate graphs.}
\subsection{Contribution  of $\bar L_u$ equals to 0\label{coL0} } We say that $u$ is of type  I. 

By definition
\begin{equation}\label{baa}
\bar a_{u-1}=\sum_{\ell_{u }\prec\ell\prec \ell_{u-1}}\alpha_\ell \ell,\ \alpha_\ell =\pm 1
\end{equation} it is also given by the Formulas of Proposition \ref{4po}.

Recall that   $\mu_u(i),\ i\neq u-1,u$  denotes  the coefficient of $e_u$ in $a_i$.

\begin{remark}\label{uod}
Formulas  \eqref{relr3} 1)  or \eqref{relb3} 1) must coincide with $\gamma_{u-1}\sum_{\ell_{u }\prec\ell\prec \ell_{u-1}}\alpha_\ell \ell $  Formula \eqref{baa}.
\end{remark}  \begin{proposition}\label{intve} When $\bar L_u=0$  all internal vertices of $S_u$ have valency 2.
\end{proposition}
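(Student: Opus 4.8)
The plan is to argue by contradiction, by comparing the two expressions that are available for $\gamma_{u-1}\bar a_{u-1}$. Suppose some interior vertex $v$ of the segment $S_u$ of \eqref{esub} has valency $\ge 3$ in the tree $T$. Then $T$ carries an edge $\ell'\ne\ell_{u-1},\ell_u$ incident to $v$ which does not lie on $S_u$; since $v$ sits on the path from the root past $\ell_u$ and strictly before $x_{u-1}$, this edge satisfies $\ell_u\prec\ell'$ while $\ell'$ and $\ell_{u-1}$ are incomparable. By Proposition \ref{icin} every edge of $T$ is either one of the edges $\ell_i$ of the minimal relation, or the single extra edge $E$ of cases 4)--5); I treat these two possibilities separately.

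Assume first $\ell'=\ell_j$ is a relation edge, so $j\ne u-1,u$; by its position relative to $S_u$ it falls into case iv) of Proposition \ref{vamu}, whence $\mu_u(j)=\pm1$. On the other hand, since we are in the regime $\bar L_u=0$ (equivalently $\sigma_{u-1}=\lambda_{u-1}\lambda_u$, Proposition \ref{leduei}), Proposition \ref{vamu} and Corollary \ref{leduei1} give $\mu_u(i)=0$ for every index $i$ in cases i) and iii). Hence Formula \eqref{laff1} — equivalently the formula of Proposition \ref{4po}, case 1) of \eqref{relr3} or \eqref{relb3}, which by Remark \ref{uod} must coincide with $\gamma_{u-1}\bar a_{u-1}$ — collapses to
$$\gamma_{u-1}\bar a_{u-1}=\sum_{i\text{ in case ii)}}\varepsilon_i\delta_i\ell_i\ +\ \sum_{i\text{ in case iv)}}\varepsilon_i\delta_i\ell_i,\qquad \varepsilon_i=\pm1,$$
the first sum ranging over the relation edges lying on the interior of $S_u$ and the second over the case-iv) edges, which are disjoint from $S_u$. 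But $\bar a_{u-1}=\sum_{\ell_u\prec\ell\prec\ell_{u-1}}\alpha_\ell\ell$ by Definition \ref{abau} and \eqref{baa}, so its support is contained in the interior of $S_u$; subtracting, $\sum_{i\text{ in case iv)}}\varepsilon_i\delta_i\ell_i$ lies in the span of the edges $\ell$ with $\ell_u\prec\ell\prec\ell_{u-1}$. Now the edges $\{\ell_i:i\text{ in case iv)}\}$ together with those $\ell$ are pairwise distinct edges of $T$, none equal to $\ell_{u-1}$ or $\ell_u$; since $\ell_{u-1},\ell_u$ belong to the support of the minimal relation and that relation is unique up to scale (Lemma \ref{endp}), this collection does not contain the support of a relation and is therefore linearly independent. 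Hence each coefficient $\varepsilon_i\delta_i$ with $i$ in case iv) vanishes, which is impossible as $\delta_i\in\{\pm1,\pm2\}$. Thus no relation edge branches off an interior vertex of $S_u$, and case-iv) indices do not occur at all.

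It remains to rule out $\ell'=E$. By Proposition \ref{icin} (cases 4)--5)) the endpoints of $E$ are precisely the two critical (branch) vertices of the encoding graph of $T$; since $u$ is a non-critical index, those vertices are not interior vertices of $S_u$, so $E$ cannot be incident to an interior vertex of $S_u$. (The only extra care needed is the convention of Remark \ref{prov}: if $u-1$ or $u+1$ is critical, the symbols $e_{u\pm1}$ above must be read as $f_{u\pm1}$, which does not affect the linear-independence step.) Hence the only edges of $T$ meeting an interior vertex of $S_u$ are the two edges of $S_u$ itself, so every interior vertex of $S_u$ has valency $2$. The genuinely delicate point of the argument is the classification step — checking that an edge branching off the interior of $S_u$ is forced into case iv) of Proposition \ref{vamu} (and that no such branching is hidden in the extra edge $E$) — after which the linear-independence observation closes everything immediately.
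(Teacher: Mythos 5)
Your main case --- a relation edge $\ell_j$ branching off an interior vertex of $S_u$ --- is essentially the paper's own argument: such an edge falls into case iv) of Proposition \ref{vamu}, so $\mu_u(j)=\pm1$ and it appears with nonzero coefficient in \eqref{relr3} 1) or \eqref{relb3} 1), whereas $\gamma_{u-1}\bar a_{u-1}$ is by \eqref{baa} supported on the edges strictly between $\ell_u$ and $\ell_{u-1}$; comparing coefficients and using linear independence (which you correctly extract from the uniqueness of the relation in Lemma \ref{endp}, since your collection of edges omits $\ell_{u-1},\ell_u$) gives the contradiction. This part is correct and even makes explicit the independence step that the paper leaves implicit.

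Your treatment of the extra edge $E$, however, contains a genuine error. The two ``critical vertices'' joined by $E$ in cases 4)--5) of Proposition \ref{icin} are vertices of the \emph{encoding} graph, i.e.\ the indices $1$ and $h$ labelling basis vectors $e_i$; they are not vertices of the tree $T$. The interior vertices of $S_u$ are vertices of $T$ (elements of $G_2$), so the inference ``since $u$ is a non-critical index, those vertices are not interior vertices of $S_u$'' does not parse: non-criticality of the index $u$ only says that exactly two relation edges involve $e_u$, and it carries no information about which tree vertices lie in the interior of the tree segment $S_u$. A priori the tree edge $E$ can perfectly well be attached to an interior vertex of some $S_u$ --- in the paper's later analysis (picture \eqref{casb1}, and case c) of the section on the extra edge) $E$ is attached to vertices that are interior to segments $S_v$ for $v$ of type II --- so this possibility must be excluded by an argument, not by an appeal to criticality. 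The repair is short: by the construction of $E$ in Lemma \ref{comt} both tree endpoints of $E$ lie on the support $T'$ of the relation, so if $E$ branched off an interior vertex of $S_u$ there would be a relation edge $\ell_j$ beyond $E$ with $\ell_u\prec\ell_j$ and $\ell_j$ incomparable with $\ell_{u-1}$, and your first argument applied to that $\ell_j$ yields the contradiction; this is also why the paper's proof only needs to discuss relation edges at all.
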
 \begin{proof}
 Notice that any edge $\ell_j$ comparable with $\ell_u$ and not with $\ell_{u-1}$ appears as
 \begin{equation}
\label{esu1}   \xymatrix{  &&&\ar@{-}[ld]^{\ell_j}\\ && &\\ r    \ar@{-}[r]^{\ell_u\ \quad} &c \ldots  \ldots\bullet  \ldots d \ar@{.}[ru] \ar@{-}[r]^{\ \qquad \ell_{u-1}}   & x_{u-1}   } .
\end{equation} has  $\mu_u(j)=\pm 1$, by Corollary \ref{ilve1},  so  appears in the relation \eqref{relr3} 1)  and \eqref{relb3} 1),   this is a contradiction with the definition of  $\bar a_{u-1}$ by \eqref{baa}. \smallskip
 
 Thus  if $\bar L_u=0$ no  edge  is comparable  with $\ell_u$ and not with $\ell_{u-1}$.  \end{proof} 
 
  \begin{corollary}\label{seq}
If we have a sequence  of consecutive indices  $u,u+1,u+2,\cdots, u+k$  all of type I then $\cup_{i=0}^k  S_{u+i}$  is a segment with all its internal vertices of valency 2.
\end{corollary}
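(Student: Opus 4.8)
The statement to prove is Corollary \ref{seq}: if $u, u+1, \dots, u+k$ are consecutive indices all of type I (i.e.\ $\bar L_{u+i}=0$ for all $i$), then $\bigcup_{i=0}^k S_{u+i}$ is a segment with all internal vertices of valency $2$.

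The plan is to build on Proposition \ref{intve}, which already handles a single index. For each $i \in \{0,\dots,k\}$, the segment $S_{u+i}$ is generated by the two edges $\ell_{u+i-1}$ and $\ell_{u+i}$, and by Proposition \ref{intve} the internal vertex of $S_{u+i}$ (the common vertex of $\ell_{u+i-1}$ and $\ell_{u+i}$, which is the index $u+i$ in the encoding graph) has valency $2$. Crucially, the proof of Proposition \ref{intve} shows more: when $\bar L_{u+i}=0$, \emph{no} edge $\ell_j$ is comparable with $\ell_{u+i}$ but not with $\ell_{u+i-1}$, and symmetrically none is comparable with $\ell_{u+i-1}$ but not with $\ell_{u+i}$. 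So first I would record that the edges $\ell_{u-1}, \ell_u, \ell_{u+1}, \dots, \ell_{u+k}$ form a chain of consecutive edges sharing the vertices $u, u+1, \dots, u+k$ (by the indexing convention established before Lemma \ref{icamm} and Remark \ref{crii}, consecutive $\ell_i$'s share exactly the vertex $i$), and these are genuine shared vertices in the tree $T$ since the $\ell_i$ all lie in the relation's support.

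Next, I would argue that $\bigcup_{i=0}^k S_{u+i}$ is precisely the path in $T$ with edges $\ell_{u-1}, \ell_u, \ell_{u+1},\dots, \ell_{u+k}$, hence a segment (simple path): consecutive segments $S_{u+i}$ and $S_{u+i+1}$ share the edge $\ell_{u+i}$, so the union is connected, and since each $\ell_i$ is an edge of the tree $T$ with distinct endpoints and the endpoints chain up in order $u-1, u, \dots, u+k+1$, no vertex repeats, so it is a simple path. The internal vertices of this union are exactly the indices $u, u+1, \dots, u+k$. For each such index $u+i$, apply Proposition \ref{intve} to the index $u+i$ (which is of type I by hypothesis): its internal vertex, namely the index $u+i$, has valency $2$ in $T$. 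This covers every internal vertex of $\bigcup_{i=0}^k S_{u+i}$, giving the claim.

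The main obstacle is a bookkeeping subtlety rather than a deep difficulty: one must be careful that the ``internal vertex'' of $S_{u+i}$ in the sense of Proposition \ref{intve} is indeed the index $u+i$ and that valency here means valency in the full tree $T$, not merely in the encoding graph of the relation; and one must confirm that when indices are consecutive the segments $S_{u+i}$ genuinely overlap edge-by-edge rather than, say, branching off. These are all guaranteed by the conventions fixed in \S\ref{BaF}, Remark \ref{crii}, and Definition \ref{Su}, together with the fact (from the proof of Proposition \ref{intve}) that type I forces no side-branching at the internal vertices. I would also note in passing that the endpoints $u-1$ and $u+k+1$ of the big segment may have higher valency --- the statement only asserts valency $2$ for the \emph{internal} vertices --- so no further work is needed there.
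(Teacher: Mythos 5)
There is a genuine gap, and it comes from a misreading of what the segment $S_{u}$ is. By Definition \ref{Su} and the picture \eqref{esub}, $S_{u}$ is the segment of the tree $T$ \emph{generated} by the two edges $\ell_{u-1}$ and $\ell_{u}$, i.e.\ the minimal subtree of $T$ containing both; in general these two edges are \emph{not} adjacent in $T$, and $S_u$ contains a whole string of intermediate edges (the part recorded by $\bar a_{u-1}$ in Definition \ref{abau}). Your proof treats $S_{u+i}$ as the two-edge path $\ell_{u+i-1},\ell_{u+i}$ meeting at a single common vertex, and moreover identifies that vertex with the \emph{index} $u+i$ of the encoding graph. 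But the vertices of $T$ are elements of $\Z^m$ (vertices of the combinatorial graph), not indices; consecutive $\ell_i$'s share the index $i$ in the encoding graph because both involve $e_i$, which says nothing about their adjacency in $T$. Consequently your claims that ``the internal vertices of the union are exactly the indices $u,\dots,u+k$'' and that ``the endpoints chain up in order $u-1,\dots,u+k+1$, so no vertex repeats'' are not justified: they presuppose precisely the geometric arrangement of the $\ell_i$ inside $T$ that the corollary is meant to establish, and Proposition \ref{intve} asserts valency $2$ for \emph{all} (possibly many) internal vertices of $S_u$, not for a single one.

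The step you are missing is the gluing argument: why does the union of the $S_{u+i}$ have no branching? The paper proves this by induction on $k$: $\cup_{i=0}^{k-1}S_{u+i}$ and $S_{u+k}$ are, by induction and Proposition \ref{intve}, segments all of whose internal vertices have valency $2$, and their intersection contains the edge $\ell_{u+k-1}$; two segments with this valency property that share an edge union to a segment with internal vertices of valency $2$ (this is the content of Lemma \ref{SEP}), and every vertex internal to the union is internal to one of the two pieces. Your proposal does note that consecutive segments share the edge $\ell_{u+i}$, which gives connectivity, but connectivity alone does not exclude branching — that exclusion requires the valency-$2$ hypothesis on the interiors of the pieces, used inductively as above, rather than the incorrect description of $S_{u+i}$ as a two-edge path indexed by encoding-graph vertices.
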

\begin{proof}
By induction  $\cup_{i=0}^{k-1}  S_{u+i}$ and $S_{u+k}$  are segments with all   internal vertices of valency 2.

Now  the intersection  $\cup_{i=0}^{k-1}  S_{u+i}\cap S_{u+k}$  contains the edge  $\ell_{u+k-1}$.  Then every vertex internal  to  $\cup_{i=0}^{k-1}  S_{u+i}\cup S_{u+k}$ is internal in at least one of the two segments.\end{proof}
  \medskip
 
 {\bf Case 2)}\quad  i.e. the encoding diagram is doubly odd. Recall that in the basic relation $\mathcal R$  the coefficients  $\delta_i$ are $\pm 1$  for the edges in $A\cup C$ and $\pm 2$ for the edges in $B$.  
 \begin{proposition}\label{intveo}
In case of a doubly odd circuit $ABC$ and $\bar L_u=0$, if $u\in A\cup C$ the segment $S_u$ is all formed by elements in $A\cup C$.

If $u\in B$ the segment $S_u$ is all formed by elements in $B$
\end{proposition}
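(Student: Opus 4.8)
The plan is to use Proposition \ref{intve} as the essential local input and then propagate it along the circuit. Recall that $S_u$ is the segment spanned by $\ell_{u-1}$ and $\ell_u$; when $\bar L_u=0$ (type I), Proposition \ref{intve} tells us that no edge of $T$ is comparable with $\ell_u$ but not with $\ell_{u-1}$, so in fact $S_u$ has all internal vertices of valency $2$ and, crucially, consists exactly of the two edges $\ell_{u-1},\ell_u$ together with all edges $\ell$ with $\ell_u\prec\ell\prec\ell_{u-1}$. The point to establish is that all such intermediate edges lie in the same block ($A$, $B$, or $C$) as $\ell_{u-1}$ and $\ell_u$.

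First I would record the following structural fact about the encoding graph in the doubly-odd case: the vertices of $T$ that sit on the segment $S_u$ for a non-critical index $u$ are, via Theorem \ref{ilve} and Corollary \ref{ilve1}, expressed as signed sums of edges $\ell$ with $\ell\preceq$ that vertex, and because $u$ is non-critical the only way to pass from one block to another along $T$ is through a critical index. So the key step is: \emph{the segment $S_u$ for a type-I index $u\in A\cup C$ (resp. $u\in B$) cannot contain a critical vertex in its interior.} If it did, then some edge $\ell_j$ attached at that critical vertex would be comparable with $\ell_u$ but lie outside the chain $\ell_u\prec\ell\prec\ell_{u-1}$ — precisely the configuration \eqref{esu1} that Proposition \ref{intve} (its proof) rules out, since such $\ell_j$ has $\mu_u(j)=\pm1$ and would therefore appear in the relation \eqref{relr3} 1) or \eqref{relb3} 1), contradicting the form \eqref{baa} of $\bar a_{u-1}$. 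Hence $S_u$ avoids all critical vertices in its interior, so it is contained in the union of one block with its two bounding edges; since its endpoints $\ell_{u-1},\ell_u$ already determine which block (they are consecutive edges at the non-critical vertex $u$, hence in the same block by Remark \ref{divido}), the whole of $S_u$ lies in that block.

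Concretely the argument runs: suppose $u\in A$ (the cases $u\in B$, $u\in C$ are identical). The edges $\ell_{u-1},\ell_u$ are the two edges meeting the valency-$2$ vertex $u$, and by the definition of the blocks (Remark \ref{divido}) both belong to $A$. By Proposition \ref{intve}, every edge $\ell$ with $\ell_u\prec\ell\prec\ell_{u-1}$ sits on the simple path in $T$ from $\ell_u$ to $\ell_{u-1}$, i.e. on $S_u$, and this path meets no critical vertex internally. But the only edges of the encoding graph $\mathfrak E$ of $T$ joining a vertex of block $A$ to a vertex outside $A$ pass through one of the two critical vertices (say $1$ and $k$ in the notation of \eqref{doppiou}); since $S_u$ does not reach a critical vertex in its interior, every edge of $S_u$ has both endpoints among the indices of block $A$, hence lies in $A$. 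Therefore $S_u\subset A$, as claimed; and symmetrically $S_u\subset B$ when $u\in B$ and $S_u\subset C$ when $u\in C$.

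The main obstacle I anticipate is making the passage ``$S_u$ contains no interior critical vertex'' fully rigorous: one must be careful that a critical vertex internal to $S_u$ genuinely forces an edge $\ell_j$ of the shape in \eqref{esu1}, i.e. comparable with $\ell_u$ but incomparable with $\ell_{u-1}$ with respect to the chosen root. This needs the observation that a critical vertex has valency $\geq 3$ in $\mathfrak E$, so at least one of its three incident edges is neither $\ell_{u-1}$ nor $\ell_u$ nor on the sub-path between them, and that this third edge is then comparable with $\ell_u$ (it hangs off the $\ell_u$–side of the segment) but not with $\ell_{u-1}$. Once that is pinned down the rest is the bookkeeping already carried out in the proof of Proposition \ref{intve}, together with the block description of \eqref{doppiou} and Remark \ref{divido}, so I would expect the write-up to be short.
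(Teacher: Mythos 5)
Your argument has a genuine gap at its central step. The blocks $A,B,C$ and the critical indices are features of the \emph{encoding graph} $\mathfrak E$, whose vertices are indices $1,\dots,k$, whereas $S_u$ and its internal vertices live in the tree $T$, whose vertices are elements of $\Z^m$. Two edges that are consecutive in $T$ need not share any index (e.g. $e_1-e_2$ followed by $e_3-e_4$), and conversely the two edges $\ell_{u-1},\ell_u$, which share the index $u$ in $\mathfrak E$, may be far apart in $T$ --- that is exactly why $S_u$ can contain other edges at all. Consequently, "walking along $T$" has nothing to do with walking along the circuit of $\mathfrak E$, and your key assertion that one can only pass from one block to another through a critical vertex, so that a segment of $T$ avoiding critical vertices in its interior stays in one block, is unsupported; it is essentially the statement to be proved (and the structural separation of the blocks inside $T$ is only established later, in Theorem \ref{lu0}, Corollary \ref{ePo} etc., \emph{using} the present proposition). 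What Proposition \ref{intve} gives you --- no edge comparable with $\ell_u$ but not with $\ell_{u-1}$, hence internal vertices of $S_u$ of valency $2$ in $T$ --- does not prevent the internal edges of $S_u$ themselves from carrying indices of a different block: nothing in the valency statement rules out, say, $u\in A$ with $S_u$ consisting of $\ell_u,\ell_j,\ell_{j'},\ell_{u-1}$ where $j,j'\in B$.

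The exclusion comes from a coefficient comparison that your proposal never makes. In the doubly odd case the minimal relation \eqref{boBf} has coefficients $\delta_i=\pm1$ on the edges of $A\cup C$ and $\pm2$ on the edges of $B$. When $\bar L_u=0$, Remark \ref{uod} identifies $\gamma_{u-1}\bar a_{u-1}$, whose coefficients on the internal edges of $S_u$ are $\pm\delta_{u-1}$ by \eqref{baa}, with the right-hand side of \eqref{relr3} 1) or \eqref{relb3} 1), where an internal edge $\ell_i$ (satisfying $\ell_u\prec\ell_i\prec\ell_{u-1}$, hence $\mu_u(i)=\pm1$ by Proposition \ref{vamu}) enters with coefficient $\pm\delta_i$. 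Matching forces $|\delta_i|=|\delta_{u-1}|$, so the internal edges lie in $A\cup C$ when $u\in A\cup C$ ($\delta_{u-1}=\pm1$) and in $B$ when $u\in B$ ($\delta_{u-1}=\pm2$). If you add this $\pm1$ versus $\pm2$ argument, the statement follows; without it, the topological reasoning about critical vertices does not close the proof.
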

 \begin{proof} In Formula  \eqref{baa} the coefficients are all $\pm 1$  so that in the corresponding Formulas  \eqref{relr3} 1)  and \eqref{relb3} 1),  the coefficients must be either all $\pm 1$ or all $\pm 2$ by Remark \ref{uod}. This depends uniquely  on the value $\delta_{u-1}$, if $u\in A\cup C$ then $\delta_{u-1}=\pm 1$ otherwise $\delta_{u-1}=\pm 2$.
 
 \end{proof}

 {\bf Case 1)}\quad   with an extra edge $E$ and $\bar L_u=0$. \begin{proposition}\label{edE}
The edge $E$  is not in the segment  $S_u$.
\end{proposition}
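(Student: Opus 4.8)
The plan is a short proof by contradiction resting on one structural fact. In cases 4) and 5) of Proposition \ref{icin} the maximal tree $T$ has edge set exactly $\{\ell_1,\dots,\ell_k\}\cup\{E\}$, where $\ell_1,\dots,\ell_k$ are the edges of the even circuit, i.e.\ the edges occurring in the minimal relation $\mathcal R=\sum_i\delta_i\ell_i=0$ (all $\delta_i=\pm1$ here, since the relation comes from an even circuit), while $E$ is the extra edge, which does \emph{not} occur in $\mathcal R$. These $k+1$ edges span the $k$--dimensional lattice $V_{\mathcal V}$, so by Lemma \ref{endp} (equivalently Lemma \ref{zita}) the space of linear relations among them is one--dimensional, generated by $\mathcal R$. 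Since $\mathcal R$ involves only $\ell_1,\dots,\ell_k$, \emph{every} linear relation among the edges of $T$ has coefficient $0$ on $E$.

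First I would note that $E$ cannot equal $\ell_{u-1}$ or $\ell_u$: those indices lie in $\{1,\dots,k\}$, so $\ell_{u-1},\ell_u$ occur in $\mathcal R$, whereas $E$ does not. Hence, if $E$ were an edge of the segment $S_u$, it would have to be one of the edges $\ell$ with $\ell_u\prec\ell\prec\ell_{u-1}$. Because $u$ is of type I, i.e.\ $\bar L_u=0$, Formula \eqref{baa} gives $\bar a_{u-1}=\sum_{\ell_u\prec\ell\prec\ell_{u-1}}\alpha_\ell\,\ell$ with all $\alpha_\ell=\pm1$; so the coefficient $\alpha_E$ of $E$ in $\bar a_{u-1}$ would be $\pm1$, in particular nonzero.

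Now I would invoke Proposition \ref{4po}. Since $\bar L_u=0$ we are in case 1) of \eqref{relr3} (when $\ell_{u-1}$ is red) or of \eqref{relb3} (when $\ell_{u-1}$ is black), and there $\delta_{u-1}\bar a_{u-1}$, respectively $-\sigma_{u-1}\delta_{u-1}\bar a_{u-1}$, is written as a $\mathbb Z$--linear combination of the edges $\ell_i$ with $i\neq u-1,u$, that is, of relation edges only, hence with coefficient $0$ on $E$. Comparing this expression with the expansion of $\bar a_{u-1}$ in the edges of $T$ from \eqref{baa} yields a linear relation among the edges of $T$ whose coefficient on $E$ equals $\delta_{u-1}\alpha_E\neq0$ (resp.\ $-\sigma_{u-1}\delta_{u-1}\alpha_E\neq0$). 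This contradicts the structural fact of the first paragraph, and forces $E\notin S_u$.

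I expect the only delicate point to be bookkeeping rather than ideas: one must check that the right--hand sides of \eqref{relr3} 1) and \eqref{relb3} 1) genuinely range only over the relation edges $\ell_1,\dots,\ell_k$ and never over $E$, and keep track that the $E$--coefficient produced on the left is an honest $\pm1$ times the nonzero scalar $\delta_{u-1}$ (or $\sigma_{u-1}\delta_{u-1}$), so that the relation obtained is genuinely nontrivial on $E$. Once these signs are pinned down, the one--dimensionality of the relation space closes the argument at once, with no need to revisit the eighteen positional cases for $S_u$.
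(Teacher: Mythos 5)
Your proof is correct and follows essentially the same route as the paper: the paper's argument is precisely that if $E$ lay between $\ell_u$ and $\ell_{u-1}$ it would occur in $\bar a_{u-1}$, while Remark \ref{uod} (your appeal to Proposition \ref{4po}, case 1)) forces $\bar a_{u-1}$ to be a combination of relation edges only, which exclude $E$. Your extra step of turning the comparison into a relation with nonzero coefficient on $E$ and invoking the uniqueness of the relation (Lemma \ref{endp}, via $\zeta(E)\neq 0$) just makes explicit the linear independence that the paper leaves implicit.
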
\begin{proof}
It is not possible that $E$  is in between  $\ell_{u-1},\ell_u$ otherwise,  by Remark \ref{uod},  $E$ would appear in the Formulas  \eqref{relr3} 1)  and \eqref{relb3} 1). But by the definition of $C_u$  in these formulas appear only the edges $\ell$ in the relation.\end{proof}  
\subsection{Some geometry of trees}  Let us collect some generalities which will be used in the course of the proof.   In all this section $T$ will be a tree,   for the moment with no further structure and later related to the Cayley graph. Sometimes it is convenient to distinguish between $T$ as a set of edges and $|T|$  as its geometric realization.

\begin{definition}\label{Tge}
Given a set $A$ of edges in $T$ let us denote  by $\langle
A\rangle$ the minimal tree contained in $T$ and containing $A$,
we call it the {\em tree generated by $A$}.
\end{definition}  The simplest trees are the {\em segments} $S$ in which no vertex has valency $>2$. In fact in a segment we have exactly two end points of valency 1 and the {\em interior points} of valency 2. The geometric realization $|T|$  of a tree $T$  is homeomorphic to a usual segment in $\R$ if and only if $T$ is a segment.
\begin{remark}\label{tws} A connected subset of a segment is a segment.

The intersection $|S_1|\cap |S_2|$ of two segments $S_1,S_2$ in   $|T|$ is either empty or  a vertex or a segment.
\end{remark}
\begin{proof} The first is clear. 
Take any two vertices $a,b$ in  $S_1\cap S_2$. The segment connecting $a,b$ in $S_1$  must coincide with  that  connecting $a,b$ in $S_1$ therefore .$S_1\cap S_2$ is connected .\end{proof}
\begin{lemma}
\label{SEP} 1)\quad If $A$ consists of 2 edges then  $\langle
A\rangle$  is a segment,   more generally if  $A$ is the union of 2
segments $S_1,  S_2$ with the interior vertices in $A$ of valency 2 then
again $\langle A\rangle$  is a segment,  if moreover   $|S_1|\cap |S_2|\neq\emptyset$,   then  $S_1\cup S_2 =\langle  S_1,
S_2\rangle$ and all its interior vertices have valency 2.
\smallskip

 \quad If we only assume that $ S_2$ has interior vertices of valency 2   but we also assume that $|S_1|\cap |S_2|\neq\emptyset$   then

2)    $\langle S_1,  S_2\rangle=S_1\cup S_2$ and it  is a segment.   
\end{lemma}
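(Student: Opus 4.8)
The statement to prove is Lemma~\ref{SEP}, whose content is purely combinatorial/topological about trees. I would treat parts 1) and 2) by reducing everything to the basic observation recorded in Remark~\ref{tws}: a connected subset of a segment is a segment, and the intersection of two segments in a tree is either empty, a vertex, or a segment. The key point throughout is that in a tree there is a \emph{unique} simple path between any two vertices, so ``the segment connecting $a$ to $b$'' is unambiguous and lies in any subtree containing $a$ and $b$.

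\textbf{Part 1), the case of two edges.} If $A=\{\ell_1,\ell_2\}$, then either the two edges share a vertex, in which case $\langle A\rangle$ is literally the path $\ell_1\cup\ell_2$ with the common vertex of valency $2$ and the two outer vertices of valency $1$, hence a segment; or they are disjoint, in which case $\langle A\rangle$ is obtained by joining the unique simple path in $T$ between an endpoint of $\ell_1$ and an endpoint of $\ell_2$ (choosing the pair of endpoints so that $\ell_1,\ell_2$ are traversed), and again this is a single simple path, i.e. a segment. For the general statement of 1), suppose $A=S_1\cup S_2$ is a union of two segments with the interior vertices of $A$ of valency~$2$ (in the ambient tree $T$); then I would argue that $\langle A\rangle$ is a segment by the same mechanism: first produce a simple path inside $T$ that runs along $S_1$, then along a connecting path, then along $S_2$, observe it contains $A$, and observe minimality forces $\langle A\rangle$ to equal this path. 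When moreover $|S_1|\cap|S_2|\neq\emptyset$, Remark~\ref{tws} says the intersection is a vertex or a sub-segment; in either case $S_1\cup S_2$ is already connected, hence (being a connected subgraph of the segment $\langle S_1,S_2\rangle$) is itself a segment, and it contains $A$, so $S_1\cup S_2=\langle S_1,S_2\rangle$. The valency claim follows because an interior vertex of $S_1\cup S_2$ is interior to $S_1$ or to $S_2$ (or the intersection vertex, which by hypothesis has valency $2$), so it has valency $2$ in $S_1$ or $S_2$; and since $S_1\cup S_2$ is the whole generated tree, no extra edges are attached, so the valency in $\langle A\rangle$ is still $2$.

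\textbf{Part 2).} Here we only assume $S_2$ has interior vertices of valency~$2$, and $|S_1|\cap|S_2|\neq\emptyset$. Again the intersection $|S_1|\cap|S_2|$ is a vertex $v$ or a segment, by Remark~\ref{tws}. So $S_1\cup S_2$ is connected. To see it is a segment, I would show every vertex of $S_1\cup S_2$ has valency $\le 2$ \emph{within} $S_1\cup S_2$: a vertex lying only in $S_1$ or only in $S_2$ has valency $\le 2$ there; a vertex in the intersection lies in the intersection segment, which is a sub-segment of $S_2$, hence (if interior to $S_2$) has valency $2$ in $S_2$, and the edges of $S_1$ at such a vertex must also be edges of $S_2$ (since they lie in the intersection segment), so no new edges are added—unless the intersection vertex is an \emph{endpoint} of $S_2$, where a short case check shows the total valency is still $\le 2$. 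Since $S_1\cup S_2$ is connected with all valencies $\le 2$ and is finite and acyclic (being a subtree of $T$), it is a segment; and it obviously contains $S_1$ and $S_2$ and is contained in any subtree containing them, so $\langle S_1,S_2\rangle=S_1\cup S_2$.

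\textbf{Main obstacle.} The genuine content is entirely in the careful bookkeeping at the intersection: checking that when $|S_1|\cap|S_2|$ is a single vertex (rather than a segment), that vertex can still be forced to have valency $2$ in the union—this needs the hypothesis that the \emph{interior} vertices of the relevant segment(s) have valency $2$, together with the fact that a valency-$1$ endpoint of one segment meeting the interior of the other is what lets the two segments concatenate into one. I expect no deep difficulty, just the need to enumerate the few configurations (intersection a vertex vs.\ a segment; intersection vertex interior vs.\ endpoint of $S_1$, of $S_2$) and dispatch each by the unique-path property of trees; the statement of part~2) as printed looks slightly garbled and I would state it cleanly as ``$S_1\cup S_2$ is a segment equal to $\langle S_1,S_2\rangle$'' before proving it.
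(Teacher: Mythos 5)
Your overall strategy is the same as the paper's (the trichotomy of Remark \ref{tws} for $|S_1|\cap|S_2|$ plus valency bookkeeping at the intersection), but two steps as written do not hold up, and both concern the one point that carries the whole content of the lemma: the hypothesis is about valency \emph{in the ambient tree} $T$, not valency inside $S_1$ or $S_2$. In part 2 you justify ``the edges of $S_1$ at such a vertex must also be edges of $S_2$'' by saying they lie in the intersection segment; that inference is invalid (an $S_1$-edge at a vertex of $S_1\cap S_2$ need not lie in $S_1\cap S_2$, e.g.\ at an endpoint of the intersection). The correct and only available reason is that an interior vertex of $S_2$ has valency $2$ in $T$, so \emph{every} edge of $T$ at it, in particular every $S_1$-edge, is one of the two $S_2$-edges; your phrase ``has valency $2$ in $S_2$'' gives nothing. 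The same distinction is what your part 1 sketch skips in the disjoint case: ``run along $S_1$, then a connecting path, then $S_2$'' is only a simple path because the valency-$2$-in-$T$ hypothesis forces the connecting path to attach at endpoints of $S_1$ and $S_2$; without it $\langle A\rangle$ has a branch vertex and is not a segment, so this is not ``the same mechanism'' as the two-edge case, which needs no hypothesis.

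The second problem is your claim that when the intersection is a single vertex which is an endpoint of $S_2$, ``a short case check shows the total valency is still $\le 2$.'' That is false: let $S_2$ be a single edge with one endpoint an interior vertex $b$ of $S_1$ and the other endpoint outside $S_1$; the hypothesis on $S_2$ is vacuous, $|S_1|\cap|S_2|=\{b\}\neq\emptyset$, yet $b$ has valency $3$ in $S_1\cup S_2$, which is therefore not a segment. This is exactly the degenerate configuration the paper's proof of 2) avoids by treating only the case in which $S_1\cap S_2$ is a segment (containing an edge) --- the situation occurring in all applications (Proposition \ref{unsegp}, Corollary \ref{ePo}, Corollary \ref{seq}), where consecutive segments share an edge. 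So in part 2 you must either assume the intersection contains an edge, or drop that case rather than claim it checks out. With the valency-in-$T$ justification inserted and the vertex-intersection case in 2) excluded, your outline coincides with the paper's argument.
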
\begin{proof}
1)\quad  If $|S_1|\cap |S_2|$,    is empty,   there is a
unique segment in $\langle  S_1,
S_2\rangle$ joining  two end points   and the statement is clear.
\smallskip
 If $|S_1|\cap |S_2|$ is a vertex then it is either an end point of both and then  $|S_1|\cup |S_2|$ is a segment or it must be an interior point of at least one of the two with valency $>2$. The picture explains  what is happening.
 
 \begin{equation}\label{intt}
I)\quad\xymatrix{       &     \ar@{-}[rd] &   &    & \ar@{-}[ld] &       && \\   & \ar@{-}[r]   & \ar@{ -}[r]      & \ar@{-}[rr]  & &        &&  }
\end{equation}   
 
    If $|S_1|\cap |S_2|$ is a  segment with end points $a,b$, then if $a$ is an interior point of $S_1$ it cannot be an  interior point of $S_2$ since it has valency 2.  Similar reasoning for $b$.

2)\quad If $A=S_1\cap
S_2$  is  a segment.   Unless $S_2\subset S_1$  one of the end points $a$ of $A$
is an internal vertex of $S_1$,   since this has valency 2 this is
possible only if  $a$  is an end point of $S_1$,   if also the
other end point of $A$ is an internal vertex of $S_1$ the same
argument shows that $S_1\subset S_2$.  The final case is that the
other end of $A$  is  also an end point of $S_2$ and then the
statement is clear. \end{proof}
\begin{proposition}\label{unsegp}
 Take   segments $S_1,S_2,\cdots, S_k$  in $T$ which all  contain an edge $E$ and $S_i\cap S_j$ is a segment. Then  $\cup_{i=1}^kS_i$ is a segment.    

\end{proposition}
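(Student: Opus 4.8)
The plan is to argue by induction on $k$, with Lemma \ref{SEP} and Remark \ref{tws} as the only inputs. The case $k=1$ is trivial, so suppose $k\ge 2$ and that the proposition holds for any $k-1$ segments satisfying its hypotheses.

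First I would apply the inductive hypothesis to $S_1,\dots,S_{k-1}$. These all contain $E$, and $S_i\cap S_j$ is a segment for all $i,j\le k-1$ by assumption, so $S:=S_1\cup\cdots\cup S_{k-1}$ is a segment, and $E\subset S$. It then suffices to show that $S\cup S_k$ is a segment, and for this I would invoke Lemma \ref{SEP} 2) applied to the pair $S,\;S_k$; the only thing that needs checking is that $S\cap S_k$ is a non empty segment.

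This verification is the heart of the argument. We have
$$S\cap S_k=\bigcup_{i=1}^{k-1}\bigl(S_i\cap S_k\bigr),$$
and by hypothesis each $S_i\cap S_k$ is a segment containing the edge $E$; in particular each is a connected subgraph containing both endpoints of $E$. A union of connected subgraphs through a common vertex is connected, so $S\cap S_k$ is a connected subgraph of the segment $S_k$, hence a segment by Remark \ref{tws}; it is non empty since $E\subset S\cap S_k$. Now Lemma \ref{SEP} 2), applied to the two segments $S$ and $S_k$ whose intersection is a non empty segment, yields $\langle S,S_k\rangle=S\cup S_k$ and that this is a segment. Since $S\cup S_k=S_1\cup\cdots\cup S_k$, the induction is complete.

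The one real obstacle is the connectedness of $S\cap S_k$, and it is precisely here that the hypothesis that every $S_i$ contains the common edge $E$ is used essentially: without it the intersection could split into several components and Remark \ref{tws} would not be applicable. The rest is a routine application of the tree lemmas already established.
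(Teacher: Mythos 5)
Your induction set-up is the same as the paper's (pass to $S:=S_1\cup\cdots\cup S_{k-1}$ and adjoin $S_k$), but the inductive step has a genuine gap: Lemma \ref{SEP} 2) is not applicable in the way you use it. That lemma does not deduce that the union of two intersecting segments is a segment from the intersection alone; it carries the additional hypothesis that (one of) the two segments has its \emph{interior vertices of valency $2$ in the ambient tree $T$}, and it is exactly this hypothesis that forbids the two segments from branching apart at a vertex interior to both. Proposition \ref{unsegp} supplies no valency information, so your sentence ``the only thing that needs checking is that $S\cap S_k$ is a non empty segment'' is where the argument fails: that condition is checked correctly, but it is not the real obstacle. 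Concretely, in a tree containing the edges $ab,\ bc,\ cd,\ be$, take $S_1$ with edges $ab,bc,cd$ and $S_2$ with edges $eb,bc,cd$: both contain $E=bc$, their intersection is the segment with vertices $b,c,d$, yet their union has a vertex of valency $3$ at $b$ and is not a segment.

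The gap cannot be repaired from the ingredients you invoke. Since all the $S_i$ contain the edge $E$, the hypothesis that $S_i\cap S_j$ is a segment is automatic from Remark \ref{tws} (the intersection is connected, nonempty and contains an edge), so your argument in effect uses only ``all $S_i$ contain $E$'' --- and under that assumption alone the conclusion is false, as the example above shows. The missing content is that the overlap of $S_k$ with $S$ must occur at an \emph{end} of each segment (equivalently, that the pairwise unions, not just the pairwise intersections, are segments). This is precisely what the paper's proof is after with its case analysis on how $S_k\cap S_1$ and $S_k\cap S_2$ sit as initial or final sub-segments of $S_1$ and $S_2$, and it is what the intended application provides through Proposition \ref{uvB} 2), where the end-overlap structure of the $S_u$ is established before Proposition \ref{unsegp} is invoked. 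Quoting Lemma \ref{SEP} 2) with its valency hypothesis dropped does not supply this step.
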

\begin{proof}
By induction $S:=\cup_{i=1}^{k-1}S_i$ is a segment with one end point  an end point say in $S_1$ and the other an end point of $S_2$.    The intersection $S\cap S_k$ is a segment containing $S_k\cap S_1$ and  $S_k\cap S_2$.   If $S_k$ is contained in one of these two intersections we are done. Otherwise we have 4 possibilities,    $S_k\cap S_1$ is a segment initial in $S_1$,  then clearly $S_k\cup S$ is a segment. $S_k\cap S_2$ is a segment final  in $S_2$,  then clearly $S_k\cup S$ is a segment.  The remaining case $S_k\subset S$.\end{proof}\subsection{All non critical indices are of type I}
\begin{theorem}\label{lu0}
A)\quad In case of an even circuit where all non critical indices are of type I we have that $T$  is a segment.\smallskip

B)\quad In case of a doubly odd circuit where all non critical indices are of type I we have that the unions $$S_{A }:=\cup_{a\in A}S_a,\  S_{B }:=\cup_{b\in A}S_b,\  S_{C }:=\cup_{c\in A}S_c,\  $$  are segments with internal vertices of valency 2.\smallskip

  $S_{B}$    is formed by all the edges in $B$. $S_A$ and $S_C$ are either formed of edges all in $A$ and all in $C$  or $S_{A\cup C}$ is a segment.
\end{theorem}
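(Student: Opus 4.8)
The plan is to build $T$ out of the segments $S_u$ attached to the type-I indices, gluing consecutive runs of such indices into longer segments by Corollary \ref{seq} and keeping track, via Propositions \ref{intve} and \ref{intveo} (and Proposition \ref{edE} when an extra edge is present), of which part of the encoding graph each piece occupies; the tree lemmas of the previous subsection then finish the argument.

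\emph{Part A.} When the encoding graph of $T$ is an even circuit it coincides, by Proposition \ref{icin}, with the encoding graph of the minimal relation, so $T$ has exactly the edges $\ell_1,\dots,\ell_k$ and, by Remark \ref{divido}, no critical index; thus the hypothesis says that every index $u=1,\dots,k$ is of type I. Applying Corollary \ref{seq} to the consecutive run $2,3,\dots,k$ I would obtain a segment $S:=\bigcup_{u=2}^{k}S_u$ all of whose interior vertices have valency $2$. Since $\ell_1\in S_2$ and $\ell_u\in S_u$ for $2\le u\le k$, the segment $S$ contains every edge of $T$; but a connected subgraph of the tree $T$ that carries all of its edges is $T$ itself, so $T=S$ is a segment. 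When the even circuit comes with the extra edge $E$ of cases $4)$, $5)$ of Proposition \ref{icin}, I expect the same argument, run on the two runs of non-critical indices and combined with Proposition \ref{edE} (which keeps $E$ out of every $S_u$), to apply with minor changes.

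\emph{Part B.} Now the encoding graph of $T$ is a doubly odd circuit $A\cup B\cup C$, and by Remark \ref{divido} its critical indices are the two attaching vertices of $B$ (a single vertex of valency $4$ when $B=\emptyset$). The non-critical indices therefore fall into a consecutive run inside $A$, a consecutive run inside $B$ (empty when $B=\emptyset$) and a consecutive run inside $C$, all of type I, and every edge of $A$ (resp.\ $B$, $C$) occurs in $S_a$ for some $a$ in the corresponding run. Corollary \ref{seq} applied to each run produces segments $S_A:=\bigcup_{a\in A}S_a$, $S_B:=\bigcup_{b\in B}S_b$, $S_C:=\bigcup_{c\in C}S_c$, each with interior vertices of valency $2$, and by Proposition \ref{intveo} the edges of $S_A$ and $S_C$ lie in $A\cup C$ while those of $S_B$ lie in $B$. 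Since $S_B$ already contains all edges of $B$, it is exactly the set of edges of $B$. Finally, for the stated dichotomy: either $S_A$ uses no edge of $C$, whence $S_A$ (and, symmetrically, $S_C$) consists of $A$-edges only (resp.\ $C$-edges only); or $S_A$ and $S_C$ share an edge of $C$, so $|S_A|\cap|S_C|\neq\emptyset$, and as both are segments with all interior vertices of valency $2$, Lemma \ref{SEP} 1) gives that $S_{A\cup C}=S_A\cup S_C$ is again a segment with interior vertices of valency $2$.

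\emph{Main obstacle.} The delicate part is the bookkeeping near the critical indices: correctly singling out the runs of type-I indices in each of cases $2)$ and $3)$ — in particular the degenerate configuration $B=\emptyset$ with one valency-$4$ vertex of \eqref{dops} — and verifying that $S_B$ can neither exceed the edges of $B$ nor omit one of them, and that $S_A\cup S_C$ glues to a genuine segment rather than to a tree with a branch point. The extra-edge variant of Part A, which rests entirely on Proposition \ref{edE}, is the other point where I expect to need some care.
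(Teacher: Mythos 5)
Your proposal is correct and follows essentially the same route as the paper's own proof: Corollary \ref{seq} (resting on Propositions \ref{intve} and \ref{intveo}) glues the segments $S_u$ of each run of type-I indices into $S_A$, $S_B$, $S_C$, and the gluing Lemma \ref{SEP} handles the case when $S_A$ and $S_C$ share an edge, exactly as in the paper. The only differences are cosmetic: you spell out why the union of the $S_u$ exhausts $T$ in case A, and your aside on the extra-edge cases 4), 5) of Proposition \ref{icin} is not needed here since the paper treats that situation separately in Theorem \ref{lu1}.
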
 
\begin{proof} A) follows from Corollary  \ref{seq} of Proposition \ref{intve}.

B)\quad We apply again Corollary  \ref{seq} of Proposition \ref{intve}.
If two   segments both with  internal vertices of valency 2 have an edge in common  then their union is a segment   with  internal vertices of valency 2.  This applies recursively to the segments $S_u, S_{u+1}$  where $u$  runs in either $A,B,C$.   It also applies to  $S_A, S_C$ in case they have an edge in common. 

We then apply Proposition \ref{intveo} which tells us that $S_B$ is formed entirely by edges in $B$.\end{proof}
In this case we  have the following possibilities for the tree $T$.
$$a'): \xymatrix{    a\ar@{-}[rr]^{S_{A\cup C}  }& &b  \ar@{-}[rr]^{S_B} && c    },\quad b'):\  \xymatrix{    a\ar@{-}[rr]^{S_{A }  }& &b  \ar@{-}[rr]^{S_B} && c \ar@{-}[rr]^{S_C} &&d   }. 
$$
\begin{equation}
\label{unseg1}  \xymatrix{&& &&d&&&&&\\  c') &&a\ar@{-}[rr]^{S_{A }  }& &\ar@{-}[u]^{S_C}b  \ar@{-}[rr]^{S_B} && c     }. 
\end{equation} 
\begin{theorem}\label{lu1}
In case of a single circuit with an extra edge $E$  in which all non critical indices are of type I we have that the unions $$S_{A }:=\cup_{a\in A}S_a,\  S_{B }:=\cup_{b\in A}S_b   $$  are segments with internal vertices of valency 2.
$S_A$ is  formed of edges all in $A$ and  $S_{B}$    is formed by all the edges in $B$ and they are separted by the edge $E$.\end{theorem}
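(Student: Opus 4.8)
The plan is to follow the scheme of the proof of Theorem \ref{lu0} B), the rôle of the two odd sub--circuits being now played by the two arcs $A$ and $B$ into which the extra edge $E$ of Proposition \ref{icin}, cases 4) and 5), divides the even circuit. By Remark \ref{divido} the non--critical indices split into two runs of consecutive indices, those $u$ with $\ell_{u-1},\ell_u\in A$ and those with $\ell_{u-1},\ell_u\in B$. Since every non--critical index is of type I by hypothesis, Corollary \ref{seq} (a consequence of Proposition \ref{intve}) applies to each run: for two consecutive type I indices $u,u+1$ the segments $S_u$ and $S_{u+1}$ share the edge $\ell_u$, so Lemma \ref{SEP} 1) glues them into a segment with all internal vertices of valency $2$, and iterating one gets that $S_A=\cup_{a\in A}S_a$ and $S_B=\cup_{b\in B}S_b$ are segments all of whose internal vertices have valency $2$.

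The next step is to check that $S_A$ involves only edges of $A$ and $S_B$ only edges of $B$. By Lemma \ref{comt} we may take $T=T'\cup E$ with $T'$ the forest carried by the edges of the minimal relation, so every edge of $T$ other than $E$ lies in $A$ or in $B$. By Proposition \ref{edE} the edge $E$ does not belong to $S_u$ for any type I index $u$, hence $E\notin S_A$ and $E\notin S_B$. The only critical vertices are the two endpoints of $E$, and in $T\setminus E$ any path joining an $A$--edge to a $B$--edge must pass through one of them; but by Proposition \ref{intve} no interior vertex of any $S_u$ is critical (a critical vertex has valency $\ge 3$ in $T$), so no $S_u$, and therefore neither $S_A$ nor $S_B$, can contain both an $A$--edge and a $B$--edge. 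As $S_A$ contains $A$--edges it is contained in $A$, and symmetrically $S_B\subseteq B$. Finally the two runs of non--critical indices exhaust the indices interior to $A$ (resp. $B$), so the edges $\ell_{u-1},\ell_u$ occurring for those $u$ sweep out all of $A$ (resp. $B$); hence $S_A$ coincides with $A$ and $S_B$ with $B$ as edge sets (the only degenerate exceptions being when one of the arcs reduces to a single edge, handled as in Proposition \ref{icin} by convention).

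For the last assertion: since $S_A\subseteq A$, $S_B\subseteq B$ and $A$ and $B$ meet only in the two critical endpoints of $E$, the geometric realizations $|S_A|$ and $|S_B|$ meet at most in those two vertices; together with the fact that $T=S_A\cup E\cup S_B$ is a tree (Proposition \ref{unsegp} being available to assemble the pieces), this forces $E$ to be the edge separating $S_A$ from $S_B$, which is precisely the stated shape of $T$.

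The only genuinely delicate point, exactly as in Theorem \ref{lu0}, is the second paragraph: one must rule out that a segment $S_u$ attached to type I indices "leaks" through a critical vertex or through $E$, and this is what the valency--$2$ conclusion of Proposition \ref{intve} combined with Proposition \ref{edE} provides; everything else is the elementary geometry of trees collected in Lemmas \ref{SEP} and Proposition \ref{unsegp}, so I expect the write--up to be short.
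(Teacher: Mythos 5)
Your first step is sound and matches the paper: Corollary \ref{seq} (via Proposition \ref{intve} and Lemma \ref{SEP}) glues the $S_u$ into segments $S_A$, $S_B$ with interior vertices of valency $2$, and Proposition \ref{edE} excludes $E$ from them. The gap is in your separation argument. You assert that in $T\setminus E$ any path joining an $A$--edge to a $B$--edge must pass through one of the two critical vertices, and that Proposition \ref{intve} forbids a critical vertex from being interior to an $S_u$ (``a critical vertex has valency $\ge 3$ in $T$''). This conflates the encoding graph with the tree $T$: critical indices are vertices of the \emph{encoding graph} of valency $>2$ there (Remark \ref{divido}), not vertices of $T$, whereas Proposition \ref{intve} controls valencies of vertices of $T$, which lie in $\Z^m_2$. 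There is no a priori reason why an edge $\ell_a$, $a\in A$, and an edge $\ell_b$, $b\in B$, cannot share a vertex of $T$ far from the endpoints of $E$; ruling out exactly this is the content of the statement, and in the analogous situations the paper must prove it by comparing coefficients in the minimal relation (Propositions \ref{intveo}, \ref{ed1b}), not by reading it off the arcs of the encoding circuit. So your sentence ``no $S_u$, and therefore neither $S_A$ nor $S_B$, can contain both an $A$--edge and a $B$--edge'' is unsupported, and the third paragraph inherits the same conflation (``$A$ and $B$ meet only in the two critical endpoints of $E$'' is a statement about the encoding graph, not about subtrees of $T$).

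The paper closes this point by minimality instead: every edge of $T$ other than $E$ is a relation edge and lies in $S_A\cup S_B$, $E\notin S_A\cup S_B$, and by Lemma \ref{endp} every end point of $T$ must appear in the (unique) minimal relation. If $S_A$ and $S_B$ met in an edge (or in a common end point), Lemma \ref{SEP} would make $S_A\cup S_B$ a single segment with interior valency $2$ not containing $E$; then $E$ would protrude from it and its far endpoint would be a leaf of $T$ incident only to $E$, hence not occurring in the relation --- contradicting Lemma \ref{endp} (equivalently, minimality, cf.\ Corollary \ref{ePo} A) 2)). Hence the only configuration is $S_A$--$E$--$S_B$; since $S_A$ and $S_B$ are then edge--disjoint and by construction $S_A\supseteq A$, $S_B\supseteq B$, one gets $S_A=A$ and $S_B=B$ as edge sets, which is the separation you wanted. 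Replacing your second and third paragraphs by this argument makes the proof complete and essentially identical to the paper's.
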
 
\begin{proof} We apply Corollary  \ref{seq} of Proposition \ref{intve} as before and  Proposition \ref{edE} implies that $E$  is not  in $S_A\cup S_B$.

Since every end point of $T$  must appear in the relation  the only possibility is given by the picture
$$\xymatrix{     a \ar@{- }[rr]^{S_A} &&  b\ar@{-}[r]^{E}  &c  \ar@{- }[rr]^{S_B}  &&d  }$$\end{proof}

  \subsection{The contribution  of $\bar L_u$ equals to $\pm 2\delta_ue_{u\pm 1}$.  }\quad We say that $u$ is of type II  \smallskip 
  
  We want to prove
  \begin{theorem}\label{itre}
In case of a doubly odd circuit the tree $T$  is formed by 3 segments, $S_A,S_B,S_C$ each formed only by the edges in $A$ or $B$ or $C$.  Moreover the internal vertices of $S_B$ have all valency 2. 
\end{theorem}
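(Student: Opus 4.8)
The plan is to combine the case analysis already carried out in Proposition \ref{leduei} (and its Corollary \ref{leduei1}) with the tree-geometry lemmas of the preceding subsection (Lemma \ref{SEP}, Proposition \ref{unsegp}). First I would recall that, since $\Gamma$ is minimal degenerate resonant, the encoding graph of $T$ is a doubly odd circuit $ABC$ (Proposition \ref{icin}, cases 2 and 3), and every non-critical index $u$ is of type II exactly when the contribution $\bar L_u$ equals $\pm 2\delta_u e_{u\pm 1}$, i.e.\ when $\sigma_{u-1}=-\lambda_{u-1}\lambda_u$ (Proposition \ref{leduei}). By Corollary \ref{leduei1} this is precisely the situation in which the coefficient $\mu_u(j)$ of $e_u$ in a vertex $a_j$ with $\ell_{u-1}\prec\ell_j$ is $\pm 2$ rather than $0$.

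\textbf{Key steps.} The argument proceeds in three stages. (1) \emph{Locating $S_u$ inside one block.} Using the form of $\bar a_{u-1}$ given by Proposition \ref{4po} (case 2 of \eqref{relr3} or \eqref{relb3}) and the fact that the coefficients $\alpha_\ell=\pm1$ in the expression \eqref{baa} for $\bar a_{u-1}$, I would argue as in Proposition \ref{intveo}: the coefficients appearing in the right-hand side of the relation for $\gamma_{u-1}\bar a_{u-1}$ must be compatible in absolute value with the values $\delta_i$ dictated by whether the index sits in $A\cup C$ (where $\delta_i=\pm1$) or in $B$ (where $\delta_i=\pm2$). Hence for $u$ of type II, $S_u$ consists entirely of edges of the block containing $u$; in particular no edge of $S_u$ crosses from $A\cup C$ into $B$. (2) \emph{Assembling the segments.} For a maximal run of consecutive indices within a block, the segments $S_u,S_{u+1},\dots$ overlap in a common edge $\ell$, so by Lemma \ref{SEP}(2) and Proposition \ref{unsegp} their union is again a segment; doing this separately over $A$, $B$ and $C$ produces the three segments $S_A,S_B,S_C$, each containing only edges of its own block. (3) \emph{Internal vertices of $S_B$.} For $u\in B$, I would show that the extra term $\pm 2\delta_u e_{u\pm1}$ in $\bar L_u$ already accounts for the full contribution of $e_u$, so no edge $\ell_j$ can be comparable with $\ell_u$ but not with $\ell_{u-1}$ without producing an inconsistent coefficient in \eqref{laff1} — the same dichotomy used in Proposition \ref{intve}. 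This forces every internal vertex of $S_B$ to have valency $2$. Since every end point of $T$ must appear in the relation (Lemma \ref{endp}), and the critical indices $1$ and $k$ are the only vertices of valency $>2$, the three segments glue along those critical vertices into exactly the claimed tree.

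\textbf{Main obstacle.} The delicate point is stage (1): one must rule out the possibility that a type-II segment $S_u$ with $u$ near a critical index straddles the boundary between $B$ and $A\cup C$, or that the $\pm 2e_{u\pm1}$ correction term conspires with contributions from other edges to mimic an admissible relation. This requires carefully tracking, via Corollary \ref{leduei1} and Formula \eqref{ukveg}, how the coefficient $\mu_u(j)$ propagates along a path past the endpoint $x_{u-1}$ of $S_u$, and checking that the parity/magnitude constraints ($\pm1$ versus $\pm2$) on the coefficients in the unique minimal relation \eqref{boBf} are genuinely violated. I expect this bookkeeping — essentially a refinement of the proof of Proposition \ref{intveo} to the type-II situation — to be the only substantive difficulty; the rest is an application of Lemma \ref{SEP} and Proposition \ref{unsegp} together with the already-established list of possible encoding graphs.
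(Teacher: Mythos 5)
Your overall route — compare the unique minimal relation with the computed value of $\bar L_u$, then glue the segments $S_u$ block by block — is the same as the paper's, but your step (2) has a genuine gap. The gluing statements you invoke (Lemma \ref{SEP} 2) and Proposition \ref{unsegp}) need either interior vertices of valency $2$ or one edge common to \emph{all} the segments being united, and for a type-II index $u\in A$ (or $C$) neither hypothesis holds in $T$: the paper proves (Proposition \ref{case2L}, item 3)) that for such $u$ all the edges of $B\cup C$ sit in branches emanating from internal vertices of $S_u$, so those vertices typically have valency $>2$ in $T$ — which is exactly why Theorem \ref{itre} claims valency $2$ only for the internal vertices of $S_B$. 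To assemble $S_A$ (and $S_C$) one needs the additional fact of Proposition \ref{case2L}, item 4), that no \emph{other $A$-edge} branches off an internal vertex of $S_u$, i.e. that inside the subtree $T_A$ generated by the $A$-edges these vertices do have valency $2$; only then can the induction of Corollary \ref{seq} be run within $T_A$, as is done in Corollary \ref{lasca}. One also needs part 2) of Proposition \ref{asuB} (for $u\in B$ of type II the $A$-edges and the $C$-edges lie on \emph{opposite} sides of $S_u$) to conclude that the three segments are mutually separated and attach only at the critical vertices; your closing sentence asserts this configuration, but nothing in your three steps produces it.

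A secondary point: the coefficient bookkeeping in your step (1) cannot be the plain parity argument of Proposition \ref{intveo}, since for a type-II index the identity \eqref{relr2} carries the extra term $\pm 2\delta_u e_{u\pm1}$. The paper instead writes down a second explicit expression of $2e_{u\pm1}$ coming from the structure of the doubly odd circuit (Formula \eqref{boB2} for $u\in B$, the relation $\mathcal R^\dagger$ in Proposition \ref{case2L} for $u\in A$) and exploits uniqueness of the representation in the linearly independent edges to compare coefficients $\pm1$, $\pm2$, $\pm4$ case by case, distinguishing $u\in B$ from $u\in A\cup C$. You correctly flag this as the main obstacle, but note that its outcome is not merely ``$S_u$ stays in one block'': for $u\in A$ it is precisely this comparison that yields the branching description 3) and the comparability statement 4) which your assembly step is missing.
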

 Thus the possible form of $T$ is that given by the next pictures on page \pageref{lefig}.\medskip

   We  thus have, from \eqref{relr3} or  \eqref{relb3}, a relation expressing $\pm 2\delta_ue_{u\pm 1}$ as linear combination of the edges $\ell_j\neq  \ell_{u-1},\ \ell_u$.  
\begin{equation}
\label{relr2}\pm 2\delta_ue_{u\pm 1}=
 \sum_{i\,|\,\vartheta_i=-1,\ i\neq u-1,u} \delta_i \mu_u(i)\ell_i  -\sum_{i\,|\,\vartheta_i= 1,\ i\neq u-1,u}   \delta_i \sigma_i  \mu_u(i)\ell_i+\gamma_{u-1}\bar a_{u-1} 
\end{equation}

 Now these edges are linearly independent so such an expression, if it exists, it is unique.  Let us assume for instance that the relation expresses $2e_{u-1}$, the other case is identical.\smallskip
 
We choose the root $r$ as in Figure \eqref{esub}. In order to understand which elements appear in $C_u$, 
 first remark that From Proposition \ref{vamu}  we have:
  \begin{lemma}\label{icontr}\strut
\begin{enumerate}\item If $\ell_u\not\prec\ell_j$ then $\mu_u(j)=0$  and $\ell_j$ does not appear in $C_u$.\item If $\ell_u\prec\ell_j$ and $\ell_j\not\prec  \ell_{u-1}$, we are in the case of figure  \eqref{esu1}    and they contribute by  $\pm \delta_j$.

\item  If  $\ell_u\prec\ell_j \prec\ell_{u-1}$ we have  $\mu_u(j)=\pm 1$  and then a contribution  $\pm \delta_{u-1}$   from  $\delta_{u-1}\bar a_{u-1} $ so a total contribution $\pm \delta_ j \pm \delta_{u-1}$.
\item Finally if  $ \ell_{u-1}\prec\ell_j$  they contribute by $ \pm 2\delta_j$ since $\mu_u(j)=\pm 2$ by   Corollary \ref{leduei1}.
\end{enumerate}
 
 \end{lemma}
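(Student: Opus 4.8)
The plan is to track, edge by edge, the coefficient with which a basis edge $\ell_j$ (with $j\neq u-1,u$) occurs in $C_u(\mathcal R)$, following the successive rewritings \eqref{retr}$\,\to\,$\eqref{laff}$\,\to\,$\eqref{laff1}$\,\to\,$\eqref{relr2} of the resonance relation, and then to evaluate $\mu_u(j)$ --- the coefficient of $e_u$ in the vertex $a_j$ --- from the position of $\ell_j$ relative to the segment $S_u$ of Definition \ref{Su}.

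First I would record that there are exactly two ways an edge $\ell_j$, $j\neq u-1,u$, can contribute to $C_u(\mathcal R)$. By Lemma \ref{primifa} the term $\gamma_j\ell_ja_j$ of $\mathcal R''$ contributes $\gamma_j\mu_u(j)\ell_j$; and when in addition $\ell_u\prec\ell_j\prec\ell_{u-1}$, so that $\ell_j$ is one of the edges making up $\bar a_{u-1}$ in \eqref{baa}, the term $\gamma_{u-1}\bar a_{u-1}$ of \eqref{relr2} adds a further $\gamma_{u-1}\alpha_j\ell_j$ with $\alpha_j=\pm1$. No other term can produce such an $\ell_j$: $C_u(\mathcal R')$ and the contributions attached to $\ell_{u-1},\ell_u$ in \eqref{LuR} only involve $e_{u-1}$, $e_{u+1}$ and the two edges $\ell_{u-1},\ell_u$ themselves. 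Since $|\gamma_i|=|\delta_i|$ for every $i$, this reduces the whole statement to identifying $\mu_u(j)$.

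For that I would use that the only basis edges whose expansions contain $e_u$ are $\ell_{u-1}=\vartheta_{u-1}e_{u-1}-e_u$ and $\ell_u=\vartheta_ue_u-e_{u+1}$, and that $a_j$ is, by Formula \eqref{La} together with \eqref{ukveg}, a signed sum over the edges $\ell$ with $\ell\preceq\ell_j$ (or $\ell\prec\ell_j$). If $\ell_u\not\prec\ell_j$ then --- the root being chosen so that $\ell_u\prec\ell_{u-1}$, see Figure \eqref{esub} --- neither $\ell_u$ nor $\ell_{u-1}$ lies on the path from the root to $\ell_j$, hence $\mu_u(j)=0$ and $\ell_j$ is absent from $C_u$, giving (1). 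If $\ell_u\prec\ell_j$ while $\ell_{u-1}\not\preceq\ell_j$ --- either the branching configuration of Figure \eqref{esu1} in case (2), or $\ell_u\prec\ell_j\prec\ell_{u-1}$ in case (3) --- then $\ell_u$ enters the sum for $a_j$ but $\ell_{u-1}$ does not, so $\mu_u(j)=\pm\vartheta_u=\pm1$; this yields the contribution $\gamma_j\mu_u(j)=\pm\delta_j$, to which in case (3) the additional $\gamma_{u-1}\alpha_j=\pm\delta_{u-1}$ coming from $\bar a_{u-1}$ must be added, producing $\pm\delta_j\pm\delta_{u-1}$. Finally, if $\ell_{u-1}\prec\ell_j$, I would express $a_j$ through \eqref{ukveg} based at the endpoint $x_{u-1}$ of $S_u$: the edges lying past $x_{u-1}$ carry no $e_u$, so $\mu_u(j)$ equals $\pm$ the $e_u$-coefficient of $x_{u-1}$, which in the present type II situation ($\bar L_u=\pm2\delta_ue_{u\pm1}\neq0$) equals $\pm2$ by Proposition \ref{leduei} --- this is exactly the content of Corollary \ref{leduei1} --- and since then $\ell_j\notin\bar a_{u-1}$ the total is $\pm2\delta_j$, which is (4).

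The only genuinely delicate part I foresee is the bookkeeping of the second paragraph: one has to be certain that, after the successive regroupings, no spurious $\ell_j$ with $j\neq u-1,u$ has slipped in through $C_u(\mathcal R')$ or through the terms $\tilde a_{u-1}\ell_{u-1}$ and $a_u\ell_u$ of \eqref{LuR}. Once that is in hand, the four cases are nothing more than a careful application of Proposition \ref{vamu} (equivalently, of the formulas \eqref{La}) together with Corollary \ref{leduei1}.
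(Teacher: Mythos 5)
Your proposal is correct and takes essentially the same route as the paper: the paper's (very terse) proof likewise reduces everything to the observation that only edges with $\ell_u\prec\ell_j$ can carry $e_u$ in $a_j$, and delegates the values of $\mu_u(j)$ to Proposition \ref{vamu} (i.e.\ Formulas \eqref{La}, \eqref{ukveg}) and Corollary \ref{leduei1}, exactly as you do. Your version simply spells out the bookkeeping of the $\bar a_{u-1}$ term and the four positional cases that the paper dismisses with ``the other cases are similar.''
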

 \begin{proof}
 The only edges $\ell_j$ that may contribute to the expression of $C_u$ are those for which  $\ell_u\prec\ell_j$ in fact otherwise  $e_u$ has coefficient 0 in $a_j$ since the path from the root to $a_j$  does not contain  $\ell_u, \ell_{u-1}$.
  \begin{equation}
\label{esu2}   \xymatrix{  &\ell_j &&\\ r  \ar@{-}[ru]   \ar@{-}[r]^{\ell_u} &s\ar@{-}[rr]^{\bar a_{u-1}}&&y \ar@{-}[r]^{\ell_{u-1}}  & x_{u-1}   } .
\end{equation} 
 The other cases      are similar.

 \end{proof}

\medskip
 
{\bf Case 1A} (single even circuit) no extra edge: 

\begin{proposition}\label{noo}
In this case such a relation cannot occur. 
\end{proposition}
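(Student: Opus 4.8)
The claim is that in the pure even--circuit case (Case 1A, no extra edge $E$, and some non--critical index $u$ of type II), we cannot have a relation of the form \eqref{relr2}. I would argue by a counting/parity obstruction coming from the structure of an even circuit. Recall that in an even circuit the only relation among the edges is $\sum_i\delta_i\ell_i=0$ with all $\delta_i=\pm1$, and in particular \emph{all} edges $\ell_j$ of the encoding graph occur in this relation with coefficient $\pm1$. The point of \eqref{relr2} is that it expresses $\pm2\delta_u e_{u\pm1}$ as an integer linear combination of the edges $\ell_j$ with $j\neq u-1,u$; by Proposition \ref{pari}(1) these edges are linearly independent, so such an expression, if it exists, is unique.

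First I would set up the bookkeeping exactly as in Lemma \ref{icontr}: having fixed the root as in \eqref{esub}, the edges $\ell_j$ contributing to $C_u(\mathcal R)$ are precisely those with $\ell_u\prec\ell_j$, and they split into the three families (ii), (iii), (iv) of that lemma, contributing respectively $\pm\delta_j$, $\pm\delta_j\pm\delta_{u-1}$, and $\pm2\delta_j$. So \eqref{relr2} becomes an identity $\pm2\delta_u e_{u\pm1}=\sum_{j}c_j\ell_j$ in which every coefficient $c_j$ is (up to the sign $\gamma_{u-1}$ absorbed into $\bar a_{u-1}$) one of $\pm1,\pm2$, and the index $j$ ranges over the edges lying ``beyond'' $\ell_u$ in the tree. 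Next I would use the fact that the whole encoding graph of $T$ is the even circuit itself (Proposition \ref{icin}, case 1)): so the set of edges with $\ell_u\prec\ell_j$ together with $\ell_u,\ell_{u-1}$ is a sub--\emph{path} of the circuit, say the arc of the circuit from one endpoint of $S_u$ around to the other; walking around this arc and using Lemma \ref{icamm}, a $\pm1$--combination of \emph{all} the edges on the arc from $e_{u+1}$ back to $e_{u-1}$ equals $\pm(e_{u-1}\pm e_{u+1})$, not $2e_{u-1}$. The contradiction I expect to extract is: the coefficient of $e_{u-1}$ on the left of \eqref{relr2} is $\pm2$, but on the right, because the encoding graph is a single circuit, the vertex (index) $u-1$ has valency exactly $2$ in the circuit, so $e_{u-1}$ occurs in exactly two of the edges $\ell_j$ that appear (namely $\ell_{u-2}$ and $\ell_{u-1}$), one of which ($\ell_{u-1}$) is excluded from the sum; hence $e_{u-1}$ can appear with coefficient at most $\pm1$ (from $\ell_{u-2}$ alone), contradicting the $\pm2$ on the left. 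More carefully one must check the role of $\bar a_{u-1}$: the extra contribution $\pm\delta_{u-1}$ from $\gamma_{u-1}\bar a_{u-1}$ attaches only to the edges $\ell_j$ with $\ell_u\prec\ell_j\prec\ell_{u-1}$ (family (iii)), and one verifies those edges again do not supply a second copy of $e_{u-1}$.

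The cleanest way to package this is to compare \eqref{relr2} with the unique relation $\mathcal R=\sum_i\delta_i\ell_i=0$. Add to \eqref{relr2} a suitable multiple of $\mathcal R$ (which in the even case has every $\delta_i=\pm1$) so as to reintroduce $\ell_{u-1}$ and $\ell_u$; one then obtains an identity $\pm2\delta_u e_{u\pm1}=\sum_i d_i\ell_i$ over \emph{all} edges of the circuit, i.e. $\pm2 e_{u\pm1}$ lies in the $\Z$--span of the $\ell_i$. But by Lemma \ref{zita} (case 1, even circuit) that span is exactly the hyperplane $\{a\mid \zeta(a)=0\}$, and $\zeta(e_{u\pm1})=\pm1\neq0$, so $\zeta(\pm2e_{u\pm1})=\pm2\neq0$ --- contradiction. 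This reduces the whole proposition to one short computation with $\zeta$ and no case analysis at all, which I would prefer to the edge--by--edge valency argument.

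\textbf{Main obstacle.} The delicate point is justifying that the ``reintroduction'' step is legitimate --- i.e. that $\pm2\delta_u e_{u\pm1}$, which a priori is only an identity among the edges $\ell_j$ with $j\neq u-1,u$, really does lie in the $\Z$--span of the \emph{full} edge set once we note that the missing two edges $\ell_{u-1},\ell_u$ are themselves integer combinations of the $e_i$ and of $\mathcal R$. Equivalently one must be sure that the coefficient $\gamma_{u-1}\bar a_{u-1}$ appearing on the right of \eqref{relr2}, when expanded via $\bar a_{u-1}=\sum_{\ell_u\prec\ell\prec\ell_{u-1}}\alpha_\ell\ell$ with $\alpha_\ell=\pm1$ (Formula \eqref{baa}), genuinely contributes only edges already in the span --- which it does, but this is exactly where one is using that in an even circuit $\bar a_{u-1}$ is a $\pm1$--combination (not a $\pm2$--combination, as could happen in the doubly odd case). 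So the proof is really: (1) expand $C_u(\mathcal R)=0$ as \eqref{relr2}; (2) move everything into the $\Z$--span of the $\ell_i$ using $\mathcal R$; (3) apply $\zeta$ and get $\zeta(2e_{u\pm1})=\pm2\neq0$, contradicting $\zeta(\ell_i)=0$. The only real care needed is tracking the $\pm1$ versus $\pm2$ coefficients so that step (2) stays inside the lattice of Lemma \ref{zita}.
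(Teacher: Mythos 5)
Your final, packaged argument is correct, but it takes a genuinely different route from the paper's. The paper proves the proposition by chasing coefficients around the circuit: since $e_{u-1}$ occurs, among the admissible edges, only in $\ell_{u-2}$, an identity $2e_{u-1}=\sum_j c_j\ell_j$ forces $c_{u-2}=-2$, which in turn expresses $\pm 2e_{u-2}$ through a still smaller edge set, and the induction runs around the circuit until no edges remain. Your preferred proof replaces this induction by a one-line invariant computation: every edge of an even circuit satisfies $\zeta(\ell_i)=0$ (exactly the content of Lemma \ref{zita}, case 1; for $i<k$ this is the defining recursion, and for the closing edge it uses that the number of red edges is even), whereas $\zeta(\pm 2e_{u\pm1})=\pm2\neq0$, so the identity \eqref{relr2} is impossible. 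This buys a shorter, case-free proof at the cost of invoking Lemma \ref{zita}; the paper's induction is self-contained and is in effect the unpackaged version of the same obstruction, since the functional $\zeta$ encodes precisely the propagation of the forced coefficients. Two remarks. First, your preliminary ``valency'' argument is not correct as stated: the coefficient of $\ell_{u-2}$ in \eqref{relr2} may well be $\pm2$ (family (iv) of Lemma \ref{icontr}), so the coefficient of $e_{u-1}$ on the right can be $\pm2$; indeed the paper's proof exploits exactly that it is forced to equal $-2$. Since you discard this sketch in favour of the $\zeta$ argument, no harm results. Second, the ``reintroduction'' step you worry about is unnecessary: the right-hand side of \eqref{relr2}, including $\gamma_{u-1}\bar a_{u-1}$, which by \eqref{baa} is a $\pm1$-combination of edges lying between $\ell_u$ and $\ell_{u-1}$ (and in case 1A every edge of $T$ is an edge of the relation, Proposition \ref{icin}), is already an integer combination of the $\ell_j$; applying $\zeta$ directly yields the contradiction, with no need to add multiples of $\mathcal R$ or to track $\pm1$ versus $\pm2$ coefficients, since $\zeta$ kills each $\ell_j$ whatever its coefficient.
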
\begin{proof}
For instance if $2e_{u-1}$ is a linear combination $\sum_jc_j\ell_j$ of the edges $\ell_j\neq  \ell_{u-1},\ \ell_u$ since $e_{u-1}$ only appears in $\ell_{u-2}$ with sign $-1$ we must have that $c_{u-2}=-2$ and then $2e_{u-2}$ is a linear combination $\sum_jc_j\ell_j$ of the edges $\ell_j\neq  \ell_{u-2}, \ell_{u-1},\ \ell_u$, continuing by induction we reach a contradiction.\end{proof}

\subsubsection{{\bf Case 1B}  (single even  circuit)  with an extra edge:} \quad We may assume that the extra edge is $E=\vartheta e_1-e_h$, this edge divides the circuit into two parts $A,B$. The edges in $A:=\{\ell_1,\ldots,\ell_{h-1}\}$ and $E$ form an odd circuit as well as the edges in $B$ and $E$.   

Since $u$ has valency 2 we have $1,h\neq u$, we may  assume for instance that $h<u$ and $ u $ is an index in $B$ (we walk the circuit clockwise) the other case is identical. \begin{equation}\label{dude1}  \xymatrix{        \ar@{=}[r] &\cdots&h-1 \ar@{=}[r]    &h  \ar@{=}[r] &\cdots &u-1\ar@{-}[r] &u      && \\      \ar@{=}[u]_{\qquad\  A} \ar@{=}[r]   &\cdots 2\ar@{ -}[r] &1\ar@{ -}[r] \ar@{ =}[ru] ^E _{\qquad\qquad B}   &k&\cdots  \qquad \ar@{=}[r]  & \ar@{=}[r] &u+1 \ar@{=}[u]      &&  }\end{equation}
\begin{proposition}\label{ed1b}\strut If $u\in B$:\begin{enumerate}[1)]\item We have $E\prec \ell_{u-1}$.

\item The edges $\ell_a,\ a\in A$ satisfy  $\ell_u\prec \ell_a$ but not   $\ell_{u-1}\prec\ell_a$ or  $\ell_a\prec\ell_{u-1}$.

\item If an edge  $\ell_k,\ k\in B$  satisfies $\ell_u\prec\ell_k$  then  either $ \ell_k\prec\ell_{u-1}$ or $\ell_{u-1}\prec\ell_k     $. 

\item All the other edges are not comparable with $\ell_u$.\end{enumerate}

\end{proposition}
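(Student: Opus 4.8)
Everything will be extracted from the expression \eqref{relr2} of $2\delta_u e_{u-1}$ (we treat this case; the case $-2\delta_u e_{u+1}$ is symmetric) as a $\Z$-linear combination of the edges $\ell_j$, $j\neq u-1,u$, together with the edges occurring in $\bar a_{u-1}$. This expression is unique, since $\{\ell_j : j\neq u-1,u\}\cup\{E\}$ is linearly independent: any $k-1$ of $\ell_1,\dots,\ell_k$ are independent by Lemma \ref{zita}, and $E$ is independent of all of them. I shall use throughout that in Case 1B the minimal relation among $\ell_1,\dots,\ell_k$ has coefficients $\pm1$; that an edge $\ell_j$ can occur in \eqref{relr2} only when $\ell_u\prec\ell_j$, with coefficient $0$, $\pm1$ or $\pm2$ according to Lemma \ref{icontr}; and that, by Definition \ref{abau} and since no edge precedes $\ell_u$, the edge $E$ occurs in \eqref{relr2} if and only if $\ell_u\prec E\prec\ell_{u-1}$, i.e. if and only if $E\prec\ell_{u-1}$.

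Part 1) is obtained from the homomorphism $\zeta\colon\Z^k\to\Z$ of Lemma \ref{zita}. Since the circuit is even we have $\zeta(\ell_j)=0$ for every circuit edge, while $\zeta(E)\neq0$ --- otherwise $E$ would lie in $\ker\zeta\cap\Z^k=\mathrm{span}_\Z(\ell_1,\dots,\ell_k)$, contradicting its linear independence from the $\ell_i$. As $\zeta(2\delta_u e_{u-1})=2\delta_u\zeta_{u-1}=\pm2\neq0$, the right-hand side of \eqref{relr2} must involve $E$; hence $E\in\bar a_{u-1}$ and $E\prec\ell_{u-1}$, which is 1).

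For 2), 3), 4) I will chase the cancellation pattern of \eqref{relr2} basis vector by basis vector, starting from $e_{u-1}$. The coefficient with which $E$ occurs is $\gamma_{u-1}\alpha_E=\pm1$, because $\alpha_E=\pm1$ and $\gamma_{u-1}=\pm\delta_{u-1}=\pm1$. Since $e_{u-1}$ lies only on $\ell_{u-2}$ and on the excluded $\ell_{u-1}$ (and on $E$ in the borderline position $u-1=h$), its coefficient forces $\ell_{u-2}$ into \eqref{relr2} with coefficient $\pm2$; then, exactly as in the proof of Proposition \ref{noo}, the two circuit edges meeting at each successive index $u-2,u-3,\dots$ propagate a chain of coefficients $\pm2$ that descends through the $B$-indices down to index $h$. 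At $h$ the edge $E$ contributes only $\mp1$ at $e_h$ and so cannot by itself absorb the $\pm2$ present there; hence $\ell_{h-1}$ must occur too, and the balance at $e_h$ forces its coefficient to equal $\pm1$. By Lemma \ref{icontr} a coefficient $\pm1$ is possible only in the configuration of Figure \eqref{esu1}, namely $\ell_u\prec\ell_{h-1}$ with $\ell_{h-1}$ incomparable to $\ell_{u-1}$. Propagating the $\pm1$ down $\ell_{h-1},\ell_{h-2},\dots,\ell_1$ (each must cancel a $\pm1$ amount, hence occurs with coefficient $\pm1$) places every edge of $A$ in that configuration, which is 2); the residual $e_1$ is then killed by $E$ together with $\ell_k$, closing the computation. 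Conversely, a $B$-edge occurring in the configuration of Figure \eqref{esu1} would, by the same mechanism, launch a chain of $\pm1$'s or of $\pm2$'s running up through $B$ toward the excluded $\ell_{u-1},\ell_u$ and leave $e_{u+1}$ with a nonzero, uncancellable coefficient on the right of \eqref{relr2} --- impossible. Thus no edge of $B$ lies in that configuration: every $B$-edge other than $\ell_{u-1},\ell_u$ is either incomparable with $\ell_u$, which is 4), or satisfies $\ell_u\prec\ell_k\prec\ell_{u-1}$ or $\ell_{u-1}\prec\ell_k$, which is 3).

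The main obstacle is organizational. The chain has to be run through all the sign choices of the $\vartheta_i$ and through the $18$ colour/orientation configurations of \S \ref{18} (which enter only through signs, not through the combinatorial conclusion), and the borderline positions where $E$ is adjacent to $\ell_{u-1}$ in the order --- in particular $u=h+1$, so that $E$ already meets $e_{u-1}$, and the mirror-image position near the other end of the circuit --- must be verified separately, although by the same argument. The conceptual content is confined to the $\zeta$-obstruction used for 1) and to the forced-chain device already exploited in Proposition \ref{noo}.
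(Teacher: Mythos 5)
Your argument is correct and shares the paper's skeleton: both proofs exploit that the expression \eqref{relr2} of $\pm2\delta_u e_{u\pm1}$ in terms of the linearly independent set $\{\ell_j:\ j\neq u-1,u\}\cup\{E\}$ is unique, and then translate the parity of the coefficients ($0,\pm1,\pm2$) into the comparability statements via Lemma \ref{icontr} and Corollary \ref{leduei1}. Where you genuinely diverge is in how the coefficient pattern is identified. The paper builds the competing relation $\mathcal R^\dagger$ top-down in one stroke: Proposition \ref{pari} v) writes $2e_h$ as a signed sum of $E$ and the $A$-edges, and Formula \eqref{priR} transports it along $B$ to $2e_{u-1}$, so the pattern ($\pm1$ on $A\cup\{E\}$, $\pm2$ on $\ell_h,\dots,\ell_{u-2}$, $0$ on the rest) is read off at once and matched with \eqref{relr2}; in particular the occurrence of $E$, hence part 1), comes for free. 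You instead obtain 1) from the $\zeta$-homomorphism of Lemma \ref{zita} ($\zeta$ annihilates every circuit edge but not $e_{u-1}$, while $\zeta(E)\neq0$, so $E$ must occur, and it can only occur inside $\bar a_{u-1}$) --- a clean alternative --- and then recover the coefficients bottom-up by the balance/propagation device of Proposition \ref{noo}. That chase does work, but it costs care the explicit $\mathcal R^\dagger$ avoids: at the junction $e_h$ you need the a priori bound $|c_j|\le 2$ from Lemma \ref{icontr} to exclude the value $\pm3$ (you do state this bound upfront, so the step is sound), the independence of any $k-1$ circuit edges should be justified by the uniqueness of the minimal relation with all coefficients $\pm1$ rather than by Lemma \ref{zita} alone, and the borderline adjacencies such as $u=h+1$ need separate verification, whereas $\mathcal R^\dagger$ treats all positions uniformly. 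Finally, your reading of 4) as the complementary trichotomy (no $B$-edge sits in the branch configuration of Figure \eqref{esu1}) is exactly what the paper's own one-line treatment of 4) delivers, so nothing is lost there.
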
\begin{proof} In this case all $\delta_j=\pm 1$.

We know that, by Proposition \ref{pari} v), we can write $2e_{h}=\gamma E+\sum_{i=1}^{h-1} \gamma_i\ell_i$ uniquely as the sum  of the edges of the odd circuit $A,E$ with signs $\gamma=\pm 1$.  

 If $\bar L_u=\pm 2e_{u-1}$ we write $2e_{u-1}= \pm \sum_{k=h}^{u-2}2\gamma_k\ell_k\pm 2e_h$ by Formula \eqref{priR}. We obtain a  relation   \begin{equation}\label{relrp}
\mathcal R^\dagger:\qquad 2e_{u-1}=\pm\gamma E\pm\sum_{i=1}^{h-1} \gamma_i\ell_i\pm \sum_{k=h}^{u-2}2\gamma_k\ell_k
\end{equation}    If $\bar L_u=\pm 2e_{u+1}= \pm \sum_{a=u+1}^{k}2\gamma_a\ell_a\pm 2e_1$ we  have a similar discussion for  $2e_1$ instead of $2e_h$.\smallskip

 The edges appearing in this  relation are all the edges of $A,E$ with coefficient $\pm 1$  and all the edges $\ell_k,\ h\leq k\leq u-2$ with coefficients $\pm 2$.  These edges are linearly independent  so this relation must be proportional (by $\pm 1$) to  \eqref{relr2}.  Notice that this is quite analogous to what we did for the relation of the odd circuits.

  \smallskip 
 
1)\quad    Since $E$,    is not an edge $\ell_i$,  it must   appear  in \eqref{relr2} as a term in  $\bar a_{i-1}$. This means that $E\prec \ell_{u-1}$.  \smallskip

2)\quad  We know that  all the edges in $A$ appear in \eqref{relrp} with coefficient $\pm 1$. If   $\ell_{u-1}\prec\ell_a$ then $\ell_a$  does not appear in $\bar a_{u-1}$  and  by  Corollary \ref{leduei1} it would have as coefficient $\pm 2$.

If  $\ell_a\prec\ell_{u-1}$ by Lemma \ref{icontr} iii) we would have a coefficient $0,\pm 2$ so 2) follows.  \smallskip

 3)\quad  The edges  $\ell_k,\ k\in B,\ h\leq k\leq u-2$ appear in $\mathcal R^\dagger$   with coefficient $\pm 2$.  
   
   In   \eqref{relr2}  if an  edge  $\ell_k,\ k\in B$   appears with coefficient $\pm 2$  then either $ \ell_k\prec\ell_{u-1}$ or $\ell_{u-1}\prec\ell_k     $  by  Lemma  \ref{icontr} ii). \smallskip

 4)\quad  All the others are not comparable with $\ell_u$.   \end{proof}
   \begin{proposition}\label{uvB}\strut  \begin{enumerate}[1)]\item If $u\in A,\ v\in B$  both of type II then $S_u\cap S_v=E$.
\item If $u,v\in B$  both of type II the union  of $S_u$ and $S_v$  is a segment.

\item The union of  $S_u,\ u\in B$ and $u$ of type II is a segment.
 \end{enumerate}

\end{proposition}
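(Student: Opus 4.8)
The plan is to prove Proposition \ref{uvB} by systematically exploiting the comparability structure established in Proposition \ref{ed1b}, together with the tree lemmas of the preceding subsection (Remark \ref{tws}, Lemma \ref{SEP}, Proposition \ref{unsegp}). Throughout, keep in mind that each $S_u$ is the segment $\langle \ell_{u-1},\ell_u\rangle$ in $T$ generated by the two edges adjacent to the non-critical index $u$, and that by Proposition \ref{ed1b} the relative positions of all edges with respect to the pair $\ell_{u-1},\ell_u$ are tightly controlled.

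For part 1), take $u\in A$ and $v\in B$, both of type II. By Proposition \ref{ed1b} applied to $v$ (an index in $B$), we have $E\prec \ell_{v-1}$ and the edges $\ell_a,\ a\in A$ satisfy $\ell_v\prec\ell_a$ but are not comparable with $\ell_{v-1}$; in particular $\ell_{u-1}$ and $\ell_u$, being edges in $A$, lie on the path from $r$ to $\ell_v$ extended through $E$. The symmetric statement (applying Proposition \ref{ed1b} with the roles of $A$ and $B$ interchanged, i.e. choosing the root so that $u$ plays the role played before by an index of $B$) shows that $E\prec\ell_{u-1}$ and that the edges of $B$ are not comparable with $\ell_{u-1}$. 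Thus $|S_u|$ consists of edges in $A\cup\{E\}$ and $|S_v|$ consists of edges in $B\cup\{E\}$; since $A$ and $B$ share no edge, the only possible common edge of $S_u$ and $S_v$ is $E$. To see that $E$ actually lies in both: $E\prec\ell_{u-1}$ forces $E$ to lie on the segment from the endpoint of $\ell_u$ to the endpoint of $\ell_{u-1}$, hence $E\in S_u$, and likewise $E\in S_v$; as $S_u\cap S_v$ is a segment by Remark \ref{tws} and contains the edge $E$ but no other edge, we get $S_u\cap S_v=E$.

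For parts 2) and 3), I would argue that all the segments $S_u$ with $u\in B$ of type II share the common edge $E$: by Proposition \ref{ed1b} 1), $E\prec\ell_{u-1}$ for every such $u$, which as above gives $E\in S_u$. By Remark \ref{tws} each pairwise intersection $S_u\cap S_v$ is a segment (it is nonempty, containing $E$). Now I invoke Proposition \ref{unsegp}: a family of segments in $T$ which all contain a fixed edge $E$, and whose pairwise intersections are segments, has union a segment. This immediately yields part 3), and part 2) is the two-element case of the same statement (or a direct application of Lemma \ref{SEP} 1), noting $|S_u|\cap|S_v|\ni E$ is nonempty). One should also record, for consistency with the later figures, that by Proposition \ref{ed1b} 3)--4) the edges of $S_u$ with $u\in B$ comparable with $\ell_u$ lie in $B$ or are $E$, so the resulting segment $S_B$ (together with $E$) separates the $A$-part from whatever lies beyond $\ell_{u-1}$.

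The main obstacle I anticipate is \emph{bookkeeping of orientations and of which endpoint of each $\ell_i$ is the relevant vertex} — that is, translating the abelian-group-level statements of Proposition \ref{ed1b} (phrased in terms of $\prec$ and coefficients $\pm1,\pm2$ in the relation $\mathcal R^\dagger$ of \eqref{relrp}) back into honest assertions about the combinatorial tree $T$ and its segments. In particular, one must be careful that ``$E\prec\ell_{u-1}$'' genuinely places $E$ inside the segment $S_u=\langle\ell_{u-1},\ell_u\rangle$ and not merely on the path from the root; this is where the choice of root in Figure \eqref{esub} and the fact that $\ell_u$ is the \emph{first} edge of that path are used. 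Once the dictionary between the algebraic comparability data and the geometric segment data is set up cleanly, parts 2) and 3) are a direct appeal to Proposition \ref{unsegp}, and part 1) follows from the edge-count $|A\cap B|=0$ plus $E\in S_u\cap S_v$.
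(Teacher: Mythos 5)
Part 1) of your argument is fine and is essentially the paper's: $E$ lies in both $S_u$ and $S_v$ by Proposition \ref{ed1b} 1) (and its $A$--$B$ symmetric version), the remaining edges of $S_u$ are in $A$ and those of $S_v$ are in $B$, and the intersection is a segment by Remark \ref{tws}, hence equals $E$.

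For parts 2) and 3) there is a genuine gap. The statement you are implicitly relying on --- that segments in a tree which all contain the edge $E$ and intersect pairwise in segments must have union a segment --- is false as a general fact about trees: if $c$ is a vertex of valency $3$ with neighbours reached through edges $bc$, $cd$, $ce$ and $ab$ precedes $bc$, the two segments $\{ab,bc,cd\}$ and $\{ab,bc,ce\}$ share the edge $ab$, intersect in the segment $\{ab,bc\}$, yet their union is not a segment, because they branch apart at the far end of their common part. Lemma \ref{SEP} 1) cannot be used to exclude this, since its hypothesis that the interior vertices have valency $2$ is exactly what fails for type II indices: by Proposition \ref{ed1b} 2) the edges of $A$ hang off \emph{interior} vertices of $S_u$, so $S_u$ has interior vertices of valency $\geq 3$ in $T$. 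And Proposition \ref{unsegp} does not close the gap either: its proof (and the paper's use of it) presupposes that each pairwise intersection is an \emph{initial} piece of one segment and a \emph{final} piece of the other, which is precisely the point at issue. This is the actual content of part 2), and it is not a formal consequence of ``$E$ is a common edge''. The paper proves it from Proposition \ref{ed1b} 3): writing $\ell_h\prec\ell_k$ for the end edges of $S=S_u\cap S_v$, if some $\ell_j\in S_v$ satisfies $\ell_k\prec\ell_j$ then $\ell_u\prec\ell_j$, so by \ref{ed1b} 3) either $\ell_j\preceq\ell_{u-1}$ (impossible, as it would put $\ell_j$ in $S$ beyond $\ell_k$) or $\ell_{u-1}\prec\ell_j$, which forces $\ell_k=\ell_{u-1}$, i.e.\ the overlap is final in $S_u$; the analogous statement at the other end is obtained by re-rooting at $x_{u-1}$ (using that $S_u$ is root-independent, Definition \ref{Su}), giving $\ell_h=\ell_u$. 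Only with this ``end-to-end overlap'' established does part 2) follow, and only then is part 3) a legitimate induction in the style of Proposition \ref{unsegp}. You need to supply this step; the bookkeeping issue you flag at the end (whether $E\prec\ell_{u-1}$ puts $E$ inside $S_u$) is handled correctly, but it is not where the difficulty lies.
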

\begin{proof}
1)\quad In both cases the intersection  $S_u\cap S_v$  is a segment  $S$  (containing $E$), see \eqref{intt}.  In the first case by  Proposition \ref{ed1b} 2)  the   edges different from $E$ in $S_u$ are in $A$ while the other edges in $S_v$ are in $B$ so $S=E$.

2)\quad Take  $u,v\in B$  denote by  $\ell_h\prec \ell_k $  the end edges  of the segment  $S=S_u\cap S_v$      (possibly one of these edges is $E$). 

 If for $\ell_j\in  S_v$  we have  $\ell_k\prec \ell_j$  then  $\ell_u\prec\ell_k$, So by Proposition \ref{ed1b} 3)   either $\ell_j\preceq \ell_{u-1}$  or $\ell_{u-1}\prec \ell_j.$  
The first $\ell_j\preceq \ell_{u-1}$ contradicts the choice of  $\ell_k$  so we have the second and hence $\ell_{u-1}=\ell_k$. 

Recall that the two segments $S_u,S_v$ do not depend on the choice of the root, Definition \ref{Su}, so if we take as root the opposite end $x_{u-1}$ of $S_u$  we have a new order   $\prec'$ on the vertices of $T$.  In this new order if an edge  $\ell_j\subset S_v$  does not satisfy  $\ell_h\preceq\ell_j$  then   $\ell_h\prec'\ell_j$ and   then $\ell_h=\ell_u$.

So unless one is contained in the other the two segments intersect in a segment which is either initial in $S_u$ and final in $S_v$ or the converse. In all cases the union is a segment.

3)\quad This follows from Proposition \ref{unsegp}.\end{proof}
\subsubsection{Geometry of $T$ case 1B)}
   Denote by  $T_A$ and $T_B$  the  two minimal trees, inside $T$, generated by the edges $\ell_c$ with $c\in A,c\in B$ respectively. We have:
   \begin{corollary}\label{ePo} \strut  A)\quad  If the indices of $A$ (resp. of  $B$) are all of type I  then \begin{enumerate}[1)]\item  $T_A=\cup_{u\in A}S_u$ (resp.   $T_B=\cup_{v\in B}S_v$)  is a segment not containing $E$. Each internal vertex in $T_A$ is internal in at least one $S_u$ so it has valency 2.

   \item If the indices of $A$ and $B$ are all of type I  then  $T_A$ and $T_B$ form two disjoint segments separated by $E$. \end{enumerate}    
   \medskip

\quad B)\ If there is an index in $B$  (resp. in $A$) of type II,
\strut \begin{enumerate}[1)]\item the two minimal trees  $T_A$ and $T_B$  generated by $A,B$ respectively are segments and can intersect only in a vertex  or in the edge $E$. 
\item  If they intersect in a vertex then all $v\in A$ (resp. all   $v\in B$) have type I and the vertex is an end point of $E$.
 \end{enumerate}
\end{corollary}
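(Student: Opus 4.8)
The plan is to derive all four assertions from the facts already proved about the segments $S_u$ — Corollary \ref{seq}, Proposition \ref{intve}, Proposition \ref{edE}, Proposition \ref{ed1b}, Proposition \ref{uvB}, Corollary \ref{leduei1} — together with the elementary tree geometry of Lemma \ref{SEP}, Proposition \ref{unsegp} and Remark \ref{tws}. Two structural facts are used throughout: by Remark \ref{divido} a non-critical index $u$ lies in $A$ (resp.\ $B$) exactly when both $\ell_{u-1},\ell_u$ do, so the non-critical indices of $A$ and of $B$ each form a block of consecutive integers; and in case 1B every edge of $T$ lies in $A\cup B\cup\{E\}$ with $A\cap B=\emptyset$ (Proposition \ref{icin}).

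For part A) assume every non-critical index of $A$ is of type I. Since these indices form a consecutive block, Corollary \ref{seq} applies: $\bigcup_{u\in A}S_u$ is a segment and each of its interior vertices is interior to some $S_u$, hence of valency $2$; and by Proposition \ref{edE} no $S_u$ contains $E$. I would then check that $\bigcup_{u\in A}S_u=T_A$ — the union is connected and contains $A$, so it contains $\langle A\rangle=T_A$, while each $S_u$ with $u\in A$ is the segment of $T$ generated by two edges of $T_A$, hence is contained in $T_A$. This is A)1), and the statement for $B$ is identical. If $B$ is also entirely of type I then $T_A=S_A$, $T_B=S_B$ in the notation of Theorem \ref{lu1}, which asserts that $E$ separates them; in particular $T_A\cap T_B=\emptyset$, giving A)2).

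For part B) fix an index of $B$ of type II (the $A$-case is symmetric). To see $T_B$ is a segment: its type-I indices sit in consecutive sub-blocks, each producing a segment with valency-$2$ interior by Corollary \ref{seq}, and the union over the type-II indices of $B$ is a segment by Proposition \ref{uvB}(2)--(3); as consecutive $S_v,S_{v+1}$ share the edge $\ell_v$, the union $T_B=\bigcup_{v\in B}S_v$ is connected and one welds it into one segment by iterating Lemma \ref{SEP} and Proposition \ref{unsegp}. If $A$ also contains a type-II index, the same reasoning makes $T_A$ a segment, and Proposition \ref{uvB}(1) forces $S_u\cap S_v=E$ for type-II indices $u\in A$, $v\in B$, so $E\in T_A\cap T_B$; since no edge of $T$ outside $A\cup B$ other than $E$ exists and $A\cap B=\emptyset$, the intersection equals $E$. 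If instead $A$ is entirely of type I, then $T_A$ is a segment not containing $E$ by A)1) and, as in Theorem \ref{lu1}, it uses only edges of $A$; hence $T_A\cap T_B$ contains no edge and is a single vertex. This dichotomy gives B)1), and also B)2): the intersection is a vertex precisely when $A$ is all of type I, and then, using Proposition \ref{ed1b}(1) to locate $E$ inside $T_B$ relative to the type-II index, that vertex is forced to be an endpoint of $E$.

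The step I expect to be the main obstacle is the welding in part B): showing that the consecutive type-I and type-II sub-blocks of $B$ give segments whose pairwise overlaps are segments (indeed single edges), so that Lemma \ref{SEP} and Proposition \ref{unsegp} genuinely apply and $T_B$ comes out a segment and not a branched tree — one must rule out an $A$-edge or a far $B$-edge slipping into some $S_v$ and creating a branch. This is done by feeding the comparability data of Proposition \ref{ed1b} (which edges $\ell_j$ are $\prec$ or $\succ$ the boundary edges of a given $S_u$) into the uniqueness of the relation \eqref{relr2}, exactly in the style of the proof of Proposition \ref{uvB}; the remaining bookkeeping (the identification $\bigcup S_u=T_A$, the absence of shared edges, and B)2)) is then routine.
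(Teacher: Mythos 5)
Your part A) and the ``segments'' half of B)1) do follow the paper's own route: Corollary \ref{seq} plus Proposition \ref{edE} for A)1) (and your verification that $\cup_{u\in A}S_u=T_A$ is the right, if tacit in the paper, observation); for B)1) the union of the type II segments via Proposition \ref{uvB} welded with the type I ones through Lemma \ref{SEP}, with Proposition \ref{ed1b} 2) used to keep $A$--edges out of $T_B$ (the paper phrases this as: an $A$--edge inside the union would create a circuit). Citing Theorem \ref{lu1} for A)2) is legitimate, since it precedes the corollary and has exactly the hypothesis of A)2); the paper instead re-derives the separation by a minimality argument (a common edge or common endpoint would make $T_A\cup T_B$ a segment avoiding $E$, hence a proper degenerate resonant subgraph). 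These are cosmetic differences.

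The genuine gap is B)2). You dispose of it with ``using Proposition \ref{ed1b}(1) to locate $E$ inside $T_B$ relative to the type-II index, that vertex is forced to be an endpoint of $E$'' and file it under routine bookkeeping. But Proposition \ref{ed1b} 1) only says $E\prec\ell_{u-1}$, i.e.\ $E$ lies \emph{somewhere} in $S_u$ for each type II index $u\in B$, and \ref{ed1b} 2) only says the $A$--edges branch off an \emph{internal} vertex $v$ of $S_u$; nothing in these statements prevents $v$ from sitting at positive distance from $E$ inside $S_u$. Excluding that is the most delicate step of the paper's proof and is a dedicated contradiction argument: assume $v$ is not an endpoint of $E$, let $U$ be the sub-segment of $S_u$ from $v$ to $E$, and show every index $j$ with $\ell_j\in U$ is of type I (a type II $j$ would have $E\in S_j$ and would force the valency-$\geq 3$ vertex $v$ to be internal to $S_j$, against Proposition \ref{intve}); moreover $S_j\subset U$ for such $j$, so one propagates edge by edge along $B$ ($\ell_j$ is also an edge of $S_{j+1}$; $j+1$ of type II again puts $v$ inside $S_{j+1}$ or places $E$ between $\ell_j$ and $\ell_{j+1}$, both impossible) and concludes that \emph{all} indices of $B$ are of type I, contradicting the standing hypothesis that $B$ contains a type II index. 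This induction is the actual content of B)2); your proposal does not contain it, and the tool you invoke does not substitute for it. A smaller looseness of the same kind occurs in your claim that $T_A\cap T_B$ is exactly $E$ when both blocks have type II indices: $A\cap B=\emptyset$ is not by itself enough, one needs that $T_A$ (resp.\ $T_B$) contains no $B$--edge (resp.\ $A$--edge), which again comes from Proposition \ref{ed1b} 2), not from disjointness of the index sets.
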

\begin{proof} A)\quad 1)\ In this case we know, by \S \ref{coL0}, that all the segments $S_u$ for $u$ non critical are segments which do not contain $E$ and with the interior vertices of valency 2. 
The statement follows from 
Corollary \ref{seq}.
  \smallskip

2)\quad If these two segments have an edge in common  then, by the same Lemma \ref{SEP}, their union is a segment not containing $E$ and thus  this segment gives a minimal degenerate graph and the one we started from is not minimal.  The same happens if they meet in an end point of both. The only remaining case is that  $T_A$ and $T_B$ form two disjoint segments separated by $E$.
 \begin{equation}\label{casb}
\xymatrix{     a \ar@{- }[rr]^{T_A} &&  b\ar@{-}[r]^{E}  &c  \ar@{- }[rr]^{T_B}  && d }
\end{equation}
\medskip

B)\quad 1)\ Let us prove that $T_B$ and $T_A$ are segments $S_A,S_B$. We start  for $T_B$. By   Proposition \ref{uvB} 2) the union of  $S_u,\ u\in B$ and $u$ of type II is a segment $S$. If there are indices $u\in B$  of type I, we start with one preceding or following an index of type II  so $S_u\cap S\neq \emptyset$. Since the internal vertices of $S_u$ have valency 2 (Corollary \ref{ePo} 1))   it follows that $S\cup  S_u$ is a segment, it is all formed by edges in $B$ since otherwise it would  form a circuit with some edge of $A$ by 2) of Proposition \ref{ed1b}. Now we continue by induction.
\smallskip

As for $T_A$ if there is also a vertex of type II on $A$ then the previous discussion applies also to $A$  and we have $E$ internal to  $S_A,S_B$ so the picture is \begin{equation}\label{casb2}
\xymatrix{   &&&b&&&\\  & a\ar@{- }[rr]^{S_B}&&v \ar@{- }[u]^{S_A}    \ar@{-}[r]^{E}  &c  \ar@{- }[rr]^{S_B}  &&  d\\\  &  &&&e \ar@{- }[u]^{S_A} }
\end{equation}

Now assume that    all vertices of $A$ are of type I
so, by Part A) 1) ,  $T_A=S_A$  is a segment does not contain $E$  and $S_A\cap S_B$  can only intersect in an end vertex of $S_A$

  By Proposition \ref{ed1b} 2) $v$ is   an internal point of each $S_u$ with $u$ of type  II.
 Now suppose that this vertex $v\in S_u$ and it is not an end point of  $E$. 

Call $U$ the segment from $v$ to $E$, the picture is: 
$$\xymatrix{   &&&&&&\\  & \ar@{- }[r]^{S_B}&v \ar@{- }[u]^{S_A}  \ar@{- }[rr]^U \ar@{- }[r]_{\ell_j}&&  \ar@{-}[r]^{E}  &  \ar@{- }[rr]^{S_B}  &&  }$$

For all the edges    $\ell_j\in U$ the index $j$ must be of type I.   If $j$ is of type II then $v$  must be internal also to $S_j$ which contains $E$  and has one end edge $\ell_j$ to the left of $E$  so the second to the right of $E$. This contradicts the picture.

Moreover  $S_j\subset U$ since $E\notin S_j$  and $v$ has valency 3 so   cannot be internal to $S_j$. 

This means that $\ell_{j-1}\in U$  so it is of type I and 
continuing we have that all $\ell_f,\ f\leq j$ and $f\in B$ are of type I. 

 But $\ell_j$ is also an edge of $S_{j+1}$.   If $j+1$ is  of type I  then   $S_{j+1}\subset U$,  otherwise $v\in S_{j+1}$  is an internal vertex of valency 3 contradicting   1).  So $j+1$ is  of type II and then $E$ is in between  $\ell_j,\ \ell_{j+1}$.  We have again a contradiction  since $v\notin S_{j+1}$. 
 
 We reach the contradiction that all vertices in $B$ arein $U$  and are  of type I. So we have, if in $ A$ all indices are of type I:
 \begin{equation}\label{casb1}
\xymatrix{   &&&b&&&\\  & a\ar@{- }[rr]^{T_B}&&v \ar@{- }[u]^{T_A}    \ar@{-}[r]^{E}  &c  \ar@{- }[rr]^{T_B}  &&  d}
\end{equation}
  \end{proof}

\subsection{Contribution  of $\bar L _u$ equals to $\pm 2\delta_ue_{u\pm 1}$,  Case 2)}    Assume   $\bar L _u$ equals to $\pm 2\delta_ue_{u- 1}$. The other case is the same exchanging the order  in which we walk on the path.

   A doubly odd circuit is divided in 3 (or 2) parts: the two odd circuits $A,C$ and the segment $B$ (possibly empty)    joining them Figure \eqref{doppio}. We divide this into two subcases $u\in A\cup C$ and $u\in B$:
   
   \begin{proposition}\label{asuB}
Assume   $u\in B$.
\begin{enumerate} 
\item All internal vertices of the segment $S_u$ have valency 2. 
\item The edges in $A$ resp. in $C$ are on opposite sides of $S_u$.\end{enumerate}
\end{proposition}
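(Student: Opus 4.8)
\textbf{Proof plan for Proposition \ref{asuB}.}

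The plan is to follow exactly the template established in the proof of Proposition \ref{ed1b} and its corollary \ref{uvB}--\ref{ePo}, transplanting the argument from the ``single even circuit with extra edge'' setting to the ``doubly odd circuit $ABC$'' setting. First I would set up the data: fix a non-critical index $u\in B$ of type II, so that $\bar L_u=\pm 2\delta_u e_{u-1}$ (the case $\pm 2\delta_u e_{u+1}$ being handled by reversing the orientation along which we traverse the doubly odd circuit, as in the other proofs), and choose the root $r$ as in Figure \eqref{esub}. The key structural fact to exploit is the form of the minimal relation $\mathcal R$ in Formula \eqref{boBf}: the coefficients $\delta_i$ are $\pm 1$ on the edges of the two odd circuits $A$ and $C$ and $\pm 2$ on the edges of the connecting segment $B$. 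Since $u\in B$, we have $\delta_{u-1}=\pm 2$, and this will force the combinatorial position of all other edges relative to the segment $S_u$ via Lemma \ref{icontr} exactly as in Proposition \ref{ed1b}.

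The main steps are: (1) Use Formulas \eqref{boB} to write $2\theta e_{u-1}$ (more precisely $\pm 2\delta_u e_{u-1}$) as an explicit $\pm 1,\pm 2$ linear combination of a linearly independent subset of the edges $\ell_j$, $j\neq u-1,u$, playing the role of relation $\mathcal R^\dagger$ of \eqref{relrp}. Because these edges are linearly independent, this expression is unique, hence must be proportional by $\pm 1$ to the expression \eqref{relr2} obtained from $C_u$. (2) Compare coefficients edge by edge, using Lemma \ref{icontr}: an edge $\ell_a$ with $a\in A$ (or $C$) appears in $\mathcal R^\dagger$ with coefficient $\pm 1$ because $A,C$ carry $\delta=\pm1$; by Lemma \ref{icontr}(iv) it cannot satisfy $\ell_{u-1}\prec\ell_a$ (that would give coefficient $\pm2$), and by Lemma \ref{icontr}(iii) it cannot satisfy $\ell_a\prec\ell_{u-1}$; hence $\ell_u\prec\ell_a$ but $\ell_a$ is incomparable with $\ell_{u-1}$ -- i.e. $\ell_a$ hangs off the middle of $S_u$ on a side branch. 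This is precisely the ``opposite sides'' statement of item (2) once one checks (as in Proposition \ref{ed1b}) that the edges of $A$ and the edges of $C$ attach at the two ends of $S_u$ (the two critical vertices $1$ and $k$ of the doubly odd circuit lie one at each end of $S_u$ since the $B$-edges form the connecting segment). (3) For item (1): suppose some internal vertex $w$ of $S_u$ had valency $>2$; then some edge $\ell_j$ comparable with $\ell_u$ but incomparable with $\ell_{u-1}$ would occur, and by Lemma \ref{icontr}(ii) it would contribute $\pm\delta_j$ to $C_u$; tracking which $j$ this can be and comparing with $\mathcal R^\dagger$ as in Proposition \ref{noo}'s induction (descending through consecutive $B$-indices using that $e_{u-1}$ occurs only in $\ell_{u-2}$, forcing $c_{u-2}=\pm2$, etc.) yields a contradiction unless that branch is empty -- so $S_u$ has all internal vertices of valency $2$.

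I expect the main obstacle to be step (3), the valency-$2$ claim: the descending-induction argument of Proposition \ref{noo} used crucially that in a single even circuit $e_{u-1}$ appears in only one other edge, whereas at the critical vertices of the doubly odd circuit $e_{u-1}$ or nearby basis vectors may appear in edges of $A$ or $C$ as well, so one must argue carefully that the induction stays within $B$ and terminates at a critical index rather than running off the graph. Concretely, one shows that the side branch, if nonempty, would have to contain an edge whose $\mu_u$-coefficient is incompatible with the $\pm1,\pm2$ pattern dictated by $\delta_{u-1}=\pm2$ together with the $\gamma_{u-1}\bar a_{u-1}$ term of \eqref{relr2}; the bookkeeping of signs through Lemma \ref{calc} and the $18$-case table (Proposition \ref{leduei}) is routine but lengthy, and I would simply cite those results rather than recompute. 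Everything else is a direct transcription of the arguments already given for case 1B.
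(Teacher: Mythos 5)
Your proposal follows the paper's strategy at the template level (match the unique expression of $\bar L_u$ in the linearly independent edges $\ell_j$, $j\neq u-1,u$, against \eqref{relr2} and read off positions via Proposition \ref{vamu} and Lemma \ref{icontr}), but the coefficient bookkeeping in your step (2) is wrong, and the error is fatal. Since $u\in B$, the minimal relation \eqref{boBf} gives $\delta_u=2\eta_u=\pm 2$, so $\bar L_u=\pm2\delta_u e_{u\pm1}=\pm4e_{u\pm1}$; the relation $\mathcal R^\dagger$ to be matched against \eqref{relr2} is \eqref{boB2} multiplied by $\pm 2$, in which the edges of the partial $B$-path carry coefficient $\pm4$, the edges of exactly one odd circuit (say $A$, when $\bar L_u=\pm4e_{u-1}$) carry coefficient $\pm2$, and the edges of the other circuit $C$ do not appear at all. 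You instead have both $A$- and $C$-edges appearing with coefficient $\pm1$, and from this you conclude that they hang off internal vertices of $S_u$ as side branches ($\ell_u\prec\ell_a$, incomparable with $\ell_{u-1}$). That is the opposite of what the comparison gives, and it flatly contradicts item (1), which you are simultaneously claiming: with the correct target coefficients, an $A$-edge ($\delta_i=\pm1$) can only reach coefficient $\pm2$ when $\mu_u(i)=\pm2$, i.e. $\ell_{u-1}\prec\ell_i$ (Corollary \ref{leduei1}), while a $C$-edge must get coefficient $0$, i.e. $\ell_u\not\prec\ell_j$; a side branch or a position inside $S_u$ gives $\pm1$ or an odd total, which is excluded. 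So $A$ attaches beyond the endpoint $x_{u-1}$ and $C$ on the root side: that asymmetry, coming precisely from the fact that only one circuit enters $\mathcal R^\dagger$, is statement (2), with no separate ``check that they attach at the two ends'' left to do.

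Item (1) then also comes from the same comparison, not from a descending induction in the style of Proposition \ref{noo}: a $B$-edge $\ell_a$ (with $\delta_a=\pm2$) hanging off an internal vertex of $S_u$ would contribute $\pm2$ to \eqref{relr2}, whereas its required coefficient is $\pm4$ (if it lies on the partial path in \eqref{boB2}) or $0$ (if not), a contradiction; and the $A$- and $C$-edges have already been located at the two ends by (2). Your step (3) is both vague (``tracking which $j$ this can be \dots yields a contradiction'') and built on the incorrect placement of the $A$- and $C$-edges from your step (2), so as written the proposal establishes neither item; the fix is to restore the factor $2$ coming from $\delta_u=\pm2$ and to use only the relevant odd circuit in $\mathcal R^\dagger$.
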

\begin{proof} The picture is:\begin{equation}\label{doppiou1}
\xymatrix{    &u\ar@{-}[r]& u+1\ar@{.}[r]& k\ar@{-}[r]&t \cdots \ar@{=}[r]& &    && \\ B\!\!\!\!\!\!\!\!\!\!\!&  u-1\ar@{-}[u] &   &k+1 \ar@{-}[r]  \ar@{-}[u] _{\quad \quad \quad  C}&\cdots \ar@{-}[r] & \ar@{-}[u]     &&  \\   & 1 \ar@{.}[u]  \ar@{=}[r] &h\ar@{-}[r]    &\cdots   \ar@{-}[r]  & \ar@{=}[r] &       && \\   &2 \ar@{-}[u]_{\quad \quad \quad  A} \ar@{=}[r]   &3\ar@{ =}[r]    & \ar@{-}[r]  &\cdots \ar@{-}[r] &6\ar@{=}[u]      &&  }
\end{equation}

  If $u\in B$   we have  from Formula \eqref{boB} and  Formula \eqref{boBf}    \begin{equation}\label{boB2}
1)\quad  \boxed{ 2e_{u-1}=2\sum_{i=h+1}^{u- 1} \eta_i\ell_i\pm\sum_{i=1}^h\delta_i\ell_i,}
\end{equation}
 
with $\eta_i,\ \delta_i=\pm 1$. Since $u\in B$ we have $\delta_u=2\eta_u=\pm  2=\pm \delta_{u-1}$, Formula \eqref{boB}.

Due to the computations in \S \ref{18} we have that $\bar L_u=\pm 2\delta_ue_{u-1}= \pm 4e_{u-1}$  in cases 2, 4, 5, 12, 13, 15, 18 and $\bar L_u=\pm 4e_{u+1}$  in cases  8,\ 9.  

Therefore   $2e_{u-1}= 2\sum_{i=h+1}^{u- 1} \eta_i\ell_i\pm\sum_{i\in A} \delta_i\ell_i,
$ by formulas \eqref{boB2}, multiplied by $\pm 2$,  must coincide with one of those  for $\bar L_u$    given by \eqref{relr2}.   

In these Formulas  the edges   $\ell_{u} \prec \ell_i\prec \ell_{u-1} $,     by Formula \eqref{ukveg}, appear in $\delta_{u-1}\bar a_{u-1}$  with coefficients $\pm\delta_{u-1}=\pm 2$, they also appear under the $\sum$ sign in   \eqref{relr2} with coefficient  $\pm\delta_i$.  If these indices do not appear in  \eqref{boB2} they  must cancel with    edges with $\mu_u(i)\neq 0$.   In   \eqref{boB2} the indices $j\in C$  do not appear so we claim that $\ell_u\not\prec \ell_j$. 

 In fact if  $\ell_{u} \prec \ell_i\prec \ell_{u-1} $ then $\mu_u(i)=\pm 1$ so in order to cancel the contribution from  $\delta_{u-1}\bar a_{u-1}$  we should have $\delta_i=\pm 2$  which is not the case. If $\ell_{u-1} \prec \ell_i$  then $\mu_i=\pm 2$  and then this is not cancelled. So only $\ell_{u} \not\prec \ell_i$ is possible.\smallskip

 If $i\in A$  then in  \eqref{boB2}  $\ell_i$ appears with coefficient $\pm 1$, so in  \eqref{relr2} it must appear with coefficient $\pm 2$. Use  Proposition \ref{vamu}.
 
 If $\ell_{u} \prec \ell_i  $ then $\mu_u(i)=\pm 1$  and $\delta_i=\pm 1$.If $\ell_i$ is not comparable with  $\ell_{u-1}$ this is the only contribution to the Formula   \eqref{relr2}.   If $\ell_{u} \prec \ell_i\prec \ell_{u-1} $  in   $\delta_{u-1}\bar a_{u-1}$ the edge $\ell_i$ appears  coefficient $\pm 2$,    so a total of an odd coefficient again a contradiction.  The only possibility left is  $\ell_{u-1} \prec \ell_i$.  So  ii) is proved.\smallskip

 We claim that there is no edge $\ell_a,\ a\in B$  with $\ell_u\prec\ell_a$ and $\ell_a$ is not comparable with $\ell_{u-1}$.  Indeed this edge would have  $\mu_u(a)=\pm 1$ and would not appear in $\bar a_{u-1}$,\eqref{esu1} (recall  $\bar a_{u-1}$ is a sum   formed by the edges $\ell_i,\ \ell_u\prec\ell_i\prec\ell_{u-1}$).
 
This is incompatible with the fact that the coefficient of $\ell_a,\ a\in B,\ a\neq u,u-1$ in Formula \eqref{boB2} must be $\pm 2\eta_a=\pm 2$ so that in Formulas  \eqref{relr3}  or \eqref{relb3}  must be $\pm 4$.  But in   \eqref{relr3}  or \eqref{relb3}   the coefficient of $\ell_a,\ a\in B$ is $\pm 2$.\smallskip

Thus  we deduce that all internal vertices of the segment $S_u$ have indices in $B$ and have valency 2 (but in general not all indices in $B$ appear in $S_u$). \end{proof}Assume  $u\in A$   (the case $u\in C$ is similar). 
The picture is:
\begin{equation}\label{doppiouu}
\xymatrix{      \ar@{-}[r]&z\cdots \ar@{=}[r]& &    && \\ 
 k  \ar@{-}[r]  \ar@{-}[u] _{\quad \quad \quad  C}&\cdots \ar@{-}[r] & \ar@{-}[u]     &&  \\ 
    1 \ar@{.}[u]^B  \ar@{=}[r] &h\ar@{-}[r]    &\cdots  u \ar@{-}[r]  &u-1 \ar@{=}[r] &       && \\    2 \ar@{-}[u]_{\quad \quad \quad  A} \ar@{=}[r]   &3\ar@{ =}[r]    & \ar@{-}[r]  &\cdots \ar@{-}[r] &6\ar@{=}[u]      &&  }
\end{equation}
 \begin{proposition}\label{case2L}\strut

\begin{enumerate}[1)]\item If $j\in C$ then   $\ell_{u-1}\not\prec\ell_j $. 
\item  Inside the segment $S_u$  there are only   edges of  $A $.
\item All $\ell_j,\  j\in B\cup C$ are in branches which originate from internal vertices of $S_u$.
\item If  $j\in A  $ and $ j\leq u-2$ we   have  either   $\ell_{u-1}\prec\ell_j$ or $\ell_j\prec\ell_{u-1}$.  For the remaining $j\geq u-1\in A$ we have $\ell_u\not\prec\ell_j$. 
 
 \end{enumerate}

\end{proposition}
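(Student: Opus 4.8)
The proof of Proposition~\ref{case2L} runs entirely inside the combinatorial analysis of the doubly odd circuit, using the same mechanism as Proposition~\ref{asuB}: compare the two expressions for $\bar L_u=\pm 2\delta_u e_{u-1}$, one coming from the structure of the circuit (Formula~\eqref{boB}, in the variant appropriate to $u\in A$) and one coming from the local computation $C_u(\mathcal R)=0$ (Formula~\eqref{relr2}), and exploit uniqueness of the relation among the linearly independent edges $\ell_j$, $j\neq u-1,u$. The key input is Lemma~\ref{icontr}, which records exactly which edges $\ell_j$ can contribute to $C_u$ and with which coefficient $\mu_u(j)\in\{0,\pm1,\pm2\}$ depending on the position of $\ell_j$ relative to the segment $S_u=\langle\ell_{u-1},\ell_u\rangle$, together with Proposition~\ref{leduei}/Corollary~\ref{leduei1}, which control the coefficient of $e_u$ in the vertices past $\ell_{u-1}$.

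For part~1), since $u\in A$ and $u\le u-1$ are indices of the circuit $A$, the combinatorial relation of the odd circuit $A$ expresses $2e_{u-1}$ (equivalently the relevant piece of $\bar L_u$) using only edges with index in $A$ and $B$, together with the edge $E_{A}$ closing the circuit; no edge $\ell_j$ with $j\in C$ appears. If $\ell_{u-1}\prec\ell_j$ for some $j\in C$, then by Corollary~\ref{leduei1} $\mu_u(j)=\pm 2$ (in the type~II case), so $\ell_j$ would appear in Formula~\eqref{relr2} with a nonzero coefficient that cannot cancel against anything else, contradicting the uniqueness of the relation and the absence of $C$-edges in the circuit relation. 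Hence $\ell_{u-1}\not\prec\ell_j$ for $j\in C$. Part~2) is then the same argument as in Proposition~\ref{asuB}: no edge strictly between $\ell_u$ and $\ell_{u-1}$ can have index outside $A$, because such an edge would appear in $\bar a_{u-1}$ with coefficient $\pm\delta_{u-1}$ and also under the sum with coefficient $\pm\delta_j$; matching parities of coefficients (recall $\delta_i=\pm1$ for $i\in A\cup C$, $\pm2$ for $i\in B$) forces $j\in A$. Part~3) follows from 1), 2) and Lemma~\ref{icontr}~(i): any $\ell_j$ with $j\in B\cup C$ must satisfy $\ell_u\not\prec\ell_j$ or else it would contribute to $C_u$ incompatibly, so it hangs off an internal vertex of $S_u$ rather than lying along it. Part~4) is the dichotomy forced by comparing the coefficient $\pm1$ with which $\ell_j$, $j\in A$, $j\le u-2$, occurs in the circuit relation against its possible coefficients $0,\pm1,\pm2$ in Formula~\eqref{relr2} (Lemma~\ref{icontr}~(ii),(iii),(iv)): the only way to realize coefficient $\pm2$ is with $\ell_{u-1}\prec\ell_j$ or $\ell_j\prec\ell_{u-1}$, and for the remaining indices $j\ge u-1$ in $A$ the edge is simply not comparable with $\ell_u$.

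So the plan is: (i) fix the root as in Figure~\eqref{esub} so that $S_u$ appears in standard position and the signs $\delta_i,\eta_i$ of Formula~\eqref{boB} are unambiguous; (ii) write down the circuit relation $\mathcal R$ restricted to what produces $\bar L_u=\pm2\delta_u e_{u-1}$, noting the coefficient pattern $\pm1$ on $A$-edges and $E_A$, $\pm2$ on $B$-edges; (iii) write down Formula~\eqref{relr2} and read off, via Lemma~\ref{icontr}, the coefficient of each $\ell_j$ as a function of the position of $\ell_j$ relative to $S_u$; (iv) invoke linear independence of $\{\ell_j : j\neq u-1,u\}$ (Proposition~\ref{pari}, Lemma~\ref{zita}) to conclude the two expressions are literally equal, and then do a short parity/position bookkeeping to deduce 1)--4). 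The main obstacle is purely organizational rather than conceptual: one must keep straight, across the $18$ cases catalogued in \S\ref{18}, which vertex plays the role of $x_{u-1}$ and hence whether the nonzero value is $\pm2e_{u-1}$ or $\pm2e_{u+1}$, and carry the correct signs $\delta_i,\sigma_i,\lambda_i$ through Formula~\eqref{La}; but all of this is routine given Proposition~\ref{leduei} and the remark (\S\ref{warn}) that in $\mathcal R$ some $\delta_i$ equal $\pm2$. No new constraint on $S$ is needed here—the statement is about the combinatorics of the tree $T$, not about geometric realizability.
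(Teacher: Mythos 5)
Your overall strategy is the right one (it is the paper's): express $\pm 2\delta_{u}e_{u\pm1}$ in the unique way as a combination of the linearly independent edges $\ell_j$, $j\neq u-1,u$, and match coefficients against Formula \eqref{relr2} via Lemma \ref{icontr}. But you have misidentified the reference relation, and this is not a cosmetic slip — the whole coefficient bookkeeping runs through it. In the doubly odd case there is no extra edge ``$E_A$ closing the circuit'' (that belongs to case 1B); the correct expression, obtained from \eqref{boB}, \eqref{boBf} and \eqref{priR}, is
$$\mathcal R^\dagger:\qquad \sum_{j\in B\cup C}\delta_j\ell_j-2\sum_{i\in A,\ i\leq u-2}\delta_i\ell_i=2\delta_{u-2}e_{u-1},$$
so the $C$-edges \emph{do} appear, with coefficient $\pm 1$, the $B$-edges with $\pm 2$, the $A$-edges of index $\leq u-2$ with $\pm 2$, and the $A$-edges of index $\geq u+1$ with $0$. (Indeed $2e_{u-1}$ cannot be written using only $A\setminus\{\ell_{u-1},\ell_u\}$ and $B$ edges: those edges lie in a path, and the $\zeta$-functional of Lemma \ref{zita} vanishes on them but not on $2e_{u-1}$.) Your claimed pattern — no $C$-edges, coefficient $\pm1$ on the $A$-edges — is what makes your parts 2)--4) internally inconsistent: with reference coefficient $0$ on $C$, a $C$-edge inside $S_u$ (case iii of Lemma \ref{icontr}, contribution $\pm\delta_j\pm\delta_{u-1}$, possibly $0$) is \emph{not} excluded, so your parity appeal does not prove 2); in part 3) you argue that $B\cup C$ edges must satisfy $\ell_u\not\prec\ell_j$, which is the \emph{opposite} of the statement (they must contribute $\mu_u(j)=\pm1$, i.e.\ branch off an internal vertex of $S_u$, precisely because their reference coefficients $\pm1$, $\pm2$ are nonzero); and in part 4) you compare against a coefficient $\pm1$ but then conclude as if it were $\pm2$ — with $\pm1$ the matching would force case (ii) (branching off), again the opposite of the claimed dichotomy $\ell_{u-1}\prec\ell_j$ or $\ell_j\prec\ell_{u-1}$.

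So the gap is concrete: you never construct the correct unique relation $\mathcal R^\dagger$, and the coefficient table you feed into the uniqueness argument would, if carried out as written, yield conclusions contradicting parts 2)--4) of the proposition rather than proving them. Once $\mathcal R^\dagger$ is in place, the rest is exactly the bookkeeping you describe: $C$-edges have odd coefficient, hence cannot be in case (iv) (giving 1)) nor in case (iii) (even contribution, giving half of 2)); $B$-edges inside $S_u$ would have odd contribution against an even reference coefficient (the other half of 2)); nonzero reference coefficients for $B\cup C$ force $\mu_u(j)=\pm1$, i.e.\ branches from internal vertices of $S_u$ (giving 3)); and the $\pm2$ versus $0$ pattern on the $A$-edges gives 4).
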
\begin{proof}

  We have    a linear combination  of the edges  in $B,C$ with coefficients $\delta_i$  which is equal to $2e_1$.  $\delta_i=\pm 1$ if $i\in C$ and $\pm 2$ if $i\in B$ (cf.   Formulas \eqref{boB},   \eqref{boBf}).
 
  Then     $2\sum_{i=1}^{u-2}\delta_i\ell_i=2e_1-2\delta_{u-2} e_{u-1}$  Formula \eqref{priR},   
     $$\mathcal R^\dagger:\sum_{j\in B\cup C}\delta_j\ell_j-2\sum_{i=1}^{u-2}\delta_i\ell_i= 2\delta_{u-2}e_{u-1} =\pm 2 e_{u-1}.$$ The  expression  of $  2\delta_{u-2}e_{u- 1} $ as linear combination of the linearly independent edges $\ell_j\neq  \ell_{u-1},\ \ell_u$ is   unique.
The expression $\mathcal R^\dagger$   must be proportional, by $\pm 1$, to   \eqref{relr3}  or \eqref{relb3} by Proposition \ref{leduei}.\medskip            . 

1)\quad Comparing these relations we first observe that, if $j\in  C$     the edge $\ell_j$ must have coefficient   $\pm\delta_j=\pm1$.  By corollary \ref{leduei1}  if $\ell_{u-1}\prec \ell_j$ we have that $\mu_u(j)=\pm 2$  hence   we deduce that   $\ell_{u-1}\not\prec\ell_j $. \medskip            .

2)\quad If $\ell_u\prec\ell_j\prec \ell_{u-1}$ the coefficient of $\ell_j$  in the two possible relations \eqref{relr3}  or \eqref{relb3} comes from two terms,   a term $\pm \delta_j$ coming from the first two summands (since in this case $\mu_u(j)=\pm 1$), and a term $\pm \delta_{u-1}$ from $\bar a_{u-1}$, hence no index in $B$ or $C$ can appear in  $\bar a_{u-1}$ by parity.   Inside the segment $S_u$  there are only   edges of  $A $. 

3)\quad  Since the edges  in $B$ or $C$ appear in the relation $\mathcal R^\dagger$ with coefficient $\pm 1$   we deduce that $\mu_u(j)=\pm 1$ so all $\ell_j, j\in B\cup C$ are in branches which originate from internal vertices of $S_u$.

4)\quad In  $\mathcal R^\dagger$ the indices in $A$ which appear are   $i\in A, i\leq u-2$ and the corresponding edges have coefficient $\pm 2$  therefore this last statement follows from   Lemma \ref{icontr} since in this case all $\delta_i=\pm 1$.

 A similar consideration holds if $u\in C$.\end{proof} So the last case is for a doubly odd circuit with at least a vertex of type II.
\begin{corollary}\label{lasca}\strut\begin{enumerate}[1)]\item The edges in $B$ always form a segment $S_B$, its internal vertices have valency 2.

\item  If there is  an index of type II in $B$ all edges in $A$ and all edges in $C$ are separated and lie in the two trees $T_A,T_C$ originating from the two end points of  $S_B$. 
\item     $T_A=S_A,\ T_C=S_C$ are both segments with no edge in common.
\item If there is no index of type II in $B$  but  an index of type II in $A$ (or $C$) all edges in $A$ and all edges in $C$ are separated and lie in   two segments which can be disjoint or meet in one vertex.\end{enumerate}
 \end{corollary}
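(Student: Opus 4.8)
\textbf{Proof plan for Corollary \ref{lasca}.} The statement is the doubly-odd analogue of Corollary \ref{ePo}, and I would prove it by the same chaining-of-segments technique, now applied to the three pieces $A,B,C$ attached at two critical vertices. Throughout, for an index $u$ of type I one invokes Proposition \ref{intveo} and Corollary \ref{seq}; for an index of type II one invokes Proposition \ref{asuB} (if $u\in B$) or Proposition \ref{case2L} (if $u\in A$, symmetrically $u\in C$); the gluing is done with Lemma \ref{SEP} and Proposition \ref{unsegp}.

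\emph{Part 1.} If every index of $B$ is of type I, then by Corollary \ref{seq} together with Proposition \ref{intveo} the union $S_B:=\cup_{b\in B}S_b$ is a segment consisting of edges of $B$, with every internal vertex of valency $2$. If $B$ contains an index $u$ of type II, Proposition \ref{asuB} gives that $S_u$ is itself a segment whose internal vertices all have indices in $B$ and valency $2$, and the edges of $A$ and of $C$ lie on opposite sides of $S_u$; the type-I indices of $B$ still contribute segments of $B$-edges with valency-$2$ interiors by Proposition \ref{intveo}. Consecutive segments $S_j,S_{j+1}$ with $j,j+1\in B$ share the edge $\ell_j$, so by Lemma \ref{SEP} (and Proposition \ref{unsegp} for the general union) $\cup_{b\in B}S_b$ is again a segment of $B$-edges with valency-$2$ internal vertices; this is $S_B$, proving 1).

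\emph{Parts 2 and 3.} Assume $B$ contains an index $u$ of type II. By Proposition \ref{asuB}(2) the $A$-edges and the $C$-edges sit on opposite sides of $S_u$, hence, since $S_u\subset S_B$ and $S_B$ is a segment by Part 1, the minimal subtree $T_A$ of $T$ generated by the $A$-edges and the corresponding $T_C$ are attached at the two endpoints of $S_B$ and lie in distinct components of $|T|\setminus\mathrm{int}|S_B|$. To see $T_A$ is a segment: if $A$ has an index of type II, Proposition \ref{case2L}(2),(3) says that inside the corresponding $S_u$ only $A$-edges appear and all $B\cup C$-edges branch off internal vertices, so the same chaining as in Part 1 gives $T_A=S_A$, a segment; if all indices of $A$ are of type I, Corollary \ref{seq} with Proposition \ref{intveo} gives the same conclusion. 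Likewise $T_C=S_C$ is a segment. If $T_A$ and $T_C$ shared an edge (necessarily lying on the path between two $A$-edges and also between two $C$-edges), then walking around the resulting configuration produces a relation among the edges of $T$ not proportional to $\mathcal R$, contradicting the uniqueness in Lemma \ref{endp}; equivalently one repeats the argument of Lemma \ref{comt}/Corollary \ref{ePo} to contradict minimality of $\Gamma$. Hence $T_A$ and $T_C$ are disjoint segments with no common edge, attached at the two ends of $S_B$, which is exactly 2) and 3).

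\emph{Part 4.} Now suppose no index of $B$ is of type II but $A$ contains one (the case $C$ is identical). Then every index of $B$ is of type I, so Part 1 (via Corollary \ref{seq}) gives that $S_B$ is a segment of $B$-edges. By Proposition \ref{case2L}(2),(3), inside each $S_u$ with $u\in A$ of type II only $A$-edges occur and all $B\cup C$-edges branch from interior vertices; chaining these with the type-I runs in $A$ (Corollary \ref{seq}) shows $T_A=S_A$ is a segment, and similarly $T_C=S_C$ is a segment. They are separated: Proposition \ref{case2L}(1) forces the $C$-edges to be $\prec$-incomparable below $\ell_{u-1}$, so they lie on a branch disjoint from the $A$-branch, and any common edge would again contradict minimality. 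The two segments are therefore either disjoint or meet in a single vertex; in the latter case, exactly as in Corollary \ref{ePo}(B)(2), all indices on the intervening part of the path are forced to be of type I and the common vertex is an endpoint of that connecting path, which is the asserted picture.

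\textbf{Main obstacle.} The genuinely delicate point is the tree-geometry bookkeeping of Parts 2–4: checking that the locally constructed segments $S_u$, as $u$ sweeps through each of $A$, $B$, $C$ with type I and type II indices intermixed, glue into single segments with the stated valency-$2$ interiors and attach at precisely the claimed vertices, and ruling out a shared edge between $T_A$ and $T_C$ by minimality. This is a direct but case-heavy elaboration of the argument already carried out for the single circuit in Corollary \ref{ePo}; the new feature, three pieces meeting at two critical vertices, only adds bookkeeping rather than a new idea.
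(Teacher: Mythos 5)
Your proposal is correct and follows essentially the same route as the paper: Proposition \ref{intveo}/Corollary \ref{seq} for type I indices, Propositions \ref{asuB} and \ref{case2L} for type II indices in $B$ and in $A$ (or $C$) respectively, gluing via Lemma \ref{SEP}/Proposition \ref{unsegp}, and separation of $T_A$, $T_C$ from the valency-2 interior of $S_B$, with minimality/uniqueness of the relation ruling out a shared edge. The only cosmetic difference is that the paper obtains the no-common-edge claim directly from the separation statement of Proposition \ref{asuB} ii) rather than from a new-relation argument, but this does not change the substance.
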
 
\begin{proof}  1)\quad The proof is similar to that of Corollary \ref{ePo} where we showed  that, if $j\in B$  is of type I 
inside the segment $S_u$ there are only edges $\ell_j$ with $j\in B$ and its internal vertices have valency 2, we have proved this now also for type II.   The claim follows from  Lemma  \ref{SEP} or arguing as in Corollary  \ref{seq} of Proposition \ref{intve}.

  2)\quad  This follows from Proposition \ref{asuB}  ii)  since the internal vertices of $S_B$ have valency 2 and the edges in $A$ and $C$ are separated by $S_u$.  

3)\quad If all the vertices of $A$ are of type I then $T_A$ is a segment by   Corollary  \ref{seq} of Proposition \ref{intve},  same for $T_C$.

 So assume $A$ has a vertex $u$ of type II.  By Proposition \ref{case2L} 2) 
 inside the segment $S_u$  there are only   edges of  $A $ and by the same proposition item 4)  inside $T_A$  the internal vertices of $S_u$ have valency 2, so the argument is the same as that of  Corollary \ref{seq}.
 
 If $B$  has an index of type II  by case 2)  $T_A$ and $T_B$ are disjoint. If   $B$  has no index of type II since we are assuming the existence of indices of type II  we need to  have such an index in $A$ or in $C$ or in both.
 
 Assume there is such an index $u$ of type II  in $A$.  By Proposition \ref{case2L}
  all $\ell_j,\  j\in B\cup C$ are in branches which originate from internal vertices of $S_u$.  So the segments $S_A$ and $S_C$  meet in a vertex which is internal to $S_A$  and can be also internal  to $S_C$ while  $S_B$  meets $S_A$  in a a vertex which is internal to $S_A$  but it is also an end vertex for $S_B$.  Finally if there is  an index    of type II also  in $C$ then $S_B$  meets $S_A$ and $S_C$ in their intersection.
. 
  \end{proof} In the end we can have the following possible pictures:
 \subsubsection{Indices of type II}  If there is at least one index of type II  the case analysis that we have performed shows that
  between two edges in $A$ there are only edges in $A$ and the edges in $A$ form a segment, the same happens for $B,C$. Denoting $S_A,S_B,S_C$ these segments  their union is a tree, the internal vertices of $S_B$ have valency 2,  so their relative position a priori can be  only one of the following, up to exchanging $A$ with $C$. \smallskip
  
   If we are in case 2) $S_A$ and $S_C$ are opposite to $S_B$ so we are in case a) or the special a'), a")
  $$ a)  \xymatrix{     \ar@{-}[rrr]^{S_A}  && \ar@{-}[dd]^{S_B}  &\\  &&&&   \\  \ar@{-}[rrrr]^{S_C\qquad} &&&&\\  &&&&   }$$
  $$  a') \xymatrix{  &  \ar@{-}[ddd]^{S_A}  &&   &\\  &&&&   \\ & \ar@{-}[rr]^{S_B} &&\ar@{-}[rr]^{S_C}&&\\  &&&&   } \qquad a") \xymatrix{    \\\ar@{-}[rr]^{S_C}  & &  \ar@{-}[rr]^{S_B} &&  \ar@{-}[rr]^{S_A}  &&   }$$ \label{lefig}
  $S_A,\ S_C$ are on the same side of $S_B$  we are in case b) or the special b'), b"),  b''') , b'''') 
$$b)\quad   \xymatrix{  &  \ar@{-}[ddd]^{S_A}  && \ar@{-}[dd]^{S_B}  &\\  &&&&   \\  \ar@{-}[rrrr]^{S_C} &&&&\\  &&&&   } \qquad     \qquad b') \xymatrix{  &  \ar@{-}[ddd]^{S_A}  &&   &\\  &&&&   \\ & \ar@{-}[rr]^{S_C} &&\ar@{-}[rr]^{S_B}& &\\  &&&&   } $$ 
$$ b''') \xymatrix{  &  \ar@{-}[dd]^{S_A}  && \ar@{-}[dd]^{S_B}  &\\  &&&&   \\  \ar@{-}[rrrr]^{S_C} &&&&\\  &&&&   } \qquad  b'''') \xymatrix{  &  \ar@{-}[ddd]^{S_C}  && \ar@{-}[ddll]^{S_B}  &\\  &&&&   \\  \ar@{-}[rrrr]^{S_A} &&&&\\  &&&&   }  $$
Of course $ b'''') $ can also be more special if $S_A, S_C$  have only vertices of type I, and we may go back to the cases in Formula \eqref{unseg1}.

  We  may also have that $B$ is empty  so $S_B$ does not appear.

 \section{Final step}
\subsubsection{ All indices  are of type I,\ $L=0$}  We have already seen (Case 1) that the case of   the single circuit and all indices  are of type I is not possible. Let us thus treat the special case when we are in the doubly odd circuit  and still  all indices of $A\cup C$ are of type I   or  when just the indices of $A$ are of type I but we know that they form a segment.

If neither $S_A,S_B,S_C$ contains a critical vertex in the interior we   have seen that the graph spanned by  $A\cup C$  is a segment as well as $S_B$ and we have.
\begin{equation}
\label{unseg}a') \xymatrix{    r\ar@{-}[rr]^{S_{A\cup C}  }& &v  \ar@{-}[rr]^{S_B} && w    }. 
\end{equation}

 In this segment we take as root one on its end points say $r$, the segment is a sequence of edges $m_i$  and vertices  $c_i$  as
 $$ \xymatrix{    0\ar@{-}[r]^{m_1 }&c_1\ar@{-}[r]^{m_2 } &c_2 \ar@{.}[rr]  && c_{k-1}\ar@{-}[r]^{m_k }&c_k    }. $$According to Definition \ref{biai}  denote by $\bar\sigma_i,\bar\lambda_i$ the corresponding values  of color and orientation (with respect to this root) of $\ell_i$. 
 
 Of course the $m_i$ are a permutation of the $\ell_j$.  Recall that the notation $\sigma_i,\lambda_i$ is relative to the segment   $S_u$ as in the previous discussion (see formula \eqref{esub}).   
 
 Take a segment $S_u\subset T$ of some length $z$, it has some initial vertex  $c_p$  and $m_{p+1}=\ell_u, \ \ell_{u-1}$, in this second case it is oriented opposite to its orientation in picture \eqref{Su}.  Its other end point is in the first case  $c_{p+z}$  in the second $c_{p-z}$.
  $$ \xymatrix{    c_p\ar@{-}[rrr]^{x_{u-1} }&  &   & c_{p+z}     }\qquad \xymatrix{    c_p\ar@{-}[rrr]^{x_{u-1} }&  &   & c_{p-z}     }. $$
\begin{lemma}\label{basi}
\begin{equation}\label{ivc}
\ell_u\prec\ell_{u-1}\implies \bar\sigma_{u-1}=\sigma_{u-1} \bar\sigma_u\theta_u,\quad \ell_{u-1}\prec\ell_u\implies \bar\sigma_{u-1}=\sigma_{u-1} \bar\sigma_u\theta_u\theta_{u-1}.
\end{equation}
\end{lemma}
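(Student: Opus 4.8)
\textbf{Proof strategy for Lemma \ref{basi}.} The statement is a bookkeeping identity relating the intrinsic color/orientation data $\sigma_{u-1},\lambda_{u-1},\lambda_u$ attached to the segment $S_u$ in its canonical picture \eqref{esub} to the color data $\bar\sigma_{u-1},\bar\sigma_u$ induced by the global root $r$ chosen at one end of the big segment $S_{A\cup C}\cup S_B$ of \eqref{unseg}. The key point is that the color of an endpoint of an edge depends only on the parity of the number of red edges on the path from the root to that endpoint, and changing the root simply changes all such parities by a fixed amount along the connecting path. So the plan is to fix the global root $r$, and compare the two ways of reaching the relevant vertices: directly from $r$ (giving $\bar\sigma$), and via the internal root $s$ of the local picture \eqref{esub} (giving $\sigma$).

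\textbf{Key steps.} First I would recall from Definition \ref{biai} and Theorem \ref{ilve} that for any vertex $v$ the color $\sigma_v$ equals $(-1)^{r(v)}$ where $r(v)$ is the number of red edges on the simple path from the root to $v$; since $T$ is a tree this path is unique. Then, for the segment $S_u$ generated by $\ell_{u-1},\ell_u$, with $\ell_i=\vartheta_ie_i-e_{i+1}$ ($\vartheta_i=\theta_i$, red iff $\theta_i=-1$, so $\theta_i$ counts $(-1)^{\#\text{red}}$ for a single edge), the two endpoints $x_{u-1}$ and the opposite end are joined by the two-edge path through the middle vertex, and the path from the global root $r$ to $x_{u-1}$ factors through this segment. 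In the case $\ell_u\prec\ell_{u-1}$ (the root side is adjacent to $\ell_u$), the path from $r$ to the common vertex $s$ of $\ell_u$ and the rest carries some parity; reaching $x_{u-1}$ requires crossing $\ell_u$ (whose color contribution is $\theta_u$) and then a subpath $\bar a_{u-1}$ that contains only edges with indices strictly between, plus $\ell_{u-1}$ — but since $x_{u-1}$ is the \emph{endpoint} of $\ell_{u-1}$ and $\sigma_{u-1}$ is precisely the color of that endpoint computed from the local root at the other end of $S_u$, one gets $\bar\sigma_{u-1}=\bar\sigma_u\cdot\theta_u\cdot\sigma_{u-1}$ directly, because $\bar\sigma_u$ already records the parity up to the endpoint of $\ell_u$ closest to $x_{u-1}$ and $\sigma_{u-1}$ records the residual parity from $s$ to $x_{u-1}$ inside $S_u$ (which includes crossing $\ell_u$ once more only in the second case). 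In the case $\ell_{u-1}\prec\ell_u$ the orientation of $\ell_{u-1}$ inside $S_u$ is reversed relative to \eqref{esub}, so the path from $r$ to the far endpoint of $S_u$ now crosses \emph{both} $\ell_{u-1}$ and $\ell_u$ (up to the internal segment), contributing the extra factor $\theta_{u-1}\theta_u$; carefully tracking which endpoint of $\ell_{u-1}$ is $x_{u-1}$ gives $\bar\sigma_{u-1}=\bar\sigma_u\theta_u\theta_{u-1}\sigma_{u-1}$.

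\textbf{Main obstacle.} The genuine difficulty is not any computation but the careful matching of the two conventions: $\sigma_{u-1}$ is defined (Definition \ref{biai}) using a root placed at the specific end of $S_u$ shown in \eqref{esub}, while $\bar\sigma_u$ uses the global root $r$; one must verify precisely \emph{which} endpoint of each of $\ell_{u-1},\ell_u$ plays the role of $x_{u-1}$ in each relative-position case and that the residual path $\bar a_{u-1}$ contributes equally to both sides (it does, since it is the same set of edges in both conventions and its red-count parity cancels in the ratio $\bar\sigma_{u-1}/\sigma_{u-1}$). I would organize this by writing, for a vertex $v$ on $S_u$, the identity $\bar\sigma_v = \bar\sigma_s\cdot(\text{parity of red edges on the subpath of }S_u\text{ from }s\text{ to }v)$ and $\sigma_v$ analogously with the local root, noting that $\bar\sigma_s=\bar\sigma_u$ up to the single edge $\ell_u$ (hence the factor $\theta_u$), and the only difference between the two cases is whether the local-to-global root change also flips the orientation of $\ell_{u-1}$, which introduces $\theta_{u-1}$. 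Once the dictionary is fixed, both displayed formulas in \eqref{ivc} drop out immediately.
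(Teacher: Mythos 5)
Your overall strategy is the same as the paper's: a vertex's color is the parity of the red edges on the unique tree path from the root, and the two identities come from comparing the global root of the big segment with the local root of the picture \eqref{esub}. Your first case ($\ell_u\prec\ell_{u-1}$) is fine in outline. The problem is that the entire content of the lemma is the exact sign bookkeeping, and in the second case you never perform it: you assert that the path "now crosses both $\ell_{u-1}$ and $\ell_u$, contributing the extra factor $\theta_{u-1}\theta_u$". That heuristic conflates the far end of $S_u$ with the far endpoint of $\ell_{u-1}$, and it gives the wrong sign. Write $S_u$ as $A-\ell_u-\cdots-\ell_{u-1}-x_{u-1}$, let $P$ be the product of the colors of the intermediate edges, and in the case $\ell_{u-1}\prec\ell_u$ let $\phi'$ be the color, with respect to the global root, of the entry vertex $x_{u-1}$. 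Then $\bar\sigma_{u-1}=\phi'\theta_{u-1}$ (far endpoint of $\ell_{u-1}$), $\bar\sigma_u=\phi'\theta_{u-1}P\theta_u$ (far endpoint of $\ell_u$, i.e.\ the end $A$ of $S_u$), and $\sigma_{u-1}=\theta_u P\theta_{u-1}$; eliminating $\phi'$ and $P$ yields $\bar\sigma_{u-1}=\sigma_{u-1}\bar\sigma_u\theta_{u-1}$, with the factor $\theta_u$ cancelling because $\bar\sigma_u$ already records the crossing of $\ell_u$. Concretely, if $\ell_{u-1},\ell_u$ are both red and the intermediate part has an even number of red edges, then $\sigma_{u-1}=1$ and $\bar\sigma_{u-1}=-\bar\sigma_u$, whereas the factor $\theta_u\theta_{u-1}$ you claim would give $\bar\sigma_{u-1}=+\bar\sigma_u$.

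So the gap is twofold: the decisive computation is replaced by an assertion, and the asserted second identity fails whenever $\ell_u$ is red. (Be aware that the displayed statement itself carries this spurious $\theta_u$, and the paper's own argument slips at the same spot by reusing the case-one relation $\phi=\theta_u\bar\sigma_u$ where the far endpoint of $\ell_u$ is no longer one step beyond the entry vertex; so matching the printed factor is not evidence of correctness.) The corrected identity $\bar\sigma_{u-1}=\sigma_{u-1}\bar\sigma_u\theta_{u-1}$ is also the one the later argument actually needs: in Lemma \ref{icons}, Case 1 (both edges red, $\sigma_{u-1}=1$) uses $\bar\sigma_{u-1}=-\sigma_{u-1}\bar\sigma_u$ in both relative positions, and in Cases 7), 10) the induction $\delta_u=\delta\bar\sigma_u$ only closes with $\bar\sigma_{u-1}=\sigma_{u-1}\bar\sigma_u$ when $\theta_{u-1}=1$. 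To repair your proof, do the endpoint-by-endpoint color computation in both relative positions exactly as above, being explicit in each case about which endpoint of $\ell_{u-1}$ and of $\ell_u$ is the far one with respect to the global root, and then record the corrected form of the second identity rather than the printed one.
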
\begin{proof}
In the first case $\ell_u\prec\ell_{u-1}$ we have  $c_{p+z}= x_{u-1} +\sigma_{  u-1 }c_p$ is the right end point of $\ell_{u-1}$ .
By Definition the color $\bar\sigma_{u-1}$ is the color of its end point, in the first case $c_{p+z}$ which has color  $\bar\sigma_{u-1}=\sigma_{u-1}\phi$ with $\phi$  the color of  $c_p$. Now the end point of $\ell_u$  is $c_{p+1}=\bar\lambda_u\ell_u+\theta_uc_p$ with color $\theta_u\phi=\bar\sigma_u$.  
Substituting we have Formula \eqref{ivc}.\smallskip

In the second case we have  $c_{p  }= x_{u-1} +\sigma_{  u-1 }c_{p-z }$ and the end point of  $\ell_{u-1}$  is $c_{p-z+1}.$
We have  $c_{p-z+1}= \bar\lambda_{u-1}\ell_{u-1}+\theta_{u-1}c_{p-z}. $ Let $\psi$ be the color of  $c_{p-z} $  we have $\psi=\sigma_{u-1}\phi$.

The color $\bar\sigma_{u-1}$ is the color of $c_{p-z+1}$, which is  $\bar\sigma_{u-1}=\theta_{u-1}\psi=\theta_{u-1}\sigma_{u-1}\phi =\theta_{u-1}\sigma_{u-1}\theta_u \bar\sigma_u$.\end{proof}\medskip

 In the next Lemma we analyze  the 9 cases in which $\bar L_u=0$, see \S \ref{18}.

 \begin{lemma}\label{icons}
 We claim that every edge $\ell_j,\ j\in A$  (resp. $j\in B$ or  $j\in C$)   has the property that $\delta_j= \delta  \bar\sigma_j$ if red and $\delta_j=  \delta \bar\lambda_j\bar\sigma_j$ if black, setting $\delta=\delta_1\bar\sigma_1$ (resp.  $\delta=\delta_h\bar\sigma_h$ where $h$ is the minimal element in $B$ or in $C$).
\end{lemma}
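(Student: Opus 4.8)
The plan is to prove the three assertions (for $A$, for $B$, for $C$) by one and the same induction, so I describe it for $A$; the blocks $B$ and $C$ are handled identically, one only starts the induction at the minimal index $h$ of the block and replaces $\delta_1\bar\sigma_1$ by $\delta_h\bar\sigma_h$, noting that in the doubly odd case every coefficient along $B$ additionally carries the global factor $2$, which is a scalar and does not affect the sign identity. By Theorem \ref{lu0} the edges of $A$ form a sub-segment $S_A$ of $T$ all of whose interior vertices have valency $2$; since $\ell_{u-1}$ and $\ell_u$ share the vertex $u$, consecutive edges of $S_A$ are consecutive in index, so walking $S_A$ from the end carrying $\ell_1$ one meets the edges in the order $\ell_1,\ell_2,\dots$. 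It therefore suffices to verify the formula for $\ell_u$ assuming it for $\ell_{u-1}$, the first step being the definition $\delta=\delta_1\bar\sigma_1$.

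For the inductive step I would take $\ell_{u-1},\ell_u$ consecutive along $S_A$ with $u$ non-critical, hence of type I, and combine two facts. First, since $u$ is non-critical, Remark \ref{crii} (that is, Lemma \ref{icamm}) gives $\delta_u=\vartheta_u\delta_{u-1}$. Second, since $u$ is of type I, Proposition \ref{leduei} supplies the colour identity $\sigma_{u-1}=\lambda_{u-1}\lambda_u$ for the data of the segment $S_u$ based at the root of \eqref{esub}, and Lemma \ref{basi} translates this local colour $\sigma_{u-1}$ together with the orientations $\lambda_{u-1},\lambda_u$ into the global data $\bar\sigma_{u-1},\bar\sigma_u,\bar\lambda_{u-1},\bar\lambda_u$ relative to the chosen root $r$, the precise translation depending on whether $\ell_u\prec\ell_{u-1}$ or $\ell_{u-1}\prec\ell_u$. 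Feeding the inductive hypothesis --- $\delta_{u-1}=\delta\,\bar\sigma_{u-1}$ if $\ell_{u-1}$ is red, $\delta_{u-1}=\delta\,\bar\lambda_{u-1}\bar\sigma_{u-1}$ if it is black --- into $\delta_u=\vartheta_u\delta_{u-1}$ and simplifying with these identities should yield $\delta_u=\delta\,\bar\sigma_u$ if $\ell_u$ is red and $\delta_u=\delta\,\bar\lambda_u\bar\sigma_u$ if it is black. In practice this amounts to running through the two possible orders of $\ell_{u-1},\ell_u$ along $S_A$, each with every edge red or black and, when black, with either orientation --- exactly the nine cases of \S\ref{18} in which $\bar L_u=0$ --- and checking that the identity closes each time.

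The hard part will be precisely this case analysis: one has to keep two bookkeepings consistent, the signs $\delta_i$ (defined by walking a circuit in the encoding graph) and the recursive colours $\bar\sigma_i$ (defined by walking $T$ from $r$), which need not share a base vertex. That consistency is exactly what Lemma \ref{basi} encodes, and once it is combined with the type I characterisation $\sigma_{u-1}=\lambda_{u-1}\lambda_u$ of Proposition \ref{leduei}, each individual case reduces to a one-line sign manipulation and the induction closes. No new geometric input is needed; the burden is purely bookkeeping.
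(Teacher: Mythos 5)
Your proposal follows essentially the same route as the paper: induction on the index $u$, using $\delta_u=\vartheta_u\delta_{u-1}$ from Lemma \ref{icamm}, the type I identity $\sigma_{u-1}=\lambda_{u-1}\lambda_u$ from Proposition \ref{leduei}, and Lemma \ref{basi} to pass between the local data of $S_u$ and the global data relative to the root, followed by the same case check over the nine $\bar L_u=0$ configurations split by whether $\ell_u\prec\ell_{u-1}$ or $\ell_{u-1}\prec\ell_u$. Only your side remark that consecutive edges of the segment are consecutive in index is inessential (and not needed): $\ell_{u-1},\ell_u$ are merely the end edges of $S_u$, possibly with intermediate edges, and Lemma \ref{basi} already accounts for this, so the argument goes through as in the paper.
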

\begin{proof} By induction $\delta_{u-1}= \delta  \bar\sigma_{u-1}$ if  $\ell_{u-1}$ is red and $ \delta_{u-1}=  \delta \bar\lambda_{u-1}\bar\sigma_{u-1}$ if black.

Look at  $S_u$ and use the notations $\sigma_i,\lambda_i$  for the root chosen in \eqref{Su}, which of course depends on $u$.
Recall that the elements  $\delta_i=\pm 1$  are defined by Formula  \eqref{priR}.

{\bf Case 1)}\quad  If  $\ell_{u-1}, \ell_u$ are both red $\sigma_{u-1}=1$. 

By Lemma \ref{icamm} an definition  \eqref{priR} $\delta_u =-\delta_{u-1}$. 
From Formula \eqref{ivc}  $$\delta_u\stackrel{\eqref{priR}}=-\delta_{u-1}=  -\delta  \bar\sigma_{u-1}= \delta  \bar\sigma_{u }\sigma_{u-1}   =   \delta  \bar\sigma_{u }.$$

{\bf Case 3), 6)}  $\ell_{u-1}$ is red and $ \ell_u$ is black. We have $\sigma_{u-1}=\lambda_u,\ \delta_u=\delta_{u-1}=  \delta  \bar\sigma_{u-1}$.  
\smallskip

If $\ell_{u-1}\prec\ell_u$  we have $\sigma_{u-1}=-\bar \sigma_{u-1}\bar \sigma_{u}$ and $\bar \lambda_u=-\lambda_u$, thus $\bar \lambda_u=\bar \sigma_{u-1}\bar \sigma_{u}$.

If $\ell_u\prec\ell_{u-1}$  we have  $\sigma_{u-1}= \bar \sigma_{u-1}\bar \sigma_{u}$ and $\bar \lambda_u=\lambda_u$ thus $\bar \lambda_u=\bar \sigma_{u-1}\bar \sigma_{u}$.

In both cases thus $ \bar\sigma_{u-1}=\bar \lambda_u \bar \sigma_{u}$ and so 
 $ \delta_u=   \delta  \bar\sigma_{u-1} = 
 \delta  \bar\sigma_{u }\bar\lambda_u$.\medskip

{\bf Case 7), 10)}    $\ell_{u-1}$ is black and $ \ell_u$ is  red so $\delta_u =-\delta_{u-1}.$  We have  $\sigma_{u-1}=\lambda_{u-1}$.   \smallskip

If $\ell_{u-1}\prec\ell_u$ we have $\lambda_{u-1}\bar\lambda_{u-1}=-1.$ From formula \eqref{ivc}  $ \bar\sigma_{u-1}=\sigma_{u-1} \bar\sigma_u\theta_u\theta_{u-1} $ implies $ \bar\sigma_{u-1}= - \bar\sigma_u\sigma_{u-1}= - \bar\sigma_u\lambda_{u-1}= \bar\sigma_u\bar \lambda_{u-1}$
$$\delta_u= -\delta_{u-1}=   -\delta \bar\lambda_{u-1} \bar\sigma_{u-1}  =\delta  \bar\sigma_{u }.$$
If $\ell_u\prec\ell_{u-1}$ we have $\lambda_{u-1}\bar\lambda_{u-1}= 1,\quad \bar\sigma_{u-1}\stackrel{\eqref{ivc}}= - \bar\sigma_u\sigma_{u-1}$$$\delta_u= -\delta_{u-1}=   -\delta  \bar\sigma_{u-1}\bar\lambda_{u-1} =   \delta  \bar\sigma_{u }\sigma_{u-1} \bar\lambda_{u-1}  =   \delta  \bar\sigma_{u }\lambda_{u-1}\bar\lambda_{u-1}=\delta  \bar\sigma_{u }.$$
{\bf Case 11), 14), 16), 17) )} 
 $\ell_{u-1}, \ell_u$ are both black. 
 
 We have  $\sigma_{u-1}=\lambda_u\lambda_{u-1}$ by Proposition \ref{leduei}.
 
 If $\ell_{u-1}\prec\ell_u$ (in the order of the total segment) we have $\lambda_{u }\bar\lambda_{u }=\lambda_{u-1}\bar\lambda_{u-1}=-1, $ $\bar\sigma_{u-1}=  \bar\sigma_u\sigma_{u-1}$
 $$\delta_u=  \delta_{u-1}=   \delta  \bar\sigma_{u-1}\bar\lambda_{u-1} =  \delta  \bar\sigma_{u }\sigma_{u-1} \bar\lambda_{u-1}  =    \delta  \bar\sigma_{u }\lambda_{u-1}\lambda_{u }\bar\lambda_{u-1}=\delta \bar\lambda_{u } \bar\sigma_{u }.$$
 
 f $\ell_u\prec\ell_{u-1}$ (in the order of the total segment) we have $\lambda_{u }=\bar\lambda_{u },\ \lambda_{u-1}=\bar\lambda_{u-1}, $ $\bar\sigma_{u-1}=  \bar\sigma_u\sigma_{u-1}$
 $$\delta_u= -\delta_{u-1}=   -\delta  \bar\sigma_{u-1}\bar\lambda_{u-1} =   \delta  \bar\sigma_{u }\sigma_{u-1} \bar\lambda_{u-1}  =   \delta  \bar\sigma_{u }\lambda_{u-1}\lambda_{u }\bar\lambda_{u-1} =\delta \bar\lambda_{u } \bar\sigma_{u }.$$
Clearly  $\lambda_{u-1}\lambda_{u }\bar\lambda_{u-1}=\bar\lambda_{u }$.
 \end{proof}
 
We keep  the left vertex $r$  of $S_{A\cup C}$ as in \eqref{unseg}  as root, that is we consider it as the 0 vertex and want to compute first the value of the other end vertex $v$  of $S_{A\cup C}$ and then the end vertex $w$ of the total segment appearing in \eqref{unseg}. \smallskip

Recall that  we have an even number of red edges in $A\cup  C$ so that the end vertex $v$ is  black, let us denote  by $\ell_j$ the edge ending in $v$ so $\bar \sigma_j=1$.  

 By Proposition \ref{coo} the   group element  $g\in G_2$ so that $g\cdot 0=v$  is the composition of  the edges $ \ell_i$.  
We can compute it by using the 3 options of formula \eqref{La} for which $\bar \sigma_j=1$. 
\begin{proposition}\label{cov}
 \begin{equation}\label{IV}
v= \sum_{\ell\preceq \ell_j}\bar \sigma_\ell\bar \lambda_\ell\ell = \sum_{i\in A\cup C}\bar \sigma_i\bar \lambda_i\ell_i = \sum_{i\in A }\bar \sigma_i\bar \lambda_i\ell_i +  \sum_{i\in   C}\bar \sigma_i\bar \lambda_i\ell_i .
\end{equation}

\end{proposition}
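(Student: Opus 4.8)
\textbf{Plan of proof of Proposition \ref{cov}.}
The statement is a direct application of Theorem \ref{ilve}, specialized to the situation at hand: the root has been taken to be the left endpoint $r$ of $S_{A\cup C}$, which we treat as the $0$ vertex, and $v$ is the opposite endpoint, reached by the edge $\ell_j$ which ends it. The first step is to invoke Theorem \ref{ilve}, which gives
\[
v = \sigma_v\sum_{\ell\preceq v}\sigma_\ell\lambda_\ell\ell .
\]
Here however the colors and orientations must be read off with respect to the \emph{present} root $r$, so they are the barred quantities $\bar\sigma_\ell,\bar\lambda_\ell$ of Lemma \ref{basi}, not the unbarred $\sigma_\ell,\lambda_\ell$ which were tied to the individual segments $S_u$. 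Since $A\cup C$ contains an even number of red edges, the path from $r$ to $v$ has an even number of red edges, hence $v$ is black and $\sigma_v=\bar\sigma_j=1$; this removes the outer factor $\sigma_v$.

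The second step is to identify the index set of the sum $\{\ell\preceq v\}$. By construction $S_{A\cup C}$ is precisely the segment joining $r$ to $v$ (this is the content of case $a')$ of Theorem \ref{lu0} together with the choice of root), so the edges $\ell$ with $\ell\preceq v$ are exactly the edges of $S_{A\cup C}$, i.e.\ the $\ell_i$ with $i\in A\cup C$; no edge of $B$ precedes $v$ since $v$ is an endpoint separating $S_{A\cup C}$ from $S_B$. Substituting gives
\[
v=\sum_{\ell\preceq\ell_j}\bar\sigma_\ell\bar\lambda_\ell\ell=\sum_{i\in A\cup C}\bar\sigma_i\bar\lambda_i\ell_i,
\]
and the final displayed equality is then just the splitting of the index set $A\cup C$ into the disjoint pieces $A$ and $C$, which is legitimate because in the all–type–I doubly odd case $A$ and $C$ are segments meeting $S_B$ only at its endpoints and the edge sets are disjoint (Theorem \ref{lu0} B), cases $a')$ or its specializations $a''$, $a'''$). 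This matches exactly the right-hand side of \eqref{IV}.

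\textbf{Main obstacle.} The only genuine subtlety — and the reason Lemma \ref{basi} and Lemma \ref{icons} were proved just before — is the bookkeeping of colors and orientations: Theorem \ref{ilve} is stated for an arbitrary but fixed root, whereas throughout the preceding computation two different conventions have been in play (the per-segment root of \eqref{esub} giving $\sigma_u,\lambda_u$, and the global root $r$ giving $\bar\sigma_i,\bar\lambda_i$). The step that requires care is therefore checking that, once $r$ is declared the root, Theorem \ref{ilve} applies verbatim with the barred data, and that $\ell_j$ is indeed the last edge of $S_{A\cup C}$ so that the summation range is correct. Both points are immediate from the definitions and from Theorem \ref{lu0}, so beyond this there is no real difficulty; the proposition is essentially a reformulation of Theorem \ref{ilve} in the notation of Definition \ref{biai}.
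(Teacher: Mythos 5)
Your proof is correct and rests on the same key result as the paper's: formula \eqref{ukvef} of Theorem \ref{ilve} applied with the global root $r$, combined with the facts that $v$ is black (even number of red edges in $A\cup C$) and that the edges $\ell\preceq \ell_j$ are exactly the $\ell_i$, $i\in A\cup C$, of the segment $S_{A\cup C}$. The paper arrives at \eqref{IV} slightly more laboriously, routing through the case list \eqref{La} for the vertices $a_j$ and distinguishing whether the end vertex $v$ is $a_j$ or $b_j$; your direct appeal to \eqref{ukvef} at $v=v_{\ell_j}$ with $\bar\sigma_j=1$ subsumes that case analysis, which is legitimate since \eqref{La} is itself a consequence of Theorem \ref{ilve}.
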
  \begin{proof}
We start from the 3 cases of Formula \eqref{La}  where  $\bar \sigma_j=1$.\begin{equation} \label{lala}
a_j=\begin{cases}\begin{matrix}
 - \sum_{\ell\prec  \ell_j}\bar \sigma_\ell\bar \lambda_\ell\ell  ,&\bar \sigma_j= 1,\ \ &\ell_j\ \text{red}\quad \\
 \sum_{\ell\preceq \ell_j}\bar \sigma_\ell\bar \lambda_\ell\ell , &\bar \lambda_j= 1,\ \ &\ell_j\ \text{black}\\
 \sum_{\ell\prec  \ell_j}\bar \sigma_\ell\bar \lambda_\ell\ell , &\bar \lambda_j= -1, &\ell_j\ \text{black} 
\end{matrix}
\end{cases}
\end{equation}
 
 If $\ell_j$ is red  or if it is black and $\bar\lambda_j=-1$  we have, by the Definition \ref{biai} of $a_j,b_j$,   that the last vertex   $v=b_j$ and not $a_j$, in the remaining case  $v=a_j$   we have  Formula \eqref{IV}.\smallskip
 
 Otherwise   $$ v= \bar \lambda_j\ell_j+\theta_ja_j =\begin{cases}\ell_j+ \sum_{\ell\prec  \ell_j}\bar \sigma_\ell\bar \lambda_\ell\ell  ,\ \bar \sigma_j= 1,\ \  \ell_j\ \text{red}\quad\\
-\ell_j+  \sum_{\ell\prec  \ell_j}\bar \sigma_\ell\bar \lambda_\ell\ell ,\  \bar \lambda_j= -1,  \ell_j\ \text{black} 
\end{cases} $$ 
In both cases we have Formula \eqref{IV}  for $v$.
 \end{proof}   
  By Lemma \ref{icons} we have  $\bar \lambda_j \bar\sigma_j=\delta\delta_j$ hence   $\sum_{j\in A}\bar \lambda_j \bar\sigma_j\ell_j=\delta\sum_{j\in A}  \delta_j\ell_j=\pm 2e_1$ and similarly $\pm\sum_{j\in C}\bar \lambda_j \bar\sigma_j\ell_j=\pm 2e_k$ (cf. \eqref{doppiou}).  \smallskip
  
  We thus  have that $v=\pm 2(e_1-e_k)$  or  $v=\pm 2(e_1+e_k)$ but this last  is impossible for a   vertex which has mass 0.  If $B=\emptyset$ then $k=1$ and $v=0$ so $T$ is not a tree. The same argument applies if  also $C=\emptyset $ so we are in the case of an even circuit.\bigskip

For  the segment $S_B$ with root $v$ and end $w$  the vertex $w$ can have any color,  we denote  by $\ell_j$ the edge ending in $w$.  Now keep in  mind  that we have defined $\delta_i=2\eta_i$ so $\delta=\pm 2$  and we have to divide by 2 to get the correct Formula.

 If the color of $w$ is black the previous argument applies and then gives as value of $S_B$  
 \begin{equation}\label{IVb} 
w=   \sum_{i\in B}\bar \sigma_i\bar \lambda_i\ell_i = \pm(e_1-e_k)
\end{equation}

   If the color of $w$ is -1, we claim that $w= -e_1-e_k$. For this  we need to analyze more cases.   If $\ell_j$ is red we apply the first of Formulas \eqref{La} and 
    \begin{equation}\label{IVc} 
w= -\sum_{\ell\preceq \ell_j}\sigma_\ell\lambda_\ell\ell=-  \sum_{i\in B}\bar \sigma_i\bar \lambda_i\ell_i = -e_1-e_k \end{equation}   
If $\ell_j$ is black we argue as in the previous Proposition and always have  $w=   \sum_{i\in B}\bar \sigma_i\bar \lambda_i\ell_i = -e_1-e_k$.
\begin{corollary}\label{PrC0}
The case of an even circuit or  \eqref{unseg} a')  does not occur or it produces a not--allowable graph   \ref{ilpunto1}.
\end{corollary}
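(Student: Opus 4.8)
The plan is to combine the explicit values of the two end vertices of the distinguished segments, which were just computed, with the non-allowability criterion of Definition \ref{ilpunto1} and Proposition \ref{ilpunto0}. Recall the setting: we are in the case where all non-critical indices of $A\cup C$ are of type I, the tree $T$ is forced into the form \eqref{unseg} a'), i.e. $T=S_{A\cup C}\cup S_B$ with $S_{A\cup C}$ a segment from $r$ to $v$ and $S_B$ a segment from $v$ to $w$. We have just shown $v=\pm 2(e_1-e_k)$ (the option $v=\pm 2(e_1+e_k)$ being excluded since $v$ has mass $0$), and for $S_B$ that $w-v=\pm(e_1-e_k)$ if $w$ is black, $w-v=-e_1-e_k$ if $w$ is red. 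So $w$ is, relative to the root $r$, either $\pm 2(e_1-e_k)\pm(e_1-e_k)$, i.e. $\pm(e_1-e_k)$ or $\pm 3(e_1-e_k)$, or else $\pm 2(e_1-e_k)-(e_1-e_k)-2e_k$ in the red case, which gives $\pm2(e_1-e_k)-(e_1+e_k)$.

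Next I would change the root: since allowability is a property of the graph $\Gamma$ and not of the chosen root (Definition \ref{ilpunto1} refers to the existence of \emph{a} vertex of the forbidden form, and by Remark \ref{fus} translates of a realizable graph are realizable with a different root), I may compute the position of $r$ with $w$ taken as root, or equivalently inspect the vertex $v-w$, $r-w$, etc., as elements of $\Z^m$. In the black-$w$ case $w=\pm 3(e_1-e_k)$ relative to $r$: then the vertex $r$ seen from root $w$ is $\mp 3(e_1-e_k)=\mp3e_1\pm3e_k$, which is not of the banned shapes $-2e_i,\ -3e_i+e_j$; but $w=\pm(e_1-e_k)$ is just an edge, and in fact if $T=S_{A\cup C}\cup S_B$ with $w-r$ an edge then $B$ consists of a single edge and one checks, exactly as in the proof that the relation forces $E$ or $-2e_i$ (cf. the Lemma before \ref{comt}), that we obtain a second relation contradicting minimality, or $\Gamma$ is not a tree. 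The case $w=\pm 3(e_1-e_k)$ must also be excluded: here $v-w=\mp(e_1-e_k)$ and $v=\pm2(e_1-e_k)$, so the two vertices $v$ (black, since $A\cup C$ is even) and the red vertex of $S_B$ adjacent appropriately give a pair $a,b$ of opposite colors with $a+b=-2e_i$ — this is precisely the configuration of Remark \ref{bala}/Definition \ref{ilpunto1}. For the red-$w$ case, $w=\pm2(e_1-e_k)-(e_1+e_k)$; taking root at $w$ one finds the vertex $\mp2(e_1-e_k)+(e_1+e_k)$ which, after the coordinate bookkeeping on $\eta$, is forced to contain a sub-sum equal to $-2e_i$, and we again land in a non-allowable configuration, or the graph degenerates.

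The key mechanism, which I would isolate as the heart of the argument, is this: the end vertices $v$ and $w$, expressed in the $e_i$-basis, are multiples of $e_1-e_k$ (resp. $e_1\pm e_k$) with small integer coefficients, so among the vertices $0,v,w$ (and the intermediate vertices of the segments, whose coordinates interpolate these) one can always locate either (i) a black vertex $a$ and a red vertex $b$ with $a+b=-2e_i$ for some $i$ — making $\Gamma$ not allowable by Remark \ref{bala} — or (ii) a configuration forcing a second independent relation among the edges of $T$, contradicting the minimality of $\Gamma$ via Lemma \ref{endp}, or (iii) the "tree" $T$ is actually not a tree (some vertex coincides with $0$). In cases (i) the conclusion is immediate from Proposition \ref{ilpunto0}; cases (ii), (iii) contradict the standing hypotheses, so only (i) survives, which is what the corollary asserts.

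The main obstacle I anticipate is the red-$w$ subcase: here the vertex produced is $\pm2(e_1-e_k)-(e_1+e_k)$, which is not literally of the form $-2e_i$ or $-3e_i+e_j$, so one must work a bit harder — either change root to one of the intermediate vertices of $S_B$ and use that $\mathrm{rk}$ and colours transform by \eqref{chro}, exhibiting the pair $a+b=-2e_k$ among the vertices of $S_B$ itself (which seems most promising, since $S_B$ has internal vertices of valency $2$ and its last edge is red, so the penultimate vertex is black and differs from $w$ by a red edge $-e_\alpha-e_k$, and tracking back along $S_B$ one hits a black vertex and a red vertex summing to $-2e_k$), or argue that otherwise $S_B$ would carry a relation of its own. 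I would structure the write-up so that this subcase is reduced, by a root change, to the already-handled "vertex $=-2e_i$" situation, invoking Proposition \ref{ilpunto0} at the end; the even-circuit case ($C=\emptyset$, hence $k=1$, $v=0$) is disposed of in one line since then $T$ is not a tree.
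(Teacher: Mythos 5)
Your overall strategy (feed the computed end--vertex values into the allowability criterion of Definition \ref{ilpunto1} / Proposition \ref{ilpunto0}, and dispose of the even circuit and $B=\emptyset$ cases because $v=0$ makes $T$ not a tree) is the right frame, and the last point matches the paper. But the two substantive subcases both have problems. In the red--$w$ subcase you manufactured an obstacle that is not there: with root $v$ one has $r=\pm2(e_1-e_k)$ and $w=-e_1-e_k$, and changing the root to $r$ (using the composition rule $a\tau\circ b=(a-b)\tau$, not plain addition) gives $w=-e_1-e_k\mp2(e_1-e_k)$, i.e.\ $-3e_1+e_k$ or $-3e_k+e_1$ --- \emph{literally} the banned shape $-3e_i+e_j$ of Definition \ref{ilpunto1}, so Proposition \ref{ilpunto0} finishes in one line. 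Your vector $\pm2(e_1-e_k)-(e_1+e_k)$ is the same pair after simplification, so your claim that it ``is not literally of the form $-2e_i$ or $-3e_i+e_j$'' is just a failure to simplify; and the substitute you propose (walking back along $S_B$ to locate a black and a red vertex summing to $-2e_k$) is unsupported --- nothing in the preceding analysis gives you the values of the intermediate vertices of $S_B$, which need not be supported on $e_1,e_k$, so your ``interpolation'' remark is false.

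The black--$w$ subcase is where the genuine gap lies, because you never use the standing hypothesis that the graph is degenerate--\emph{resonant}. The paper's argument is: with root $v$ the two end vertices are $r=\pm2(e_1-e_k)$ and $w=\pm(e_1-e_k)$, so they satisfy the vertex relation $r\mp 2w=0$; resonance then forces $C(r)\mp2C(w)=0$, which yields $C(e_1-e_k)=0$, hence $k=1$, hence $v=w=0$ and $T$ is not a tree --- so this case simply does not occur. Your replacements do not hold up: $w=\pm(e_1-e_k)$ is the displacement across \emph{all} of $S_B$, so you cannot conclude that $B$ is a single edge; in the $\pm3(e_1-e_k)$ alternative both $v$ and $w$ are black, so there is no opposite--colour pair with $a+b=-2e_i$ to be found among them; and the appeal to ``a second relation contradicting minimality'' is asserted, not proved. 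The missing idea, concretely, is applying the resonance identity $\sum_i n_iC(a_i)=0$ to the relation between the two end vertices; without it the black case cannot be excluded.
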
 
\subsubsection{Conclusion}\quad In the first case  we take as root the point $v$. Now the left and right hand vertices are $r=\pm 2(e_1-e_k),\ w=\pm  (e_1-e_k)$. The relation is, $  r=\pm 2w$  so if the graph is degenerate one should have $4 C(w)=C(r)=\pm 2 C(w)$ implies $C(e_1-e_k)=0$ implies $k=1$  and $v=w=0$  so $T$ is not a tree.

\medskip  

In the second case (root $v$)  $w= -e_1-e_k,\ r=\pm   2(e_1-e_k)$. Change the root to $r$  now $w=  -e_1-e_k \pm   2(e_1-e_k)$ equals  $-3e_1+e_k$ or  $-3e_k+e_1$  which   which also gives a non allowable graph from Definition \ref{ilpunto1} and Proposition \ref{ilpunto0}.

 If the edges in $A$ (an odd circuit) form a segment and are of type I  the same argument shows that fixing the root at one end the other end vertex is $-2e_i$ for some $i$. We deduce
\begin{corollary}\label{PrC}
The case of all indices of type I  in $A$ or in $C$ does not occur or it produces a not--allowable graph   \ref{ilpunto1}.
\end{corollary}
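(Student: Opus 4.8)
The plan is to rerun, on a single odd circuit, exactly the endpoint computation that was just carried out for the even circuit $S_{A\cup C}$ in the proof of Corollary \ref{PrC0}. Assume without loss of generality that every index of $A$ is of type I (the case of $C$ is identical). Then by Corollary \ref{seq} of Proposition \ref{intve} the edges $\ell_i$, $i\in A$, span a segment $S_A\subset T$ all of whose interior vertices have valency $2$; call its two endpoints $r$ and $w$. Since $A$ is an odd circuit it contains an odd number of red edges, and the unique path in $T$ joining $r$ to $w$ is precisely $S_A$, so $w$ has color $-1$, that is $\eta(w)=-2$.

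Next I would take the root at $r$, so that after relabelling $r$ becomes $0$ and $w$ becomes the group element $\ell_h\circ\cdots\circ\ell_1\in G_2$ obtained by composing the edges of $S_A$ in order (Proposition \ref{coo}). Applying Theorem \ref{ilve} gives $w=\sigma_w\sum_{\ell\preceq w}\bar\sigma_\ell\bar\lambda_\ell\,\ell=-\sum_{i\in A}\bar\sigma_i\bar\lambda_i\,\ell_i$, where $\bar\sigma_i,\bar\lambda_i$ are the color and orientation of $\ell_i$ relative to $r$. The sign bookkeeping of Lemma \ref{icons} — which only ever compares $\delta_u$ with $\delta_{u-1}$ across consecutive type-I indices and so applies verbatim along $S_A$ with the root at one of its ends — yields $\bar\sigma_i\bar\lambda_i=\delta\,\delta_i$ for every $i\in A$, with $\bar\lambda_i=1$ for red $\ell_i$ and $\delta$ a fixed $\pm1$. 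Hence $w=-\delta\sum_{i\in A}\delta_i\ell_i$, and by Formula \eqref{boB} the combination $\sum_{i\in A}\delta_i\ell_i$ equals $-2e_1$, where $1$ is the critical index of $A$. Thus $w=\pm 2e_1$, and because $\eta(w)=-2$ the sign must be $-$: $w=-2e_1$.

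Consequently the combinatorial graph, rooted at $r$, contains the vertex $-2e_1$, so it is not allowable in the sense of Definition \ref{ilpunto1}, and by Proposition \ref{ilpunto0} it has no geometric realization outside the special component; this is exactly the assertion, the "does not occur" alternative being the same statement under the standing assumption that $\Gamma$ is allowable. The one boundary situation to dispose of separately is $B=\emptyset$, where the two odd circuits share their critical vertex and $S_A$, $S_C$ meet there: rooting at the far end of $S_A$ and, if necessary, changing root once more, the same computation produces a vertex $-2e_i$ or $-3e_i+e_j$, exactly as in the discussion following \eqref{unseg}. The only real friction I anticipate is the notational check that Lemma \ref{icons} and the endpoint formula of Proposition \ref{cov}, both written for a root at an end of the longer segment $S_{A\cup C}$ in case a$'$) of \eqref{unseg}, transfer unchanged to a root at an end of $S_A$ alone; the genuinely decisive ingredient is the parity count, which is what upgrades the a priori ambiguous $\pm 2e_1$ into the forbidden $-2e_1$ and hence excludes the case outright rather than merely constraining it.
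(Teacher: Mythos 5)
Your proposal is correct and follows essentially the same route as the paper: apply Lemma \ref{icons} along the segment spanned by the type-I edges of $A$ with the root at one end, use \eqref{boB} to identify the signed sum $\sum_{i\in A}\delta_i\ell_i=-2e_1$, and let the mass constraint $\eta\in\{0,-2\}$ fix the sign so the far endpoint is $-2e_1$, giving a not-allowable graph by Definition \ref{ilpunto1}. The paper argues exactly this way (including deferring the degenerate $B=\emptyset$ configuration to the discussion of \eqref{unseg}), so no further comment is needed.
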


2) If  $A$   contains no index of type II)  we apply to it Lemma \ref{icons}  and deduce that the segment equals $\delta \sum_{i\in A}\delta_i\ell_i=-2\delta e_1$. Since the mass of a segment can only be $0,-2$ we deduce that if one extreme is set to be 0 the other is $-2e_1$.

3) is similar to 2).

Notice that at this point we have proved Theorem \ref{MM} for the doubly odd circuit  in all cases except a), b), and  b''''). 

4) Let us treat the case in which $u\in A$ gives a contribution to $\bar L_u$ equal $\pm 2e_{u-1}$ (the other is similar), from our analysis in our setting  all edges $\ell_j,\ j\leq u-2$ must be comparable with $\ell_u $.
\smallskip

In all cases we have that   $S_A$  and $S_C$ have a unique critical vertex which divides the segment.

 So $S_A$ is divided into two segments, one  $X$ ending with a red vertex $x$ the other $Y$ with a black vertex $y$ since in   $S_A$ there is an odd number of red edges  which are distributed into the two segments.  \smallskip

We choose as root the critical vertex. With this choice we denote by $\bar\sigma,\bar \lambda$ the corresponding values  on the edges  (in order to distinguish from the ones $ \sigma,  \lambda$ we have used where the root is at the beginning of $S_u$). 
 \smallskip

 \begin{lemma}\label{chiave} i)\quad   The edges in $Y, X$  have the property that, $ \delta_j\bar\sigma_j\bar\lambda_j=\delta$ is constant.
 \smallskip

Then using Formula \eqref{ukvef} of Theorem \ref{ilve}

  ii)\quad  $$y=\sum_{j\in Y} \bar\sigma_j \bar\lambda_j\ell_j= \delta\sum_{j\in Y}\delta_j\ell_j;\quad x=-\sum_{j\in X} \bar\sigma_j \bar\lambda_j\ell_j=- \delta\sum_{j\in X}\delta_j\ell_j$$
$$\delta=-1,\quad x-y=-2e_1 $$
 
\end{lemma}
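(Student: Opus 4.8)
The statement to be established is Lemma \ref{chiave}: with the critical vertex of $S_A$ chosen as root, the edges in the two sub-segments $X$ (ending in the red vertex $x$) and $Y$ (ending in the black vertex $y$) satisfy $\delta_j\bar\sigma_j\bar\lambda_j=\delta$ (a single sign $\delta$), whence $y=\delta\sum_{j\in Y}\delta_j\ell_j$, $x=-\delta\sum_{j\in X}\delta_j\ell_j$, and in fact $\delta=-1$ with $x-y=-2e_1$. The plan is to run exactly the same sign-bookkeeping argument used in Lemma \ref{icons} and Proposition \ref{cov}, but now inside each of the two segments $X$ and $Y$ separately, propagating from the root (the critical vertex) outwards.

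\textbf{Step 1: the sign propagation inside $X$ and $Y$.} Part i) is proved by induction along each segment, using Lemma \ref{basi} (the relation \eqref{ivc} between the root-colors $\bar\sigma$ and the segment-local colors $\sigma,\lambda$ of $\ell_{u-1},\ell_u$) together with the fact, from Proposition \ref{leduei}, that for every non-critical index $u$ of type I the contribution $\bar L_u=0$ forces $\sigma_{u-1}=\lambda_{u-1}\lambda_u$; this is precisely the input that made the case-by-case table of Lemma \ref{icons} collapse to the uniform relation $\delta_j=\delta\bar\sigma_j$ (red) or $\delta_j=\delta\bar\lambda_j\bar\sigma_j$ (black). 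The same nine cases ($rr$, $rb$, $br$, $bb$ with the two relative orders $\ell_{u-1}\prec\ell_u$ and $\ell_u\prec\ell_{u-1}$) go through verbatim inside each segment, because $X$ and $Y$ are each segments with all internal indices non-critical of type I; there is no new computation, only a re-indexing with the critical vertex as base point. Since the critical vertex is common to $X$ and $Y$, the normalising constant $\delta$ can be taken the same for both segments.

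\textbf{Step 2: evaluating the endpoints.} Part ii) follows from Theorem \ref{ilve}: walking from the root to $y$ along $Y$ gives $y=\sum_{j\in Y}\bar\sigma_j\bar\lambda_j\ell_j$ (using that $Y$ ends in a black vertex, so the sign computation of Proposition \ref{cov} applies), and walking from the root to $x$ along $X$, with $X$ ending in a red vertex, gives $x=-\sum_{j\in X}\bar\sigma_j\bar\lambda_j\ell_j$ exactly as in \eqref{IVc}. Substituting $\bar\sigma_j\bar\lambda_j=\delta\delta_j$ from Step 1 yields $y=\delta\sum_{j\in Y}\delta_j\ell_j$ and $x=-\delta\sum_{j\in X}\delta_j\ell_j$. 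Now $Y$ together with $X$ traverses the odd circuit $A$, and by Proposition \ref{pari} v) (the $\pm1$-combination of the edges of an odd circuit equalling $2e_i$ for each of its vertices, here the common critical vertex labelled $1$, cf. Formula \eqref{boB}) we get $\sum_{j\in A}\delta_j\ell_j=\pm2e_1$. Splitting this sum over $X$ and $Y$ and matching masses (the segment $X$ must carry the odd number of red edges of $A$, so $x$ has mass $-2$ while $y$ has mass $0$) pins down $\delta=-1$ and gives $x-y=-(\sum_{j\in X}\delta_j\ell_j+\sum_{j\in Y}\delta_j\ell_j)=-\sum_{j\in A}\delta_j\ell_j=-2e_1$.

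\textbf{Main obstacle.} The genuinely delicate point is not the sign induction — that is a mechanical repeat of Lemma \ref{icons} — but the bookkeeping that the red count inside $S_A$ is odd and is split so that exactly $X$ ends in a red vertex; i.e. that the critical vertex really does separate $S_A$ into one ``black-ending'' and one ``red-ending'' half, so that the two endpoint formulas of Proposition \ref{cov}/\eqref{IVc} apply in the claimed way and the final mass comparison forces $\delta=-1$ rather than $+1$. This uses that the two halves $X,Y$ inherit from the type-II analysis (Proposition \ref{case2L}) the property that all their internal indices are type I of valency $2$, which is what legitimises applying the uniform sign rule and Theorem \ref{ilve} segment-by-segment. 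Once that structural fact is in hand the rest is routine substitution.
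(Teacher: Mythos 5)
Your Step 1 contains a genuine gap at exactly the point that carries the content of part i). The type-I argument of Lemma \ref{icons} does give you that $\delta_j\bar\sigma_j\bar\lambda_j$ is constant \emph{within} $X$ and constant \emph{within} $Y$; but these are a priori two different constants $\delta_X,\delta_Y$, and your only justification for their equality is ``since the critical vertex is common to $X$ and $Y$, the normalising constant $\delta$ can be taken the same for both segments.'' This is not something you can ``take'': the quantity $\delta_j\bar\sigma_j\bar\lambda_j$ is determined by the data on each edge, and the only freedom (flipping the overall sign of the minimal relation) changes $\delta_X$ and $\delta_Y$ simultaneously, so it cannot reconcile them if they differ. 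If $\delta_X\neq\delta_Y$, then in part ii) you get $x-y=-\delta_X\sum_{j\in X}\delta_j\ell_j-\delta_Y\sum_{j\in Y}\delta_j\ell_j$, which is no longer proportional to $\sum_{j\in A}\delta_j\ell_j=-2e_1$, and the conclusions $\delta=-1$, $x-y=-2e_1$ collapse. The induction must also cross the critical vertex: the pair $\ell_{u-1},\ell_u$ attached to the type~II index $u$ is precisely the pair separated by that vertex, and checking that the value does not jump there is the new computation. It uses the \emph{second} part of Proposition \ref{leduei}, namely $\sigma_{u-1}=-\lambda_{u-1}\lambda_u$ for a type~II index (not the type~I identity $\sigma_{u-1}=\lambda_{u-1}\lambda_u$ you invoke), combined with the change-of-root relations $\bar\sigma_u\bar\sigma_{u-1}=\sigma_{u-1}$, $\bar\lambda_{u-1}=\lambda_{u-1}$, $\bar\lambda_u=-\vartheta_u\lambda_u$ and $\delta_u=\vartheta_u\delta_{u-1}$; the minus sign from re-rooting cancels the minus sign in the type~II identity, and that cancellation is what forces $\delta_X=\delta_Y$. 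Your write-up asserts that ``all internal indices of $X$ and $Y$ are of type I'' and that the nine type-I cases suffice ``verbatim'', which shows the type~II crossing was not addressed at all.

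Part ii) is essentially the paper's argument (Theorem \ref{ilve} for the black endpoint $y$ and the red endpoint $x$, then the mass comparison $\eta(x)-\eta(y)=-2$ to pin $\delta=-1$), but it is stated with sign slips: from $x=-\delta\sum_{j\in X}\delta_j\ell_j$ and $y=\delta\sum_{j\in Y}\delta_j\ell_j$ one gets $x-y=-\delta\sum_{j\in A}\delta_j\ell_j=2\delta e_1$ (using $\sum_{j\in A}\delta_j\ell_j=-2e_1$ from \eqref{boB}), whereas you drop the factor $\delta$ and leave the sign of $\sum_{j\in A}\delta_j\ell_j$ ambiguous. These are repairable, but only once the single constant $\delta$ across both halves has actually been established.
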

\begin{proof}  i)    We want to prove that on   $X$ and $Y$ the value  $ \delta_j\bar\sigma_j\bar\lambda_j$ is constant. For this by induction it is enough to see that  the value does not change  for $\ell_u,\ell_{u-1}$. 

When they are not separated  by the critical vertex $v$ (of valency 4) we can use Lemma  \ref{icons}. 

When separated
  we first compare the values that we call $\bar\sigma_j$ when we place the root at the critical vertex with the values $\sigma_j$  when we place the root at the beginning of $\ell_u$. \begin{equation}
\label{esubx} \quad  \xymatrix{   r    \ar@{-}[r]^{\ell_u} &s\ar@{--}[r] &v \ar@{--}[r] &  y \ar@{-}[r]^{\ell_{u-1}}  & x_{u-1}   } .
\end{equation} 
We claim that $ \bar\sigma_u\bar\sigma_{u-1}=\sigma_{u-1} $.  

Let $g_1,g_2\in G_2$ be such that $r=g_1v,\ x_{u-1}=g_2v$ so  $x_{u-1}=g_2\circ g_1^{-1}r.$   $ \bar\sigma_u,\ \bar\sigma_{u-1} $  are respectively the color  of $g_1,g_2$ and so $\sigma_{u-1} $ the color of $g_2\circ g_1^{-1}$  is their product.\medskip

In order to prove  that $\delta_j \bar \sigma_j\bar\lambda_j$ is constant  we need to show that when $\ell_u,\ell_{u-1}$ are separated  the product of the two terms is 1.  That is we need
$$ 1=   \delta_{u-1}\bar\sigma_{u-1}\bar\lambda_{u-1}    \delta_u \bar\sigma_u \bar\lambda_u=  \delta_{u-1} \sigma_{u-1}\bar\lambda_{u-1}    \delta_u   \bar\lambda_u . $$
  We have $\bar\lambda_{u-1}= \lambda_{u-1}$  while $\bar\lambda_u=- \vartheta_u\lambda_u$. In other words  we need
  $$  -\delta_{u-1}\vartheta_u \sigma_{u-1} \lambda_{u-1}    \delta_u    \lambda_u=1. $$
Since by definition $\delta_{u-1}\vartheta_u =    \delta_u $
we have to verify that
   $$  -\delta_{u-1}\vartheta_u \sigma_{u-1} \lambda_{u-1}    \delta_u    \lambda_u=-  \sigma_{u-1} \lambda_{u-1}      \lambda_u=1.  $$  This is in our case the content of the second part of Corollary \ref{leduei}.
\smallskip

    ii) By Formula \eqref{ukvef} and part i)
   $$y=\sum_{j\in Y} \bar\sigma_j \bar\lambda_j\ell_j= \delta\sum_{j\in Y}\delta_j\ell_j;\quad x=-\sum_{j\in X} \bar\sigma_j \bar\lambda_j\ell_j=- \delta\sum_{j\in X}\delta_j\ell_j$$ hence
   $x-y=- \delta\sum_{j\in A}\delta_j\ell_j=\delta2e_1$.  But $\eta(x)=-2,\eta(y)=0$  implies $\delta=-1$.\end{proof}

  \begin{proposition}
i)\quad If the graph is resonant $x+y=-2e_i$ for some $i\neq 1$.  ii)\quad The graph is not allowable.
\end{proposition}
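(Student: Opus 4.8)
The plan is to combine Lemma \ref{chiave} with the structure of the segment $S_A$ established in the previous section and then invoke Proposition \ref{ilpunto0} to conclude non-allowability. First I would recall from Lemma \ref{chiave} ii) that, writing $x$ and $y$ for the two end vertices of the two subsegments $X,Y$ into which the critical vertex of $S_A$ splits it, we have $x-y=-2e_1$ and $\eta(x)=-2$, $\eta(y)=0$. The point $v$ (the critical vertex) is the common root; $x$ is red and $y$ is black. Now I would bring in the resonance hypothesis. Since the graph is degenerate-resonant, all linear relations among the vertices lift to relations among the $C(a_i)$. The particular relation I want to exploit is the one expressing the minimal relation $\mathcal R$ of Formula \eqref{boBf} in terms of vertices, specialized so that the end vertices $x,y$ of $S_A$ appear; by Lemma \ref{chiave} i) the coefficients along $X$ and $Y$ are determined, and summing the edge relation $\sum_{j\in A}\delta_j\ell_j=\pm 2e_1$ separately over $X$ and $Y$ shows that $x+y$ is a linear combination with coefficients $\pm 1$ of the edges outside $A$ together with $2e_1$ contributions. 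The crucial observation is that because $\eta(x)=-2$ and $\eta(y)=0$, the sum $x+y$ has mass $-2$, and — since $x-y=-2e_1$ already isolates index $1$ — the combination $x+y$ must be supported away from index $1$; comparing with the circular relations this forces $x+y=-2e_i$ for some index $i\neq 1$.

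For part i) concretely, I would argue as follows. We have $x=-\delta\sum_{j\in X}\delta_j\ell_j$ and $y=\delta\sum_{j\in Y}\delta_j\ell_j$ with $\delta=-1$, so $x+y = \sum_{j\in Y}\delta_j\ell_j - \sum_{j\in X}\delta_j\ell_j$ is an integral combination of the edges in $A$ with coefficients $\pm 1$. Using Lemma \ref{icamm} applied to the walk along $S_A$ (an odd circuit together with the implied relation), a $\pm 1$ combination of these edges that is \emph{not} $\pm 2e_1$ must be a single basis vector $\pm e_r$ or $-2e_r$ for some index $r$ among the vertices of $S_A$; but $x+y$ has mass $\eta(x+y)=-2$, which rules out $\pm e_r$ (mass $\pm 1$) and forces $x+y=-2e_r$. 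Finally $r\neq 1$: indeed $2x = (x-y)+(x+y) = -2e_1 - 2e_r$, so $x=-e_1-e_r$ and the red vertex $x$ would otherwise, for $r=1$, be $-2e_1$, which makes the graph immediately not allowable by Definition \ref{ilpunto1} — a case already handled; and in the generic (resonant) setting we want the genuinely new configuration $r\neq 1$.

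For part ii), by i) the graph contains (after possibly changing the root, using Remark \ref{fus} and the rule \eqref{chro} for how colours and vertices transform) a vertex of the form $-2e_i$ or $-3e_i+e_j$: taking the root at $y$ we get the red vertex $x+(-2e_1\ \text{shift})$, and combining $x+y=-2e_i$ with $x-y=-2e_1$ as above gives $x=-e_1-e_i$, $y=-e_1+e_i$ (up to sign), so re-rooting at the black end of $X$ or at $x$ produces either $-2e_i$ directly or, after the shift by $\pm 2(e_1-e_i)$ along the relation, the vertex $-3e_i+e_1$ or $-3e_1+e_i$. In every case Definition \ref{ilpunto1} says the graph is not allowable, and then Proposition \ref{ilpunto0} gives that it has no geometric realization outside the special component, which is exactly what Theorem \ref{MM} asserts for this last remaining family of pictures (cases a), b), b'''')).

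\textbf{Main obstacle.} The hard part will be the bookkeeping of signs and colours when the critical vertex separates $\ell_u$ and $\ell_{u-1}$ and one re-roots: one must check that the $\pm 1$ coefficients produced by Lemma \ref{chiave} i) are globally consistent across $X$ and $Y$ so that $x+y$ really is a $\pm 1$-combination and not something with a stray $\pm 2$, and that the mass computation $\eta(x+y)=-2$ is robust under the re-rooting transformations \eqref{chro}. Once that sign consistency is pinned down, the identification $x+y=-2e_i$ and the reduction to a non-allowable vertex are routine applications of Lemma \ref{icamm} and Proposition \ref{ilpunto0}.
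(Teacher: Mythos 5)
Your part ii) is fine and is essentially the paper's argument: re-rooting at $x$ turns $y$ into the vertex $x+y$, so once $x+y=-2e_i$ the graph is not allowable by Definition \ref{ilpunto1} and Proposition \ref{ilpunto0} applies. The problem is part i), where your argument has a genuine gap. Your "concrete" derivation never actually uses the resonance hypothesis: you argue that $x+y$ is a $\pm 1$ combination of the edges in $A$ with mass $-2$ and then invoke Lemma \ref{icamm} to conclude it must be $-2e_r$. That step is false. Lemma \ref{icamm} only describes the telescoping combination along a path with the specific signs $\delta_i$ produced by the walk; an arbitrary $\pm 1$ combination of the edges of an odd circuit need not be of the form $\pm e_r$ or $-2e_r$. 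For instance, on the odd circuit with edges $\ell_1=-e_1-e_2,\ \ell_2=e_2-e_3,\ \ell_3=e_3-e_4,\ \ell_4=e_4-e_5,\ \ell_5=e_5-e_1$ one has $\ell_1+\ell_2-\ell_3+\ell_4+\ell_5=-2e_1-2e_3+2e_4$, which has mass $-2$ but is not a multiple of a single basis vector. More structurally: since Lemma \ref{chiave} gives $x-y=-2e_1$, the claim $x+y=-2e_i$ is equivalent to $x=-e_1-e_i$, i.e.\ to pinning down the vertex $x$ itself, and no purely combinatorial bookkeeping of signs along $S_A$ can do that; your phrase ``$x+y$ must be supported away from index $1$'' is likewise unsubstantiated. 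The statement is genuinely about resonance, as the hypothesis in i) indicates.

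What the paper does, and what is missing from your proposal, is to exploit the resonance relation attached to the minimal relation $x-y=-2e_1=\sum_{j\notin A}\delta_j\ell_j$: the vertices built from edges outside $A$ meet the support of $x,y$ only in $e_1$, so resonance forces $C(x)-C(y)=\alpha e_1^2$, and the mass computation $\eta(C(y))=0,\ \eta(C(x))=-1$ gives $\alpha=-1$. Then the composition rule \eqref{vee} applied to $x=y-2e_1$ yields $-y^2-y^{(2)}+2e_1y=0$, and solving this explicitly forces $y=e_1-e_i$ (the alternative $y=-2e_1+2e_i$ fails), hence $x=-e_1-e_i$ and $x+y=-2e_i$ with $i\neq 1$ automatic from $\eta(y)=0$. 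You mention the resonance of the $C$'s in your opening paragraph but never write down or use any such relation, and your dismissal of the case $i=1$ ("a case already handled") is not an argument. To repair the proof you would need to insert exactly this $C$-computation (or an equivalent use of the resonance), after which your re-rooting argument for ii) goes through.
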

\begin{proof} ii)\quad If we take as root the vertex $x$  the other vertex of $S_A$ is $x+y$.   So if $x+y=-2e_j$ the graph is not allowable  by Definition \ref{ilpunto1}. \smallskip

i)\quad We choose as root the critical vertex of $S_A$.
We have  $x-y=-2e_1 =\sum_{j\notin A}\delta_j \ell_j$ is the minimal relation. Therefore the resonance relation has the form:
$$C(x)-C(y)=\sum \alpha_iC(w_i) $$  where the vertices $w_i$ are linear combination of the edges not in $A$. Therefore  these vertices have support which intersects the support of the vertices in $S_A$ only in $e_1$, hence we must have $C(x)-C(y)=\alpha e_1^2$ for some $\alpha$. 

Applying the mass $\eta$ we see that $\eta(C(y))=0,\ \eta(C(x))=-1$ hence $\alpha=-1.$\smallskip

So $C(x)-C(y)=- e_1^2$.  We now apply the rule \eqref{vee} ($u=-2e_1,\ g=y$) of the operator $C$ to $x $ red, $y$ black,   $x =g\cdot u=y-2e_1$
\begin{equation}\label{veeF}
C(x)=C(y-2e_1)=-   C(y)+ C(-2e_1) +2e_1y,\quad C(-2e_1)=-e_1^2
\end{equation}
$$\implies   -2 C(y)  +2e_1y=-y^2-y^{(2)}   +2e_1y=0.$$

$$y=\sum_i\alpha_ie_i\implies  -y^2-y^{(2)}=-\sum_i\alpha_i(\alpha_i+1)e_i^2-2\sum_{i<j}\alpha_i\alpha_je_ie_j=-2e_1y$$
$$\implies \alpha_i\alpha_j=0,\ 1<i<j,\  \alpha_1^2+\alpha_1=2,\  \implies \alpha_1=1,-2. $$
Since $\eta(y)=0$  we must have another index $i$ with $\alpha_i\neq 0$ and then all other $\alpha_j,\ j\neq 1,i$ are 0. So we have either $y=e_1-e_i$ or $y=-2e_1+2e_i$.
$$\alpha_1=1,\  y^2+y^{(2)}=e_1^2+e_i^2-2e_1e_i+e_1^2-e_i^2=2e_1(e_1-e_i)=2e_1y,\ $$
$$\alpha_1=-2,\  y^2+y^{(2)}=4e_1^2+4e_i^2-8e_1e_i-2e_1^2+2e_i^2\neq 2e_1y,\ $$
so $y=e_1-e_i,\ x= -e_1-e_i,\ x+y=-2e_i$. \end{proof}
\begin{remark}\label{mintree}
In the previous discussion $x,y$ are connected to the root by an edge so we can replace these two in the graph and now in the new tree we have a segment with the two consecutive  edges   $y=(e_1-e_i),\ x= -e_1-e_i$, So  the previous tree was not minimal. Arguing in the same way for $B,C$ we see in this case that a minimal graph in this case  has a simple structure  of {\em encoding graph} and {\em degenerate tree}:
\begin{equation}\label{minG} 
\xymatrix{ &&&encoding\ graph\\&4\ar@/^/[r]^{-e_4-e_3}&\ar@/^/[l]^{e_4-e_3}3\ar@{-}[r]^{\pm e_2-e_3}&1\ar@/^/[r]^{-e_1-e_2}&\ar@/^/[l]^{e_1-e_2}2& ,} \qquad  \xymatrix{     \ar@{-}[r]^{e_1-e_2}  &\ar@{-}[r]^{-e_1-e_2}  \ar@{-}[d]^{\pm e_2-e_3}  &\\  \ar@{-}[r]^{e_3-e_4}  &\ar@{-}[r]^{-e_3-e_4}&\\&tree   }
\end{equation} 
\end{remark}
We have thus verified  that the graph is not--allowable by Definition \ref{ilpunto1} for the two extremes of the segment $S_A$, a similar analysis would apply to $S_C$.

\subsection{The extra edge}

We treat now case 1) with an extra edge $E=\vartheta e_1-e_h,\ \vartheta=\pm1$.  We have the function $\zeta$  such that $\zeta(e_1)=1,\ \zeta(\ell_i)=0,\ \forall i$ and $\zeta(E)=2\vartheta $.  In this case the  even circuit is divided into two  odd paths. We divide the indices different from the two critical indices $1,h$  in  two blocks $A=(2,\ldots,h-1),\ B=(h+1,\ldots,k-1)$ and argue as in the previous section.

From Corollary \ref{ePo} it follows that, either the extra edge  is outside  the segment spanned by the $\ell_i$, this may happen if we are in a situation as (up to symmetry between $A,B$)

$$ a) \xymatrix{   &&\\ \ar@{-}[r]^{E}  &   \ar@{-}[rr]^{S_B} &&  \ar@{-}[rr]^{S_A}  &&   } \quad b) \xymatrix{  && \ar@{-}[d]^{E}  & \\  \ar@{-}[rr]^{S_B} &&  \ar@{-}[rr]^{S_A}  &&   } $$

In these cases the edge $E$ can be removed and the graph is not minimal. Otherwise it could separate the two segments spanned by the two blocks $A,B$ or it could appear in one or both of these segments according to the following pictures:

$$ c) \xymatrix{  &&\ar@{-}[dd]^{S_A}  && &  &\\ &&&&&\\ \ar@{-}[rr]^{S_B} &&\ar@{-}[r]^{E}&\ar@{-}[dd]^{S_A}\ar@{-}[rr]^{S_B} &&\\  &&&&   \\  &&&& \\&&&&  } $$
$$ d) \xymatrix{  &&\ar@{-}[dd]^{S_A}  && &  &\\ &&&&&\\ \ar@{-}[rr]^{S_B} &&\ar@{-}[r]^{E}  & \ar@{-}[rr]^{S_B} &&\\  &&&&   \\  &&&& \\&&&&  } \quad  e) \xymatrix{  &&\ar@{-}[dd]^{S_B}  && &  &\\ &&&&&\\ \ar@{-}[rr]^{S_A} &&\ar@{-}[r]^{E}  & \ar@{-}[rr]^{S_A} &&\\  &&&&   \\  &&&& \\&&&&  } $$

Cases d), e)  are   special cases of  c), and in fact follow from previous results, so we treat   case c).

  \subsubsection{ $E=\theta_E e_1-e_h$ } Let $\theta_E=\pm 1$ be its color. We look at the picture c).
  $$ c) \xymatrix{  &&a=h_1\cdot c\ar@{-}[dd]^{S_A^0}  && &  &\\ &&&&&\\ y\ar@{-}[rr]^{S_B^0} &&c\ar@{-}[r]^{\theta_E e_1-e_h}  &d\ar@{-}[dd]^{S_A^1}\ar@{-}[rr]^{S_B^1} &&x\\  && &&   \\  &&&b=h_2\cdot c& \\&&&&  } $$

The encoding graph is given in figure \eqref{dude1}.  As example
\begin{equation}\label{minG0}
\xymatrix{    \ar@{-}[dr]_{ e_1-e_2} \ar@{-}[r]^{ e_2-e_3}  &\ar@{=}[d]_{-e_3-e_1}  &    &\\  &\ar@{-}[r]_{e_4-e_1}  &\ar@{-}[ul]_{ e_3-e_4}& \\ &encoding\ graph  &&&} \xymatrix{  &a&&&&&&\\ tree\quad  y \ar@{-}[r]_{\quad e_1-e_2}  &\ar@{-}[u]^{ e_1-e_4} c \ar@{=}[r]^{- e_1-e_3}  &d\ar@{-}[d]^{e_3-e_4}\ar@{-}[r]_{e_3-e_2}&x\\     & &  b }
\end{equation}  \begin{lemma}\label{ABs}
We can fix the signs $\delta_i=\pm 1$ for which $ \sum_{i\in A\cup B } \delta_i\ell_i=0$   so that 
\begin{equation}\label{pmei}
-e_1-\theta_E e_h=\sum_{i\in A} \delta_i\ell_i ,\quad   \theta_E e_h+e_1= \sum_{i\in B } \delta_i\ell_i .
\end{equation}
\begin{proof} If $\theta_E=1,\ E=e_1-e_h$ the two paths from $1,h$  and $h$ back to 1 are both red  so
$$\sum_{i\in A} \delta_i\ell_i=-e_1-e_h,\     \sum_{i\in B } \delta_i\ell_i =e_1+e_h.$$
If $E=-e_1-e_h$  we have  the two paths from $1,h$  and $h$ back to 1 are both black and
$$\sum_{i\in A} \delta_i\ell_i= e_h-e_1,\     \sum_{i\in B } \delta_i\ell_i =e_1-e_h.$$
 \end{proof} 

\end{lemma}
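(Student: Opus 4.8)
The plan is to read off the two identities in \eqref{pmei} by applying Lemma~\ref{icamm} (equivalently Proposition~\ref{pari}\,(ii)) separately to the two arcs $A$ and $B$ of the even circuit, using a parity count from the oddness of the two auxiliary circuits $A\cup\{E\}$ and $B\cup\{E\}$ to pin down the colours. First I would fix the set-up: by Proposition~\ref{icin} (cases 4--5) and Lemma~\ref{comt} the encoding graph of $T$ is an even circuit together with the extra edge $E$, which separates this circuit into two arcs, and after relabelling the indices of the circuit we may take $E=\theta_E e_1-e_h$ with $\theta_E=\pm1$, with $A=\{\ell_1,\dots,\ell_{h-1}\}$ the simple path $\ell_i=\theta_i e_i-e_{i+1}$ from index $1$ to index $h$ and $B$ the complementary arc from $h$ back to $1$ (the last circuit vertex $k$ being identified with $1$). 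By hypothesis $A\cup\{E\}$ and $B\cup\{E\}$ are both odd circuits; writing $k_A,k_B$ for the numbers of red edges in $A,B$ and $r_E\in\{0,1\}$ for that of $E$, so $\theta_E=(-1)^{r_E}$, oddness gives $k_A+r_E$ and $k_B+r_E$ odd, hence $(-1)^{k_A}=(-1)^{k_B}=-\theta_E$.

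Then, applying Lemma~\ref{icamm}\,(1) to the path $A$ produces unique signs $\varepsilon_i=\pm1$ with $\sum_{i\in A}\varepsilon_i\ell_i=\theta_A e_1-e_h$, where $\theta_A=\prod_{i\in A}\theta_i=(-1)^{k_A}=-\theta_E$; thus $\sum_{i\in A}\varepsilon_i\ell_i=-\theta_E e_1-e_h=\theta_E(-e_1-\theta_E e_h)$ because $\theta_E^2=1$, and setting $\delta_i:=\theta_E\varepsilon_i$ gives $\sum_{i\in A}\delta_i\ell_i=-e_1-\theta_E e_h$. The same computation for $B$ traversed from $h$ to $1$ gives a combination $\sum_{i\in B}\varepsilon'_i\ell_i=\theta_B e_h-e_1=-(\theta_E e_h+e_1)$, so after replacing all these signs by $\delta_i:=-\varepsilon'_i$ we obtain $\sum_{i\in B}\delta_i\ell_i=\theta_E e_h+e_1$. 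Since $A$ and $B$ are disjoint edge sets the two sign choices do not interfere, and adding the two identities yields $\sum_{i\in A\cup B}\delta_i\ell_i=0$; by Lemma~\ref{endp} this is, up to an overall sign, the only relation among the edges of the even circuit, so the $\delta_i$ just built are exactly the signs asserted in the statement.

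The only point that needs care is the bookkeeping: matching the orientation in which Lemma~\ref{icamm} is applied to each arc with the fixed orientation of $E$, and tracking the global sign flips, which are harmless because a relation is determined only up to sign. The conceptual content --- that the right-hand sides must be $\mp(e_1+\theta_E e_h)$ and not $\pm(e_1-e_h)$ --- is forced entirely by the parities $(-1)^{k_A}=(-1)^{k_B}=-\theta_E$ coming from $A\cup\{E\}$ and $B\cup\{E\}$ being odd circuits, so there is no genuine difficulty beyond careful case checking.
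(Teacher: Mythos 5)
Your proof is correct and follows essentially the same route as the paper: both arguments use the fact (Lemma \ref{comt}) that $E$ splits the even circuit into two odd circuits to fix the parity of red edges on each arc, and then apply Lemma \ref{icamm} (Proposition \ref{pari}) to evaluate the signed sum along each arc, adjusting by a harmless global sign. Your uniform bookkeeping via $(-1)^{k_A}=(-1)^{k_B}=-\theta_E$ just packages the paper's two explicit cases $\theta_E=\pm1$ into one computation.
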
    If $E$ is black the  two vertices $y,x$ one is black   the other is red, by Lemma \ref{comt} the two circuits are both odd.   If $E$ is red the  two vertices $y,x$  have the same color.
   The same for $a,b$.
We need to argue as in Lemma \ref{chiave}

 \begin{lemma}\label{chiave1} i)\quad   Taking $c$ as root the indices in $A$  have the property that:\smallskip

  $ \delta_j\bar\sigma_j\bar\lambda_j=\delta$ is constant if $E$ is black. Same    for the indices in $B$.\smallskip

 If $E$ is red $ \delta_j\bar\sigma_j\bar\lambda_j=\delta$ is constant on the two segments $S_A^0,\ S_A^1$ and changes sign passing from one to the other.
 \smallskip

 \end{lemma}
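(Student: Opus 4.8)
The statement is a bookkeeping claim about how the auxiliary sign $\delta_j\bar\sigma_j\bar\lambda_j$ behaves along the two blocks $A$ and $B$ when we change the root to the vertex $c$ that is an endpoint of the extra edge $E$. The strategy is entirely parallel to the proof of Lemma \ref{chiave} i): reduce to an inductive step along the segment, and check that the product of the two relevant signs stays $+1$ when we move across a pair of consecutive edges $\ell_u,\ell_{u-1}$. So the plan is: first set up the data and the induction, then split into the case where the pair $\ell_u,\ell_{u-1}$ lies on one side of the splitting vertex (where we may quote Lemma \ref{icons} verbatim), and finally handle the case where $\ell_u,\ell_{u-1}$ straddle a critical vertex of the encoding graph, which is exactly the situation treated in the proof of Lemma \ref{chiave}.

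In more detail: fix $c$ as root and write $\bar\sigma_j,\bar\lambda_j$ for the color and orientation of $\ell_j$ computed from $c$, as in Definition \ref{biai} and \eqref{esubx}. The quantity $\delta_j\bar\sigma_j\bar\lambda_j$ is defined edge-by-edge, so it suffices to show it is locally constant along each of $A,B$ (resp. along each of $S_A^0,S_A^1$ in the red case). When $\ell_u$ and $\ell_{u-1}$ are not separated by a critical vertex — i.e. both edges are comparable in the partial order from $c$ and the intermediate segment $S_u$ contains $c$ on one side — the computation is literally that of Lemma \ref{icons}: one uses $\delta_u=\vartheta_u\delta_{u-1}$ from \eqref{priR}, the relation $\bar\sigma_{u-1}=\bar\sigma_u\bar\lambda_u$ (or its black/red variants) obtained from Lemma \ref{basi}, Proposition \ref{leduei}, and Corollary \ref{leduei}, and one gets $\delta_u\bar\sigma_u\bar\lambda_u=\delta_{u-1}\bar\sigma_{u-1}\bar\lambda_{u-1}$. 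When the pair straddles a critical vertex $v$, one compares the colors $\bar\sigma_u,\bar\sigma_{u-1}$ relative to $c$ with the colors $\sigma_u,\sigma_{u-1}$ relative to the base of $S_u$: writing $r=g_1 v$, $x_{u-1}=g_2 v$ with $g_1,g_2\in G_2$, the color $\sigma_{u-1}$ of $g_2\circ g_1^{-1}$ is the product $\bar\sigma_u\bar\sigma_{u-1}$ (exactly the argument in Lemma \ref{chiave}). Combining $\bar\lambda_{u-1}=\lambda_{u-1}$, $\bar\lambda_u=-\vartheta_u\lambda_u$, $\delta_u=\vartheta_u\delta_{u-1}$ with the sign identity $-\sigma_{u-1}\lambda_{u-1}\lambda_u=1$ from the second part of Corollary \ref{leduei} gives $\delta_u\bar\sigma_u\bar\lambda_u\cdot\delta_{u-1}\bar\sigma_{u-1}\bar\lambda_{u-1}=1$, i.e. the constancy (or, in the red-$E$ case, the controlled sign flip as we pass through the critical vertex $d$ that carries the red edge $E$ into $S_A^1$).

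The point that needs the most care is the red-$E$ case, where the product is asserted to be constant on each of $S_A^0$ and $S_A^1$ separately but to change sign between them: there one must track precisely where the sign reversal comes from. The reversal is forced by the fact that $E$ being red changes the parity of red edges on a path crossing $d$, so the color assignment $\bar\sigma$ picks up a sign when one passes from the $S_A^0$ side of $d$ to the $S_A^1$ side, while $\delta_j$ and $\bar\lambda_j$ are unaffected by the (fixed, non-varying) edge $E$; making this precise amounts to repeating the $g_1,g_2$ computation above with $E$ playing the role of the straddling edge and observing that now $\vartheta_E=-1$ contributes the extra factor. Everything else is the same routine sign chase as in Lemma \ref{icons} and Lemma \ref{chiave}, so I would state it as an induction, do the straddling step carefully, and refer to the earlier lemmas for the non-straddling steps.
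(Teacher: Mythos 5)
Your proposal follows essentially the same route as the paper: induction over consecutive edges, Lemma \ref{icons} for pairs not separated by $E$, and for the pair straddling $E$ the comparison of colors via $g_1,g_2$ (with $E$ interposed) together with the type II sign identity $\sigma_{u-1}=-\lambda_{u-1}\lambda_u$, so that the color of $E$ supplies the constancy (black) or the sign flip (red). The only points to make explicit are that a pair separated by $E$ forces $u$ to be of type II (Proposition \ref{edE}), which licenses the appeal to the second part of Proposition \ref{leduei}, and that the straddling product equals $\sigma_E$ rather than $1$; with these clarifications your sketch matches the paper's proof.
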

\begin{proof}  i)    We want to prove that   the value  $ \delta_j\bar\sigma_j\bar\lambda_j$ is constant or changes sign. For this by induction it is enough to see what  the value does    for $\ell_u,\ell_{u-1}$. 

When they are not separated  by the edge $E$    we can use Lemma  \ref{icons}. 

Assume $u\in S_A^0,\ u-1\in S_A^1$ then 
  we first compare the values that we call $\bar\sigma_j$ when we place the root at $c$  with the values $\sigma_j$  when we place the root at the beginning of $\ell_u$. \begin{equation}
\label{esubx1} \quad  \xymatrix{   r    \ar@{-}[r]^{\ell_u} &s\ar@{--}[r] &c\ar@{-}[r]^E&d \ar@{--}[r] &  y \ar@{-}[r]^{\ell_{u-1}}  & x_{u-1}   } .
\end{equation} 
We claim that $ \bar\sigma_u\bar\sigma_{u-1}=\sigma_E\sigma_{u-1} $.  

Let $g_1,g_2\in G_2$ be such that $r=g_1c,\ x_{u-1}=g_2d$ so  $x_{u-1}=g_2\circ E^{-1} \circ g_1^{-1}r.$   

$ \bar\sigma_u,\ \bar\sigma_{u-1} $  are respectively the color  of $g_1,g_2\circ E$ and so $\sigma_{u-1} $ the color of $g_2\circ E^{-1}\circ g_1^{-1}$  is their product.\medskip

In order to prove  that $\delta_j \bar \sigma_j\bar\lambda_j$ changes by $\sigma_E$  we need to show that when $\ell_u,\ell_{u-1}$ are separated  the product of the two terms is  $\sigma_E$.  That is we need
$$\sigma_E=   \delta_{u-1}\bar\sigma_{u-1}\bar\lambda_{u-1}    \delta_u \bar\sigma_u \bar\lambda_u=  \delta_{u-1} \sigma_E\sigma_{u-1}\bar\lambda_{u-1}    \delta_u   \bar\lambda_u . $$ If $\ell_u,\ell_{u-1}$ are separated this means that $u$ is an index of type II, cf. Proposition  \ref{edE}.

  We have $\bar\lambda_{u-1}= \lambda_{u-1}$  while $\bar\lambda_u=- \vartheta_u\lambda_u$. In other words  we need
  $$  -\delta_{u-1}\vartheta_u \sigma_{u-1} \lambda_{u-1}    \delta_u    \lambda_u=1. $$
Since by definition $\delta_{u-1}\vartheta_u =    \delta_u $
we have to verify that
   $$  -\delta_{u-1}\vartheta_u \sigma_{u-1} \lambda_{u-1}    \delta_u    \lambda_u=-  \sigma_{u-1} \lambda_{u-1}      \lambda_u=1.  $$  This is in our case the content of the second part of Corollary \ref{leduei}.
\smallskip

We thus have taking $c$ as root by Theorem \ref{ilve} ($v:=v_\ell=\sigma_\ell  \sum_{\ell\preceq v}\sigma_\ell\lambda_\ell\ell $).      
\begin{align*}\label{iduela}
a=\bar \sigma_a\sum_{j\in S_A^0} \bar \sigma_j\bar\lambda_j\ell_j=\bar \sigma_a \delta\sum_{j\in S_A^0} \delta_j\ell_j,\  & h_1=(\bar \sigma_a \delta\sum_{j\in S_A^0} \delta_j\ell_j,\bar\sigma_a)\\
b=\bar \sigma_b(\theta_E   E +\sum_{j\in S_A^1} \bar \sigma_j\bar\lambda_j\ell_j)=\bar \sigma_b\theta_E ( E+ \delta\sum_{j\in S_A^1} \delta_j\ell_j ),\ &  h_2 =(b, \bar \sigma_b\theta_E  )\end{align*}
$$  b=-\bar \sigma_a\theta_E ( E+ \delta\sum_{j\in S_A^1} \delta_j\ell_j )\implies $$
\begin{equation}\label{Bara}
\bar a  -  \bar   b:=\bar \sigma_a  a-  \bar \sigma_a\theta_E b =E+\sum_{j\in A} \delta_j\ell_j=E-e_1-\theta_E e_h= -2e_h
\end{equation} A similar argument holds for $ y,x$ and   from \eqref{pmei}
 $$ \bar   y-  \bar   x = \bar \sigma_y  y-  \bar \sigma_x\theta_E x =E+\sum_{j\in B} \delta_j\ell_j=E+\theta_Ee_h+e_1= (\theta_E+1)e_1+(\theta_E-1)e_h
 $$  

$$\theta_A=-1\implies  \bar a-\bar b=\bar y-\bar x,\quad  \theta_A=1\implies  \bar a-\bar b-\bar y+\bar x=-2E$$ the resonance is thus
$$ C(\bar a)-C(\bar b)-C(\bar y)+C(\bar x)=\begin{cases}
4C(E)=4(e_1^2-e_1e_h) ,\  \theta_E= 1 \\0,\  \theta_E=-1
\end{cases}$$ 
This implies  that both $C(\bar a)-C(\bar b)$ and $C(\bar x)-C(\bar y)$ are quadratic expressions in $e_1,e_h$.

We may assume $\bar a, \bar y$ red  and $\bar b, \bar x$  black so 
$$ 2C(\bar a)-2C(\bar b)=-\bar a^2-\bar a^{(2)}-\bar b^2-\bar b^{(2)}$$
  write  $\bar a= u+v,\  \bar b= s+t$  where $s,u$ have support in $1,h$ and $v,t$ outside.

  $$-u^2-v^2-2uv-u^{(2)}-v^{(2)}-s^2-t^2-2st-s^{(2)}-t^{(2)}$$
   
implies    $$\implies   v^2+2uv +v^{(2)} +t^2+2st +t^{(2)}=0\implies  uv=-st,\  v^2  +v^{(2)} = t^2  +t^{(2)}=0 .$$ Then $v^2  +v^{(2)} =0$  implies  $v=-e_i$ for some $i$ or $v=0$.    Implies $u=s, v=-t$ or $u=-s, v=t$.
     From the Formula for $a$ we have that the coefficients  in $u$ for $e_1,e_h$ are $\pm 1$  so $a$ is the sum of  $e_1,e_h,v$ with coefficients $\pm 1,0$ furthermore $\eta(a)=-2$ implies that $a=-e_1-e_j$  or $a=-e_h-e_j$ where $j=i$ or  if $v  =0$ we have $a=-e_1-e_h$. 
   
   Then  from \eqref{Bara} since $\bar \sigma_a=-1$ we have 
 $    -a+\theta_E b =  -2e_h,\    \theta_Ab=-2e_h+e_h+e_i=-e_h+e_i$.
   This means that taking the root at $a$  we have $b= -e_h-e_j +\theta_E(-e_h+e_j)=-2e_h,\ -2e_j$ the graph is not allowable.\end{proof}
   {\bf In conclusion}   We have treated all possible cases and verified in each case that a minimal degenerate graph, is not allowable, proving Theorem \ref{MM}.  In fact we have even shown what are the possible minimal degenerate graphs which are presented in the two figures \eqref{minG} and \eqref{minG0}.\section{Appendix}
   In this paper we have treated the case of the rectangle graph, which appears in the NLS  for $q=1$.  The first part of the paper in fact holds also for any $q$,  arriving to Theorem \ref{bou}. \smallskip
   
   Still Theorem \ref{main} holds for all graphs with only black vertices which in the arithmetic case excludes only finitely many  blocks   in the normal form of the NLS.
   
   In this more general case the difference is in the choice of the  edges $X_q=X_0^q\cup X_{-2}^q$  which now are a larger set, the constraints of rectangles are replaced by    \begin{equation}\label{basco}
\sum_{i=1}^{4q}(-1)^i k_i=0,\quad  \sum_{i=1}^{4q}(-1)^i |k_i|^2=0  .
\end{equation} The first constraint on the choice of the vectors $S$ is replaced by\begin{constraint}\label{co1b}\strut\begin{enumerate}
\item We assume   that  $\sum_{j=1}^m n_j \mathtt v_j \neq 0$ for all  $n_i\in\Z,\,\sum_in_i=0,\ 1<\sum_i|n_i|\leq 2q+2$. 

\item $|\sum_in_i\mathtt v_i|^2-\sum_in_i|\mathtt v_i|^2\neq 0$ when $n_i\in\Z,\,\sum_in_i=1,\ 1<\sum_i|n_i|\leq 2q+1$.

\item We assume that  $\sum_{j=1}^{m}\ell_j \mathtt v_j\neq 0 $, when $u:=\sum_{j=1}^{m}\ell_j e_j$ is either an edge or a sum or difference  of two distinct edges.

\item  $2\sum_{j=1}^{m}\ell_j |\mathtt v_j|^2+|\sum_{j=1}^{m}\ell_j \mathtt v_j|^2\neq 0$ for all edges  $\ell=\sum_{j=1}^{m}\ell_j e_j$ in $X^q_{-2}$.
\end{enumerate}
\end{constraint} We need to strenghten Constraint \ref{c3} to 
\begin{constraint}\label{c3b}
$\sum_{i=1}^m\nu_i\mathtt v_i\neq 0,\ \forall \nu_i\in\Z,\ \mid \sum_{i=1}^m|\nu_i|\leq 4q(n+1)$.
\end{constraint}
We have to give a different proof of Proposition \ref{main1}.  In that proposition 
since  we are assuming that there is a non trivial odd circuit starting from $x$, changing if necessary the starting point $x$, in the first step of the circuit    we may assume that 
 $x$ lies in   a sphere $S_{\ell}$ for some initial edge $\ell\in X^q_{-2}$  with $\eta(\ell)=-2$.

This implies that $x=-1/2\sum_in_iv_i$ satisfies a  relation of type 
\begin{equation}\label{pririn1}  |\sum_in_iv_i|^2-2( \sum_in_iv_i,\pi(\ell))=4 K(\ell).
\end{equation}  Where  $\ell=(\sum_i\ell_ie_i) $.   This formula  vanishes identically     if   $ a ^2-2  a \ell  =4 C(\ell )=-2(\ell ^2+\ell^{(2)})$.  Thus  $$(a-\ell)^2=-\ell^2-2\ell^{(2)}. $$

This implies that all  coefficients $a_i$ of $\ell$ must have $-a_i^2-2a_i\geq 0$ so since $a_i\in\Z$  if $a_i\neq 0$ must be $a_i=-1$ or $a_i=-2$, and, since $\eta(\ell)=-2$ then  $\ell=-e_i-e_j, -2e_i$.

This implies, if $\ell=-2e_i$  that $a=\ell$. In the first case if $\ell=-e_i-e_j$  we have $-\ell^2-2\ell^{(2)}= (e_i-e_j)^2$  so   $a-\ell=\pm (e_i-e_j)$ hence again $a=-2e_i, -2e_j$ and $x=v_i,v_j$.
\smallskip

Finally  we have to give a different proof of Lemma \ref{spegra}.
   \begin{lemma}\label{spegra1} In dimension $n$,
If   a   graph of rank $\geq n+1$ has a  generic solution to the associated system,   which is given by a polynomial, then the graph is special and the polynomial is of the form $v_i$ for some $i$.
\end{lemma}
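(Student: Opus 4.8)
The strategy mirrors that of Lemma~\ref{spegra}; only the last step, where a single edge leaving the root is used, requires the more general edge analysis already carried out in the new proof of Proposition~\ref{main1}. Let $\GA$ be a combinatorial graph of rank $\ge n+1$ possessing a generic realization. By Theorem~\ref{codim} (which, like all of Part~1, holds for every $q$) this realization is a polynomial $x=F(\mathtt v_1,\dots,\mathtt v_m)$ in the coordinates of the $\mathtt v_i$; comparing degrees on the two sides of \eqref{bacos}, where the left side is at most linear in $x$ and both sides are quadratic in the $\mathtt v_i$, shows $F$ is linear. Since the energy functions $K$ are invariant under the diagonal action of $O(n)$, substituting $\mathtt v_i\mapsto g\mathtt v_i$ with $g\in O(n)$ turns the system of $\GA$ into the one with solution $gx$; hence $F$ is an $O(n)$-equivariant linear map $(\R^n)^m\to\R^n$, and by simple invariant theory $F=\sum_s c_s\mathtt v_s$ for scalars $c_s$.

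Now I invoke Lemma~\ref{CK}: the equations of $\GA$ hold identically in the $\mathtt v_i$ for all dimensions, hence for $n=1$, where the $\mathtt v_i=v_i$ are scalar indeterminates and $x=\sum_s c_sv_s$. Choose a vertex adjacent to the root $0$, joined to it by an edge $\ell=(L,\sigma)\in X_0^q\cup X_{-2}^q$, $L=\sum_j\ell_je_j$, with $\sigma=1$ (so $\eta(L)=0$, black) or $\sigma=\tau$ (so $\eta(L)=-2$, red). Substituting $x=\sum_s c_sv_s$ into the equation \eqref{bacos} attached to this vertex yields, in dimension one, the polynomial identity $\pi(L)\bigl(2x-\pi(L)\bigr)=\pi(L^{(2)})$ in the black case, and $\bigl(x+\tfrac12\pi(L)\bigr)^{2}=-\tfrac14\bigl(\pi(L)^{2}+2\pi(L^{(2)})\bigr)$ in the red case --- the latter being exactly the identical vanishing of \eqref{pririn1}.

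It remains to read off $L$. In the black case the identity forces $L\mid L^{(2)}$ in $\Z[e_j]$; writing $L^{(2)}=LM$ and matching coefficients gives $m_j=1$ on the support of $L$ and $\ell_i+\ell_j=0$ for any two support indices $i\neq j$, so the support is a pair $\{i,j\}$ with $\ell_j=-\ell_i$, i.e. $L=c(e_i-e_j)$, and then $2x=(c+1)v_i+(1-c)v_j$. In the red case $-(\pi(L)^{2}+2\pi(L^{(2)}))$ must be a perfect square of a linear form, hence positive semidefinite; its $v_j^2$-coefficient is $-\ell_j(\ell_j+2)$, so $\ell_j\in\{-2,-1,0\}$ for every $j$, and with $\eta(L)=-2$ the only possibilities are $L=-2e_i$ or $L=-e_i-e_j$, exactly as in the new proof of Proposition~\ref{main1}; solving the identity then gives $x=v_i$, respectively $x=v_i$ or $x=v_j$. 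Provided the adjacent edge has all coordinates of absolute value $1$ --- so $L=\pm(e_i-e_j)$ in the black case --- we conclude $F=\mathtt v_s$ for some $s$, so the realization of $\GA$ lies in $S$ and $\GA$ is special.

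The main obstacle, absent when $q=1$, is to exclude the ``long'' edge $L=c(e_i-e_j)$ with $|c|\ge2$ and, in the red case, the degenerate $L=-2e_i$: these are not ruled out by the identity alone, since they still produce a polynomial $x$. This is settled by the arithmetic hypotheses already imposed: the third clause of Constraint~\ref{co1b} constrains precisely which vectors occur as single edges or as sums and differences of two distinct edges, while Constraint~\ref{c3b} forbids the short linear relations among the $\mathtt v_j$ that a non-primitive edge $c(e_i-e_j)$ would create, forcing $c=\pm1$; and if the adjacent edge were $-2e_i$ the graph $\GA$ would be not allowable in the sense of Definition~\ref{ilpunto1}, whence by Proposition~\ref{ilpunto0} it admits no realization outside $S$, again consistent only with $x=\mathtt v_i$. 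In all cases $F=\mathtt v_i$ for some $i$, which proves the lemma.
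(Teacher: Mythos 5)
Your reduction to dimension one, the linearity of $x$ via $O(n)$-equivariance, and the coefficient-matching analysis of a single equation \eqref{bacos} are all sound and parallel the paper's argument (your red-vertex computation, forcing $\ell_j\in\{-2,-1,0\}$ and hence $x=v_i$, is correct and handles long red edges automatically). The gap is in the black case. For $q>1$ a black vertex adjacent to the root can indeed be $L=c(e_i-e_j)$ with $|c|\ge 2$ (such elements satisfy the mass and momentum conditions and are legitimate edges of $X_0^q$), and then the single equation you use is identically satisfied by $2x=(c+1)v_i+(1-c)v_j$, which is not of the form $v_s$. Your attempt to exclude this via Constraints \ref{co1b} and \ref{c3b} is a non sequitur: those are genericity inequalities on the tangential sites $\mathtt v_i$, they do not restrict which combinatorial vertices may occur in a possible graph, and in any case the identity you are analyzing holds identically in the indeterminates $v_i$, so no inequality on the $\mathtt v_i$ can enter. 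A vertex $c(e_i-e_j)$ creates no linear relation among the $\mathtt v_j$, so there is nothing for Constraint \ref{c3b} to forbid; nothing in your argument forces $c=\pm1$.

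What closes this case in the paper's proof of Lemma \ref{spegra1} is the hypothesis that the rank is $\ge n+1\ge 2$, which you never use: there is a second vertex, and one plays its equation against the first. If a second black vertex $p(e_a-e_b)$ is present, equating the two resulting expressions for $2x$ forces $m=\pm1$ and $x=v_h$ or $v_k$; if all remaining vertices are red, the linear form $(1+m)v_h+(1-m)v_k$ must divide the quadratic $2K(\sum_a n_ae_a)$, and the discriminant computation ($-n(n+2)\ge 0$, hence $n=0,-2$) again yields $x=v_h$ or $v_k$. Also note that you need not, and cannot, restrict attention to vertices adjacent to the root: the equations \eqref{bacos} are attached to all vertices, and it is precisely a second, possibly non-adjacent, vertex that rescues the argument. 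As written, your proof establishes the lemma only when the chosen adjacent vertex is red or is a primitive black edge, so the "long black edge" case remains open and the proof is incomplete.
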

\begin{proof}  The root $x$ is a solution of  the equations \eqref{bacos}
 $$(x,\pi(a_i))=K(a_i),\quad |x|^2+(x,\pi(b_j))=K(b_j).$$
 If the  solution $x$ is polynomial in the $v_i$,   it is linear by a simple degree computation. 
 
 Let $g\in O(n)$  be an element of the orthogonal group of $\R^n$,  substitute in the   equations $v_i\mapsto  g\cdot v_i$.     By their definition the functions $K$ are   invariant under $g$  and a transformed equations have a solution $x(g)$ with  $(x(g),g\pi(a_i))= K(a_i)$.
 
 We have   $(x(g), \pi(a_i))= (g^{-1}x(g),\pi(a_i))$  so $g^{-1}x(g)=x$    is also  equivariant under the orthogonal group of $\R^n$. It follows by simple invariant theory that it has the form $x=\sum_sc_s\mathtt v_s$ for some numbers $c_s$.

  By Lemma \ref{CK} and the fact that the given system of equations is satisfied for all $n$ dimensional vectors $\mathtt v_i$ it is valid for the vectors $\mathtt v_i$  with only the first coordinate  $x_i$ different from 0, or if we want for 1--dimensional vectors so that now the symbols $\mathtt v_i=v_i$ represent simple variables (and not vector variables). So we have, for a  black vertex $a_i=\sum_jm_je_j$
  $$\pi(a_i)=\sum_jm_jv_j,  \ K(a_j)=\frac12[ (\sum_jm_jv_j)^2+\sum_jm_jv_j^2]$$The equations \eqref{bacos} become 
  $$2(\sum_sc_sv_s) (\sum_jm_jv_j) = (\sum_jm_jv_j)^2+\sum_jm_jv_j^2$$ which implies that $ (\sum_jm_jv_j)$ divides $\sum_jm_jv_j^2$.
  
  Now $\sum_jm_jv_j^2$  if it is in $\geq 3$ variables it is an irreducible polynomial. In 2 variables  since  we have $\sum_jm_j=0$, the polynomial is    $ m(v_h^2-v_k^2)= m(v_h-v_k)(v_h+v_k) $ and
  $$2(\sum_sc_sv_s)=m(v_h-v_k)+ v_h+v_k=(1+m  )v_h+(1-m)v_k.$$ if there is another black vertex  $a_i\neq a_j$  we have a different linear equation  of the same type and  get $$2(\sum_sc_sv_s)= (1+p  )v_a+(1-p)v_b\implies  (1+m  )v_h+(1-m)v_k = (1+p  )v_a+(1-p)v_b$$  since the linear equation  is different  this can happen only if $m=\pm 1$ and $(\sum_sc_sv_s)=v_h, v_k$.  \smallskip
  
  If all other vertices are red we have an equation for $a_i=\sum_hn_he_h$ with $\eta(a_i)=-2$
  
$$x^2+x(\sum_an_av_a)=K(\sum_an_ae_a),\  2x =  (1+m  )v_h+(1-m)v_k.$$ So $ (1+m  )v_h+(1-m)v_k$ divides the quadratic polynomial $2K(\sum_an_ae_a)$. 

This implies first as before that $\sum_an_ae_a=ne_h-(2+n) e_k,\ n\geq 0$  so $$-2K(\sum_an_ae_a)=(nv_h-(2+n)  v_k)^2+nv_h^2-(2+n) v_k^2= (n^2+n)v_h^2+(n+2)  (n+1)v_k^2-2n(n+2)v_hv_k.$$
 For this a necessary condition to be factorizable over $\Z$ is that the discriminant $-n(n+2) \geq 0$  which implies $n=0,-2$.  In either case  $ 2x=(1+m  )v_h+(1-m)v_k$ divides $v_h$ or $v_k$  which implies $x=v_h,\ v_k$.\end{proof}
    \bibliographystyle{amsalpha}

\begin{thebibliography}{99}
\bibitem{GYX}
 Geng, J.,You, J.,Xu, X. \newblock An infinite dimensional KAM theorem and its application to the two dimensional
cubic Schr\"odinger equation. \newblock {\em Adv. Math. }226(6), 5361--5402 (2011).
\bibitem{PP}
C.~Procesi and M.~Procesi.
\newblock A normal form for the {S}chr\"odinger equation with analytic
  non-linearities.
\newblock {\em Communications in Mathematical Physics}, 312(2):501--557, 2012.

\bibitem{PP3}
M.~Procesi and C.~Procesi.
\newblock A {KAM} algorithm for the non--linear {S}chr\"odinger equation.
\newblock {\em Advances in Math.}, 272:399--470, 2015.

\bibitem{PP1}
M.~Procesi, C.~Procesi, and Nguyen~Bich Van.
\newblock The energy graph of the non--linear {S}chr\"odinger equation.
\newblock {\em Rend. Lincei Mat. Appl.}, 24:1--73, 2013.
\end{thebibliography}

\end{document}